\documentclass{amsart}
\usepackage[utf8]{inputenc}
\usepackage[english]{babel}
\usepackage[pdftex]{graphicx}
\usepackage{amsmath,amsfonts}
\usepackage{color}
\usepackage[left=2.5cm,top=4cm,right=2.5cm,bottom=4cm]{geometry}
\usepackage[hidelinks]{hyperref}
\usepackage{fancyhdr}
\usepackage{subcaption}
\usepackage{mathtools}
\usepackage{caption}
\usepackage{relsize}
\usepackage{amssymb}
\usepackage{amsthm}
\usepackage{mathrsfs}
\usepackage[parfill]{parskip}
\usepackage{enumitem}
\usepackage{fancyhdr}
\usepackage{csquotes}
\usepackage{nicefrac}
\usepackage{bbm}
\usepackage[nameinlink,capitalize]{cleveref}

\usepackage{comment}

\usepackage{pgfplots}
\pgfplotsset{compat=1.18}
 \usepgfplotslibrary{groupplots}
\usepgfplotslibrary{fillbetween}

\usepackage{tikz}
\usetikzlibrary{
    arrows.meta,
    calc,
    positioning,
    decorations.pathreplacing,
    bending
}

\numberwithin{equation}{section}
\newtheorem{theorem}{Theorem}[section]
\newtheorem{corollary}[theorem]{Corollary}
\newtheorem{lemma}[theorem]{Lemma}
\newtheorem{proposition}[theorem]{Proposition}
\newtheorem{remark}[theorem]{Remark}
\theoremstyle{definition}
\newtheorem{definition}[theorem]{Definition}
\newtheorem{lemdef}[theorem]{Lemma/Definition}
\newtheorem{example}[theorem]{Example}

\theoremstyle{remark}

\usepackage[backend=biber,backref=true,style=numeric,giveninits,maxbibnames=20,url=false,isbn=false,doi=false]{biblatex}

\def \l {\left}
\def \r {\right}
\def \R {\mathbb{R}}
\def \C {\mathbb{C}}
\def \P {\mathbb{P}}
\def \F {\mathcal{F}}
\def \E {\mathbb{E}}
\def \N {\mathbb{N}}

\def \ind {\mathbbm{1}}
\newcommand{\calN}{\mathcal{N}}

\DeclareMathOperator{\tr}{tr}
\DeclareMathOperator{\id}{Id}

\def \law {\textup{Law}}

\def \Cov {\textup{Cov}}

\newcommand{\br}[1]{{\l(#1\r)}}
\newcommand{\sqbr}[1]{{\l[#1\r]}}

\newcommand{\dTV}[1]{{d_{\textup{TV}}\br{#1}}}

\author{Nawaf Bou-Rabee \and Sonja Cox \and Roy Schieven
}
\title{On couplings for kinetic Langevin diffusions}
\date{\today}

\begin{document}

\begin{abstract}
For the kinetic Langevin diffusion and its splitting discretizations, the hypoelliptic noise structure makes the relationship between couplings and total variation (TV) bounds more subtle than in the elliptic case. We establish that, for the kinetic Langevin equation with quadratic
potential, no Markovian coupling (continuous or discrete) captures the
asymptotic decay rate of the TV distance between two solutions with
different initial values; the canonical iterated one-shot (or sticky) coupling, for
which we derive an exact contraction formula, saturates this lower bound. On the constructive side, we show that the recent sharp TV bounds of \citeauthor{ChakMonmarche:2025} admit a natural interpretation through an explicit non-Markovian coupling, built from an optimal coalescence trajectory characterized by a classical minimum-energy control problem. For the OBABO splitting scheme, this approach additionally eliminates the Hessian-Lipschitz, step-size, and final-time assumptions of \cite{ChakMonmarche:2025}.
\end{abstract}

\maketitle

\section{Introduction}

The \emph{kinetic Langevin diffusion} is the $2d$-dimensional Markov process
$(X_t, V_t)_{t \geq 0}$ satisfying the stochastic differential equation
\begin{equation}
\label{eq: general kinetic Langevin}
    \begin{split}
        dX_t &= V_t\,dt,\\
        dV_t &= -\nabla U(X_t)\,dt-\gamma V_t\,dt+\sqrt{2\gamma}\,dW_t,
    \end{split}
\end{equation}
on the position--velocity space $\R^{2d}$, with continuously differentiable
potential $U\colon \R^d\to\R$, friction parameter $\gamma>0$, and standard
$d$-dimensional Brownian motion $(W_t)_{t\geq0}$.  Originating in
statistical mechanics as a model for a particle in the potential $U$
subject to frictional dissipation and thermal noise, it is a canonical
example of a hypoelliptic diffusion: noise enters directly only on the
velocity and propagates to the position through the kinematic coupling
$dX_t = V_t\,dt$.  When $e^{-U}$ is integrable,
the process $(X_t,V_t)_{t\geq 0}$ leaves the \emph{Boltzmann--Gibbs measure}
\begin{equation*}
    \mu = \mu_{\textup{target}}\otimes\mathcal{N}(0,\id_d),
    \qquad \mu_{\textup{target}}(dx)\propto e^{-U(x)}\,dx,
\end{equation*}
invariant, with $\mu_{\textup{target}}$ as its position marginal.  Under standard 
regularity conditions,  the law of $(X_t,V_t)_{t\geq 0}$ converges to $\mu$ in total variation (TV)
distance~\cite{Talay:2002,MattinglyEtAl:2002}; in particular, the law of
$X_t$ converges to $\mu_{\textup{target}}$.

Compared to its overdamped counterpart, the kinetic Langevin diffusion
exhibits a quantitative \emph{diffusive-to-ballistic} acceleration of
equilibration: under a Poincar\'e inequality with constant $m$ together
with mild non-convexity of $U$, the $L^2$ relaxation time of
critically-tuned kinetic Langevin dynamics is of order $m^{-1/2}$ rather than
$m^{-1}$~\cite{CaoLuWang:2023,EberleLoerler:2024}. This acceleration has
motivated using kinetic Langevin to construct
Markov chain Monte Carlo (MCMC) algorithms for sampling from
$\mu_{\textup{target}}$. To implement such an algorithm,
however, the dynamics must be discretized, and the sample
complexity of the resulting chain is governed by its TV mixing time. The present work concerns this TV mixing time, with particular attention to its dependence on the time discretization step size.

The TV distance admits the coupling characterization
$\dTV{\nu,\tilde\nu}=\inf_{(X,\tilde X)}\P(X\neq\tilde X)$, where the
infimum runs over couplings $(X,\tilde X)$ of $\nu$ and $\tilde\nu$.
Bounding TV distance therefore reduces to exhibiting a coupling under
which the two marginals coincide with high probability. For
\emph{elliptic} diffusions, in particular the overdamped Langevin
equation
\begin{equation}
\label{eq: general overdamped Langevin}
    dX_t = -\nabla U(X_t)\,dt + \sqrt{2}\,dW_t,
\end{equation}
which has $\mu_{\textup{target}}$ as invariant measure, reflection
couplings combined with concave distances yield Wasserstein and TV
contraction rates even when $U$ is non-convex~\cite{LindvallRogers:1986,Eberle:2016},
with corresponding concentration estimates for empirical
averages~\cite{JoulinOllivier:2010,Villani:2009}. These ideas extend to
infinite dimensions for stochastic partial differential equations driven
by non-degenerate space-time white noise, again without convexity
assumptions~\cite{DaPratoEtAl:2005}. In the discrete-time MCMC setting,
the same Markovian coupling arguments yield sharp TV mixing bounds for
discretizations of~\eqref{eq: general overdamped Langevin}~\cite{DurmusMoulines:2019}.

For diffusions driven by \emph{degenerate} (hypoelliptic) noise, such
as the kinetic Langevin diffusion~\eqref{eq: general kinetic Langevin},
in which noise enters only through the velocity, none of these
elliptic constructions transfer directly: the interplay between the
noise degeneracy and a possibly non-convex drift complicates coupling
design. \citeauthor{BenArousEtAl:1995}~\cite{BenArousEtAl:1995}
initiated the hypoelliptic coupling program with a hybrid
synchronous/reflection coupling for the stochastic oscillator with
generator $\tfrac{1}{2}\partial_v^2 + v\,\partial_x$, and
\citeauthor{EberleEtAl:2019}~\cite{EberleEtAl:2019} subsequently
established Wasserstein contraction
for~\eqref{eq: general kinetic Langevin} under gradient-Lipschitz
potentials by combining synchronous and reflection couplings in distinct
regions of phase space. Related ideas appear
in~\cite{Bou-RabeeEberle:2022} for Andersen dynamics --- a
piecewise-deterministic relative of~\eqref{eq: general kinetic Langevin}
in which diffusive forcing is replaced by periodic velocity
randomizations --- and in~\cite{BanerjeeKendall:2016} for the
Kolmogorov diffusion $(W_t,\int_0^t W_s\,ds)$, where the structural
limitations of Markovian couplings under degenerate noise were first
quantified.

Within this landscape, the present paper makes three contributions:
\begin{enumerate}[label=C.\arabic*]
    \item \label{cont: a coupling for CM} In
    Section~\ref{sec: CM coupling} we construct an explicit non-Markovian
    coupling whose meeting probability realizes the recent TV bound of
    \citeauthor{ChakMonmarche:2025}~\cite{ChakMonmarche:2025} for
    discretizations of~\eqref{eq: general kinetic Langevin}. The coupling
    is built from a \emph{coalescence map} associated with a classical
    minimum-energy control problem; for quadratic $U$, its meeting
    probability equals the TV distance exactly
    (Remark~\ref{rem: coalescence gives optimal TV for linear case}).
    
    \item \label{cont: no Markovian coupling} The coupling
    of~\ref{cont: a coupling for CM} is fundamentally non-Markovian. In
    Section~\ref{sec: non-optimality Markovian couplings} we show that
    this is unavoidable: for the kinetic Langevin diffusion with a
    quadratic potential $U(x)=\alpha|x|^2$, in both continuous and
    discrete time, the meeting probability of any Markovian coupling
    either fails to match the asymptotic decay rate of the TV distance
    between two initial conditions, or does so only at the cost of a
    multiplicative constant scaling like $1/h$. This extends to the
    kinetic Langevin setting the impossibility framework introduced
    by~\citeauthor{BanerjeeKendall:2016}~\cite{BanerjeeKendall:2016}
    for the Kolmogorov diffusion.
    
    \item \label{cont: iterated one-shot} For the canonical Markovian
    coupling --- the iterated one-shot (or sticky) coupling  ---  which yields
    sharp TV bounds for discretizations of the overdamped Langevin
    equation~\cite{DurmusMoulines:2019}, we derive in
    Section~\ref{sec: iterated one-shot coupling} a closed-form expression
    for the meeting probability in the linear Markov chain setting
    (Theorem~\ref{thm: iterated one-shot coupling probability}). Applied
    to the kinetic Langevin diffusion, this expression saturates the
    worse of the two regimes
    identified in~\ref{cont: no Markovian coupling}: the meeting
    probability captures the correct asymptotic rate in $T$, but only
    with a $1/h$ prefactor.
\end{enumerate}

The impossibility result of~\ref{cont: no Markovian coupling} explains why the non-Markovian construction of~\ref{cont: a coupling for CM} is unavoidable, while~\ref{cont: iterated one-shot} shows that the iterated one-shot coupling saturates this lower bound. Related work appears at the end of this introduction.

\subsection*{\ref{cont: a coupling for CM}: Interpretation of TV bound in \cite{ChakMonmarche:2025} as a non-Markovian coupling}

A key innovation of~\citeauthor{ChakMonmarche:2025}~\cite{ChakMonmarche:2025}
is a TV bound between two gHMC chains with different initial values over
a time horizon spanning multiple time steps; see~\cite[Theorem 2.2]{ChakMonmarche:2025}.
 The gHMC framework of~\cite{ChakMonmarche:2025} encompasses a broad class
of \emph{unadjusted} sampling methods: it includes unadjusted Hamiltonian
Monte Carlo and the OBABO splitting scheme for the kinetic Langevin equation \eqref{eq: general kinetic Langevin}, up to a velocity refreshment at the start and the end of the chain. The proof
of~\cite[Theorem 2.2]{ChakMonmarche:2025} treats OBABO before extending
to gHMC; specialized to OBABO, it gives:
\begin{theorem}[OBABO version of Theorem 2.1 of the supplement to \cite{ChakMonmarche:2025}]\label{thm: CM Wasserstein-to-TV regularization}
    Let $U\in C^2(\R^d)$ be twice continuously differentiable such that the Hessian of $U$ is globally Lipschitz continuous and bounded. Let $\pi_n(\delta_z)$ be the distribution after $n$ steps of the OBABO discretization of \eqref{eq: general kinetic Langevin} when starting at $z\in \R^{2d}$.  
    Then there exist $C,h_0>0$ such that for all $z,\tilde z\in \R^{2d}$, $h\in(0,h_0]$ and $n\in\N$ such that $hn\leq1$ we have
    \begin{equation}
    \label{eq: CMs TV bound OBABO}
        \dTV{\pi_n(\delta_z),\pi_n(\delta_{\tilde z})}\leq C\frac{h^{1/2}}{(1-e^{-\gamma h/2})^{1/2}}\br{\frac{e^{\gamma h/2}}{(hn)^{3/2}}+\frac{1-e^{-\gamma h/2}}{h}\frac{e^{\gamma h/2}}{(hn)^{1/2}}+(hn)^{1/2}}|\tilde z -z|.
    \end{equation}
\end{theorem}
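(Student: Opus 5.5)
The plan is to build an explicit coupling of the two OBABO chains started at $z=(x,v)$ and $\tilde z=(\tilde x,\tilde v)$, and to bound the probability that they have not met after $n$ steps. Write one OBABO step as follows. A half O-step $v\mapsto \eta v+\sqrt{1-\eta^2}\,\xi$ with $\eta=e^{-\gamma h/2}$. Then the deterministic BAB map $\Phi_h$. Then another half O-step.

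The $n$ steps use the $2n$ Gaussians $\xi_1,\dots,\xi_{2n}$, and the chain after $n$ steps is a smooth function of $z$ and of these Gaussians. Following the ``coalescence'' idea, the plan is to couple the noise of the second chain as $\tilde\xi_k=\xi_k+u_k$. The shifts $u_k$ are chosen so that the second chain's trajectory is steered to coincide with the first's at time $n$. The shifts are allowed to depend on the whole path; this is where non-Markovianity enters. With the shifts chosen as deterministic functions of the noise (defined by a fixed-point/implicit-function argument), the TV distance is bounded by the TV distance between $\mathcal N(0,I)$ and its image under the shift map $\xi\mapsto\xi+u(\xi)$. Via Pinsker, or directly via the Girsanov-type density for a change of variables, this is at most $C\,\E|u|$ plus a Jacobian correction.

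The key steps are as follows.

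\begin{enumerate}[label=(\roman*)]
\item \textbf{Linearization.} For the linear dynamics $U=0$, or more generally along the first chain's trajectory, solve the discrete minimum-energy control problem. Find $u\in\R^{2nd}$ of minimal norm that sends the difference $\tilde z-z$ to $0$ at time $n$ through the linearized propagator. The minimal norm is $|G^{-1/2}(\tilde z-z)|$, with $G$ the discrete controllability Gramian. Each control enters with weight $\sqrt{1-\eta^2}$ in velocity, and position is reached through the kinematic coupling.
\item \textbf{Gramian estimate.} Compute the Gramian explicitly. Its velocity block is of order $(1-\eta^2)n$ and its position block of order $(1-\eta^2)h^2n^3$, with mixed terms. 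Inverting gives a bound of the form
\[
|u|\lesssim (1-\eta)^{-1/2}h^{1/2}\Big((hn)^{-3/2}|\Delta x|+\tfrac{1-\eta}{h}(hn)^{-1/2}|\Delta v|\Big).
\]
The factors $e^{\gamma h/2}$ come from the first half O-step, whose noise cannot be used before the B-step. I will track this carefully to match \eqref{eq: CMs TV bound OBABO}.
\item \textbf{Nonlinear perturbation.} Treat $\nabla U$ as a perturbation on the horizon $hn\le1$. Bounded Hessian gives a Gronwall-type bound on the deviation of the linearized propagator. Hessian-Lipschitz continuity controls the dependence of the Jacobian of $\xi\mapsto\xi+u(\xi)$ on $\xi$, and hence the log-determinant term. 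This produces the extra $(hn)^{1/2}$ term.
\item \textbf{Assembly.} Combine, using $\dTV{\cdot,\cdot}\le\P(\text{shift map fails})+\|\text{density ratio}-1\|_{L^1}$. Small $h\le h_0$ ensures that the implicit map is well defined and invertible.
\end{enumerate}

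I expect step (iii) to be the main obstacle. There the nonlinear shift must be shown to be a near-isometry with controlled Jacobian uniformly in the noise. My first approach is to define $u$ from the linearization around the first chain's realized path. I would then bound the Jacobian determinant by $\exp(\tr)$ estimates using the Hessian-Lipschitz bound, and absorb the error into the $(hn)^{1/2}|\tilde z-z|$ term.
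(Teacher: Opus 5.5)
Your overall architecture is the right one: shift the second chain's noise so that the chains coalesce at time $n$, choose the shift through a minimum-energy control problem, bound the Gramian, and pass to TV via Pinsker. The gap is the construction of the shift in the nonlinear case, which is exactly what you leave open in (iii)--(iv).

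A control computed from the linearization along the first chain's realized path has two defects. It depends on the \emph{future} noise, through the flow Jacobians at later steps. It also only makes the nonlinear chains agree up to $O(|\Delta z|^2)$, and the fixed-point repair that would fix this is not specified. The Pinsker-type estimate you would use is
\[
\dTV{\law(\xi),\law(\xi+u(\xi))}\le\tfrac12\Bigl(\E\bigl[|u(\xi)|^2+2\tr Du(\xi)-2\log|\det(\id+Du(\xi))|\bigr]\Bigr)^{1/2}.
\]
This is an $L^2$ bound, not $C\,\E|u|$, and it holds only if $\xi\mapsto\xi+u(\xi)$ is a global diffeomorphism. With $u$ depending on all of $\xi$, $Du$ is a full $2dn\times 2dn$ matrix. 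You give no argument for global injectivity, for existence and smooth dependence of an exactly coalescing correction, or for a log-determinant bound that is uniform in $\xi$ and summable over $2dn$ coordinates. The claim that $h\le h_0$ ensures invertibility is also unjustified; the natural small parameter is $|\Delta z|$, which you may take $\lesssim (hn)^{3/2}$, since otherwise the right-hand side exceeds $1$. As it stands the proposal is a heuristic rather than a proof.

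The missing idea, used in \cite{ChakMonmarche:2025} and in the paper, is to avoid linearizing the nonlinear flow altogether. Prescribe a \emph{deterministic} difference trajectory $y_0=\Delta z,y_1,\dots,y_n=0$. Then define the second chain's noise recursively by $\Psi_{\tilde z_{k-1}}(\tilde\xi_k)=\Psi_{z_{k-1}}(\xi_k)+y_k$, so that $\tilde z_k=z_k+y_k$ exactly for every $k$.

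One OBABO step is affine in $\xi^{(1)}$ and, given $\xi^{(1)}$, affine in $\xi^{(2)}$, so this inversion is explicit. Several things then follow at once:
\begin{itemize}
\item coalescence at time $n$ is automatic;
\item the map is a diffeomorphism, with inverse given by the map built from $-y$;
\item its Jacobian is block lower triangular with identity diagonal blocks, so the trace and log-determinant terms vanish identically and no Hessian-Lipschitz condition is needed.
\end{itemize}
With the force-free minimum-energy path $y_{k+1}=A_hy_k-L_hE_{k+1}$, the deviation is explicit:
\[
\tilde\xi_k-\xi_k=-E_k+\frac{h}{2(1-e^{-\gamma h})^{1/2}}\begin{pmatrix}\nabla U(\tilde x_{k-1})-\nabla U(x_{k-1})\\ e^{-\gamma h/2}\bigl[\nabla U(\tilde x_k)-\nabla U(x_k)\bigr]\end{pmatrix}.
\]
Here $\tilde x_k-x_k$ is the deterministic position component of $y_k$, bounded by $2\bigl(1+\frac{hn}{2+\gamma hn}\bigr)|\Delta z|$.

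The identity $\sum_k|E_k|^2=|\Sigma_{h,n}^{-1/2}A_h^n\Delta z|^2$ gives the $(hn)^{-3/2}$ and $(hn)^{-1/2}$ terms through the Gramian computation you sketch in (ii). The $L$-Lipschitz bound on $\nabla U$ alone gives the $(hn)^{1/2}$ term. This yields Theorem~\ref{thm: TV bound via coalescence map} for all $h>0$ and $n\in\N$, which implies the stated bound for $h\le h_0$, $hn\le 1$.
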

Three aspects of Theorem~\ref{thm: CM Wasserstein-to-TV regularization} are worth noting.
First, the upper bound in~\eqref{eq: CMs TV bound OBABO} scales like
$T^{-3/2}$ for small terminal time $T\coloneqq hn$, which is optimal: the
position has variance of order $T^3$ on small time scales. Second,
combined with a Wasserstein bound, this yields mixing time bounds;
see~\cite[Theorems 2.1 and 2.2]{ChakMonmarche:2025}. Third, the proof
relies on a \emph{coalescence map} that transforms the noise driving the
gHMC chain started at $z$ into the noise driving the chain started at
$\tilde z$, in a way that forces the two chains to meet.

Our contribution~\ref{cont: a coupling for CM} comprises the following:
\begin{enumerate}[label=\ref{cont: a coupling for CM}\alph*]
\item We show that conditions the $hn\leq 1$ and $h\leq h_0$ in the theorem above can be dropped for the OBABO splitting scheme approximation of~\eqref{eq: general kinetic Langevin}, see Theorem~\ref{thm: TV bound via coalescence map} below. The same approach can be applied to other splittings schemes such as the BOAOB scheme, see Remark~\ref{rem: other splitting schemes}.
\item We identify an `optimal' coalescence map by relating its construction to a classical control problem, see Section~\ref{subsec: optimized trajectory}. Indeed, while the coalescence map constructed in~\cite{ChakMonmarche:2025} suffices for their purposes, our \emph{optimized} coalescence map yields a coupling that reproduces the \emph{exact TV distance} when $U$ is quadratic, see Remark~\ref{rem: coalescence gives optimal TV for linear case}.
\item For splitting-scheme discretizations
of~\eqref{eq: general kinetic Langevin} admitting an explicit coalescence
map (notably OBABO and BOAOB), this construction yields an explicit
coupling of the two chains whose meeting probability matches the TV
upper bound; see Section~\ref{sec: non-Markovian coupling}. The same
construction applies in principle to any gHMC scheme with a diffeomorphic
coalescence map (Remark~\ref{rem: other splitting schemes}), though an
explicit coupling requires the coalescence map to be known in closed form.
\end{enumerate}

\subsection*{\ref{cont: no Markovian coupling}: Lack of suitable Markovian couplings}
For the kinetic Langevin equation~\eqref{eq: general kinetic Langevin}
with isotropic quadratic potential $U(x) = \alpha |x|^2$, $\alpha \geq 0$,
no Markovian coupling captures the asymptotic decay rate of the TV
distance between the laws of two copies of the process with different
initial values. The associated drift matrix has eigenvalues
\begin{equation}
    \lambda_\pm=-\frac{\gamma}{2}\pm\frac{1}{2}\sqrt{\gamma^2-4\alpha};
\end{equation}
Let $(Z_t)_{t\geq 0}$ and $(\tilde Z_t)_{t\geq 0}$ be solutions
to~\eqref{eq: general kinetic Langevin} with this potential and initial
values $z, \tilde z \in \R^{2d}$, and set $Z_k^h \coloneqq Z_{hk}$,
$\tilde Z_k^h \coloneqq \tilde Z_{hk}$ for $h > 0$ and $k \in \N$. The
discrete-time Markov chains $(Z_k^h)_{k\in\N}$ and $(\tilde Z_k^h)_{k\in\N}$
are obtained by sampling exactly from the increment distribution.  The following is a condensed form of Theorem~\ref{thm: discrete Markovian couplings have the wrong asymptotic behavior}:

\begin{theorem}
\label{thm: discrete Markovian couplings have the wrong asymptotic behavior intro}
Assume that $\gamma^2>4\alpha\geq 0$. 
Then there exist $z, \tilde{z}\in \R^{2d}$ and a constant $C>0$ such that for all $h>0$, $k\in \N$ one has
\begin{equation}\label{eq:tv_dist_Gauss_disc intro}
    \dTV{\law(Z_k^h),\law(\tilde Z_k^h)} \leq C e^{\lambda_{-} hk} | \tilde{z} - z |.
\end{equation}
Conversely, for every $z\neq \tilde{z}\in \R^{2d}$, every $h>0$ and every Markovian coupling $\mu_h$ of $(Z_k^h,\tilde{Z}_k^h)_{k\in \N}$ there exist constants $k_{\mu_h}\in\N$, $c_{\mu_h} >0$ and $c>0$ (with $c$ independent of $\mu_h$) such that for all $k\geq k_{\mu_h}$ one has
\begin{equation}\label{eq:lb_couplingprob_disc intro}
    \mu_h(Z_k^h\neq \tilde{Z}_k^h) \geq c\min\big( c_{\mu_h}(hk+1)^{-\nicefrac{1}{2}}, c_{\mu_h} e^{\lambda_{+} hk},  h^{-1} e^{\lambda_{-} hk} \big).
\end{equation}
\end{theorem}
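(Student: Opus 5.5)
The plan is to work throughout with the explicit linear structure. Write $Z_t$ as the solution of $dZ_t = AZ_t\,dt + B\,dW_t$ with $B=(0,\sqrt{2\gamma}\,\id_d)^\top$ and $A$ the drift matrix with eigenvalues $\lambda_\pm$. The exactly sampled chain then satisfies
\[
Z^h_{k+1}=e^{hA}Z^h_k+\xi_k,\qquad \xi_k\sim\calN(0,\Sigma_h),\qquad \Sigma_h=\int_0^h e^{sA}BB^\top e^{sA^\top}ds .
\]
Consequently $\law(Z^h_k)=\calN(e^{hkA}z,\Sigma_{hk})$, and $\law(\tilde Z^h_k)$ differs only in the mean.

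\textbf{Upper bound.} For Gaussians with a common covariance,
\[
\dTV{\calN(m,\Sigma),\calN(\tilde m,\Sigma)}\le \tfrac{1}{\sqrt{2\pi}}\,|\Sigma^{-1/2}(\tilde m-m)|.
\]
\begin{enumerate}[label=(\roman*)]
\item Choose $\tilde z-z$ to be a (right) eigenvector of $A$ for $\lambda_-$. This is possible since $\gamma^2>4\alpha$ makes $A$ diagonalizable with real eigenvalues. Then $e^{tA}(\tilde z-z)=e^{\lambda_- t}(\tilde z-z)$.
\item For $t\ge t_0$ (any fixed $t_0>0$), $\Sigma_t\succeq\Sigma_{t_0}\succ0$ by hypoellipticity (Kalman rank condition). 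Hence $\dTV{\cdot}\le C e^{\lambda_- t}|\tilde z-z|$.
\item For $t<t_0$, use the trivial bound $\dTV{\cdot}\le 1$ and enlarge $C$ so that $Ce^{\lambda_- t_0}|\tilde z-z|\ge1$.
\end{enumerate}
Since $t=hk$, this gives \eqref{eq:tv_dist_Gauss_disc intro} uniformly in $h$.

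\textbf{Lower bound: martingale step.} Let $(Z_k,\tilde Z_k)$ be a Markovian coupling, set $D_k=\tilde Z_k-Z_k$, and let $\tau$ be the meeting time. Markovian here means the pair is a Markov chain whose kernel couples the two Gaussian transitions. Take left eigenvectors $\ell_\pm$ of $A$, so that $\ell_\pm e^{hA}=e^{\lambda_\pm h}\ell_\pm$. Then
\[
M^\pm_k\coloneqq e^{-\lambda_\pm hk}\,\ell_\pm\cdot D_k
\]
are martingales, because the increments $\ell_\pm\cdot(\tilde\xi_k-\xi_k)$ are centred. Since $D_k=0$ for $k\ge\tau$ (after meeting we may assume the chains stay together; otherwise $\P(Z_k\ne\tilde Z_k)$ is only larger), $M^\pm_k=M^\pm_k\ind_{\tau>k}$. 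For $z\neq\tilde z$ at least one of $M^\pm_0$ is nonzero. Cauchy--Schwarz gives
\[
\P(\tau>k)\ge (M^\pm_0)^2/\E[(M^\pm_k)^2],
\]
and $\E[(M^\pm_k)^2]=(M_0^\pm)^2+\sum_{j<k}e^{-2\lambda_\pm hj}\E[q^\pm_j\ind_{\tau>j}]$, where $q^\pm_j$ is the conditional variance of $\ell_\pm\cdot(\tilde\xi_j-\xi_j)$. This variance is at most $4\ell_\pm\Sigma_h\ell_\pm^\top\lesssim h$. The trichotomy in \eqref{eq:lb_couplingprob_disc intro} comes from how much noise the coupling injects in the $\ell_+$ direction.
\begin{enumerate}[label=(\alph*)]
\item If $\sum_j e^{-2\lambda_+hj}\E[q^+_j\ind_{\tau>j}]<\infty$, then $\E[(M^+_k)^2]$ is bounded. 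This yields $\P(\tau>k)\ge c_{\mu_h}$, or $c_{\mu_h}e^{\lambda_+hk}$ when $M^+_0=0$ and one argues with a restart at a positive-probability state where $M^+\ne0$. The constants $c_{\mu_h}$ and $k_{\mu_h}$ depend on the coupling through this restart.
\item Otherwise the coupling keeps injecting noise, so $D$ genuinely diffuses. On $\{\tau>j\}$ the chance of meeting in one step is at most $\dTV{\calN(e^{hA}Z_j,\Sigma_h),\calN(e^{hA}\tilde Z_j,\Sigma_h)}\lesssim|\Sigma_h^{-1/2}e^{hA}D_j|$. Meeting therefore requires $D_j$ to enter an $O(\Sigma_h^{1/2})$-neighbourhood of $0$, which is anisotropic: of size $h^{3/2}$ in position and $h^{1/2}$ in velocity. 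A one-dimensional hitting estimate for the diffusing martingale $M^+$ gives the $(hk+1)^{-1/2}$ term. Summing the one-step meeting bound against the decay $e^{\lambda_-hj}$ of the $\ell_-$-component gives the $h^{-1}e^{\lambda_-hk}$ term, with the $h^{-1}$ coming from $h^{-1/2}$ in $\Sigma_h^{-1/2}$ times the $h^{-1/2}$ mismatch of the two directions.
\end{enumerate}

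\textbf{Main obstacle.} The naive Cauchy--Schwarz/Gronwall argument only yields the squared rate $e^{2\lambda hk}$: bounding $\P(\tau>j)\le1$ inside the variance sum gives $\E[(M^\pm_k)^2]\lesssim e^{-2\lambda_\pm hk}$. Getting linear rates requires combining the martingale lower bound with the one-step TV constraint on meeting. The approach I would try first is the Banerjee--Kendall route. Decompose $\{\tau\le k\}$ according to the last step, writing
\[
\P(\tau=j+1)\le\E\bigl[c\,|\Sigma_h^{-1/2}e^{hA}D_j|\,\ind_{\tau>j}\bigr].
\]
Then bound $\E[|D_j|\ind_{\tau>j}]$ from below via the martingales, applied separately to the $\ell_+$ and $\ell_-$ components, and derive a discrete renewal-type inequality for $p_k=\P(\tau>k)$. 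Solving that inequality should produce exactly the minimum of the three expressions in \eqref{eq:lb_couplingprob_disc intro}. I expect this step, and especially tracking the explicit $h$-dependence of $\Sigma_h^{-1/2}$ uniformly in $h$, to be the hardest part.
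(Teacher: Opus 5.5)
Your upper bound is correct. Step (iii) even covers small $hk$ (including $k=0$), where $\|\Sigma_t^{-1/2}\|$ is unbounded, a case the paper's appeal to monotonicity of $\Sigma_t$ alone does not handle. The lower bound, however, is not established.

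As you note yourself, the second-moment/Cauchy--Schwarz estimate only gives $(hk)^{-1}$ or $e^{2\lambda_+hk}$, and the proposed remedy (a hitting estimate for $M^+$, a renewal inequality) is not carried out. Worse, the route you sketch for the third term cannot work. Summing one-step meeting probabilities only gives $\P(\tau>k)\ge 1-\sum_{j<k}(\cdots)$, which is vacuous precisely in the regime $h^{-1}e^{\lambda_-hk}<1$ where that term carries information. Your dichotomy also puts the critical case in the wrong branch. If the coupling keeps $\ell_+\cdot D_k\equiv 0$ (e.g.\ $\tilde z-z$ in the $\lambda_-$-eigenspace with the iterated one-shot coupling), then $q^+_j\equiv 0$ and you are in case (a). But there is then no state with $M^+\neq 0$ to restart from, and the conclusion of (a) is false: for $\alpha=0$ it would give a $k$-independent lower bound, whereas for that coupling $\P(\tau>k)$ decays like $h^{-1}e^{-\gamma hk}$. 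This is exactly the case that must produce the third term.

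The missing idea is to replace second moments by the coupling inequality applied to autonomous projections.
\begin{itemize}
\item \emph{Autonomous projection.} Let $Q_k\coloneqq X_k-\lambda_-^{-1}V_k\in\R^d$, the projection onto all left $\lambda_+$-eigenvectors. It is itself a Gaussian AR(1) chain, $Q_{k+1}=e^{\lambda_+h}Q_k+(\text{fresh Gaussian noise})$, and $Z_k=\tilde Z_k$ forces $Q_k=\tilde Q_k$.
\item \emph{Case $Q\not\equiv\tilde Q$.} Let $k_0$ be the first index with $\mu_h(Q_{k_0}\neq\tilde Q_{k_0})>0$. By the Markov property, conditionally on $\F_{k_0}$ the pair $(Q_k,\tilde Q_k)_{k\ge k_0}$ is a coupling of two such chains started at $Q_{k_0}$ and $\tilde Q_{k_0}$. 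Hence $\mu_h(Z_k\neq\tilde Z_k\mid\F_{k_0})$ is at least the TV distance between two explicit Gaussians with common covariance and means $e^{\lambda_+h(k-k_0)}Q_{k_0}$ and $e^{\lambda_+h(k-k_0)}\tilde Q_{k_0}$. Since $2\Phi(x)-1$ is linear near $0$, this is linear in $|\tilde Q_{k_0}-Q_{k_0}|$. After taking expectations and using monotone convergence, it yields $c_{\mu_h}(hk+1)^{-1/2}$ for $\alpha=0$, or $c_{\mu_h}e^{\lambda_+hk}$ for $\alpha>0$, for large $k$, with no squaring.
\item \emph{Case $Q\equiv\tilde Q$.} If no such $k_0$ exists, $D_k$ stays in the $\lambda_-$-eigenspace. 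Then $\Delta X_{k+1}=e^{\lambda_-h}\Delta X_k+(\text{position block of }\tilde\xi_k-\xi_k)$. So $\Delta X_k=\tilde Y_k-Y_k$ for two autonomous chains $Y_{k+1}=e^{\lambda_-h}Y_k+(\text{position block of }\xi_k)$, $Y_0=x$, and likewise $\tilde Y$ with $\tilde\xi$. Their innovations have variance $\sigma^{xx}_h=O(h^3)$. The coupling inequality for $(Y_k,\tilde Y_k)$ gives
\[
\mu_h(Z_k\neq\tilde Z_k)\ge\mu_h(Y_k\neq\tilde Y_k)\ge\dTV{\law(Y_k),\law(\tilde Y_k)}\ge c\min\bigl(1,h^{-1}e^{\lambda_-hk}\bigr).
\]
Here $h^{-1}$ arises because the accumulated position noise has standard deviation at most $\bigl(\sigma^{xx}_h/(1-e^{2\lambda_-h})\bigr)^{1/2}\lesssim h$, not from a product of one-step factors.
\end{itemize}
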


An analogous result hold in continuous time; see Theorem~\ref{thm: Markovian couplings have the wrong asymptotic behavior}.  The proof of these results is inspired by the work of \citeauthor{BanerjeeKendall:2016} \cite{BanerjeeKendall:2016}, where the authors prove a similar result for the Kolmogorov diffusion $Z_t = (W_t, \int_0^t W_s \,ds)$. 

\subsection*{\ref{cont: iterated one-shot}: Limitations of the iterated one-shot coupling}
In the setting of Theorem~\ref{thm: discrete Markovian couplings have the wrong asymptotic behavior intro} with $\alpha = 0$, the iterated one-shot coupling satisfies, for some
$c > 0$,
\begin{equation}\label{eq: bound iterated one-shot intro}
    \mu_h(Z_k^h \neq \tilde Z_k^h) \geq c\, h^{-1} e^{\lambda_- hk}.
\end{equation}
This bound saturates the third term in~\eqref{eq:lb_couplingprob_disc intro};
see Example~\ref{example: one shot matching lower bound}.

The iterated one-shot coupling was introduced for Gaussian random walks in~\cite{BubleyEtAl:1998} and was first applied directly to approximations of SDEs in \cite{RobertsRosenthal:2002}, where the term ``one-shot coupling" was coined. The coupling maximizes the probability of meeting at the
next step given the current position, and can be viewed as a
discretization of the reflection coupling for continuous processes
(see~\cite[Remark 2.6]{EberleMajka:2019} and~\cite{LindvallRogers:1986}). The terminology \emph{sticky coupling} in~\cite{DarshanEtAl:2024,DurmusEtAl:2024} refers to the same construction.

\citeauthor{DurmusMoulines:2019} used the iterated one-shot coupling to
obtain asymptotically optimal TV contraction rates for discretizations
of the overdamped Langevin equation~\eqref{eq: general overdamped Langevin}.
Adapting their argument, we derive an \emph{exact} formula for the
meeting probability of the iterated one-shot coupling on linear
discretized SDEs (Theorem~\ref{thm: iterated one-shot coupling probability}).
This formula yields the bound~\eqref{eq: bound iterated one-shot intro}
and identifies the conditions under which the iterated one-shot coupling
does and does not reproduce sharp TV bounds.

\subsection*{Related work}

The TV analysis of overdamped Langevin discretizations is well-developed.
Weak-error expansions for the Euler-Maruyama discretization go back
to~\cite{TalayTubaro:1990}; non-asymptotic TV bounds for the unadjusted
Langevin algorithm (ULA) were obtained by~\citeauthor{Dalalyan:2017}~\cite{Dalalyan:2017}
under strong convexity and sharpened
in~\cite{DurmusMoulines:2017,DurmusMoulines:2019}, and the
Metropolis-adjusted variant (MALA) eliminates the discretization bias at
the cost of an accept-reject step. The iterated one-shot coupling
in~\cite{DurmusMoulines:2019} is the canonical Markovian coupling for
these discretizations and the natural starting point for our analysis
in~\ref{cont: iterated one-shot}.

For kinetic Langevin discretizations, Wasserstein contraction under
strong convexity was established
by~\cite{ChengEtAl:2018,DalalyanRiou-Durand:2020} using synchronous
couplings, and in the non-strongly-convex case by~\cite{ChengEtAl:2020}
adapting the hybrid coupling of~\cite{EberleEtAl:2019} to discrete time.
TV bounds via Wasserstein-to-TV regularization were obtained for the
OBABO splitting in~\cite{Monmarche:2021} and for the larger generalized
HMC class in~\cite{GouraudEtAl:2025}; one-step Wasserstein contraction
rates for a broad family of splittings appear
in~\cite{LeimkuhlerEtAl:2024}, with related Wasserstein bounds for
further splittings in~\cite{SchuhWhalley:2025}. The TV bounds
of~\cite{Monmarche:2021,GouraudEtAl:2025} degenerate under vanishing
step size, motivating the multi-step Wasserstein-to-TV regularization
of~\cite{ChakMonmarche:2025}.  However, their argument 
does not  produce an explicit coupling between the noise
increments; we construct such a coupling in~\ref{cont: a coupling for CM}.

An alternative line of work analyzes TV mixing of kinetic Langevin
discretizations via functional inequalities~\cite{CamrudEtAl:2024,Lehec:2025,MaEtAl:2021,Monmarche:2024};
as argued in~\cite{ChakMonmarche:2025}, the coupling-based approach is
more flexible, accommodating in particular stochastic-gradient
approximations. Couplings have also been used extensively for
Hamiltonian Monte Carlo and its variants~\cite{Bou-RabeeEtAl:2020,Bou-RabeeEberle:2023,Bou-RabeeSchuh:2023}.

\section{Preliminaries and notation}

Throughout the paper, $\N = \{0, 1, 2, \ldots\}$. For a measurable space
$(X, \F)$ and $x \in X$, $\delta_x$ denotes the Dirac measure at $x$.

The total variation (TV) distance between two probability measures $\nu$ and $\tilde\nu$ on a countably separated measurable space $(X,\F)$ is 
\begin{equation*}
    \dTV{\nu,\tilde\nu}=\sup_{A\in\F}\{|\nu(A)-\tilde\nu(A)|\}.
\end{equation*}
Let $\Gamma(\nu,\tilde\nu)$ denote the set of couplings of $\nu$ and $\tilde\nu$, i.e.\ the set of all probability measures on $(X\times X, \F \otimes \F)$ with marginals $\nu$ and $\tilde\nu$. The TV distance admits the
coupling characterization~ \cite[Sections I.2 \& I.5]{Lindvall:2002} 
\begin{equation}
\label{eq: TV coupling characterization}
    \dTV{\nu,\tilde\nu}=\min_{\gamma\in\Gamma(\nu,\tilde\nu)}\{\gamma(\{(x,y) \in X \times X \colon x \neq y \})\}.
\end{equation}
A coupling $\gamma\in \Gamma(\nu,\tilde\nu)$ that attains this minimum is called \emph{maximal} (since it maximizes $\gamma(\{(x,y) \in X \times X \colon x = y \})$); maximal couplings exist for any two probability measures on a countably separated measurable space \cite[Section I.5]{Lindvall:2002} and can be constructed by
rejection sampling; see Section~\ref{sssec: reflection coupling} for
the Gaussian case used in this paper.

For two Gaussians on $\R^d$ with common non-singular covariance
$\Sigma$ and (possibly distinct) means $\mu, \tilde\mu$,
\begin{equation}
\label{eq: Gaussian TV distance}
    \dTV{\mathcal{N}(\mu,\Sigma),\mathcal{N}(\tilde\mu,\Sigma)}= 2\Phi\br{\frac{\l|B^{-1}(\tilde\mu-\mu)\r|}{2}}-1,
\end{equation}
where $B\in\R^{d\times d}$ satisfies $BB^T=\Sigma$, and $\Phi\colon\R\to\R$ is the standard normal CDF~\cite[Theorem 1]{BarsovUlyanov:1987}.

Throughout, $|\cdot|$ denotes the Euclidean norm on $\R^d$, and also the
induced norm on $\R^{2d}$ under the splitting $z=(x,v)$ used below; for
$A\in\R^{d\times d}$, $\|A\|$ denotes the operator norm and $\|A\|_F$ the
Frobenius norm. We write $\law(X)$ for the law of a random variable $X$.
For probability measures $\nu,\tilde\nu$ on $\R^d$, the $L^p$-Wasserstein
distance of order $p\in[1,\infty)$ is
\[
  W_p(\nu,\tilde\nu)
  \;=\;
  \Bigl(\,\inf_{\gamma\in\Gamma(\nu,\tilde\nu)}
  \int |x-\tilde x|^p\,\gamma(dx,d\tilde x)\Bigr)^{1/p}.
\]

A recurring theme of this paper is the conversion of Wasserstein control
between two distributions into total variation control between their
evolutions under a Markov chain. We refer to estimates of this form as
\emph{Wasserstein-to-TV regularization}. Let $P$ be a Markov kernel on
$\R^{2d}$ and let $\nu, \tilde\nu$ be probability measures on $\R^{2d}$;
write $\nu P^n$ for the law at time $n$ of the chain with kernel $P$ and
initial distribution $\nu$. A Wasserstein-to-TV regularization bound for
$P$ is an inequality of the form
\begin{equation}\label{eq: W-to-TV schematic}
  \dTV{\nu P^n,\,\tilde\nu P^n}
  \;\le\;
  C_{n}\,W_p(\nu,\tilde\nu),
  \qquad n\ge 1,
\end{equation}
for some $p\in[1,\infty)$ and a constant $C_n$ that depends on $P$ but not
on $\nu, \tilde\nu$. The two distances on the right- and left-hand sides
are not comparable in general: at $n=0$, two distinct Dirac masses
$\delta_z, \delta_{\tilde z}$ satisfy
$\dTV{\delta_z,\delta_{\tilde z}}=1$ while
$W_p(\delta_z,\delta_{\tilde z})=|z-\tilde z|$, so~\eqref{eq: W-to-TV schematic}
fails uniformly at $n=0$. The content of the bound is that the kernel $P$
regularizes the laws sufficiently in $n$ steps that a transport-based
control of the initial distance produces a statistical-distance control of
the resulting laws.  Equivalently, for every $n\ge 1$, the map
$\mu\mapsto\mu P^n$ is $C_n$-Lipschitz from
$(\mathcal P_p(\R^{2d}),\,W_p)$ to
$(\mathcal P(\R^{2d}),\,\dTV{\cdot,\cdot})$, whereas the identity map
between these two metric spaces is not even continuous, as the Dirac-mass
example above shows.

\section{The non-Markovian finite-time coupling}\label{sec: CM coupling}
The starting point for this section is the recent work of
\citeauthor{ChakMonmarche:2025}~\cite{ChakMonmarche:2025}, who establish
Wasserstein-to-TV regularization for a broad class of generalized
Hamiltonian Monte Carlo (gHMC) Markov chains. The OBABO discretization of
the kinetic Langevin equation, with step size $h>0$, generates a chain in
this class; we review the OBABO scheme in
Section~\ref{subsec: OBABO scheme} and specialize to it throughout the
remainder of this section. The regularization mechanism is necessarily
multi-step: the position component of a single step of the OBABO chain has
variance of order $h^3$, too degenerate to support a Gaussian coupling on
its own. To circumvent this, \cite{ChakMonmarche:2025} introduce a
\emph{coalescence map}, a measurable transport of the Gaussian increments
driving the chain from $z$ onto Gaussian increments driving the chain from
$\tilde z$; controlling the deviation of this map from the identity over
$n$ steps yields the bound of
Theorem~\ref{thm: CM Wasserstein-to-TV regularization}.

This section refines that analysis in three respects. 
First, we relax the hypotheses of Theorem~\ref{thm: CM Wasserstein-to-TV regularization}: the
step-size restrictions $hn\le 1$ and $h\le h_0$ are removed for the OBABO
chain (Theorem~\ref{thm: TV bound via coalescence map}), so that the bound
holds for an arbitrary number of integration steps $n\in\N$ and arbitrary
step sizes $h>0$, and the Lipschitz-Hessian assumption on $U$ in
\cite{ChakMonmarche:2025} is dropped, retaining only the Lipschitz-gradient
assumption.  Second, we
identify the coalescence map that is optimal for the linearized dynamics,
as the unique minimizer of a discrete linear-quadratic control problem
(Section~\ref{subsec: optimized trajectory}); this map differs from the
choice in~\cite{ChakMonmarche:2025}. The proofs of the first two
refinements share a common structural ingredient: the Jacobian of the
coalescence map associated with the OBABO scheme is lower triangular
(Lemma~\ref{lemma: trajectory based coalescene maps are differentiable}),
which makes both the inversion required for the optimal map and the bounds
needed for arbitrary $(h,n)$ tractable. Several other splitting schemes
share this lower-triangular structure
(Remark~\ref{rem: other splitting schemes}). Third, we show that the
resulting analysis admits a coupling interpretation: it defines an explicit
non-Markovian coupling of two OBABO chains started from different initial
values (Section~\ref{sec: non-Markovian coupling}), whose meeting
probability coincides with the TV bound. When $U$ is quadratic, this
coupling is maximal
(Remark~\ref{rem: coalescence gives optimal TV for linear case}).

\subsection{A TV distance bound for the OBABO scheme}

\label{subsec: OBABO scheme} 

We refine Theorem~\ref{thm: CM Wasserstein-to-TV regularization} for the
OBABO discretization
(Theorem~\ref{thm: TV bound via coalescence map} below). Combined with a
Wasserstein contraction estimate, as in~\cite{ChakMonmarche:2025}, this
bound yields convergence in total variation
(Remark~\ref{rem:how to obtain convergence in the TV distance}).

Fix the step size $h>0$ and the friction coefficient $\gamma>0$.
 The kinetic Langevin equation,
\begin{equation}
\label{eq: general kinetic Langevin CM}
    \begin{split}
        dX_t&=V_t\,dt,\\
        dV_t&=-\nabla U(X_t)\,dt-\gamma V_t\,dt+\sqrt{2\gamma}\,dW_t,
    \end{split}
\end{equation}
admits the splitting into Ornstein--Uhlenbeck, potential, and kinetic
parts, with time-$h$ flows
\begin{align*}
    O_h(a)(x,v)
    &=\bigl(x,\;e^{-\gamma h}v+(1-e^{-2\gamma h})^{1/2}a\bigr),
    && a\sim\calN(0,\id_d),\\
    \theta_h^{(\textup{B})}(x,v)
    &=\bigl(x,\;v-h\,\nabla U(x)\bigr),\\
    \theta_h^{(\textup{A})}(x,v)
    &=\bigl(x+hv,\;v\bigr).
\end{align*}
The Ornstein--Uhlenbeck flow $O_h$ is exact in law: it returns the
distribution at time $h$ of the OU SDE
$dV_t=-\gamma V_t\,dt+\sqrt{2\gamma}\,dW_t$ started from $v$, expressed as
a deterministic function of the Gaussian increment $a$. The flows
$\theta^{(\textup{A})}_h$ and $\theta^{(\textup{B})}_h$ are exact in
the pathwise sense.

The OBABO scheme is the Strang composition
\begin{equation}
\label{eq: OBABO scheme}
    O_{h/2}(\xi^{(2)})\circ\theta_{h/2}^{(\textup{B})}
    \circ\theta_h^{(\textup{A})}
    \circ\theta_{h/2}^{(\textup{B})}\circ O_{h/2}(\xi^{(1)}),
\end{equation}
with $\xi^{(1)},\xi^{(2)}\stackrel{\text{iid}}{\sim}\calN(0,\id_d)$. Setting
$\theta_h:=\theta_{h/2}^{(\textup{B})}\circ\theta_h^{(\textup{A})}
\circ\theta_{h/2}^{(\textup{B})}$, the OBABO map is
$O_{h/2}(\xi^{(2)})\circ\theta_h\circ O_{h/2}(\xi^{(1)})$.

The associated Markov chain
$(Z^h_k)_{k\in\N}=(X^h_k,V^h_k)_{k\in\N}$ is defined by $Z^h_0=z$ and
\begin{equation*}
    Z^h_{k+1}
    =O_{h/2}(\xi_{k+1}^{(2)})\circ\theta_h\circ O_{h/2}(\xi_{k+1}^{(1)})
    (Z^h_k),
\end{equation*}
with $\xi_k=(\xi^{(1)}_k,\xi^{(2)}_k)$ iid copies of
$(\xi^{(1)},\xi^{(2)})$. Equivalently,
\begin{equation}
\label{eq: OBABO as linear Markov chain}
    Z^h_{k+1}
    =A_h\,Z^h_k+L_h\,\xi_{k+1}
    -\frac{h}{2}\begin{pmatrix}
        h\,\nabla U(X^h_k)\\
        e^{-\gamma h/2}\,\bigl[\nabla U(X^h_k)+\nabla U(X^h_{k+1})\bigr]
    \end{pmatrix},
\end{equation}
where
\begin{equation}
\label{eq: OBABO matrices}
    A_h=\begin{pmatrix}
        \id_d & h\,e^{-\gamma h/2}\id_d \\
        0     & e^{-\gamma h}\id_d
    \end{pmatrix},
    \qquad
    L_h=(1-e^{-\gamma h})^{1/2}
    \begin{pmatrix}
        h\,\id_d              & 0       \\
        e^{-\gamma h/2}\id_d  & \id_d
    \end{pmatrix}.
\end{equation}

A single OBABO step, viewed as a function of its driving noise, defines a
map $\Psi_z\colon\R^{2d}\to\R^{2d}$ for each $z\in\R^{2d}$:
\begin{equation}
\label{eq: definition Psi_z}
    \Psi_z(\xi)
    =O_{h/2}(\xi^{(2)})\circ\theta_h\circ O_{h/2}(\xi^{(1)})(z),
    \qquad
    \xi=(\xi^{(1)},\xi^{(2)})\in\R^{2d},
\end{equation}
so that $Z^h_{k+1}=\Psi_{Z^h_k}(\xi_{k+1})$. Writing
$\Psi_z=(\Psi^{X}_z,\Psi^{V}_z)$ for the position and velocity components
and $z=(x,v)$,
\begin{equation}
\label{eq: Psi_z explicit}
    \begin{aligned}
        \Psi^{X}_z(\xi^{(1)})
        &=x+h\,e^{-\gamma h/2}v
        +(1-e^{-\gamma h})^{1/2}h\,\xi^{(1)}
        -\tfrac{h^2}{2}\nabla U(x),\\
        \Psi^{V}_z(\xi^{(1)},\xi^{(2)})
        &=e^{-\gamma h}v
        +(1-e^{-\gamma h})^{1/2}\bigl(e^{-\gamma h/2}\xi^{(1)}+\xi^{(2)}\bigr)
        -\tfrac{h}{2}\,e^{-\gamma h/2}
        \bigl(\nabla U(x)+\nabla U(\Psi^{X}_z(\xi^{(1)}))\bigr).
    \end{aligned}
\end{equation}
The map $\Psi_z$, like the
matrices $A_h$ and $L_h$ above, depends on $h$ and $\gamma$ through the
OBABO scheme; we suppress this dependence in the notation throughout,
retaining $h$ as an explicit subscript only on $A_h$ and $L_h$.

\begin{lemma}\label{lemma: Psi_z is a diffeomorphism}
Suppose $U\in C^2(\R^d)$. For every $z\in\R^{2d}$, the map
$\Psi_z\colon\R^{2d}\to\R^{2d}$ is a $C^1$ diffeomorphism.
\end{lemma}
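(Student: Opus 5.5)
The plan is to exhibit an explicit inverse of $\Psi_z$ using the triangular structure visible in \eqref{eq: Psi_z explicit}, and then check that both $\Psi_z$ and its inverse are $C^1$. The key observation is that $\Psi^X_z$ depends only on $\xi^{(1)}$, and does so affinely with nonzero coefficient. Meanwhile $\Psi^V_z$ depends on $\xi^{(2)}$ only through the additive term $(1-e^{-\gamma h})^{1/2}\xi^{(2)}$, with nonzero coefficient since $\gamma,h>0$.

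\textbf{Regularity of $\Psi_z$.} Since $U\in C^2$, $\nabla U$ is $C^1$. Hence $\Psi_z$ is $C^1$ as a composition of $C^1$ maps; concretely, $\Psi^X_z$ is affine in $\xi^{(1)}$ and $\Psi^V_z$ involves $\nabla U\circ\Psi^X_z$.

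\textbf{Inversion.} Given $(y,w)\in\R^{d}\times\R^d$, I solve $\Psi_z(\xi)=(y,w)$ in two steps.
\begin{enumerate}
\item The first equation is solved uniquely by
\[
\xi^{(1)}=\frac{y-x-he^{-\gamma h/2}v+\tfrac{h^2}{2}\nabla U(x)}{(1-e^{-\gamma h})^{1/2}h}.
\]
\item Substituting $\xi^{(1)}$, and noting that $\Psi^X_z(\xi^{(1)})=y$, the second equation gives
\[
\xi^{(2)}=(1-e^{-\gamma h})^{-1/2}\Bigl(w-e^{-\gamma h}v+\tfrac{h}{2}e^{-\gamma h/2}\bigl(\nabla U(x)+\nabla U(y)\bigr)\Bigr)-e^{-\gamma h/2}\xi^{(1)}.
\]
\end{enumerate}
This shows that $\Psi_z$ is a bijection of $\R^{2d}$. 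The inverse is given by explicit formulas involving only affine operations and $\nabla U(y)$, so it is also $C^1$.

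As a consistency check, the Jacobian of $\Psi_z$ is block lower triangular. Its diagonal blocks are $(1-e^{-\gamma h})^{1/2}h\,\id_d$ and $(1-e^{-\gamma h})^{1/2}\id_d$, so its determinant is nonzero everywhere, as required by the inverse function theorem.

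There is no real obstacle here. The only point needing care is to note that $\nabla U(\Psi^X_z(\xi^{(1)}))$ equals $\nabla U(y)$ once the first component is fixed. This is what removes any implicit equation from the inversion, and it also explains why no Lipschitz or convexity assumption on $\nabla U$ is needed.
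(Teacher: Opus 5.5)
Your proof is correct and takes essentially the same route as the paper: you use the triangular structure of $\Psi_z$ to solve first for $\xi^{(1)}$ and then for $\xi^{(2)}$, and both leading coefficients are nonzero. Your explicit inverse, in which $\nabla U(\Psi^X_z(\xi^{(1)}))$ is replaced by $\nabla U(y)$, makes the $C^1$ regularity of $\Psi_z^{-1}$ somewhat more transparent than the paper's verbal argument.
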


\begin{proof}
By~\eqref{eq: Psi_z explicit}, $\xi^{(1)}\mapsto\Psi^{X}_z(\xi^{(1)})$ is
affine with leading coefficient $h(1-e^{-\gamma h})^{1/2}\neq 0$, hence a
$C^\infty$ bijection of $\R^d$. Given $\xi^{(1)}$,
$\xi^{(2)}\mapsto\Psi^{V}_z(\xi^{(1)},\xi^{(2)})$ is affine with leading
coefficient $(1-e^{-\gamma h})^{1/2}\neq 0$, hence a $C^\infty$ bijection
of $\R^d$. Solving sequentially recovers $(\xi^{(1)},\xi^{(2)})$ from
$\Psi_z(\xi)$, so $\Psi_z$ is a bijection. The dependence of $\Psi^{V}_z$
on $\xi^{(1)}$ through $\nabla U(\Psi^{X}_z(\xi^{(1)}))$ is $C^1$ since
$U\in C^2$; hence both $\Psi_z$ and $\Psi_z^{-1}$ are $C^1$.
\end{proof}

For $n\ge 1$, define $\Psi^n_z\colon\R^{2dn}\to\R^{2d}$ inductively by
$\Psi^1_z=\Psi_z$ and
\begin{equation}
\label{eq: definition Psi^n_z}
    \Psi_z^{n+1}(\xi_1,\dots,\xi_{n+1})
    =\Psi_{\Psi^{n}_z(\xi_1,\dots,\xi_{n})}(\xi_{n+1}),
\end{equation}
with the convention $\Psi^0_z\equiv z$. Then
$Z^h_n=\Psi^n_z(\xi_1,\dots,\xi_n)$ for every $n\in\N$.

The proof of Theorem~\ref{thm: TV bound via coalescence map} constructs a
coupling of two OBABO chains~\eqref{eq: OBABO as linear Markov chain} started
from $z$ and $\tilde z\in\R^{2d}$. Since each chain is a deterministic
function of its driving noise, the coupling is constructed at the level of
the iid sequences $\xi=(\xi_1,\xi_2,\ldots, \xi_n)$ and
$\tilde\xi=(\tilde\xi_1,\tilde\xi_2,\ldots, \tilde\xi_n)$, with $\xi_k,\tilde\xi_k \sim
\calN(0,\id_{2d})$ for each $k$.  For $n\in\N$ and $z\in\R^{2d}$, write
\begin{equation}\label{eq: def pi_n}
    \pi_n(\delta_z) := \law\bigl(\Psi^n_z(\xi)\bigr),
    \qquad \xi\sim\calN(0,\id_{2dn}),
\end{equation}
so that $\pi_n(\delta_z)=\law(Z^h_n)$ when $Z^h_0=z$.

\begin{theorem}\label{thm: TV bound via coalescence map}
Let $\gamma,h>0$ and $n\in\N$, and assume that $U\colon\R^d\to\R$ is twice
continuously differentiable with $L$-Lipschitz gradient. Then, for all
$z,\tilde z\in\R^{2d}$,
\begin{equation*}
    \dTV{\pi_n(\delta_z),\,\pi_n(\delta_{\tilde z})}
    \;\le\;
    \frac{1}{\gamma^{1/2}}\left[
        \frac{5}{(hn)^{3/2}}
        +\frac{12+5\gamma}{(hn)^{1/2}}
        +\frac{L(\gamma h)^{1/2}}{(1-e^{-\gamma h})^{1/2}}
        \left(1+\frac{hn}{1+\gamma hn}\right)(hn)^{1/2}
    \right]|\tilde z-z|.
\end{equation*}
\end{theorem}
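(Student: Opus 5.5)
We will prove the bound by the coalescence-map method: we construct a $C^1$ diffeomorphism $\Phi\colon\R^{2dn}\to\R^{2dn}$ such that
\[
\Psi^n_{\tilde z}(\Phi(\xi))=\Psi^n_z(\xi)
\]
for every $\xi$. Then $\pi_n(\delta_z)$ is the law of $\Psi^n_{\tilde z}$ under the pushforward $\Phi_\#\calN(0,\id_{2dn})$. The data processing inequality gives
\[
\dTV{\pi_n(\delta_z),\pi_n(\delta_{\tilde z})}\le \dTV{\Phi_\#\calN(0,\id),\calN(0,\id)}.
\]
The right-hand side is bounded by a change of variables. Pinsker's inequality, or a direct bound on $\int|\rho_\Phi-\rho|$, reduces it to two quantities: the deviation $\|\Phi(\xi)-\xi\|$ in $L^2(\calN(0,\id))$, and the log-Jacobian $\log\det D\Phi$. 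Concretely, I will use
\[
\mathrm{KL}\le \E\Bigl[\tfrac12|\Phi(\xi)-\xi|^2+\langle\xi,\Phi(\xi)-\xi\rangle-\log\det D\Phi(\xi)\Bigr]
\]
in the form that gives a linear-in-$|\tilde z-z|$ estimate. An alternative is to interpolate $z_s=z+s(\tilde z-z)$ and differentiate in $s$. This gives $\dTV{}\le\int_0^1\E|\mathrm{div}_\xi \dot\Phi_s - \langle\xi,\dot\Phi_s\rangle|\,ds$, which is linear in $|\tilde z-z|$ and avoids the square root from Pinsker. I will take this interpolation route.

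\textbf{Construction of $\Phi$.} The map is built trajectory by trajectory. Fix a deterministic target path $\delta_k$, $k=0,\dots,n$, for the difference of the two chains, with $\delta_0=\tilde z-z$ and $\delta_n=0$. We then choose $\tilde\xi_k$ so that $\tilde Z_k=Z_k+\delta_k$ at each step. By \eqref{eq: Psi_z explicit}, the position component at step $k$ is affine in $\tilde\xi^{(1)}_k$ with coefficient $h(1-e^{-\gamma h})^{1/2}$, and the velocity component is affine in $\tilde\xi^{(2)}_k$. Solving sequentially, exactly as in Lemma~\ref{lemma: Psi_z is a diffeomorphism}, determines $\tilde\xi_k$ uniquely. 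It has the form
\[
\tilde\xi_k=\xi_k+\eta_k+r_k,
\]
where $\eta_k=L_h^{-1}(\delta_k-A_h\delta_{k-1})$ is the linear (free-particle) part. The remainder $r_k$ collects the gradient differences $\nabla U(\tilde X)-\nabla U(X)$, which are bounded by $L|\delta_{k-1}^x|$ and $L|\delta_k^x|$. The Jacobian of $\xi\mapsto\tilde\xi$ is block lower triangular with identity diagonal blocks in $\xi^{(1)}$. The $\xi^{(2)}$ diagonal block involves $\nabla^2U$ only off-diagonal, through the dependence of $\nabla U(\tilde X_k)$ on $\xi^{(1)}_k$. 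Hence $\det D\Phi=1$, and the divergence term in the interpolation formula reduces to a trace of off-diagonal blocks, which vanishes. The TV bound therefore becomes $\sum_k\E|\langle\xi_k,\partial_s\tilde\xi_k\rangle|\le(\sum_k|\partial_s\tilde\xi_k|^2)^{1/2}$ up to bounded factors. The gradient terms are differentiated only through $\nabla U$, so only $L$ is needed and no Hessian-Lipschitz assumption.

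\textbf{Choice of the path and the estimates.} For $\delta_k$ I will take a cubic-in-time interpolation, a discrete analogue of the minimum-energy control of $\dot x=v$ driven through $v$. It steers $(\Delta x,\Delta v)$ to zero over horizon $T=hn$ with $|\delta^x_k|\lesssim|\tilde z-z|(1+T)$, velocity $\lesssim|\tilde z-z|/T$, and controls of size $|\Delta x|/T^2+|\Delta v|/T$. Dividing by the noise scale $(1-e^{-\gamma h})^{1/2}$ per step and summing $n$ squares gives the energy
\[
\sum_k|\eta_k|^2\lesssim \frac{h}{1-e^{-\gamma h}}\Bigl(\frac{|\Delta x|^2}{T^3}+\frac{|\Delta v|^2}{T}\Bigr).
\]
Since $h/(1-e^{-\gamma h})\le(1+\gamma h)/\gamma$, this produces the first two terms: $\frac{5}{(hn)^{3/2}}$, and the $\gamma$ part of $\frac{12+5\gamma}{(hn)^{1/2}}$, each with prefactor $\gamma^{-1/2}$. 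The remainder $r_k$ is bounded by $hL|\delta^x_k|/(1-e^{-\gamma h})^{1/2}$. Its $\ell^2$ sum over $n$ steps gives $L(\gamma h)^{1/2}(1-e^{-\gamma h})^{-1/2}\gamma^{-1/2}(hn)^{1/2}\sup_k|\delta^x_k|$. Here $\sup_k|\delta^x_k|\le(1+\frac{hn}{1+\gamma hn})|\tilde z-z|$, provided the velocity is damped suitably in the path.

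\textbf{Main obstacle.} The main difficulty is the explicit constant-tracking for arbitrary $h$ and $n$. In particular, we must choose the discrete path so that the $A_h$-induced coupling between $\delta^x$ and $\delta^v$, with factors $e^{-\gamma h/2}$ and $e^{-\gamma h}$, is inverted cleanly. The bounds must also remain uniform in large $\gamma h$, where $1-e^{-\gamma h}\to1$, and in large $n$. I will handle this by defining the path directly in terms of the discrete dynamics: I prescribe $\delta^x_k$ as a cubic polynomial in $k$ and read off the velocities from the first row of $A_h$. I will then check the sums $\sum k^2$, $\sum k^4$ with elementary inequalities to obtain the numerical constants $5$ and $12$.
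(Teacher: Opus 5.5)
Your reduction matches the paper's. You use a trajectory-defined coalescence map with lower-triangular, unit-diagonal Jacobian, so only an $L^2$ deviation needs controlling and $\nabla U$ enters only through $L$. The interpolation variant can replace the Pinsker step of Corollary~\ref{cor: coalescence map bound}, but not as written. The quantity $\sum_k\E|\langle\xi_k,\partial_s\tilde\xi_k\rangle|$ is of $\ell^1$ type and is not bounded by $(\sum_k|\partial_s\tilde\xi_k|^2)^{1/2}$ up to constants; the ratio can be of order $\sqrt n$. You must keep the absolute value outside the sum. Then use that, by the triangular structure, $V_s:=\partial_s\tilde\xi$ is predictable for the ordering $\xi^{(1)}_1,\xi^{(2)}_1,\xi^{(1)}_2,\dots$, so $\E|\langle\xi,V_s\rangle|^2=\E|V_s|^2$ even though $V_s$ is random. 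With the centering factor $\tfrac12$ this gives $\tfrac12\int_0^1(\E|V_s|^2)^{1/2}ds$, where $\E|V_s|^2$ obeys the bound of Lemma~\ref{lem: coalescence map square distance} with $\|\nabla^2U\|\le L$. That is exactly the paper's estimate, so this route buys nothing extra.

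The genuine gap is the path. Taking $\delta^x_k$ cubic and $\delta^v_k=(\delta^x_{k+1}-\delta^x_k)/(he^{-\gamma h/2})$ is the ansatz of \cite{ChakMonmarche:2025} (Remark~\ref{remark: CM trajectory}). It switches off the position control, sends all control through the velocity, and is the minimum-energy path of the frictionless double integrator, not of $(A_h,L_h)$. For any such velocity-only path, write $\delta^v_{k+1}-e^{-\gamma h}\delta^v_k=(\delta^v_{k+1}-\delta^v_k)+(1-e^{-\gamma h})\delta^v_k$ and sum by parts using $\delta^v_n=0$:
\[
\sum_k|\eta_k|^2\;\ge\;(1-e^{-\gamma h})\sum_{k<n}|\delta^v_k|^2-|\Delta v|^2 .
\]
This has four consequences.
\begin{itemize}
\item For $\Delta v=0$, Cauchy--Schwarz and $he^{-\gamma h/2}\sum_{k<n}\delta^v_k=-\Delta x$ give energy at least $(e^{\gamma h}-1)|\Delta x|^2/(nh^2)$. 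This is exponential in $\gamma h$ and is the source of the $e^{\gamma h/2}$ factors in Theorem~\ref{thm: CM Wasserstein-to-TV regularization}, whereas the statement is polynomial in $h$.
\item A quadratic velocity profile with $\delta^v_0=\Delta v$ has $\sum_{k<n}|\delta^v_k|^2\ge c\,n|\Delta v|^2$. So for $\gamma h\le 1$ the energy grows like $\gamma hn|\Delta v|^2$, while the force-free terms of the statement decay like $(hn)^{-1/2}$.
\item By Markov's polynomial inequality, a cubic in $k/n$ with $\delta^x_1-\delta^x_0=he^{-\gamma h/2}\Delta v$ has $\sup_k|\delta^x_k|$ of order $hne^{-\gamma h/2}|\Delta v|$; for the ansatz of \cite{ChakMonmarche:2025} it is $\tfrac{4}{27}hne^{-\gamma h/2}|\Delta v|$. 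No velocity damping can therefore give $\sup_k|\delta^x_k|\le(1+\frac{hn}{1+\gamma hn})|\Delta z|$, a bound that stays bounded as $n\to\infty$. The $L$-term would then grow like $(hn)^{3/2}$.
\item For $n=1$ the path exists only if $\Delta x+he^{-\gamma h/2}\Delta v=0$.
\end{itemize}
Your claim $|\delta^v|\lesssim|\Delta z|/T$ already fails at $k=0$, and your displayed energy bound does not hold for this path.

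What you are missing is the exact minimum-energy control of the discrete pair $(A_h,L_h)$, namely $E_k=L_h^T(A_h^T)^{n-k}\Sigma_{h,n}^{-1}A_h^n\Delta z$. It exploits both the position noise and the friction, and its energy is $\sum_k|E_k|^2=|\Sigma_{h,n}^{-1/2}A_h^n\Delta z|^2$. Two estimates then carry the proof, in place of your sums of $k^2$ and $k^4$:
\begin{itemize}
\item the bound on $\|\Sigma_{h,n}^{-1/2}A_h^n\|$ from the closed forms of $A_h^n$ and $\Sigma_{h,n}$ (Proposition~\ref{prop: potential free operator norm bound});
\item the bound $|u_k|\le|\Delta x|+\frac{he^{-\gamma h/2}}{1-e^{-\gamma h}}(1-e^{-\gamma hn})|\Delta v|$, from the explicit formula for $u_k$ and the monotonicity of $\alpha_k,\beta_k$ (Lemma~\ref{lemma: alpha_k and beta_k are increasing}).
\end{itemize}
These are where the uniformity in $h$ and $n$, and the constants $5$ and $12$, come from.
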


Theorem~\ref{thm: TV bound via coalescence map} improves on
Theorem~\ref{thm: CM Wasserstein-to-TV regularization} in three respects.
First, we identify an explicit coalescence map for which the
corresponding coupling of OBABO chains is maximal in the force-free
case. Second, the bound holds for arbitrary $h>0$ and $n\in\N$, in
contrast to the step-size restrictions $hn\le 1$ and $h\le h_0$
required by Theorem~\ref{thm: CM Wasserstein-to-TV regularization}.
Third, the regularity hypothesis on $U$ weakens to $\nabla U$ Lipschitz,
the Lipschitz-Hessian assumption of
Theorem~\ref{thm: CM Wasserstein-to-TV regularization} being no longer
needed. The first improvement is achieved by an optimization argument
(Section~\ref{subsec: optimized trajectory}); the second and third both
follow from a structural feature of the OBABO scheme established in
Section~\ref{subsec: coalescence maps}: the Jacobian of the associated
coalescence map is lower triangular with identity blocks on the
diagonal, so the trace and log-determinant terms in the underlying
Pinsker-type bound vanish identically.

We outline the proof here; the full argument is given in
Sections~\ref{subsec: coalescence maps}
and~\ref{subsec: optimized trajectory}.

Following~\cite{ChakMonmarche:2025}, the proof proceeds via a
\emph{coalescence map}: a map
$\Psi^n_{z,\tilde z}\colon\R^{2dn}\to\R^{2dn}$ that transports the noise
driving the OBABO chain from $z$ to noise driving the chain from $\tilde z$,
in such a way that the two trajectories coincide at time $n$. Given any deterministic difference trajectory $(y_k)_{k=0}^{n}$ in $\R^{2d}$
with $y_0=\tilde z-z$ and $y_n=0$, we construct such a map and show that it
is a diffeomorphism of $\R^{2dn}$
(Section~\ref{subsec: coalescence maps}; see also
Figure~\ref{fig:finite-time-coalescence-map}).  The
bound of Theorem~\ref{thm: TV bound via coalescence map} then follows from
standard estimates for the TV distance between the law of a Gaussian and
that of its image under a diffeomorphism; for these estimates we follow
\cite{Monmarche:2024}.

The remaining question is which difference trajectory $(y_k)$ to choose. In
\cite{ChakMonmarche:2025} the choice is made by ansatz. In
Section~\ref{subsec: optimized trajectory} we identify the trajectory that
is optimal in the linearized regime: when $U$ is quadratic, the resulting
coalescence map induces a coupling whose meeting probability is maximal at
time $n$ (Remark~\ref{rem: coalescence gives optimal TV for linear case}).
This trajectory is characterized as the unique solution of a discrete
linear-quadratic control problem. A direct comparison with the choice of
\cite{ChakMonmarche:2025} is given in Remark~\ref{remark: CM trajectory}.

\begin{figure}[t]
\centering
\begin{tikzpicture}[
    scale=1.0,
    every node/.style={font=\small},
    pt/.style={circle, fill=black, inner sep=1.1pt}
]
  \draw[thick] (0,0) -- (1.0,0.25) -- (2.0,0.10) -- (3.0,0.22)
                    -- (4.0,0.50) -- (5.0,0.72) -- (6.0,1.00);
  \foreach \p in {(0,0),(1.0,0.25),(2.0,0.10),(3.0,0.22),
                  (4.0,0.50),(5.0,0.72),(6.0,1.00)}
    \node[pt] at \p {};

  \draw[thick, dashed] (0,2.00) -- (1.0,1.78) -- (2.0,1.92) -- (3.0,1.40)
                              -- (4.0,1.55) -- (5.0,1.18) -- (6.0,1.00);
  \foreach \p in {(0,2.00),(1.0,1.78),(2.0,1.92),(3.0,1.40),
                  (4.0,1.55),(5.0,1.18)}
    \node[pt] at \p {};

  \draw[<->, semithick, gray!65!black]
       (0,0.05) -- node[left, font=\footnotesize, text=gray!65!black] {$y_0$} (0,1.95);
  \draw[<->, semithick, gray!65!black]
       (3.0,0.27) -- node[right, font=\footnotesize, text=gray!65!black] {$y_k$} (3.0,1.35);

  \node[below, font=\footnotesize] at (0,0)   {$z$};
  \node[above, font=\footnotesize] at (0,2.0) {$\tilde z$};
  \node[right, font=\footnotesize] at (6.0,1.00) {$Z^h_n=\tilde Z^h_n$};
\end{tikzpicture}
\caption{The coalescence map $\Psi^n_{z,\tilde z}$ is parameterized by a
deterministic difference trajectory $(y_k)_{k=0}^{n}$ in $\R^{2d}$ with
$y_0=\tilde z-z$ and $y_n=0$; the trajectory forces the chain $(Z^h_k)$
started at $z$ (solid) and the chain $(\tilde Z^h_k)$ started at $\tilde z$
(dashed) to coincide at time $n$.}
\label{fig:finite-time-coalescence-map}
\end{figure}

A standard conditioning argument extends
Theorem~\ref{thm: TV bound via coalescence map} to general initial
distributions, yielding the following Wasserstein-to-TV regularization
(cf.~\cite[Corollary~2.2 in the supplementary material]{ChakMonmarche:2025}).

\medskip
\begin{corollary}
\label{cor: Wasserstein-to-TV regularization}
Under the assumptions of
Theorem~\ref{thm: TV bound via coalescence map}, for any probability
measures $\nu,\tilde\nu$ on $\R^{2d}$,
\begin{equation*}
    \dTV{\pi_n(\nu),\,\pi_n(\tilde\nu)}
    \;\le\;
    \frac{1}{\gamma^{1/2}}\biggl[
        \frac{5}{(hn)^{3/2}}
        +\frac{12+5\gamma}{(hn)^{1/2}}
        +\frac{L(\gamma h)^{1/2}}{(1-e^{-\gamma h})^{1/2}}
        \biggl(1+\frac{hn}{1+\gamma hn}\biggr)(hn)^{1/2}
    \biggr]\mathcal{W}_1(\nu,\tilde\nu).
\end{equation*}
\end{corollary}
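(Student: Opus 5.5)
Write $C_{h,n}$ for the bracketed constant in Theorem~\ref{thm: TV bound via coalescence map} multiplied by $\gamma^{-1/2}$, so that the theorem reads $\dTV{\pi_n(\delta_z),\pi_n(\delta_{\tilde z})}\le C_{h,n}|\tilde z-z|$ for all $z,\tilde z\in\R^{2d}$. The plan is to average this pointwise bound against a $W_1$-optimal coupling of $\nu$ and $\tilde\nu$. If $\mathcal W_1(\nu,\tilde\nu)=\infty$ there is nothing to prove. Otherwise, take $\gamma^\ast\in\Gamma(\nu,\tilde\nu)$ attaining the infimum in the definition of $\mathcal W_1$; it exists by lower semicontinuity of the cost and tightness of $\Gamma(\nu,\tilde\nu)$. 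Alternatively, one can take a near-optimal coupling with cost at most $\mathcal W_1+\varepsilon$ and let $\varepsilon\downarrow0$ at the end.

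\textbf{Step 1: mixture representation.} Since $\pi_n(\nu)=\int\pi_n(\delta_z)\,\nu(dz)$, and $z\mapsto\pi_n(\delta_z)(A)=\P(\Psi^n_z(\xi)\in A)$ is measurable (because $(z,\xi)\mapsto\Psi^n_z(\xi)$ is continuous), we can write
\[
\pi_n(\nu)(A)-\pi_n(\tilde\nu)(A)=\int\bigl(\pi_n(\delta_z)(A)-\pi_n(\delta_{\tilde z})(A)\bigr)\,\gamma^\ast(dz,d\tilde z)
\]
for every Borel set $A$.

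\textbf{Step 2: pointwise estimate and integration.} Taking absolute values inside the integral, bounding each integrand by $\dTV{\pi_n(\delta_z),\pi_n(\delta_{\tilde z})}$, applying Theorem~\ref{thm: TV bound via coalescence map} pointwise, and then taking the supremum over $A$ gives
\[
\dTV{\pi_n(\nu),\pi_n(\tilde\nu)}\le C_{h,n}\int|\tilde z-z|\,\gamma^\ast(dz,d\tilde z)=C_{h,n}\,\mathcal W_1(\nu,\tilde\nu).
\]

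\textbf{An equivalent coupling-level argument.} Sample $(Z_0,\tilde Z_0)\sim\gamma^\ast$, and conditionally on these initial values use the coalescence coupling from the theorem, which has meeting probability at least $1-C_{h,n}|\tilde Z_0-Z_0|$. Then invoke the coupling characterization~\eqref{eq: TV coupling characterization}. This version requires a measurable selection of the conditional couplings, so the kernel-level argument above is simpler.

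\textbf{Main obstacle.} The only delicate points are measurability in $(z,\tilde z)$ of the integrand, which follows from the continuity noted in Step~1, and the existence of an optimal $W_1$ coupling. The latter is avoided entirely by the $\varepsilon$-approximation.
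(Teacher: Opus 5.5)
Your proof is correct, but it takes a different route from the paper's. The paper argues at the level of couplings. For an arbitrary coupling $(Z_0,\tilde Z_0)$ of $\nu$ and $\tilde\nu$, it takes, conditionally on $\sigma(Z_0,\tilde Z_0)$, a maximal coupling $(Z,\tilde Z)$ of $\pi_n(\delta_{Z_0})$ and $\pi_n(\delta_{\tilde Z_0})$. The coupling characterization then gives $\dTV{\pi_n(\nu),\pi_n(\tilde\nu)}\le\P(Z\neq\tilde Z)=\E[\dTV{\pi_n(\delta_{Z_0}),\pi_n(\delta_{\tilde Z_0})}]\le C\,\E|\tilde Z_0-Z_0|$. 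Only then does the paper take the infimum over initial couplings, so it needs neither an optimal $\mathcal{W}_1$ coupling nor an $\varepsilon$-approximation. You could do the same by starting from an arbitrary $\gamma\in\Gamma(\nu,\tilde\nu)$.

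This is essentially your ``coupling-level'' alternative. The paper handles the measurable-selection issue you flag only by appeal to $\R^{2d}$ being standard Borel. Your main argument integrates the setwise differences against a coupling before taking the supremum over $A$, which is just joint convexity of total variation under mixtures. It needs only measurability of $z\mapsto\pi_n(\delta_z)(A)$, which you correctly obtain from continuity of $(z,\xi)\mapsto\Psi^n_z(\xi)$, so it is the cleaner argument measure-theoretically.

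The paper's version instead fits its coupling theme: it exhibits an actual coupling of the two chains started from $\nu$ and $\tilde\nu$. Replacing the abstract maximal coupling by the explicit coalescence coupling of Section~\ref{sec: non-Markovian coupling} would give an explicit coupling whose non-meeting probability is bounded by the right-hand side.
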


\begin{proof}
Let $(Z_0,\tilde Z_0)$ be any coupling of $\nu$ and $\tilde\nu$, and set
$\F_0:=\sigma(Z_0,\tilde Z_0)$. Conditional on $\F_0$, let $(Z,\tilde Z)$
be a maximal coupling of $\pi_n(\delta_{Z_0})$ and
$\pi_n(\delta_{\tilde Z_0})$; such a coupling exists since $\R^{2d}$ is a
standard Borel space. Integrating against $\F_0$ shows that $(Z,\tilde Z)$
is a coupling of $\pi_n(\nu)$ and $\pi_n(\tilde\nu)$, so
\begin{equation*}
    \dTV{\pi_n(\nu),\,\pi_n(\tilde\nu)}
    \;\le\;\P(Z\neq\tilde Z)
    \;=\;\E\bigl[\dTV{\pi_n(\delta_{Z_0}),\,\pi_n(\delta_{\tilde Z_0})}\bigr]
    \;\le\;C\,\E\bigl[|\tilde Z_0-Z_0|\bigr],
\end{equation*}
where $C$ denotes the bracketed constant in the statement and the last
inequality is
Theorem~\ref{thm: TV bound via coalescence map}. Taking the infimum over
couplings $(Z_0,\tilde Z_0)$ of $(\nu,\tilde\nu)$ on the right-hand side
gives $\mathcal{W}_1(\nu,\tilde\nu)$ and concludes the proof.
\end{proof}

\begin{remark}\label{rem:how to obtain convergence in the TV distance}
Corollary~\ref{cor: Wasserstein-to-TV regularization} is a Wasserstein-to-TV
regularization bound for the OBABO chain in terms of the total simulation
time $T:=hn$. For small $T$, the bound scales like $T^{-3/2}$, matching
the small-time behavior of the continuous kinetic Langevin semigroup
under hypoelliptic regularization. The $T$-dependence at larger times
comes from the trajectory chosen in
Section~\ref{subsec: optimized trajectory}, which is optimal only for
the linearized dynamics; the nonlinear force $\nabla U$ contributes an
additional term scaling with $T^{1/2}$. Combined with a Wasserstein convergence estimate,
Corollary~\ref{cor: Wasserstein-to-TV regularization} yields TV
convergence for the chain. Such estimates are available for the
OBABO scheme in~\cite[Theorem~2.1]{ChakMonmarche:2025} and for the UBU
scheme in~\cite{SchuhWhalley:2025}.
\end{remark}

\begin{remark}
The TV bound provided in Theorem~\ref{thm: TV bound via coalescence map} can also be obtained for the BOAOB splitting scheme (which does not fall under the gHMC framework discussed in~\cite{ChakMonmarche:2025}). Moreover, similar results can presumably be obtained for other splitting schemes. For details, see Remark~\ref{rem: other splitting schemes}.
\end{remark}

\subsection{Coalescence maps}
\label{subsec: coalescence maps}

Fix initial values $z,\tilde z\in\R^{2d}$, and recall from
\eqref{eq: definition Psi^n_z} that $\Psi^n_z\colon\R^{2dn}\to\R^{2d}$
sends a noise vector $\xi=(\xi_1,\dots,\xi_n)$ to the state of the OBABO
chain~\eqref{eq: OBABO as linear Markov chain} after $n$ steps, started
from $z$ with driving noise $\xi$. Two such chains, one started from $z$
with noise $\xi$ and the other from $\tilde z$ with noise $\tilde\xi$,
coincide at time $n$ if and only if
\begin{equation}\label{eq: coalescence condition}
    \Psi^n_{\tilde z}(\tilde\xi)=\Psi^n_z(\xi).
\end{equation}

\medskip
\begin{definition}\label{def: coalescence map}
Let $z,\tilde z\in\R^{2d}$ and $n\in\N_{>0}$. A map
$\Psi^n_{z,\tilde z}\colon\R^{2dn}\to\R^{2dn}$ is a \emph{coalescence map}
if \eqref{eq: coalescence condition} holds with $\tilde\xi = \Psi^n_{z,\tilde z}(\xi)$,
for every $\xi\in\R^{2dn}$.
\end{definition}

Every coalescence map yields an upper bound on
$\dTV{\pi_n(\delta_z),\pi_n(\delta_{\tilde z})}$, a consequence of the contractivity of TV under measurable maps (cf.~\cite[Lemma~3]{MadrasSezer:2010}).
\medskip

\begin{lemma}\label{lemma: coalescence maps provide TV distance bounds}
Let $z,\tilde z\in\R^{2d}$, $n\in\N_{>0}$, and let $\Psi^n_{z,\tilde z}$
be a coalescence map. Then, for $\xi\sim\calN(0,\id_{2dn})$,
\begin{equation}\label{eq: coalescence map bounds TV distance}
    \dTV{\pi_n(\delta_z),\,\pi_n(\delta_{\tilde z})}
    \;\le\;
    \dTV{\law(\xi),\,\law(\Psi^n_{z,\tilde z}(\xi))}.
\end{equation}
\end{lemma}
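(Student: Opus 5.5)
The plan is to combine the data-processing (contractivity) property of total variation under a measurable map with the coalescence identity of Definition~\ref{def: coalescence map}.

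Write $\tilde\xi:=\Psi^n_{z,\tilde z}(\xi)$ with $\xi\sim\calN(0,\id_{2dn})$. By definition, $\pi_n(\delta_z)=\law(\Psi^n_z(\xi))$. Since $\Psi^n_{\tilde z}$ is measurable (indeed continuous, by Lemma~\ref{lemma: Psi_z is a diffeomorphism} and the inductive definition~\eqref{eq: definition Psi^n_z}), we have the pushforward identity
\[
\pi_n(\delta_{\tilde z})=\law(\Psi^n_{\tilde z}(\eta)),\qquad \eta\sim\calN(0,\id_{2dn}).
\]
The coalescence property~\eqref{eq: coalescence condition} gives $\Psi^n_{\tilde z}(\tilde\xi)=\Psi^n_z(\xi)$ pointwise. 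Hence $\pi_n(\delta_z)$ equals the pushforward of $\law(\tilde\xi)$ under $\Psi^n_{\tilde z}$. So both laws are images under the \emph{same} measurable map $F:=\Psi^n_{\tilde z}$: the first of $\law(\tilde\xi)$, the second of $\law(\xi)=\calN(0,\id_{2dn})$.

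It then remains to invoke contractivity: for any measurable $F$ and probability measures $\rho,\sigma$,
\[
\dTV{F_\#\rho,F_\#\sigma}=\sup_A|\rho(F^{-1}A)-\sigma(F^{-1}A)|\le\dTV{\rho,\sigma},
\]
because $\{F^{-1}A\}$ is a subfamily of the measurable sets. Applying this with $\rho=\law(\tilde\xi)$ and $\sigma=\law(\xi)$ yields the claim, using the symmetry of $d_{\textup{TV}}$.

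Alternatively, the same bound can be phrased through a coupling, which matches the paper's later interpretation. Take a maximal coupling $(\xi,\eta)$ of $\law(\xi)$ and $\law(\Psi^n_{z,\tilde z}(\xi))$, and use \eqref{eq: TV coupling characterization}. Then $(F(\eta),F(\xi))$ couples the two target laws, and $\{F(\eta)\neq F(\xi)\}\subseteq\{\eta\neq\xi\}$.

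The only point needing care is measurability of $\Psi^n_{\tilde z}$ and of the coalescence map. The former is immediate from continuity. The latter is implicit in the statement, since $\law(\Psi^n_{z,\tilde z}(\xi))$ must make sense. I therefore expect no genuine obstacle; the argument is essentially one line.
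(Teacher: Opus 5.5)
Your proof is correct and is essentially the paper's. Both write $\pi_n(\delta_z)$ and $\pi_n(\delta_{\tilde z})$ as images of $\law(\Psi^n_{z,\tilde z}(\xi))$ and $\calN(0,\id_{2dn})$ under the same measurable map $\Psi^n_{\tilde z}$, and then use that total variation does not increase under a common measurable map. The paper checks this contraction exactly as in your alternative (a maximal coupling pushed forward componentwise, with $\{\Psi^n_{\tilde z}(\tilde\xi)\neq\Psi^n_{\tilde z}(\tilde\zeta)\}\subseteq\{\tilde\xi\neq\tilde\zeta\}$), while your primary sup-over-sets version verifies the same step directly from the definition without needing a maximal coupling.
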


\begin{proof}
Let $(\tilde\xi,\tilde\zeta)$ be a maximal coupling of $\law(\xi)$ and $\law(\Psi^n_{z,\tilde z}(\xi))$.
    Applying the measurable map
$\Psi^n_{\tilde z}$ componentwise yields the pair
$(\Psi^n_{\tilde z}(\tilde\xi),\Psi^n_{\tilde z}(\tilde\zeta))$, whose
marginals are
\begin{align*}
    \law(\Psi^n_{\tilde z}(\tilde\xi))
    &=\pi_n(\delta_{\tilde z}),\\
    \law(\Psi^n_{\tilde z}(\tilde\zeta))
    &=\law\bigl(\Psi^n_{\tilde z}(\Psi^n_{z,\tilde z}(\xi))\bigr)
    =\law(\Psi^n_z(\xi))
    =\pi_n(\delta_z),
\end{align*}
the second by the coalescence map property.  Hence
$(\Psi^n_{\tilde z}(\tilde\xi),\Psi^n_{\tilde z}(\tilde\zeta))$ is a
coupling of $\pi_n(\delta_{\tilde z})$ and $\pi_n(\delta_z)$, and
\begin{equation*}
    \dTV{\pi_n(\delta_z),\pi_n(\delta_{\tilde z})}
    \;\le\;\P\bigl(\Psi^n_{\tilde z}(\tilde\xi)\neq\Psi^n_{\tilde z}(\tilde\zeta)\bigr)
    \;\le\;\P(\tilde\xi\neq\tilde\zeta)
    \;=\;\dTV{\law(\xi),\,\law(\Psi^n_{z,\tilde z}(\xi))},
\end{equation*}
as required.
\end{proof}

The right-hand side of~\eqref{eq: coalescence map bounds TV distance}
admits sharp estimates via standard techniques for the TV distance between
a Gaussian and its image under a smooth perturbation. Combining
Lemma~\ref{lemma: coalescence maps provide TV distance bounds} with
Pinsker's inequality and the proof of~\cite[Lemma~15]{Bou-RabeeEberle:2023},
we obtain the following.

\begin{lemma}\label{lem: derivative coalescence map bounds TV distance}
Let $z,\tilde z\in\R^{2d}$, $n\in\N_{>0}$, and let $\Psi^n_{z,\tilde z}$
be a diffeomorphic coalescence map. Then, for
$\xi\sim\calN(0,\id_{2dn})$,
    \begin{equation}
    \label{eq: derivative coalescence map bounds TV distance}
    \dTV{\pi_n(\delta_z),\pi_n(\delta_{\tilde z})}\leq
    \tfrac{1}{2}\left(\E\sqbr{|\Psi^n_{z,\tilde z}(\xi)-\xi|^2+2\tr(D\Psi^n_{z,\tilde z}(\xi)-\id_{2dn})-2\log\left(|\det D\Psi^n_{z,\tilde z}(\xi)|\right)} \right)^{\nicefrac{1}{2}}.
\end{equation}
\end{lemma}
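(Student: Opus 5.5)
Via Lemma~\ref{lemma: coalescence maps provide TV distance bounds}, it suffices to bound $\dTV{\law(\xi),\law(\Phi(\xi))}$ with $\Phi:=\Psi^n_{z,\tilde z}$ and $\xi\sim\calN(0,\id_{2dn})$. We write $\gamma_N$ for the standard Gaussian density on $\R^N$, $N=2dn$. We bound this TV distance by Pinsker's inequality, but in the reversed direction: we control $\mathrm{KL}(\law(\xi)\,\|\,\law(\Phi(\xi)))$, whose integrand is evaluated along $\xi$ itself, so that the right-hand side becomes an expectation of explicit functionals of $\Phi$ at $\xi$. Pinsker gives $\dTV{\cdot,\cdot}\le (\tfrac12 \mathrm{KL})^{1/2}$, and the claimed bound $\tfrac12(\E[\cdots])^{1/2}$ then follows once we show $\mathrm{KL}\le \tfrac12\E[\cdots]$.

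The density of $\Phi(\xi)$ is computed by change of variables, since $\Phi$ is a diffeomorphism:
\[
p(y)=\gamma_N(\Phi^{-1}(y))\,|\det D\Phi^{-1}(y)|.
\]
This expression is awkward to evaluate at $y=\xi$, so we follow the proof of~\cite[Lemma~15]{Bou-RabeeEberle:2023} and switch to the inverse map. Set $\Theta=\Phi^{-1}$. Total variation is invariant under the bijection $\Theta$, so
\[
\dTV{\law(\xi),\law(\Phi(\xi))}=\dTV{\law(\Theta(\xi)),\law(\xi)}.
\]
Now compute $\mathrm{KL}(\law(\xi)\,\|\,\law(\Theta(\xi)))$ by evaluating the density of $\Theta(\xi)$, namely $q(y)=\gamma_N(\Phi(y))|\det D\Phi(y)|$, at $y=\xi$:
\[
\mathrm{KL}=\E\Bigl[\log\frac{\gamma_N(\xi)}{\gamma_N(\Phi(\xi))|\det D\Phi(\xi)|}\Bigr]
=\E\Bigl[\tfrac12|\Phi(\xi)|^2-\tfrac12|\xi|^2-\log|\det D\Phi(\xi)|\Bigr].
\]

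Next we rewrite the quadratic terms. Since $|\Phi|^2-|\xi|^2=|\Phi-\xi|^2+2\langle\xi,\Phi(\xi)-\xi\rangle$, it remains to handle $\E\langle\xi,\Phi(\xi)-\xi\rangle$. By Gaussian integration by parts (Stein's identity),
\[
\E\langle \xi,G(\xi)\rangle=\E[\tr DG(\xi)]\quad\text{with } G=\Phi-\id,
\]
which produces the term $\E\,\tr(D\Phi(\xi)-\id_{2dn})$. Collecting everything gives $\mathrm{KL}=\tfrac12\E[\cdots]$, which is exactly the quantity under the square root in~\eqref{eq: derivative coalescence map bounds TV distance}.

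The main technical obstacle is justifying Stein's identity and the finiteness of the expectations for a general $C^1$ map. If the right-hand side of~\eqref{eq: derivative coalescence map bounds TV distance} is infinite, the bound is trivial. Otherwise we need $G$ and $DG$ to be integrable against the Gaussian. I would first prove the identity for $G$ smooth with compact support, then pass to the limit by truncation (multiply by cutoffs $\chi_R$ and use dominated convergence). This requires integrability of $|\Phi(\xi)-\xi|^2$ and of the trace term, which can be assumed whenever the right-hand side is finite; alternatively one may state the lemma under this integrability proviso.
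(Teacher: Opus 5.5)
Your proposal is correct and is essentially the paper's argument: reduce to $\dTV{\law(\xi),\law(\Psi^n_{z,\tilde z}(\xi))}$ via Lemma~\ref{lemma: coalescence maps provide TV distance bounds}, apply Pinsker's inequality, compute the Kullback--Leibler divergence by change of variables, and convert $\E\langle\xi,\Psi^n_{z,\tilde z}(\xi)-\xi\rangle$ into the trace term by Gaussian integration by parts, as in the proof of \cite[Lemma~15]{Bou-RabeeEberle:2023} that the paper invokes. The divergence you actually evaluate is $D_{\textup{KL}}(\law(\xi)\,\|\,\law(\Theta(\xi)))=D_{\textup{KL}}(\law(\Phi(\xi))\,\|\,\law(\xi))$, not the one named in your opening paragraph; this is harmless because you correctly use the invariance of TV under $\Theta$, and your integrability proviso for Stein's identity is a sensible precision that the paper leaves implicit.
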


We turn to the construction of coalescence maps. Following~\cite{ChakMonmarche:2025},
a coalescence map can be specified through the difference trajectory of
the two chains; the construction below is parameterized by an arbitrary
$y\in\R^{2dn}$ with $y_n=0$.

\begin{lemdef}\label{lemdef: coalescence map defined by trajectory}
Let $z,\tilde z\in\R^{2d}$, $n\in\N_{>0}$, and let
$y=(y_1,\dots,y_n)\in\R^{2dn}$ satisfy $y_n=0$ (see Figure~\ref{fig:finite-time-coalescence-map}). 
 There exists a unique
coalescence map $\Psi^n_{z,\tilde z}\colon\R^{2dn}\to\R^{2dn}$ such that,
for every $\xi=(\xi_1,\dots,\xi_n)\in\R^{2dn}$, the vector
$\tilde\zeta=(\tilde\zeta_1,\dots,\tilde\zeta_n):=\Psi^n_{z,\tilde z}(\xi)$
satisfies
\begin{equation}
\label{eq: coalescence map recursive relation}
    \Psi^k_{\tilde z}(\tilde\zeta_1,\dots\tilde\zeta_k)=\Psi^k_z(\xi_1,\dots,\xi_k)+y_k, \qquad k\in\{1,\dots n\}.
\end{equation}
The map $\Psi_{z,\tilde{z}}^{n}$ is called the \emph{coalescence map defined by the trajectory $y$}.
\end{lemdef} 

\begin{proof}
We define $\tilde\zeta_1,\dots,\tilde\zeta_n$ by induction on $k$. 

\emph{Base case ($k=1$).} The $k=1$ instance
of~\eqref{eq: coalescence map recursive relation} reads
$\Psi_{\tilde z}(\tilde\zeta_1)=\Psi_z(\xi_1)+y_1$. Since $\Psi_{\tilde z}$
is invertible (cf.~\eqref{eq: Psi_z explicit}), this uniquely determines
$\tilde\zeta_1$.

\emph{Inductive step.} Fix $k\in\{2,\dots,n\}$ and suppose
$\tilde\zeta_1,\dots,\tilde\zeta_{k-1}$ have been determined. By the
recursive definition~\eqref{eq: definition Psi^n_z} of $\Psi^k_z$ and
$\Psi^k_{\tilde z}$, the $k$-th instance
of~\eqref{eq: coalescence map recursive relation} is equivalent to
\begin{equation*}
    \Psi_w(\tilde\zeta_k)
    = \Psi_{\Psi^{k-1}_z(\xi_1,\dots,\xi_{k-1})}(\xi_k) + y_k,
    \qquad
    w := \Psi^{k-1}_{\tilde z}(\tilde\zeta_1,\dots,\tilde\zeta_{k-1}).
\end{equation*}
By the inductive hypothesis, $w\in\R^{2d}$ is known and the right-hand
side is a fixed element of $\R^{2d}$; invertibility of $\Psi_w$ uniquely
determines $\tilde\zeta_k$.

This defines $\tilde\zeta=(\tilde\zeta_1,\dots,\tilde\zeta_n)$. The
constraint $y_n=0$ gives
$\Psi^n_{\tilde z}(\tilde\zeta)=\Psi^n_z(\xi)$, so $\Psi^n_{z,\tilde z}$
is a coalescence map by Definition~\ref{def: coalescence map}. \end{proof}

We next verify that trajectory-based coalescence maps are diffeomorphisms,
as required for Lemma~\ref{lem: derivative coalescence map bounds TV distance}.
Invertibility is established below; differentiability is the content of
Lemma~\ref{lemma: trajectory based coalescene maps are differentiable}.

\medskip
\begin{lemma}
\label{lemma: trajectory based coalescene maps are invertible}  Let $z,\tilde z\in\R^{2d}$, $n\in\N_{>0}$, and let $\Psi^n_{z,\tilde z}$
be the coalescence map defined by the trajectory $y\in\R^{2dn}$. Then
$\Psi^n_{z,\tilde z}$ is invertible, with
\begin{equation*}
    (\Psi^n_{z,\tilde z})^{-1} = \Psi^n_{\tilde z,z},
\end{equation*}
where $\Psi^n_{\tilde z,z}$ is the coalescence map defined by the
trajectory $-y$.
\end{lemma}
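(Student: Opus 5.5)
The plan is to check directly that the two compositions $\Psi^n_{\tilde z,z}\circ\Psi^n_{z,\tilde z}$ and $\Psi^n_{z,\tilde z}\circ\Psi^n_{\tilde z,z}$ are both the identity on $\R^{2dn}$. The tool is the uniqueness part of Lemma/Definition~\ref{lemdef: coalescence map defined by trajectory}: for fixed starting points and trajectory, the recursion~\eqref{eq: coalescence map recursive relation} determines the output vector uniquely, one block at a time.

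First fix $\xi\in\R^{2dn}$ and set $\tilde\zeta:=\Psi^n_{z,\tilde z}(\xi)$. By~\eqref{eq: coalescence map recursive relation},
\begin{equation*}
\Psi^k_{\tilde z}(\tilde\zeta_1,\dots,\tilde\zeta_k)=\Psi^k_z(\xi_1,\dots,\xi_k)+y_k,\qquad k\in\{1,\dots,n\}.
\end{equation*}
Rearranged, this reads $\Psi^k_z(\xi_1,\dots,\xi_k)=\Psi^k_{\tilde z}(\tilde\zeta_1,\dots,\tilde\zeta_k)+(-y_k)$ for every $k$. This is exactly the defining relation of the coalescence map $\Psi^n_{\tilde z,z}$ with trajectory $-y$, evaluated at input $\tilde\zeta$ and producing output $\xi$. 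The roles are swapped: the base point is $\tilde z$, the target is $z$, and $-y$ still satisfies $-y_n=0$, so Lemma/Definition~\ref{lemdef: coalescence map defined by trajectory} applies. By uniqueness, $\Psi^n_{\tilde z,z}(\tilde\zeta)=\xi$, that is, $\Psi^n_{\tilde z,z}\circ\Psi^n_{z,\tilde z}=\id$.

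The same argument with the roles of $(z,y)$ and $(\tilde z,-y)$ interchanged gives $\Psi^n_{z,\tilde z}\circ\Psi^n_{\tilde z,z}=\id$. Here one uses that negating $-y$ returns $y$. Hence $\Psi^n_{z,\tilde z}$ is a bijection with the stated inverse.

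The only point needing care is the uniqueness claim itself: the inductive construction must determine each block $\tilde\zeta_k$ uniquely from the previous blocks. This holds because each one-step map $\Psi_w$ is a bijection, by Lemma~\ref{lemma: Psi_z is a diffeomorphism}. Since the construction in the proof of Lemma/Definition~\ref{lemdef: coalescence map defined by trajectory} already establishes this, I do not expect a real obstacle; the proof is essentially bookkeeping of which chain plays which role.
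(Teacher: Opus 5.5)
Your proposal is correct and follows essentially the paper's route. The paper also rearranges the defining relations into $\Psi^k_z(\zeta_1,\dots,\zeta_k)=\Psi^k_z(\xi_1,\dots,\xi_k)$ and then runs the block-by-block induction with invertibility of $\Psi_w$ to get $\zeta_k=\xi_k$, which is the pointwise uniqueness you invoke from Lemma/Definition~\ref{lemdef: coalescence map defined by trajectory}; it obtains the reverse composition by the same swap $(z,\tilde z,y)\mapsto(\tilde z,z,-y)$.
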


\begin{proof}
The trajectory $-y$ satisfies $(-y)_n=0$, so
Lemma~\ref{lemdef: coalescence map defined by trajectory} produces a
coalescence map $\Psi^n_{\tilde z,z}$ defined by it. We show
$\Psi^n_{\tilde z,z}\circ\Psi^n_{z,\tilde z}=\id_{\R^{2dn}}$; the reverse
identity follows from the same argument with $(z,\tilde z,y)$ replaced by
$(\tilde z,z,-y)$.

Fix $\xi\in\R^{2dn}$ and set $\tilde\zeta:=\Psi^n_{z,\tilde z}(\xi)$,
$\zeta:=\Psi^n_{\tilde z,z}(\tilde\zeta)$. From the defining
relations~\eqref{eq: coalescence map recursive relation} for the two
coalescence maps,
\begin{equation*}
    \Psi^k_z(\zeta_1,\dots,\zeta_k)
    = \Psi^k_{\tilde z}(\tilde\zeta_1,\dots,\tilde\zeta_k) - y_k
    = \Psi^k_z(\xi_1,\dots,\xi_k),
    \qquad k\in\{1,\dots,n\}.
\end{equation*}
By the recursive definition~\eqref{eq: definition Psi^n_z}, the $k$-th
identity in this chain is equivalent to
\begin{equation*}
    \Psi_{\Psi^{k-1}_z(\zeta_1,\dots,\zeta_{k-1})}(\zeta_k)
    = \Psi_{\Psi^{k-1}_z(\xi_1,\dots,\xi_{k-1})}(\xi_k),
\end{equation*}
with the convention $\Psi^0_z\equiv z$ at $k=1$. The same chain at index
$k-1$ gives
$\Psi^{k-1}_z(\zeta_1,\dots,\zeta_{k-1})=\Psi^{k-1}_z(\xi_1,\dots,\xi_{k-1})$
(trivially at $k=1$), and invertibility of $\Psi_w$
(cf.~\eqref{eq: Psi_z explicit}) then gives $\zeta_k=\xi_k$.
\end{proof}

We derive a closed-form expression for $\Psi^n_{z,\tilde z}$. Two
properties of this expression are central to what follows:
differentiability (Lemma~\ref{lemma: trajectory based coalescene maps are differentiable})
and a block-lower-triangular Jacobian with identity blocks on the diagonal.
The latter eliminates the trace and log-determinant terms from
Lemma~\ref{lem: derivative coalescence map bounds TV distance}.

Fix $\xi\in\R^{2dn}$, set $\tilde\zeta:=\Psi^n_{z,\tilde z}(\xi)$, and for
$k\in\{0,\dots,n\}$ define
\begin{equation}\label{eq: definition z_k and tilde z_k}
    z_k := \Psi^k_z(\xi_1,\dots,\xi_k) = (x_k,v_k),
    \qquad
    \tilde z_k := \Psi^k_{\tilde z}(\tilde\zeta_1,\dots,\tilde\zeta_k)
                = (\tilde x_k,\tilde v_k),
\end{equation}
with the convention $\Psi^0_w\equiv w$ giving $z_0=z$ and
$\tilde z_0=\tilde z$. Decompose $y_k=(u_k,w_k)$ into its position and
velocity components $u_k,w_k\in\R^d$, and extend $y$ to $k=0$ via
$y_0:=\tilde z-z$.

In this notation, relation~\eqref{eq: coalescence map recursive relation}
becomes the one-step recursion
\begin{equation}\label{eq: coalescence map one step recursive relation}
    y_{k+1}
    = \tilde z_{k+1}-z_{k+1}
    = \Psi_{\tilde z_k}(\tilde\zeta_{k+1})-\Psi_{z_k}(\xi_{k+1}),
    \qquad k\in\{0,\dots,n-1\}.
\end{equation}
Substituting the explicit form~\eqref{eq: Psi_z explicit} of $\Psi_w$ and
solving for $\tilde\zeta_{k+1}$ gives
\begin{equation}\label{eq: coalescence map general y_k}
    \tilde\zeta_{k+1}
    = \xi_{k+1}+L_h^{-1}\biggl(
        y_{k+1}-A_h y_k
        +\frac{h}{2}\begin{pmatrix}
            h\,[\nabla U(\tilde x_k)-\nabla U(x_k)]\\
            e^{-\gamma h/2}\,[\nabla U(\tilde x_k)-\nabla U(x_k)+\nabla U(\tilde x_{k+1})-\nabla U(x_{k+1})]
        \end{pmatrix}
    \biggr),
\end{equation}
or, in position and velocity components,
\begin{equation}\label{eq: coalescence map explicit general y_k}
\begin{aligned}
    \tilde\zeta_{k+1}^{(1)}
    &= \xi_{k+1}^{(1)}+(1-e^{-\gamma h})^{-1/2}
       \sqbr{\tfrac{1}{h}(u_{k+1}-u_k-h\,e^{-\gamma h/2}w_k)
             +\tfrac{h}{2}\br{\nabla U(\tilde x_k)-\nabla U(x_k)}},\\
    \tilde\zeta_{k+1}^{(2)}
    &= \xi_{k+1}^{(2)}+(1-e^{-\gamma h})^{-1/2}\bigg[
         w_{k+1}-e^{-\gamma h}w_k
         -\tfrac{1}{h}e^{-\gamma h/2}(u_{k+1}-u_k-h\,e^{-\gamma h/2}w_k)\\
    &\qquad\qquad\qquad\qquad\quad
         +\tfrac{h}{2}e^{-\gamma h/2}\br{\nabla U(\tilde x_{k+1})-\nabla U(x_{k+1})}
       \bigg].
\end{aligned}
\end{equation}

The explicit form~\eqref{eq: coalescence map explicit general y_k} has
two structural features. First, $\tilde\zeta_{k+1}$ is $\xi_{k+1}$ plus a
function of $\xi_1,\dots,\xi_k$ 
and the data $z,\tilde z,y$; this yields differentiability of
$\Psi^n_{z,\tilde z}$ by induction on $k$ (Lemma~\ref{lemma: trajectory based coalescene maps are differentiable}). Second, the same structure
forces $D\Psi^n_{z,\tilde z}(\xi)$ to be lower triangular with all
diagonal entries equal to $1$, so $\det D\Psi^n_{z,\tilde z}(\xi)=1$ and
$\tr D\Psi^n_{z,\tilde z}(\xi)=2dn$ identically. The trace and
log-determinant terms in
Lemma~\ref{lem: derivative coalescence map bounds TV distance} therefore
vanish.

\begin{lemma}\label{lemma: trajectory based coalescene maps are differentiable}
Let $z,\tilde z\in\R^{2d}$, $n\in\N_{>0}$, and let $\Psi^n_{z,\tilde z}$
be the coalescence map defined by a trajectory $y\in\R^{2dn}$ with
$y_n=0$. If $U\colon\R^d\to\R$ is twice continuously differentiable, then
$\Psi^n_{z,\tilde z}$ is a $C^1$ diffeomorphism of $\R^{2dn}$ whose
Jacobian $D\Psi^n_{z,\tilde z}$ is lower triangular with all diagonal
entries equal to~$1$.
\end{lemma}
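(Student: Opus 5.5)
We prove the three claims (smoothness, triangular Jacobian with unit diagonal, global $C^1$ inverse) together by induction on the step index $k$, using the explicit recursion \eqref{eq: coalescence map general y_k}. Write $\tilde\zeta=\Psi^n_{z,\tilde z}(\xi)$ and let $z_k,\tilde z_k$ be as in \eqref{eq: definition z_k and tilde z_k}. The inductive claim is: for each $k\in\{0,\dots,n\}$, the maps $\xi\mapsto z_k$ and $\xi\mapsto\tilde z_k$ are $C^1$ functions of $(\xi_1,\dots,\xi_k)$ only, and $\xi\mapsto(\tilde\zeta_1,\dots,\tilde\zeta_k)$ is $C^1$ with
\begin{equation*}
\tilde\zeta_j=\xi_j+G_j(\xi_1,\dots,\xi_{j-1}),\qquad j\le k,
\end{equation*}
for some $C^1$ function $G_j$.

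For $k=0$ the claim is trivial, since $z_0=z$ and $\tilde z_0=\tilde z$ are constants. For the inductive step, we note that $z_{k+1}=\Psi_{z_k}(\xi_{k+1})$ is $C^1$ in $(z_k,\xi_{k+1})$ by \eqref{eq: Psi_z explicit}, because $U\in C^2$ means $\nabla U$ is $C^1$. Moreover $\tilde z_{k+1}=z_{k+1}+y_{k+1}$ by the defining relation \eqref{eq: coalescence map recursive relation}, so $\tilde z_{k+1}$ is also $C^1$ in $(\xi_1,\dots,\xi_{k+1})$. Plugging into \eqref{eq: coalescence map general y_k}, the correction $L_h^{-1}(\cdots)$ involves $\tilde x_k,x_k,\tilde x_{k+1},x_{k+1}$.

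This is the one delicate point. The term $\nabla U(\tilde x_{k+1})-\nabla U(x_{k+1})$ might seem to depend on $\xi_{k+1}$. However, $\tilde x_{k+1}=x_{k+1}+u_{k+1}$, and by \eqref{eq: Psi_z explicit} $x_{k+1}=\Psi^X_{z_k}(\xi^{(1)}_{k+1})$ depends on $\xi^{(1)}_{k+1}$. So the second component $\tilde\zeta^{(2)}_{k+1}$ equals $\xi^{(2)}_{k+1}$ plus a $C^1$ function of $(\xi_1,\dots,\xi_k,\xi^{(1)}_{k+1})$. The first component $\tilde\zeta^{(1)}_{k+1}$ equals $\xi^{(1)}_{k+1}$ plus a function of $(\xi_1,\dots,\xi_k)$ alone.

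We therefore refine the ordering of coordinates to $\xi_1^{(1)},\xi_1^{(2)},\xi_2^{(1)},\dots$. In this ordering each output coordinate block $\tilde\zeta^{(i)}_{j}$ equals the matching input block plus a $C^1$ function of strictly earlier blocks. Hence $D\Psi^n_{z,\tilde z}(\xi)$ is block lower triangular with identity diagonal blocks, i.e.\ lower triangular with unit diagonal entries, and it is continuous in $\xi$.

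For the diffeomorphism property, Lemma \ref{lemma: trajectory based coalescene maps are invertible} gives the inverse $\Psi^n_{\tilde z,z}$, which is the coalescence map defined by $-y$. Applying the same inductive argument to it, with the roles of $z$ and $\tilde z$ swapped, shows that the inverse is $C^1$. Alternatively, the triangular structure can be inverted recursively: $\xi_j^{(i)}=\tilde\zeta_j^{(i)}-G(\text{earlier }\xi)$, and this expression is $C^1$. Either way $\Psi^n_{z,\tilde z}$ is a $C^1$ diffeomorphism. The main obstacle is the bookkeeping of the $\nabla U(\tilde x_{k+1})$ term, which is handled by the splitting into $\xi^{(1)}$ and $\xi^{(2)}$ blocks described above.
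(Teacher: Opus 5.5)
Your proposal is correct and follows essentially the same route as the paper. Both arguments establish smoothness of $z_k$ and $\tilde z_k=z_k+y_k$ by induction, observe that $x_k$ depends only on $(\xi_1,\dots,\xi_{k-1},\xi_k^{(1)})$ so that $\tilde\zeta_k^{(1)}-\xi_k^{(1)}$ depends only on earlier blocks while $\tilde\zeta_k^{(2)}-\xi_k^{(2)}$ depends on earlier blocks and $\xi_k^{(1)}$, and obtain the $C^1$ inverse from Lemma~\ref{lemma: trajectory based coalescene maps are invertible}, since the inverse is the coalescence map of $-y$ and the same argument applies to it.

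The one blemish is your initially stated inductive form $\tilde\zeta_j=\xi_j+G_j(\xi_1,\dots,\xi_{j-1})$, which is false for the velocity block when $\nabla U$ is not affine. You correct this yourself in the next paragraph, and the induction never actually uses that form.
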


\begin{proof}
Since $U\in C^2$, the explicit form~\eqref{eq: Psi_z explicit} shows that
the map $(w,\eta)\mapsto\Psi_w(\eta)$ is continuously differentiable on
$\R^{2d}\times\R^{2d}$. By induction on $k$, the iterate
$z_k=\Psi_{z_{k-1}}(\xi_k)$ depends only on $(\xi_1,\dots,\xi_k)$ and is
$C^1$ in these arguments; the same holds for $\tilde z_k$
via~\eqref{eq: coalescence map one step recursive relation}. Moreover,
inspection of~\eqref{eq: Psi_z explicit} shows that $x_k$, and hence
$\tilde x_k$, depends only on $(\xi_1,\dots,\xi_{k-1},\xi_k^{(1)})$.

Substituting into~\eqref{eq: coalescence map explicit general y_k} gives
that $\tilde\zeta_k^{(1)}-\xi_k^{(1)}$ is $C^1$ in $(\xi_1,\dots,\xi_{k-1})$
alone, and $\tilde\zeta_k^{(2)}-\xi_k^{(2)}$ is $C^1$ in
$(\xi_1,\dots,\xi_{k-1},\xi_k^{(1)})$. Hence $\Psi^n_{z,\tilde z}$ is
$C^1$, with partial derivatives
\begin{equation*}
    \frac{\partial\tilde\zeta_k^{(1)}}{\partial\xi_k^{(1)}}=\id_d,
    \qquad
    \frac{\partial\tilde\zeta_k^{(1)}}{\partial\xi_k^{(2)}}=0,
    \qquad
    \frac{\partial\tilde\zeta_k^{(2)}}{\partial\xi_k^{(2)}}=\id_d,
    \qquad
    \frac{\partial\tilde\zeta_k}{\partial\xi_j}=0\ \text{for }j>k.
\end{equation*}
Thus $D\Psi^n_{z,\tilde z}$ is lower triangular with all diagonal entries
equal to~$1$. Combined with
Lemma~\ref{lemma: trajectory based coalescene maps are invertible},
$\Psi^n_{z,\tilde z}$ admits a $C^1$ inverse and is therefore a
diffeomorphism.
\end{proof}

\begin{corollary}\label{cor: coalescence map bound}
Under the assumptions of
Lemma~\ref{lemma: trajectory based coalescene maps are differentiable},
for $\xi\sim\calN(0,\id_{2dn})$,
\begin{equation}\label{eq: TV bound in terms of the coalescence map without trace and determinant terms}
    \dTV{\pi_n(\delta_z),\,\pi_n(\delta_{\tilde z})}
    \;\le\;
    \tfrac{1}{2}\bigl(\E\sqbr{|\Psi^n_{z,\tilde z}(\xi)-\xi|^2}\bigr)^{1/2}.
\end{equation}
\end{corollary}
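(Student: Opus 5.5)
The plan is to combine Lemma~\ref{lem: derivative coalescence map bounds TV distance} with the structural information from Lemma~\ref{lemma: trajectory based coalescene maps are differentiable}. Under the standing assumptions, $\Psi^n_{z,\tilde z}$ is a $C^1$ diffeomorphic coalescence map, so Lemma~\ref{lem: derivative coalescence map bounds TV distance} applies. It bounds $\dTV{\pi_n(\delta_z),\pi_n(\delta_{\tilde z})}$ by one half of the square root of
\begin{equation*}
\E\bigl[|\Psi^n_{z,\tilde z}(\xi)-\xi|^2+2\tr(D\Psi^n_{z,\tilde z}(\xi)-\id_{2dn})-2\log|\det D\Psi^n_{z,\tilde z}(\xi)|\bigr].
\end{equation*}

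The next step is to show that the last two terms vanish pointwise in $\xi$. By Lemma~\ref{lemma: trajectory based coalescene maps are differentiable}, $D\Psi^n_{z,\tilde z}(\xi)$ is lower triangular with all diagonal entries equal to $1$, for every $\xi\in\R^{2dn}$. I would order the coordinates as $(\xi_1^{(1)},\xi_1^{(2)},\dots,\xi_n^{(1)},\xi_n^{(2)})$, which is the ordering in which the partial-derivative identities of that lemma exhibit triangularity. In this ordering, $D\Psi^n_{z,\tilde z}(\xi)-\id_{2dn}$ is strictly lower triangular, so its trace is $0$. Likewise, $\det D\Psi^n_{z,\tilde z}(\xi)$ equals the product of the diagonal entries, which is $1$, so $\log|\det D\Psi^n_{z,\tilde z}(\xi)|=0$. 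The expectation therefore reduces to $\E[|\Psi^n_{z,\tilde z}(\xi)-\xi|^2]$, which gives \eqref{eq: TV bound in terms of the coalescence map without trace and determinant terms}.

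The only point that needs care is a bookkeeping one. Triangularity as stated is block-wise, with $d\times d$ identity blocks on the diagonal and a zero upper block $\partial\tilde\zeta_k^{(1)}/\partial\xi_k^{(2)}=0$. I would note explicitly that a block lower-triangular matrix with identity diagonal blocks has unit determinant and trace $2dn$; this is immediate by block expansion. If the expectation on the right is infinite, the inequality holds trivially, so no integrability issue arises.
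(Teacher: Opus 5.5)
Your proposal is correct and follows essentially the paper's own argument. The paper likewise applies \cref{lem: derivative coalescence map bounds TV distance} to the diffeomorphic coalescence map and uses the lower-triangular, unit-diagonal Jacobian from \cref{lemma: trajectory based coalescene maps are differentiable} to get $\det D\Psi^n_{z,\tilde z}\equiv 1$ and $\tr D\Psi^n_{z,\tilde z}\equiv 2dn$, so the trace and log-determinant terms vanish; your choice of coordinate ordering $(\xi_1^{(1)},\xi_1^{(2)},\dots,\xi_n^{(1)},\xi_n^{(2)})$ is a correct and slightly more explicit reading of that triangular structure.
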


Up to this point, the trajectory $y$ has been arbitrary: the construction
of $\Psi^n_{z,\tilde z}$, its diffeomorphism property, and the
bound~\eqref{eq: TV bound in terms of the coalescence map without trace and determinant terms}
all hold for any $y\in\R^{2dn}$ satisfying $y_n=0$. Different choices of
$y$ produce different coalescence maps and therefore different right-hand
sides; the freedom in the choice of $y$ is the lever by which the bound
is made sharp. To complete the proof of
Theorem~\ref{thm: TV bound via coalescence map}, it remains to choose $y$
so that this right-hand side is as small as possible. This is the content
of Section~\ref{subsec: optimized trajectory}.

\medskip
\begin{remark}\label{rem: other splitting schemes}
    In order to construct coalescence maps $\Psi^n_{z,\tilde z}$ for the OBABO splitting scheme and arrive at the bound \eqref{eq: TV bound in terms of the coalescence map without trace and determinant terms}, we argued that
    \begin{enumerate}
        \item \label{it: well-defined} $\Psi^n_{z,\tilde{z}}$ is well-defined through the relation \eqref{eq: coalescence map recursive relation};
        \item \label{it: diff} $\Psi^n_{z,\tilde{z}}$ is a diffeomorphism; and
        \item the Jacobian matrix $D\Psi^n_{z,\tilde{z}}$ is a lower-triangular matrix, so that the last two terms in equation \eqref{eq: derivative coalescence map bounds TV distance} vanish.
    \end{enumerate}
    The same line of reasoning can be applied to other gHMC methods and splitting schemes. More specifically, if the single step update map $\Psi_z$ is invertible for each $z\in\R^{2d}$, then~\eqref{it: well-defined} is satisfied. It subsequently suffices to verify that $(z,\xi)\mapsto \Psi_z(\xi)$ and $(z,y)\mapsto \Psi^{-1}_z(y)$ are continuously differentiable to  ensure~\eqref{it: diff}. \par 
    These invertibility and differentiability properties of the single-step update map $\Psi_z$ are satisfied for the gHMC method, as discussed in \cite{ChakMonmarche:2025}, provided that $U$ is twice continuously differentiable and the integration time of the corresponding velocity Verlet integrator is sufficiently small. Unfortunately, the Jacobian matrices $D\Psi^n_{z,\tilde z}$ of the resulting coalescence maps are not necessarily lower triangular. As a consequence, the final two terms in \eqref{eq: derivative coalescence map bounds TV distance} do not vanish and need to be treated separately in order to derive TV bounds, requiring the additional assumption that $U\in C^2(\R^{d})$ has a Lipschitz continuous Hessian. \par 
    We note that the framework described above extends naturally beyond the gHMC setting of \cite{ChakMonmarche:2025} to include several well-known splitting schemes associated to the kinetic Langevin equation. More specifically, the BOAOB splitting scheme has a single step update map $\Psi_z$ that satisfies the aforementioned invertibility and differentiability properties, thus giving rise to diffeomorphic coalescence maps. Similar to the OBABO scheme, these coalescence maps have an explicit expression and a Jacobian matrix that is lower triangular with all diagonal entries equal to $1$, so that \eqref{eq: TV bound in terms of the coalescence map without trace and determinant terms} again holds true. \par 
    Regarding the OABAO splitting scheme, the additional assumption that $\frac{Lh^2}{4}<1$ is needed to ensure that the single step update maps $\Psi_z$, $z\in \R^{2d}$, are invertible. The resulting coalescence maps are implicit and the final two terms in \eqref{eq: derivative coalescence map bounds TV distance} do not vanish, so that the additional assumption that $U\in C^2(\R^{d})$ has a Lipschitz continuous Hessian is again needed for the analysis. These are standard assumptions that are also required in \cite{Bou-RabeeOberdoster:2024} to ensure existence of the one-shot map for the OABAO scheme. \par 
    The splitting schemes BAOAB, ABOBA and AOBOA do not have diffeomorphic update mappings, and therefore do not fit the framework in its current state. We expect that it is feasible to extend the framework to these schemes by using the fact that $n\geq2$ steps of these schemes consist of $n-1$ steps of the OABAO, OBABO and BOAOB schemes respectively accompanied by two remaining partial updates at the beginning and the end. Finally, the framework can also be transferred over to the UBU splitting scheme under similar assumptions as for the OABAO splitting scheme. 
\end{remark}


\subsection{The optimized trajectory}
\label{subsec: optimized trajectory}

The bound of Corollary~\ref{cor: coalescence map bound} depends on the
trajectory $y$ through the $L^2$ deviation
$\E[|\Psi^n_{z,\tilde z}(\xi)-\xi|^2]$. The aim of this subsection is to
choose $y$ so as to make this deviation small.  
To see why the choice matters, consider $y_k=0$ for all
$k\in\{1,\dots,n\}$. This forces the two chains to coalesce after the
first OBABO step, since $\Psi_{\tilde z}(\tilde\zeta_1)=\Psi_z(\xi_1)$,
so the entire initial difference $\tilde z-z$ must be absorbed by the
single noise increment $\tilde\zeta_1$. When $|\tilde z-z|$ is large
relative to the per-step noise scale, the law of $\tilde\zeta_1$ is far
from $\calN(0,\id_{2d})$, and Corollary~\ref{cor: coalescence map bound}
yields a weak bound.

A better strategy spreads the coalescence over all $n$ steps, keeping
each $\tilde\zeta_k$ close in law to a standard Gaussian. Such a
trajectory is also chosen in~\cite{ChakMonmarche:2025}, where it takes
the form of an explicit ansatz that suffices to establish
Theorem~\ref{thm: CM Wasserstein-to-TV regularization} but is not
derived from an optimality principle. Here we take an optimization
approach: we identify the trajectory that minimizes the $L^2$ deviation
$\E\sqbr{|\Psi^n_{z,\tilde z}(\xi)-\xi|^2}$ in the force-free case
$\nabla U\equiv 0$, characterized as the unique solution of a discrete
linear-quadratic control problem. We then use this trajectory for general
$U$, and obtain the corresponding optimality statement when $U$ is
quadratic (Remark~\ref{rem: coalescence gives optimal TV for linear case}).
See Figure~\ref{fig:optimal-trajectory-phase} for a phase-space
visualization of the two trajectories, and
Remark~\ref{remark: CM trajectory} for a detailed comparison.

\begin{figure}[t]
\centering
\begin{tikzpicture}
\begin{axis}[
    width=0.72\linewidth, height=6cm,
    xlabel={position component $u_k$},
    ylabel={velocity component $w_k$},
    xmin=-0.15, xmax=1.35,
    ymin=-0.85, ymax=0.65,
    legend pos=south east,
    legend style={font=\footnotesize, draw=none, fill=none,
                  at={(axis description cs:0.1,0.95)},
                  anchor=north west},
    label style={font=\small},
    tick label style={font=\footnotesize},
    grid=both,
    grid style={gray!20},
    every axis plot/.append style={very thick, mark=*, mark size=1.4pt},
]
  \addplot[blue!70!black] coordinates {
    (1.00, 0.50) (1.11, 0.05) (1.06, -0.21) (0.90, -0.36)
    (0.69, -0.42) (0.46, -0.43) (0.25, -0.37)
    (0.08, -0.24) (0, 0)
  };
  \addlegendentry{optimization-based, \eqref{eq: y_k choices optimized trajectory}}
  \addplot[red!70!black, dashed, mark=square*] coordinates {
    (1.00, 0.50) (1.20, 0.04) (1.21, -0.31) (1.09, -0.55)
    (0.87, -0.67) (0.61, -0.67) (0.35, -0.56)
    (0.13, -0.34) (0, 0)
  };
  \addlegendentry{trajectory of \cite{ChakMonmarche:2025}, \eqref{eq: CM u_k and w_k choices}}
  \node[anchor=south west, font=\footnotesize] at (axis cs: 1.00, 0.50)
       {$y_0=\Delta z$};
  \node[anchor=north east, font=\footnotesize] at (axis cs: 0, 0)
       {$y_n=0$};
\end{axis}
\end{tikzpicture}
\caption{Phase-space view of two coalescence trajectories
$(y_k)_{k=0}^n=(u_k,w_k)_{k=0}^n$ in $\R^d\times\R^d$ ($d=1$ depicted),
both satisfying $y_0=\Delta z=(\Delta x,\Delta v)$ and $y_n=0$, with
$\gamma=1$, $h=0.5$, $n=8$, $\Delta z=(1,0.5)$. The
trajectory~\eqref{eq: y_k choices optimized trajectory} minimizes
$\sum_{k=1}^n|E_k|^2$ over all admissible trajectories; the trajectory
of~\cite{ChakMonmarche:2025} given by~\eqref{eq: CM u_k and w_k choices}
is determined by an explicit ansatz. The two paths differ structurally:
in~\eqref{eq: CM u_k and w_k choices} the position component follows
the noiseless dynamics $u_{k+1}=u_k+h\,e^{-\gamma h/2}w_k$ (visible as
the initial increase in $u$ driven by $w_0=\Delta v>0$),
while~\eqref{eq: y_k choices optimized trajectory} introduces direct
noise corrections to the position that damp the overshoot. 
See Remark~\ref{remark: CM trajectory} for a
detailed comparison.}
\label{fig:optimal-trajectory-phase}
\end{figure}

To identify the trajectory $y$ that minimizes the $L^2$ deviation
$\E\sqbr{|\Psi^n_{z,\tilde z}(\xi)-\xi|^2}$, recall
from~\eqref{eq: coalescence map general y_k} that
\begin{equation*}
   (\Psi_{z,\tilde z}^n(\xi) - \xi)_k
   = L_h^{-1}\br{y_{k}-A_hy_{k-1}+\frac{h}{2}\begin{pmatrix}
        h\sqbr{\nabla U(\tilde x_{k-1})-\nabla U(x_{k-1})}\\
        e^{-\gamma h/2}\sqbr{\nabla U(\tilde x_{k-1})-\nabla U(x_{k-1})
            +\nabla U(\tilde x_{k})-\nabla U(x_{k})}
     \end{pmatrix}}
\end{equation*}
for $k\in\{1,\dots,n\}$, with $(x_k,v_k)=z_k$ and
$(\tilde x_k,\tilde v_k)=\tilde z_k$ defined
in~\eqref{eq: definition z_k and tilde z_k}.

The nonlinear $\nabla U$ terms obstruct an explicit minimization of
$\E\sqbr{|\Psi^n_{z,\tilde z}(\xi)-\xi|^2}$ over $y$. We sidestep this by
first solving the optimization in the force-free case $\nabla U\equiv 0$,
where $(\Psi^n_{z,\tilde z}(\xi)-\xi)_k$ depends linearly on $y$ and the
minimization reduces to a discrete linear-quadratic control problem; the
$\nabla U$ contribution in the general case is then absorbed via the
Lipschitz property of $\nabla U$ in the proof of
Theorem~\ref{thm: TV bound via coalescence map}.

In the force-free case $\nabla U\equiv 0$, the $L^2$ deviation reduces to
the deterministic quantity
\begin{equation*}
    \sum_{k=0}^{n-1}|L_h^{-1}(y_{k+1}-A_hy_k)|^2,
\end{equation*}
which we minimize over trajectories $y$ subject to
$y_0=\Delta z\coloneqq\tilde z-z$ and $y_n=0$. Introducing the change of
variables $E_{k+1}\coloneqq -L_h^{-1}(y_{k+1}-A_hy_k)$ for
$k\in\{0,\ldots,n-1\}$, the recursion becomes $y_{k+1}=A_hy_k-L_hE_{k+1}$,
and the minimization reads
\begin{align*}
    \min_{E=(E_1,\ldots,E_n)\in\R^{2dn}} |E|^2
    \quad\text{subject to}\quad
    \begin{aligned}
        &y_0=\Delta z,\\
        &y_{k+1}=A_hy_k-L_hE_{k+1},\quad k=0,\ldots,n-1,\\
        &y_n=0.
    \end{aligned}
\end{align*}
This is the minimum-energy control problem for the discrete linear
system $(A_h,L_h)$ with terminal constraint at time $n$, a standard
problem in control theory~\cite{BaggioEtAl:2019,Kailath:1980}. Its
solution is
\begin{equation}\label{eq: def Ek}
    E_k = L_h^T(A_h^T)^{n-k}\Sigma_{h,n}^{-1}A_h^n\Delta z,
    \qquad k=1,\ldots,n,
\end{equation}
where $\Sigma_{h,n}=\sum_{k=1}^{n} A_h^{n-k}L_hL_h^T(A_h^T)^{n-k}$ is the
controllability Gramian of $(A_h,L_h)$. Equivalently, $\Sigma_{h,n}$ is
the covariance matrix of $Z^h_n$ in the force-free case.

In view of the discussion above, we take $\Psi^n_{z,\tilde z}$ to be the coalescence map defined by the trajectory $y=(y_1,\dots,y_n)$ (see Definition~\ref{lemdef: coalescence map defined by trajectory}) generated recursively from $y_0=\Delta z$ by
\begin{equation}
\label{eq: y_k choices optimized trajectory}
    y_{k+1}=A_hy_k-L_hE_{k+1}.
\end{equation}
The terminal condition $y_n=0$ holds by construction: iterating~\eqref{eq: y_k choices optimized trajectory}
and using~\eqref{eq: def Ek},
\begin{equation*}
    y_n=A_h^ny_0-\sum_{k=1}^n A_h^{n-k}L_hE_{k}=A_h^n\Delta z-\br{\sum_{k=1}^n A_h^{n-k}L_hL_h^T(A_h^T)^{n-k}}\Sigma_{h,n}^{-1}A_h^n\Delta z=0,
\end{equation*}

Substituting~\eqref{eq: y_k choices optimized trajectory} and the
explicit form of $L_h^{-1}$ from~\eqref{eq: OBABO matrices}
into~\eqref{eq: coalescence map general y_k}, the components of
$\tilde\zeta=\Psi^n_{z,\tilde z}(\xi)$ become
\begin{equation}\label{eq: def tilde zeta}
\begin{aligned}
    \tilde\zeta_{k+1}
    &= \xi_{k+1}-E_{k+1}+\frac{h}{2}L_h^{-1}
       \begin{pmatrix}
            h\sqbr{\nabla U(\tilde x_k)-\nabla U(x_k)}\\
            e^{-\gamma h/2}\sqbr{\nabla U(\tilde x_k)-\nabla U(x_k)+\nabla U(\tilde x_{k+1})-\nabla U(x_{k+1})}
       \end{pmatrix}\\
    &= \xi_{k+1}-E_{k+1}+\frac{h}{2(1-e^{-\gamma h})^{1/2}}
       \begin{pmatrix}
            \nabla U(\tilde x_k)-\nabla U(x_k)\\
            e^{-\gamma h/2}\sqbr{\nabla U(\tilde x_{k+1})-\nabla U(x_{k+1})}
       \end{pmatrix}.
\end{aligned}
\end{equation}
The Lipschitz continuity of $\nabla U$ then yields
\begin{align*}
    |\tilde\zeta_{k+1}-\xi_{k+1}|^2&\leq2|E_{k+1}|^2+\frac{h^2}{2(1-e^{-\gamma h})}\l|\begin{pmatrix}\nabla U(\tilde x_k)-\nabla U(x_k)\\ e^{-\gamma h/2}\sqbr{\nabla U(\tilde x_{k+1})-\nabla U(x_{k+1})} \end{pmatrix}\r|^2\\
    &=2|E_{k+1}|^2+\frac{h^2}{2(1-e^{-\gamma h})}\br{\l|\nabla U(\tilde x_k)-\nabla U(x_k)\r|^2+e^{-\gamma h}\l|\nabla U(\tilde x_{k+1})-\nabla U(x_{k+1})\r|^2}\\
    &\leq2|E_{k+1}|^2+\frac{h^2L^2}{2(1-e^{-\gamma h})}\br{\l|u_k\r|^2+e^{-\gamma h}\l|u_{k+1}\r|^2},
\end{align*}
where $L$ is the Lipschitz constant of $\nabla U$ and where $u_k=\tilde x_k-x_k$ is the position component of $y_k$. Summing
over $k=0,\ldots,n-1$ and using $u_n=0$ together with
$1+e^{-\gamma h}\le 2$ gives the following.

\begin{lemma}\label{lem: coalescence map square distance}
Let $z,\tilde z\in\R^{2d}$ and $n\in\N_{>0}$, and assume
$U\colon\R^d\to\R$ is twice continuously differentiable with $\nabla U$
$L$-Lipschitz. Let $y$ be the trajectory defined by $y_0=\Delta z$
together with~\eqref{eq: def Ek}
and~\eqref{eq: y_k choices optimized trajectory}, and let
$\Psi^n_{z,\tilde z}$ be the coalescence map defined by $y$.  Then
\begin{equation}
\label{eq: coalescence map squared distance}
    |\Psi^n_{z,\tilde z}(\xi)-\xi|^2 \leq 2\sum_{k=1}^n|E_k|^2+\frac{h^2L^2}{1-e^{-\gamma h}}\sum_{k=0}^{n-1}\l|u_{k}\r|^2,
\end{equation}
where $y_k=(u_k,w_k)$ and $E_k$ is given by~\eqref{eq: def Ek}.
\end{lemma}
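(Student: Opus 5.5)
The plan is to compute the increments $(\Psi^n_{z,\tilde z}(\xi)-\xi)_{k+1}=\tilde\zeta_{k+1}-\xi_{k+1}$ one step at a time, bound each of them, and add up. The starting point is the explicit formula \eqref{eq: def tilde zeta}. It rests on the following substitution: along the trajectory \eqref{eq: y_k choices optimized trajectory} the linear part of \eqref{eq: coalescence map general y_k} reduces to $L_h^{-1}(y_{k+1}-A_hy_k)=-E_{k+1}$. Next I would apply $L_h^{-1}$ to the force vector
\[
\tfrac h2\bigl(h\,a_k,\ e^{-\gamma h/2}(a_k+a_{k+1})\bigr),\qquad a_k:=\nabla U(\tilde x_k)-\nabla U(x_k),
\]
using the block form
\[
L_h^{-1}=(1-e^{-\gamma h})^{-1/2}\begin{pmatrix} h^{-1}\id_d & 0\\ -h^{-1}e^{-\gamma h/2}\id_d & \id_d\end{pmatrix}.
\]
In the second component the $a_k$ contribution cancels. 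What remains is
\[
\frac{h}{2(1-e^{-\gamma h})^{1/2}}\bigl(a_k,\ e^{-\gamma h/2}a_{k+1}\bigr).
\]
This cancellation is the only step I would actually recompute, since the whole bound depends on it.

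With the formula in hand, I would use $|p+q|^2\le 2|p|^2+2|q|^2$ to get
\[
|\tilde\zeta_{k+1}-\xi_{k+1}|^2\le 2|E_{k+1}|^2+\frac{h^2}{2(1-e^{-\gamma h})}\bigl(|a_k|^2+e^{-\gamma h}|a_{k+1}|^2\bigr).
\]
By Lipschitz continuity of $\nabla U$ we have $|a_k|\le L|\tilde x_k-x_k|$. By \eqref{eq: coalescence map recursive relation}, $\tilde x_k-x_k=u_k$ is deterministic, with $u_0$ the position part of $\Delta z$. This gives $|a_k|^2\le L^2|u_k|^2$.

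Finally I would sum over $k=0,\dots,n-1$. This yields
\[
2\sum_{k=1}^n|E_k|^2+\frac{h^2L^2}{2(1-e^{-\gamma h})}\Bigl(\sum_{k=0}^{n-1}|u_k|^2+e^{-\gamma h}\sum_{k=1}^{n}|u_k|^2\Bigr).
\]
Since $u_n=0$, the second sum runs over $k\le n-1$, and it is bounded by $\sum_{k=0}^{n-1}|u_k|^2$. The bracket is therefore at most $(1+e^{-\gamma h})\sum_{k=0}^{n-1}|u_k|^2\le 2\sum_{k=0}^{n-1}|u_k|^2$, which gives \eqref{eq: coalescence map squared distance}.

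None of the steps is a real obstacle. The only place needing care is the matrix algebra with $L_h^{-1}$, which is where the $a_k$ term drops out of the velocity component. The estimate is pointwise in $\xi$, so no expectation needs to be taken.
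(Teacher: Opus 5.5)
Your proposal is correct and is essentially the paper's own proof. The paper likewise starts from the explicit expression for $\tilde\zeta_{k+1}-\xi_{k+1}$, in which the $\nabla U(\tilde x_k)-\nabla U(x_k)$ term cancels in the velocity component after applying $L_h^{-1}$; it then uses $|p+q|^2\le 2|p|^2+2|q|^2$ with the Lipschitz bound $|\nabla U(\tilde x_k)-\nabla U(x_k)|\le L|u_k|$, and sums over $k=0,\dots,n-1$ using $u_n=0$ and $1+e^{-\gamma h}\le 2$.
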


The first term on the right-hand side
of~\eqref{eq: coalescence map squared distance} comes from the
force-free part of the dynamics, and the second captures the perturbation
due to $\nabla U$. To complete the proof of
Theorem~\ref{thm: TV bound via coalescence map} via
Corollary~\ref{cor: coalescence map bound}, it remains to bound the two
sums on the right-hand side of~\eqref{eq: coalescence map squared distance}.
This is the content of
Lemmas~\ref{lemma: potential free part bound}
and~\ref{lemma: potential part bound}.

\medskip
\begin{lemma}\label{lemma: potential free part bound}
Let $z,\tilde z\in\R^{2d}$ and $n\in\N_{>0}$, and let $E_k$ be given
by~\eqref{eq: def Ek} for $k\in\{1,\ldots,n\}$. Then
\begin{equation*}
    \sum_{k=1}^{n}|E_k|^2
    \le \br{\frac{44}{\gamma(hn)^3}+\frac{264+44\gamma^2}{\gamma hn}}|\Delta z|^2.
\end{equation*}
\end{lemma}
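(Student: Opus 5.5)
The plan avoids inverting the Gramian $\Sigma_{h,n}$ by using the variational characterization of~\eqref{eq: def Ek}. Since $(E_k)_{k=1}^n$ is the unique minimizer of the minimum-energy control problem, we have
\[
\sum_{k=1}^n|E_k|^2\le\sum_{k=0}^{n-1}\bigl|L_h^{-1}(y_{k+1}-A_hy_k)\bigr|^2
\]
for \emph{every} trajectory $(y_k)_{k=0}^n$ with $y_0=\Delta z$ and $y_n=0$. It therefore suffices to exhibit one explicit admissible trajectory whose cost has the claimed bound. Write $y_k=(u_k,w_k)$ and introduce the position residual $r_k:=u_{k+1}-u_k-he^{-\gamma h/2}w_k$ and the velocity residual $s_k:=w_{k+1}-e^{-\gamma h}w_k$. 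By~\eqref{eq: OBABO matrices} the cost is
\[
(1-e^{-\gamma h})^{-1}\sum_{k=0}^{n-1}\Bigl(|r_k/h|^2+|s_k-e^{-\gamma h/2}r_k/h|^2\Bigr).
\]

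In the main regime $\gamma h\le 1$, I would take $r_k\equiv0$, so that the position follows the noiseless dynamics and all control acts through the velocity, as in the ansatz of~\cite{ChakMonmarche:2025}. Concretely, set $w_k=\Delta v\,p(k/n)+\frac{\Delta x}{hn}\,q(k/n)$ with fixed polynomials $p,q$ on $[0,1]$ satisfying:
\begin{itemize}
\item $p(0)=1$, $p(1)=0$, $q(0)=q(1)=0$;
\item the discrete sum constraint $he^{-\gamma h/2}\sum_{k<n}w_k=-\Delta x$, which gives $u_n=0$.
\end{itemize}
The sum constraint must be enforced exactly at the discrete level. I would do this by choosing the coefficients of $p$ and $q$ to solve a small linear system involving $\sum_k (k/n)^j$, or alternatively by adding a correction term $c\,\phi(k/n)$ with $\sum_k\phi(k/n)\neq0$. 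The cost then splits as
\[
s_k=(w_{k+1}-w_k)+(1-e^{-\gamma h})w_k,
\]
so that $|s_k|^2\le 2|w_{k+1}-w_k|^2+2(1-e^{-\gamma h})^2|w_k|^2$. The estimates are as follows:
\begin{itemize}
\item the increments satisfy $|w_{k+1}-w_k|\lesssim \frac1n\bigl(|\Delta v|+|\Delta x|/(hn)\bigr)$;
\item dividing by $1-e^{-\gamma h}\ge \gamma h/2$ (valid since $\gamma h\le1$) gives
\[
\sum_k\frac{|w_{k+1}-w_k|^2}{1-e^{-\gamma h}}\lesssim \frac{1}{\gamma hn}\Bigl(|\Delta v|^2+\frac{|\Delta x|^2}{(hn)^2}\Bigr);
\]
\item the second piece satisfies $(1-e^{-\gamma h})\sum_k|w_k|^2\le\gamma h\, n(|\Delta v|^2+|\Delta x|^2/(hn)^2)$, which is at most $\gamma(hn)\cdot\gamma^{-1}(hn)^{-2}\,(\dots)$ and is bounded by the $\gamma^2/(\gamma hn)$ and $1/(\gamma(hn)^3)$ terms after using $hn\,\gamma h\cdots$ bookkeeping.
\end{itemize}
This produces $\frac{C}{\gamma(hn)^3}+\frac{C'+C''\gamma^2}{\gamma hn}$, and tracking the polynomial constants should give $44$ and $264$.

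In the regime $\gamma h>1$, the factor $(1-e^{-\gamma h})^{-1}\le (1-e^{-1})^{-1}$ is harmless, so I would instead use a trajectory that corrects both components quickly. The simplest choice is $y_1=\dots=y_n=0$, whose cost is $|L_h^{-1}A_h\Delta z|^2\lesssim |\Delta x|^2/h^2+|\Delta v|^2$. If $n$ is large, this must be spread out to obtain decay in $n$: one could take $u_k$ linearly interpolated over the $n$ steps with $w_k$ chosen so that $s_k$ stays small, using $r_k\neq0$. The comparison with the target is then done using $\gamma h>1$, for example $1/h^2\le\gamma^2\cdot 1\le \gamma(\cdot)/h$ for $n=1$, and an analogous bound for general $n$.

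The main obstacle I expect is uniformity in $h$. With the pure ``$r_k=0$'' ansatz, the lower bound $1-e^{-\gamma h}\gtrsim \gamma h/(1+\gamma h)$ introduces stray factors $(1+\gamma h)$ that the stated bound does not contain. I would first try to resolve this with the case split $\gamma h\le1$ versus $\gamma h>1$ described above. In the large-step case I would choose $u_k$ to decay linearly to zero over $n$ steps and the velocity to follow $w_k\approx (u_{k+1}-u_k)/(he^{-\gamma h/2})$ only where this is cheap, and otherwise absorb the residual through $r_k$, whose weight $1/h^2$ is harmless there. The other delicate point is enforcing the discrete constraint $u_n=0$ exactly with explicit numerical constants, so that the final constants $44$ and $264+44\gamma^2$ come out.
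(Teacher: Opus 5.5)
Your reduction is sound. Since \eqref{eq: def Ek} is the minimum-energy solution, $\sum_k|E_k|^2$ is bounded by the cost of \emph{any} admissible trajectory. The gap is that the trajectory you propose for $\gamma h\le 1$ does not give the claimed bound once the horizon is long, $\gamma hn\gg 1$. Your case split is on the wrong parameter: it should be $\gamma hn$, not $\gamma h$.

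With $r_k\equiv0$ the cost is $(1-e^{-\gamma h})^{-1}\sum_k\bigl|(w_{k+1}-w_k)+(1-e^{-\gamma h})w_k\bigr|^2$. The inequality $|a+b|^2\ge\frac12|b|^2-|a|^2$ bounds it below by $\frac{1-e^{-\gamma h}}{2}\sum_k|w_k|^2-(1-e^{-\gamma h})^{-1}\sum_k|w_{k+1}-w_k|^2$. Take $\Delta x=0$. Then $w_k=\Delta v\,p(k/n)$ with $p(0)=1$ and $p$ of bounded degree and coefficients, so $\sum_k|w_k|^2\ge c\,n|\Delta v|^2$. Hence the cost is at least $c'\gamma hn|\Delta v|^2-\frac{C}{\gamma hn}|\Delta v|^2$.

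This grows linearly in $T=hn$, whereas the right-hand side of the lemma, $\bigl(\frac{44}{\gamma T^3}+\frac{264}{\gamma T}+\frac{44\gamma}{T}\bigr)|\Delta v|^2$, tends to $0$. Your ``$hn\,\gamma h\cdots$ bookkeeping'' hides exactly this: $(1-e^{-\gamma h})\sum_k|w_k|^2\approx\gamma T|\Delta v|^2$ is absorbed into the $1/(\gamma T)$ term only when $\gamma T\lesssim1$. This is the same polynomial-in-$k/n$ structure as \eqref{eq: CM u_k and w_k choices}, which is used under the restriction $hn\le 1$. So the proposal covers at best the short-horizon regime. Nothing valid is offered for $\gamma h\le 1<\gamma hn$, and the case $\gamma h>1$ is only sketched.

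The missing idea is that the trial velocity must relax on its natural time scale $1/\gamma$, not on the horizon. Holding $w_k$ at size $|\Delta v|$ for a time $T$ costs about $\gamma T|\Delta v|^2$ to fight friction. For instance, take $r_k\equiv0$ and $w_k=e^{-\gamma hk}(1-k/n)\Delta v$. Then $w_{k+1}-e^{-\gamma h}w_k=-e^{-\gamma h(k+1)}\Delta v/n$, and the cost is exactly $\frac{e^{-2\gamma h}(1-e^{-2\gamma hn})}{(1-e^{-\gamma h})(1-e^{-2\gamma h})n^2}|\Delta v|^2\le\frac{|\Delta v|^2}{\gamma hn}$ for all $h,n$.

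What remains is to remove the residual position offset $\Delta x+I_n\Delta v$, where $0\le I_n\le\min(\gamma^{-1},hn)$. A velocity bump of height $O(|\Delta x+I_n\Delta v|/(hn))$ does this, heuristically at cost $\bigl(\frac{1}{\gamma T^3}+\frac{\gamma}{T}\bigr)|\Delta x+I_n\Delta v|^2$, which has the right form. When $\gamma h$ is large one can instead use $r_k\neq0$. All of this must be done uniformly in $h$, including $n=1$.

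Even then you would have to verify that your constants do not exceed $44$, $264$, $44$. Saying that ``tracking the polynomial constants should give $44$ and $264$'' is not an argument, because those numbers come from the paper's specific route. That route uses the exact identity \eqref{eq: identity for E_k^2}, bounds $\lambda_{\max}$ of the explicit $2\times2$ block of $(A_h^n)^T\Sigma_{h,n}^{-1}A_h^n$ by its trace, and applies the exponential inequalities of the appendix to $b_n,c_n,\sigma^{xx}_n,\sigma^{vv}_n$ and the determinant. It handles all regimes of $\gamma h$ and $\gamma hn$ at once and needs no trajectory construction.
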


The key identity for the proof, which follows directly from the
definition of $E_k$ in~\eqref{eq: def Ek}, is
\begin{equation}\label{eq: identity for E_k^2}
    \sum_{k=1}^n|E_k|^2
    = \Delta z^T(A_h^T)^n\Sigma_{h,n}^{-1}
      \underbrace{\br{\sum_{k=1}^n A_h^{n-k}L_h L_h^T(A_h^T)^{n-k}}}_{=\,\Sigma_{h,n}}
      \Sigma_{h,n}^{-1}A_h^n\Delta z
    = \bigl|\Sigma_{h,n}^{-1/2}A_h^n\Delta z\bigr|^2.
\end{equation}
The remaining argument is technical and is given in
Appendix~\ref{appendix: potential free bound}.

\medskip
\begin{lemma}\label{lemma: potential part bound}
Let $z,\tilde z\in\R^{2d}$ and $n\in\N_{>0}$, and let $y_k=(u_k,w_k)$,
$k\in\{0,\ldots,n\}$, be the trajectory defined by~\eqref{eq: def Ek}
and~\eqref{eq: y_k choices optimized trajectory}. The position
components satisfy
\begin{equation}\label{eq: uk bound}
    |u_k|
    \le |\Delta x|+\frac{he^{-\gamma h/2}}{1-e^{-\gamma h}}(1-e^{-\gamma hn})|\Delta v|
    \le 2\br{1+\frac{hn}{2+\gamma hn}}|\Delta z|,
    \qquad k\in\{0,\ldots,n\}.
\end{equation}
\end{lemma}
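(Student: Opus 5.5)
The first inequality in \eqref{eq: uk bound} reduces to a scalar comparison, via an explicit formula for $y_k$ and the observation that the right-hand side is the largest position component of the \emph{uncontrolled} trajectory $A_h^k\Delta z$, $0\le k\le n$. The second inequality is elementary.

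\emph{Step 1: closed form for $y_k$.} Iterating \eqref{eq: y_k choices optimized trajectory} and inserting \eqref{eq: def Ek} gives
\begin{equation*}
  y_k = A_h^k\Delta z-\Sigma_{h,k}(A_h^T)^{n-k}\Sigma_{h,n}^{-1}A_h^n\Delta z,
  \qquad \Sigma_{h,k}=\sum_{j=1}^{k}A_h^{k-j}L_hL_h^T(A_h^T)^{k-j}.
\end{equation*}
Probabilistically, $y_k$ is the conditional mean $\E[Y_k\mid Y_n=0]$ of the force-free linear chain $Y_{k+1}=A_hY_k+L_h\xi_{k+1}$ started at $Y_0=\Delta z$. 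Indeed, $\Cov(Y_k,Y_n)=\Sigma_{h,k}(A_h^T)^{n-k}$, so the formula is the Gaussian conditioning formula. I will use this bridge picture as a guide.

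\emph{Step 2: reduction to $d=1$.} Both $A_h$ and $L_h$ are $2\times2$ block matrices with scalar multiples of $\id_d$ as blocks. Hence every matrix above has the form $M\otimes\id_d$ with $M\in\R^{2\times 2}$, and
\begin{equation*}
  u_k=a_k\Delta x+b_k\Delta v
\end{equation*}
for real coefficients $a_k,b_k$ depending only on $(\gamma,h,n,k)$. It therefore suffices to prove
\begin{equation*}
  |a_k|\le 1,\qquad |b_k|\le c_n:=\frac{he^{-\gamma h/2}(1-e^{-\gamma hn})}{1-e^{-\gamma h}} .
\end{equation*}
The uncontrolled coefficients are $1$ and $c_k:=he^{-\gamma h/2}\sum_{j<k}e^{-\gamma hj}\le c_n$. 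So the claim says the correction term neither flips the sign of these coefficients nor inflates them.

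\emph{Step 3: the coefficient bounds, which I expect to be the main obstacle.} I would compute the $2\times2$ Gramian $\Sigma_{h,k}$ explicitly; its entries are geometric sums in $e^{-\gamma h}$. I would then try to show that $a_k$ decreases from $a_0=1$ to $a_n=0$, and that $b_k$ stays in $[0,c_k]$ while vanishing at $k=n$. The first thing I would try is a comparison of the bridge with its unconditioned mean. Conditioning on $Y_n=0$ pulls the mean toward the straight interpolation between the initial value and $0$. To make this rigorous, I would write the correction as $\Sigma_{h,k}(A_h^T)^{n-k}\Sigma_{h,n}^{-1}$ applied to $A_h^n e_i$, and check the sign and size of its first component by Cramer's rule on $\Sigma_{h,n}$. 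Every entry of the Gramians is nonnegative, so this reduces to finitely many scalar inequalities between sums of exponentials. If this route becomes unwieldy, a fallback is the variational characterisation: clip $u_k$ to the allowed range and argue that this does not increase the energy $\sum_k|E_k|^2$. I am less confident that the fallback works.

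\emph{Step 4: the second inequality.} Write $\frac{he^{-\gamma h/2}}{1-e^{-\gamma h}}=\frac{h}{2\sinh(\gamma h/2)}\le\frac1\gamma$. This gives $c_n\le\frac{1}{\gamma}(1-e^{-\gamma hn})$. Separately, $c_n\le hn$, since each of the $n$ terms $he^{-\gamma h(j+1/2)}$ is at most $h$. With $t=\gamma hn$,
\begin{equation*}
  \min\Bigl(hn,\tfrac1\gamma\Bigr)\le\frac{2hn}{1+\gamma hn}\le\frac{4hn}{2+\gamma hn}:
\end{equation*}
for $t\le 1$ the first inequality holds because $1+t\le 2$, and for $t\ge1$ because $\frac{1}{\gamma}\le\frac{2hn}{1+t}$ is equivalent to $1+t\le 2t$. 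Finally, $|\Delta x|+c_n|\Delta v|\le(1+c_n)|\Delta z|\le 2\bigl(1+\frac{hn}{2+\gamma hn}\bigr)|\Delta z|$, as required.
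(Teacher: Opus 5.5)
Steps 1--2 and your reduction to $|a_k|\le 1$, $|b_k|\le c_n$ are correct; Step 1 is exactly the paper's formula for $y_k$. The gap is Step 3, which carries the whole content of the first inequality and is left as a plan.

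\emph{Step 3 does not yet prove anything.} Nonnegativity of the Gramian entries gives no sign control. The correction is subtracted, and $\Sigma_{h,n}^{-1}$ has the negative off-diagonal entry $-\sigma^{xv}_n/\det\Sigma_{h,n}$. So $a_k$ and $b_k$ are differences of exponential sums whose signs must be proved. Your ``finitely many scalar inequalities'' are really a family indexed by $k\le n$ with a continuous parameter $\eta=e^{-\gamma h}$.

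What is missing is the structure the paper exploits. After explicit computation, $a_{n-k}=\alpha_k/\alpha_n$ and $b_{n-k}=-\frac{h\eta^{1/2}}{1-\eta}\cdot\frac{\eta^n\beta_k-(1-\eta^n)\alpha_k}{\alpha_n}$, for explicit sequences with $\alpha_0=\beta_0=0$. The decisive facts are:
\begin{itemize}
\item both $(\alpha_k)$ and $(\beta_k)$ are increasing (\cref{lemma: alpha_k and beta_k are increasing}); the $\beta_k$ part is delicate and uses several inequalities of \cref{lemma: auxiliary exponential bounds};
\item the identity $\eta^n\beta_n=(1-\eta^n)\alpha_n$.
\end{itemize}
Together these give $0\le\alpha_k\le\alpha_n$ and $-(1-\eta^n)\alpha_k\le\eta^n\beta_k-(1-\eta^n)\alpha_k\le(1-\eta^n)(\alpha_n-\alpha_k)$. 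Hence $|a_j|\le1$ and $|b_j|\le c_n$.

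This sandwich does not exclude $b_j<0$. Your target $b_k\in[0,c_k]$ is therefore strictly stronger than what is needed or what the paper proves. It would require further inequalities, such as $(1-\eta^n)\alpha_j\ge\eta^n\beta_j$ for all $j$, which you have not identified.

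The clipping fallback is not a substitute. In this second-order system, truncating $u_k$ perturbs the two consecutive controls $E_k,E_{k+1}$. Unlike for first-order bridges, there is no energy monotonicity under truncation.

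\emph{Step 4 contains an error.} From $c_n\le\frac{4hn}{2+\gamma hn}$ you cannot conclude $1+c_n\le 2\bigl(1+\frac{hn}{2+\gamma hn}\bigr)$; that implication needs $\frac{hn}{2+\gamma hn}\le\frac12$. Take $\gamma=0.1$, $h=1$, $n=10$. The target requires $c_n\le 23/3$, but your bound gives only $c_n\le 40/3$, and even $\min(hn,1/\gamma)=10$ exceeds $23/3$.

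The loss comes from discarding the factor $1-e^{-\gamma hn}$ after correctly obtaining $c_n\le\gamma^{-1}(1-e^{-\gamma hn})$. Keep it and apply $1-e^{-x}\le\frac{2x}{2+x}$ from \eqref{eq: exp bound lemma 1}. This gives $c_n\le\frac{2hn}{2+\gamma hn}$, whence $|\Delta x|+c_n|\Delta v|\le\bigl(1+\frac{2hn}{2+\gamma hn}\bigr)|\Delta z|\le2\bigl(1+\frac{hn}{2+\gamma hn}\bigr)|\Delta z|$. This is exactly the paper's argument.
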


\begin{proof}
By the explicit computation in
Appendix~\ref{app: explicit optimized trajectory}
(equation~\eqref{eq: expression for u_k}), the position component of
$y_k$ is
\begin{equation}
\label{eq: explicit optimized trajectory u_k}
    u_{k}=\frac{\alpha_{n-k}}{\alpha_n}\Delta x-\frac{ h \eta^{\nicefrac{1}{2}}}{1-\eta}
    \frac{\eta^{n}\beta_{n-k}-(1-\eta^n)\alpha_{n-k}}{\alpha_n}\Delta v,\qquad k\in \{1,\ldots,n\}
\end{equation}
where we use the shorthand $\eta =e^{-\gamma h}$, and 
\begin{align*}
    \alpha_k&=(1-\eta^2)(1-\eta^{2n})k-2\eta(1+\eta^{n-k})(1-\eta^k)(1-\eta^n),\\
    \beta_k&=(1-\eta^2)\br{\frac{1}{\eta^k}(1-\eta^k)^2n+(1-\eta^n)^2k}-2\frac{\eta}{\eta^k}(1-\eta^k)(1-\eta^n)(2-\eta^k-\eta^{n}),\qquad k\in \{0,\ldots,n\}.
\end{align*}
(See~\eqref{eq: alpha_k definition} and~\eqref{eq: beta_k definition}.)
    It thus immediately follows that we have the following bound:
    \begin{equation}\label{eq: uk bound intermediate}
        |u_k|\leq \frac{|\alpha_{n-k}|}{|\alpha_n|}|\Delta x|+\frac{h\eta^{1/2}}{1-\eta}\frac{|\eta^n\beta_{n-k}-(1-\eta^n)\alpha_{n-k}|}{|\alpha_n|}|\Delta v|, \qquad k\in \{1,\ldots,n\}.
    \end{equation}
 By \cref{lemma: alpha_k and beta_k are increasing}, the sequences
$(\alpha_k)_{0\le k\le n}$ and $(\beta_k)_{0\le k\le n}$ are increasing,
$\alpha_0=\beta_0=0$, and a direct computation gives
   \begin{align*}
       \beta_n&=(1-\eta^2)\frac{1}{\eta^n}(1-\eta^n)^2\br{1+\eta^n}n-4\frac{\eta}{\eta^n}(1-\eta^n)^3\\
       &=\frac{1}{\eta^n}(1-\eta^n)\sqbr{(1-\eta^2)(1-\eta^{2n})n-4\eta(1-\eta^n)^2}=\frac{1}{\eta^n}(1-\eta^n)\alpha_n.
   \end{align*}
    In particular, we have that
    \begin{align*}
        -(1-\eta^n)\alpha_n & \leq-(1-\eta^n)\alpha_{n-k}\leq\eta^n\beta_{n-k}-(1-\eta^n)\alpha_{n-k}
        \leq \eta^n \beta_n - (1-\eta^n)\alpha_{n-k} \\
        & =(1-\eta^n)(\alpha_n-\alpha_{n-k}) \leq(1-\eta^n)\alpha_n.
    \end{align*}
    We conclude that both $|\alpha_{n-k}|=\alpha_{n-k}\leq\alpha_n$ and
    \begin{equation*}
        |\eta^n\beta_{n-k}-(1-\eta^n)\alpha_{n-k}|\leq(1-\eta^n)\alpha_n,
    \end{equation*}
    for all $k\in\{0,\dots,n\}$, which, when combined with~\eqref{eq: uk bound intermediate}, gives the first inequality in~\eqref{eq: uk bound}.  The second
inequality follows from the exponential bounds in
Appendix~\ref{appendix: exponential bounds}:
\eqref{eq: exp bound lemma 3} gives
$he^{-\gamma h/2}/(1-e^{-\gamma h})\le 1/\gamma$,
\eqref{eq: exp bound lemma 1} gives
$1-e^{-\gamma hn}\le 2\gamma hn/(2+\gamma hn)$, and
$|\Delta x|,|\Delta v|\le|\Delta z|$.
\end{proof}

For completeness we include:\label{proof of TV bound}
\begin{proof}[Proof of Theorem~\ref{thm: TV bound via coalescence map}]
Combining Corollary~\ref{cor: coalescence map bound} with
Lemmas~\ref{lem: coalescence map square distance},
\ref{lemma: potential free part bound},
and~\ref{lemma: potential part bound} yields the bound.
\end{proof}

\medskip
\begin{remark}\label{rem: KL Renyi extension}
The proof of Theorem~\ref{thm: TV bound via coalescence map} passes
through the Pinsker-type estimate of
Lemma~\ref{lem: derivative coalescence map bounds TV distance}, in which
the trace and log-determinant terms vanish identically (by
Lemma~\ref{lemma: trajectory based coalescene maps are differentiable},
since $\det D\Psi^n_{z,\tilde z}\equiv 1$ and
$\tr D\Psi^n_{z,\tilde z}\equiv 2dn$). The Pinsker step therefore yields,
without further work, a Kullback--Leibler analogue:
\begin{equation*}
    D_{\textup{KL}}\bigl(\pi_n(\delta_z)\,\|\,\pi_n(\delta_{\tilde z})\bigr)
    \le \tfrac{1}{2}\E\sqbr{|\Psi^n_{z,\tilde z}(\xi)-\xi|^2},
\end{equation*}
which by Lemmas~\ref{lemma: potential free part bound}
and~\ref{lemma: potential part bound} is bounded by an explicit
expression in $\gamma$, $h$, $n$, $L$, and $|\Delta z|^2$. A R\'enyi
divergence analogue follows from the
framework of~\cite{Bou-RabeeMitraWibisono:2026}, which lifts
Orlicz--Wasserstein bounds to R\'enyi divergences of order
$\alpha\in(1,\infty)$ via one-shot couplings. Applying that framework
to the coalescence-map setting requires upgrading the $L^2$ moment
bound of Lemma~\ref{lem: coalescence map square distance} to an
Orlicz--Wasserstein control on $|\Psi^n_{z,\tilde z}(\xi)-\xi|$.
\end{remark}

\medskip
\begin{remark}
Suppose $\nabla U\equiv 0$ in Lemma~\ref{lem: coalescence map square distance}.
Then the coalescence map reduces to
$\Psi^n_{z,\tilde z}(\xi)=\xi-E$, where
$E\coloneqq(E_1,\ldots,E_n)\in\R^{2dn}$, so for
$\xi\sim\calN(0,\id_{2dn})$ we have
$\Psi^n_{z,\tilde z}(\xi)\sim\calN(-E,\id_{2dn})$.
By~\eqref{eq: identity for E_k^2},
$|E|^2=|\Sigma_{h,n}^{-1/2}A_h^n\Delta z|^2$, so the Gaussian TV
formula~\eqref{eq: Gaussian TV distance} gives
\begin{equation*}
    \dTV{\law(\xi),\,\law(\Psi^n_{z,\tilde z}(\xi))}
    = 2\Phi\br{\frac{|E|}{2}}-1
    = 2\Phi\br{\frac{|\Sigma_{h,n}^{-1/2}A_h^n\Delta z|}{2}}-1.
\end{equation*}
This coincides with $\dTV{\pi_n(\delta_z),\pi_n(\delta_{\tilde z})}$
(by~\eqref{eq: OBABO as linear Markov chain}
and~\eqref{eq: def pi_n}): when $\nabla U\equiv 0$, the bound of
Lemma~\ref{lemma: coalescence maps provide TV distance bounds} is
attained with equality, and the corresponding coupling of OBABO chains
is maximal.    
\end{remark}

\medskip
\begin{remark}\label{rem: coalescence gives optimal TV for linear case}
    The above framework can be used for more general linear Markov chains $Z_{k+1}=AZ_k+B\xi_{k+1}$ to find a trajectory that defines a coalescence map that exactly reproduces the TV distance. In particular, if we consider kinetic Langevin with a quadratic potential, its force can be included in the expression of $A$ and possibly $B$ in order to derive a coalescence map that exactly reproduces the TV distance. It is to be expected that this TV distance would satisfy a bound similar to that presented in \cref{thm: TV bound via coalescence map}, but without the third term (i.e., with $L=0$). Providing a fully rigorous proof of this would require deriving a result analogous to that of \cref{lemma: potential free part bound}, which would result in a rather lengthy calculation.
\end{remark}
\medskip

\begin{remark}\label{remark: CM trajectory}
The trajectory used in~\cite{ChakMonmarche:2025} differs from ours.
Recall the position-velocity splitting $\Delta z=(\Delta x,\Delta v)$
with $\Delta x=\tilde x-x$ and $\Delta v=\tilde v-v$. 
Instead of the recursive definition~\eqref{eq: y_k choices optimized trajectory},
the CM trajectory is given explicitly, for $k\in\{1,\ldots,n\}$, by
\begin{equation}\label{eq: CM u_k and w_k choices}
    \begin{gathered}
        w_k = \br{1-\frac{k}{n}-\frac{3k(n-k)}{n^2-n}}\Delta v
              -\frac{6k(n-k)}{(n^3-n)he^{-\gamma h/2}}\Delta x,\\
        u_k = \Delta x+h\,e^{-\gamma h/2}\sum_{j=0}^{k-1}w_j.
    \end{gathered}
\end{equation}
(A direct calculation verifies $w_n=0$ and $u_n=0$). 
This choice is designed to cancel the leading divergent term in $h$ of the
resulting TV bound (see~\cite[inequality~(68)]{Monmarche:2024}).

The two trajectories differ structurally as follows
(see Figure~\ref{fig:optimal-trajectory-phase}).  The position
component of~\eqref{eq: CM u_k and w_k choices} satisfies the recursion
$u_{k+1}=u_k+h\,e^{-\gamma h/2}w_k$, which is the position update of the
OBABO chain in the absence of force and noise: the
trajectory~\eqref{eq: CM u_k and w_k choices} uses noise corrections
only on the velocity component, with the positions evolving freely along
the deterministic dynamics. By contrast,
\eqref{eq: y_k choices optimized trajectory} introduces noise
corrections in both components and, by construction, minimizes
$\sum_{k=1}^n|E_k|^2$ over all admissible trajectories. The trajectory
of~\cite{ChakMonmarche:2025} is therefore suboptimal for
$\sum_{k=1}^n|E_k|^2$. However, the gap is of higher order in $h$: noise
enters the position component of the OBABO step with coefficient
$h(1-e^{-\gamma h})^{1/2}=O(h^{3/2})$, whereas it enters the velocity
component at order $h^{1/2}$ (cf.~\eqref{eq: OBABO matrices}).
Consequently, the additional position corrections
in~\eqref{eq: y_k choices optimized trajectory} contribute only
higher-order terms, and the two trajectories yield TV bounds of the same
leading order in $h$.

Both trajectories are independent of $\nabla U$: each is constructed for
the linearized problem $\nabla U\equiv 0$, and the $\nabla U$
contribution to the general case is absorbed via the Lipschitz argument
in Lemma~\ref{lem: coalescence map square distance}. Incorporating
information about $\nabla U$ into a coalescence trajectory directly is
not straightforward in either framework, since such a trajectory would
require knowing the chain trajectories in advance.  
\end{remark}

\subsection{An explicit non-Markovian coupling}\label{sec: non-Markovian coupling}
The line of argument introduced by \cite{ChakMonmarche:2025} and refined in the previous sections provides a Wasserstein-to-TV regularization bound for gHMC and several splitting methods. However, it does not directly result in an \emph{explicit} coupling between two Markov chains starting from different initial values.
In this section we use the coalescence map $\Psi^n_{z,\tilde z}$ constructed in
Section~\ref{subsec: coalescence maps} to construct such a coupling.
Specifically, we couple the driving noise sequences of the two chains so
that $\tilde\xi=\Psi^n_{z,\tilde z}(\xi)$ holds with maximal
probability; by the coalescence map property
(Definition~\ref{def: coalescence map}), the two chains coincide at time
$n$ on this event. Such a coupling
exists by the coupling characterization of the TV distance; we now give
an explicit construction by rejection sampling.

Fix $z,\tilde z\in\R^{2d}$ and $n\in\N_{>0}$.  Let $y$ be the trajectory
defined recursively by~\eqref{eq: def Ek}
and~\eqref{eq: y_k choices optimized trajectory}, and let
$\Psi^n_{z,\tilde z}$ be the coalescence map defined by $y$. Write
$\varphi_{2dn}$ for the density of $\calN(0,\id_{2dn})$ on $\R^{2dn}$ and
$\psi^n_{z,\tilde z}$ for the density of $\Psi^n_{z,\tilde z}(\xi)$ when
$\xi\sim\calN(0,\id_{2dn})$. By
Lemma~\ref{lemma: trajectory based coalescene maps are differentiable},
$\Psi^n_{z,\tilde z}$ is a $C^1$ diffeomorphism with
$\det D\Psi^n_{z,\tilde z}\equiv 1$, so
\begin{equation*}
    \psi^n_{z,\tilde z}(t)
    = \frac{1}{(2\pi)^{dn}}\exp\br{-\tfrac{1}{2}\,
        |(\Psi^n_{z,\tilde z})^{-1}(t)|^2},
    \qquad t\in\R^{2dn},
\end{equation*}
where the inverse $(\Psi^n_{z,\tilde z})^{-1}$ is the coalescence map
defined by the trajectory $-y$
(Lemma~\ref{lemma: trajectory based coalescene maps are invertible});
its explicit form in terms of $y$ is given
by~\eqref{eq: coalescence map one step recursive relation}.

In order to construct a coupling $(\xi,\tilde\xi)$ that has the marginals $\calN(0,\id_{2dn})$ and maximizes the probability that $\tilde\xi=\Psi^n_{z,\tilde z}(\xi)$, we follow the following rejection sampling procedure. Let $\xi\sim\calN(0,\id_{2dn})$, $U\sim\textup{Unif}(0,1)$,
$(\eta_j)_{j\ge 1}$ iid $\calN(0,\id_{2dn})$, and $(V_j)_{j\ge 1}$ iid
$\textup{Unif}(0,1)$, with all four jointly independent. Set
$\tilde\zeta:=\Psi^n_{z,\tilde z}(\xi)$ and define the acceptance event
\begin{equation}\label{eq: acceptance criterion}
    A := \l\{U\le\frac{\varphi_{2dn}(\tilde\zeta)}{\psi^n_{z,\tilde z}(\tilde\zeta)}\r\}
\end{equation}
and the rejection-trial index
\begin{equation}\label{eq: rejection criterion}
    N := \inf\l\{j\ge 1:V_j\le 1-\frac{\psi^n_{z,\tilde z}(\eta_j)}{\varphi_{2dn}(\eta_j)}\r\},
\end{equation}
which is almost surely finite. The coupling is then
\begin{equation}\label{eq: coupling definition}
    \tilde\xi := \tilde\zeta\,\ind_A + \eta_N\,\ind_{A^c}.
\end{equation}

By construction, the pair $(\xi,\tilde\xi)$ satisfies
$\xi\sim\calN(0,\id_{2dn})$; the construction's two branches (acceptance
and rejection) are illustrated in Figure~\ref{fig:coupling-branches}.
The next proposition asserts that $\tilde\xi$ also has marginal
$\calN(0,\id_{2dn})$ and that the coupling is maximal.

\begin{figure}[t]
\centering
\begin{tikzpicture}[
    every node/.style={font=\small},
    pt/.style={circle, fill=black, inner sep=1.1pt}
]
  \begin{scope}
    \node[anchor=south, font=\footnotesize, align=center] at (3,2.3)
      {acceptance event $A$\\[-2pt]
       probability $1-\dTV{\law(\xi),\law(\Psi^n_{z,\tilde z}(\xi))}$};

    \draw[thick] (0,0) -- (1.0,0.25) -- (2.0,0.10) -- (3.0,0.22)
                      -- (4.0,0.50) -- (5.0,0.72) -- (6.0,1.00);
    \foreach \p in {(0,0),(1.0,0.25),(2.0,0.10),(3.0,0.22),
                    (4.0,0.50),(5.0,0.72),(6.0,1.00)}
      \node[pt] at \p {};

    \draw[thick, dashed] (0,1.80) -- (1.0,1.55) -- (2.0,1.70) -- (3.0,1.25)
                                -- (4.0,1.35) -- (5.0,1.10) -- (6.0,1.00);
    \foreach \p in {(0,1.80),(1.0,1.55),(2.0,1.70),(3.0,1.25),
                    (4.0,1.35),(5.0,1.10)}
      \node[pt] at \p {};

    \node[below, font=\footnotesize] at (0,0)    {$z$};
    \node[above, font=\footnotesize] at (0,1.8)  {$\tilde z$};
    \node[right, font=\footnotesize] at (6.0,1.0) {$Z^h_n=\tilde Z^h_n$};
  \end{scope}

  \begin{scope}[xshift=9cm]
    \node[anchor=south, font=\footnotesize, align=center] at (3,2.3)
      {rejection event $A^c$\\[-2pt]
       probability $\dTV{\law(\xi),\law(\Psi^n_{z,\tilde z}(\xi))}$};

    \draw[thick] (0,0) -- (1.0,0.25) -- (2.0,0.10) -- (3.0,0.22)
                      -- (4.0,0.50) -- (5.0,0.72) -- (6.0,1.00);
    \foreach \p in {(0,0),(1.0,0.25),(2.0,0.10),(3.0,0.22),
                    (4.0,0.50),(5.0,0.72),(6.0,1.00)}
      \node[pt] at \p {};

    \draw[thick, dashed] (0,1.80) -- (1.0,1.95) -- (2.0,1.40) -- (3.0,1.65)
                                -- (4.0,1.30) -- (5.0,1.50) -- (6.0,1.70);
    \foreach \p in {(0,1.80),(1.0,1.95),(2.0,1.40),(3.0,1.65),
                    (4.0,1.30),(5.0,1.50),(6.0,1.70)}
      \node[pt] at \p {};

    \node[below, font=\footnotesize] at (0,0)    {$z$};
    \node[above, font=\footnotesize] at (0,1.8)  {$\tilde z$};
    \node[right, font=\footnotesize] at (6.0,1.0) {$Z^h_n$};
    \node[right, font=\footnotesize] at (6.0,1.70) {$\tilde Z^h_n$};
  \end{scope}
\end{tikzpicture}
\caption{The two branches of the rejection-sampling coupling produced by
\Cref{prop: coupling has correct marginals and is optimal}. On the acceptance event $A$ (left), the noise coupling
satisfies $\tilde\xi=\Psi^n_{z,\tilde z}(\xi)$ and the chains coincide at
time $n$. On the rejection event $A^c$ (right), the driving noise
$\tilde\xi$ is sampled independently and the chains generically diverge.
}
\label{fig:coupling-branches}
\end{figure}

\begin{proposition}
\label{prop: coupling has correct marginals and is optimal}
The pair $(\xi,\tilde\xi)$ defined by~\eqref{eq: coupling definition}
satisfies $\xi\sim\calN(0,\id_{2dn})$, $\tilde\xi\sim\calN(0,\id_{2dn})$,
and
\begin{equation*}
    \dTV{\law(\xi),\,\law(\Psi^n_{z,\tilde z}(\xi))}
    = \P\bigl(\tilde\xi\neq\Psi^n_{z,\tilde z}(\xi)\bigr)
    = \P\bigl(\Psi^n_z(\xi)\neq\Psi^n_{\tilde z}(\tilde\xi)\bigr).
\end{equation*}
 \end{proposition}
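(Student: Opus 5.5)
The plan is the standard rejection-sampling argument for maximal couplings, with two paper-specific points: $\tilde\zeta=\Psi^n_{z,\tilde z}(\xi)$ has density $\psi:=\psi^n_{z,\tilde z}$ (this uses Lemma~\ref{lemma: trajectory based coalescene maps are differentiable}), and the coalescence property identifies the meeting event. Write $\varphi:=\varphi_{2dn}$ and set $p:=\int(\varphi\wedge\psi)\,dt=1-\dTV{\law(\xi),\law(\tilde\zeta)}$, using $\dTV{\cdot,\cdot}=\tfrac12\int|\varphi-\psi|$. The marginal $\xi\sim\calN(0,\id_{2dn})$ holds by construction. The rest of the proof computes the law of $\tilde\xi$ and the probability of the event $\{\tilde\xi\neq\tilde\zeta\}$.

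\textbf{Acceptance branch.} We have $\P(A\mid\xi)=\min(1,\varphi(\tilde\zeta)/\psi(\tilde\zeta))$. Since $\tilde\zeta$ has density $\psi$ and $U$ is independent of $\xi$, it follows that
\begin{equation*}
\P(\tilde\zeta\in B,\,A)=\int_B \psi\,\min(1,\varphi/\psi)\,dt=\int_B(\varphi\wedge\psi)\,dt .
\end{equation*}
In particular $\P(A)=p$.

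\textbf{Rejection branch.} Each trial $j$ succeeds independently with probability $q:=\int\varphi\,(1-\psi/\varphi)_+\,dt=\int(\varphi-\psi)_+\,dt=1-p$. If $p=1$, then $A^c$ is null, $N$ plays no role, and the claims are trivial. Otherwise $q>0$, so $N<\infty$ almost surely, and $\eta_N$ has density $(\varphi-\psi)_+/(1-p)$. The variables $(\eta_j,V_j)$ are independent of $(\xi,U)$, hence of $A$. Therefore
\begin{equation*}
\P(\tilde\xi\in B)=\int_B(\varphi\wedge\psi)\,dt+(1-p)\int_B\frac{(\varphi-\psi)_+}{1-p}\,dt=\int_B\varphi\,dt,
\end{equation*}
because $\varphi\wedge\psi+(\varphi-\psi)_+=\varphi$. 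So $\tilde\xi\sim\calN(0,\id_{2dn})$.

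\textbf{First equality.} On $A$ we have $\tilde\xi=\tilde\zeta$. On $A^c$ we have $\tilde\xi=\eta_N$. Conditionally on $\xi$, the variable $\eta_N$ has an absolutely continuous law independent of $\xi$, so $\P(\eta_N=\tilde\zeta,\,A^c)=0$. Hence $\P(\tilde\xi\neq\tilde\zeta)=1-p=\dTV{\law(\xi),\law(\Psi^n_{z,\tilde z}(\xi))}$.

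\textbf{Second equality.} This is the step requiring the most care. On $\{\tilde\xi=\tilde\zeta\}$ the coalescence property (Definition~\ref{def: coalescence map}) gives $\Psi^n_{\tilde z}(\tilde\xi)=\Psi^n_z(\xi)$. That shows $\P(\Psi^n_z(\xi)\neq\Psi^n_{\tilde z}(\tilde\xi))\le\P(\tilde\xi\neq\tilde\zeta)$. For the reverse inequality there are two routes.
\begin{itemize}
\item[(i)] Show directly that $\P(\Psi^n_{\tilde z}(\eta_N)=\Psi^n_z(\xi),\,A^c)=0$. The law of $\Psi^n_{\tilde z}(\eta_N)$ is absolutely continuous on $\R^{2d}$: it is a submersion image of an absolutely continuous law, since $\Psi^n_{\tilde z}$ is surjective with a full-rank last-step differential. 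It is also independent of $\xi$, so the probability of hitting the value $\Psi^n_z(\xi)$ is zero.
\item[(ii)] Argue by optimality. The pair $(\Psi^n_z(\xi),\Psi^n_{\tilde z}(\tilde\xi))$ is a coupling of $\pi_n(\delta_z)$ and $\pi_n(\delta_{\tilde z})$, so its mismatch probability is at least $\dTV{\pi_n(\delta_z),\pi_n(\delta_{\tilde z})}$. This yields equality whenever the bound of Lemma~\ref{lemma: coalescence maps provide TV distance bounds} is tight.
\end{itemize}
Route (ii) only covers the tight case, for example the quadratic-potential case. I will therefore use route (i) in general, and check the absolute-continuity claim via the explicit triangular structure of \eqref{eq: Psi_z explicit} at the last step.
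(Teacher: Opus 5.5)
Your proposal is correct and follows the paper's proof essentially step for step. It uses the same acceptance/rejection density computation for the marginal of $\tilde\xi$, the same density-overlap identification $\P(A^c)=1-\int(\varphi\wedge\psi)$, and the same independence/non-atomicity argument (Lemma~\ref{lemma: no coincidental coalescence}), applied on $A^c$ at both the noise level and the chain level. The only differences are that at the chain level you make $\Psi^n_{\tilde z}(\eta_N)$ the non-atomic variable via the full-rank last-step Jacobian, whereas the paper makes $\Psi^n_z(\xi)$ non-atomic and lets $f=\Psi^n_{\tilde z}$ act on $\eta_N$, which is slightly lighter; and you handle the degenerate case $p=1$ (where $N$ need not be finite), which the paper glosses over.
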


Via the chain maps $\Psi^n_z$ and $\Psi^n_{\tilde z}$, the noise coupling
$(\xi,\tilde\xi)$ induces a coupling
$(\Psi^n_z(\xi),\Psi^n_{\tilde z}(\tilde\xi))$ of the chain laws
$\pi_n(\delta_z)$ and $\pi_n(\delta_{\tilde z})$
(Figure~\ref{fig:coupling-branches}). The coalescence-map
property gives the inclusion
$\{\tilde\xi=\Psi^n_{z,\tilde z}(\xi)\}\subseteq
\{\Psi^n_z(\xi)=\Psi^n_{\tilde z}(\tilde\xi)\}$, and the reverse
inclusion holds almost surely
(Lemma~\ref{lemma: no coincidental coalescence}, applied on the rejection
branch where $\tilde\xi$ is independent of $\xi$). Hence the noise-level
and chain-level disagreement events coincide a.s., and combined with the
maximal-coupling identity (the first equality in the proposition),
\begin{equation*}
    \P\bigl(\Psi^n_z(\xi)\neq\Psi^n_{\tilde z}(\tilde\xi)\bigr)
    = \dTV{\law(\xi),\,\law(\Psi^n_{z,\tilde z}(\xi))}.
\end{equation*}
The coupling characterization of TV distance then yields
\begin{equation*}
    \dTV{\pi_n(\delta_z),\,\pi_n(\delta_{\tilde z})}
    \le \dTV{\law(\xi),\,\law(\Psi^n_{z,\tilde z}(\xi))}.
\end{equation*}
This is the bound of
Lemma~\ref{lemma: coalescence maps provide TV distance bounds}, now
realized by an explicit coupling. The inequality is in general strict;
equality holds when $\nabla U$ is affine (equivalently, when $U$ is a
quadratic polynomial); see
Remark~\ref{rem: coalescence gives optimal TV for linear case}.

\medskip

In order to prove this proposition, we need the following basic result.
\medskip
\begin{lemma}
\label{lemma: no coincidental coalescence}
    Let $m,n\in\N_{>0}$, let $X$ be a $\R^m$-valued random variable and let $Y$ be a non-atomic $\R^n$-valued random variable, i.e., $\P(Y=y)=0$ for all $y\in\R^n$. Suppose that $X$ and $Y$ are independent. In addition, let $f\colon\R^m\to\R^n$ be a measurable map. Then
    \begin{equation*}
        \P(f(X)=Y)=0.
    \end{equation*}
\end{lemma}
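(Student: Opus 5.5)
The plan is to condition on $X$ and use independence via Fubini's theorem. Since $X$ and $Y$ are independent, the joint law of $(X,Y)$ on $\R^m\times\R^n$ is the product measure $\mu_X\otimes\mu_Y$, where $\mu_X=\law(X)$ and $\mu_Y=\law(Y)$. Consider the set
\begin{equation*}
G:=\{(x,y)\in\R^m\times\R^n\colon f(x)=y\}.
\end{equation*}
This is the graph of $f$. It is measurable in the product $\sigma$-algebra because it is the preimage of the diagonal $\{(a,b)\colon a=b\}$ under the measurable map $(x,y)\mapsto(f(x),y)$, and the diagonal is closed in $\R^n\times\R^n$. Then $\P(f(X)=Y)=(\mu_X\otimes\mu_Y)(G)$.

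Next, I would apply Tonelli's theorem to the nonnegative indicator $\ind_G$:
\begin{equation*}
(\mu_X\otimes\mu_Y)(G)=\int_{\R^m}\mu_Y\bigl(\{y\colon y=f(x)\}\bigr)\,\mu_X(dx)=\int_{\R^m}\P\bigl(Y=f(x)\bigr)\,\mu_X(dx).
\end{equation*}
The $x$-section of $G$ is the singleton $\{f(x)\}$. By the non-atomic assumption, $\P(Y=f(x))=0$ for every $x$, so the integrand vanishes identically and the probability is $0$.

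The only point requiring care is measurability of $G$, which is what justifies both the use of Fubini--Tonelli and the identification of $\P(f(X)=Y)$ with a product-measure value. This is handled by the preimage argument above, so I expect no real obstacle.

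In the intended application, $Y$ is taken to be $\eta_N$ on the rejection branch and $X$ the pair $(\xi,\ldots)$. There one must additionally check that $\eta_N$ is independent of $\xi$ and has a density. This holds because $N$ and $(\eta_j)$ are built from variables independent of $\xi$ and $U$, and because conditioning on the rejection event preserves independence. That verification, however, belongs to the proof of Proposition~\ref{prop: coupling has correct marginals and is optimal}, not to this lemma.
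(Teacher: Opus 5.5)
Your proof is correct and follows the paper's argument essentially verbatim. Like the paper, you write $\P(f(X)=Y)$ as the $\mu_X\otimes\mu_Y$-measure of the graph of $f$, apply Tonelli, and use that each $x$-section is the singleton $\{f(x)\}$, which is $\mu_Y$-null; your preimage-of-the-diagonal argument additionally supplies the measurability of the graph, which the paper only says is ``readily verified''.
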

\begin{proof}
    Let $\mu_X$ and $\mu_Y$ be the probability measures of X on $\R^m$ and $Y$ on $\R^n$ respectively. Since $X$ and $Y$ are independent, their joint probability measure on $\R^m\times\R^n$ is given by $\mu_{X,Y}=\mu_X\otimes\mu_Y$. Let $A=\{(x,y)\in\R^{m}\times\R^n:f(x)=y\}$; one readily verifies that $A$ is measurable. Thus, by Tonelli's theorem we have
    \begin{align*}
        \P(f(X)=Y)&=\int \ind_{A}(x,y) \,(\mu_X\otimes \mu_Y)(dx\,dy)=\int\int\ind_{A}(x,y)\,\mu_Y(dy)\,\mu_X(dx)\\
        &=\int\int\ind_{f(x)}(y)\,\mu_Y(dy)\,\mu_X(dx)=\int\mu_Y(\{f(x)\})\,\mu_X(dx)=0,
    \end{align*}
    where the last equality comes from the assumption that $\mu_Y$ is non-atomic.
\end{proof}

\begin{proof}[Proof of Proposition~\ref{prop: coupling has correct marginals and is optimal}]
Write $\varphi:=\varphi_{2dn}$ and $\psi:=\psi^n_{z,\tilde z}$ throughout
the proof, and let $B\in\mathcal{B}(\R^{2dn})$.

\emph{Step 1: marginal of $\tilde\xi$.} Splitting by the acceptance event,
\begin{equation*}
    \P(\tilde\xi\in B)
    = \P(\{\tilde\zeta\in B\}\cap A) + \P(\{\eta_N\in B\}\cap A^c).
\end{equation*}
For the first term, since $U$ is independent of $\tilde\zeta$ and
$\tilde\zeta$ has density $\psi$,
\begin{equation}\label{eq: acceptance density}
    \P(\{\tilde\zeta\in B\}\cap A)
    = \E\sqbr{\ind_{\{\tilde\zeta\in B\}}\br{1\wedge\frac{\varphi(\tilde\zeta)}{\psi(\tilde\zeta)}}}
    = \int_B (\varphi\wedge\psi)(y)\,dy;
\end{equation}
taking $B=\R^{2dn}$ yields $\P(A)=\int(\varphi\wedge\psi)\,dy$. Define the
per-trial acceptance events
\begin{equation*}
    B_j := \l\{V_j\le 1-\frac{\psi(\eta_j)}{\varphi(\eta_j)}\r\},
    \qquad j\ge 1,
\end{equation*}
so that $N=\inf\{j\ge 1:B_j\}$. The same calculation as
in~\eqref{eq: acceptance density}, applied to $\eta_1$ in place of
$\tilde\zeta$, gives
\begin{equation}\label{eq: rejection density}
    \P(\{\eta_1\in B\}\cap B_1)
    = \int_B \bigl(\varphi(y)-\varphi(y)\wedge\psi(y)\bigr)\,dy,
\end{equation}
and in particular $\P(B_1) = 1-\int(\varphi\wedge\psi)\,dy = \P(A^c)$.
Since the sequence $(\eta_j,V_j)_{j\ge 1}$ is independent of $(\xi,U)$,
the stopping time $N$ is geometrically distributed on the
$B_j$-trials, and $\eta_N$ is independent of $\ind_{A^c}$ with law
\begin{equation*}
    \P(\eta_N\in B)
    = \P(\eta_1\in B\mid B_1)
    = \frac{1}{\P(B_1)}\int_B\bigl(\varphi(y)-\varphi(y)\wedge\psi(y)\bigr)\,dy.
\end{equation*}
Hence
$\P(\{\eta_N\in B\}\cap A^c) = \int_B(\varphi-\varphi\wedge\psi)(y)\,dy$,
and summing the two contributions,
\begin{equation*}
    \P(\tilde\xi\in B) = \int_B \varphi(y)\,dy,
\end{equation*}
so $\tilde\xi\sim\calN(0,\id_{2dn})$.

\emph{Step 2: disagreement equalities.} On $A$, by construction
$\tilde\xi=\tilde\zeta=\Psi^n_{z,\tilde z}(\xi)$, and the coalescence-map
property forces $\Psi^n_z(\xi)=\Psi^n_{\tilde z}(\tilde\xi)$. On $A^c$,
$\tilde\xi=\eta_N$; by~\eqref{eq: rejection density}, $\eta_N$ has a
density (in particular it is non-atomic) and is independent of $\xi$, so
Lemma~\ref{lemma: no coincidental coalescence} gives
\begin{equation*}
    \P\bigl(\tilde\xi=\Psi^n_{z,\tilde z}(\xi),\,A^c\bigr) = 0.
\end{equation*}
Moreover, $\Psi^n_z(\xi)$ is absolutely continuous with respect to
Lebesgue measure on $\R^{2d}$
(by~\eqref{eq: OBABO as linear Markov chain} its law is the convolution
of a Dirac mass with non-degenerate Gaussian increments) and is
independent of $\eta_N$, so a second application of
Lemma~\ref{lemma: no coincidental coalescence} gives
\begin{equation*}
    \P\bigl(\Psi^n_z(\xi)=\Psi^n_{\tilde z}(\eta_N),\,A^c\bigr) = 0.
\end{equation*}
Combining, the disagreement events at the noise and chain levels both
coincide a.s.\ with $A^c$:
\begin{equation*}
    \P\bigl(\tilde\xi\neq\Psi^n_{z,\tilde z}(\xi)\bigr)
    = \P(A^c)
    = \P\bigl(\Psi^n_z(\xi)\neq\Psi^n_{\tilde z}(\tilde\xi)\bigr).
\end{equation*}

\emph{Step 3: TV identification.} The quantity
$\P(A^c)=1-\int(\varphi\wedge\psi)\,dy$ is the density-overlap form of
the total variation distance,
\begin{equation*}
    \dTV{\law(\xi),\,\law(\Psi^n_{z,\tilde z}(\xi))}
    = \dTV{\varphi,\psi}
    = 1-\int(\varphi\wedge\psi)\,dy
\end{equation*}
(see, e.g.,~\cite[Section~I.5]{Lindvall:2002}), which completes the
proof.
\end{proof}

\begin{remark}
\cref{prop: coupling has correct marginals and is optimal} shows that the coupling $(\xi,\tilde\xi)$ between the two standard Gaussians $\xi$, $\tilde \xi$ that we construct above is such that 
\begin{equation*}
    \dTV{\law(\xi),\law\br{\Psi^n_{z,\tilde z}(\xi)}}=\P\br{Z^h_n\neq\tilde Z^h_n},
\end{equation*}
where $Z^h_n=\Psi^n_z(\xi)$ and $\tilde Z^h_n=\Psi^n_{\tilde z}(\tilde\xi)$ are the final values of the two OBABO discretizations with initial values $z$, $\tilde z$ and increments $\xi$ and $\tilde \xi$ respectively. The coupling $(\xi,\tilde\xi)$ thus induces a coupling $(Z^h_k,\tilde Z^h_k)_{0\leq k\leq n}=(\Psi^k_{z}(\xi_1,\dots\xi_k),\Psi^k_{\tilde z}(\tilde \xi_1,\dots\tilde\xi_k))_{0\leq k\leq n}$ of the Markov chains that has the property that $\P(Z^h_n\neq \tilde Z^h_n)$ matches the TV bound of \cref{thm: TV bound via coalescence map}. This coupling is \emph{non-Markovian} in the sense that the joint chain $(Z^h_k,\tilde Z^h_k)_{0\leq k\leq n}$ is not a Markov chain with respect to the filtration $\F_k\coloneqq\sigma(\{(Z^h_i,\tilde Z^h_i)\mid 0\leq i \leq k\})$. This is due to the fact that the vector of increments $\tilde\xi$ depends on the entire vector of increments $\xi$ at once. \par 
Another aspect of the coupling constructed above is that if $z\neq \tilde{z}$, then typically
\begin{equation*}
\P( Z_k^h = \tilde{Z}_k^h) = 0,\quad k\in \{1,\ldots,n-1\}.
\end{equation*}
In other words, the processes $(Z_k^h)_{k=0}^{n}$ and $(Z_k^h)_{k=0}^{n}$ fail to meet prior to the final time $n$. This is due to the fact that on $A$ it holds that
\begin{equation*}
\Psi_z^{k}(\xi) = \Psi_{\tilde{z}}^k(\xi) + y_k
\end{equation*}
(see Lemma/Definition~\ref{lemdef: coalescence map defined by trajectory}), and typically $y_k\neq 0$ for $k\in \{1,\ldots, n-1\}$ (see~\eqref{eq: def Ek} and~\eqref{eq: y_k choices optimized trajectory}). In particular, the coupling constructed above is designed specifically for $n$ time steps, and cannot be extended to the whole time-line in an obvious way.\par
The non-Markovianity is unavoidable: Section~\ref{sec: non-optimality Markovian couplings}
shows that no Markovian coupling achieves the optimal asymptotic TV
decay for the linear kinetic Langevin equation.
\end{remark}

\begin{remark}
    \label{remark: extension coalescence coupling to other settings}
    The construction of the above coupling relies solely on the fact that the coalescence mapping $\Psi^n_{z,\tilde z}$ is a diffeomorphism. Recall from~\cref{rem: other splitting schemes} that other gHMC methods as well as other splitting methods give rise to diffeomorphic coalescence maps. In particular, the same reasoning can be applied to obtain a coupling for these sampling methods. However, in order to explicitly implement these couplings, we need an explicit (closed form) expression of the corresponding coalescence map $\Psi^n_{z,\tilde z}$. Out of the splitting schemes mentioned in \cref{rem: other splitting schemes}, such an explicit expression of $\Psi^n_{z,\tilde z}$ is only available for the OBABO and the BOAOB schemes.
\end{remark}

\section{There are no asymptotically optimal Markovian couplings}
\label{sec: non-optimality Markovian couplings}

In the search for explicit couplings achieving the correct asymptotic TV
decay rate, Markovian couplings are a natural first candidate: they have
been used successfully for TV mixing of the overdamped Langevin
dynamics~\cite{Eberle:2016} and for Wasserstein mixing of the kinetic
Langevin dynamics~\cite{EberleEtAl:2019}. The main result of this
section is that this approach fails for the kinetic Langevin equation
with quadratic potential:
Theorems~\ref{thm: Markovian couplings have the wrong asymptotic behavior}
and~\ref{thm: discrete Markovian couplings have the wrong asymptotic behavior}
below show that no Markovian coupling -- continuous-time or discrete --
can achieve the optimal asymptotic TV decay rate. The argument
follows~\cite{BanerjeeKendall:2016}, where an analogous suboptimality is
established for the Kolmogorov diffusion (the joint process of a
standard Brownian motion and its time integral).

Let us briefly introduce the setting. We consider the kinetic Langevin equation \eqref{eq: general kinetic Langevin} with the (isotropic) quadratic potential $U(x)=\alpha |x|^2$, where $\alpha\geq 0$. This results in the following $2d$-dimensional SDE:
\begin{align}
\label{eq: kinetic Langevin with quadratic potential}
\begin{split}
    dX_t&=V_t\,dt,\\ 
    dV_t&=-\alpha X_t\,dt-\gamma V_t\,dt+\sqrt{2\gamma}\,dW_t,
\end{split}
\end{align}
where $W$ is a standard $d$-dimensional Brownian motion. Throughout this section we fix $x,\tilde{x},v,\tilde{v}\in \R^d$ and let $(Z_t)_{t\geq 0}=(X_t,V_t)_{t\geq 0}$ and $(\tilde{Z}_t)_{t\geq 0}=(\tilde{X}_t,\tilde{V}_t)_{t\geq 0}$ denote solutions to~\eqref{eq: kinetic Langevin with quadratic potential} with initial values $z=(x,v)$ and $\tilde{z}=(\tilde{x},\tilde{v})$, respectively. We set $\Delta x = \tilde{x}-x$ and $\Delta v = \tilde {v} - v$. We also define $\lambda_{-},\lambda_{+}\in \C$ by
\begin{equation}\label{eq:ev_of_A}
    \lambda_\pm=-\frac{\gamma}{2}\pm\frac{1}{2}\sqrt{\gamma^2-4\alpha};
\end{equation}
these are the eigenvalues of the drift matrix associated with~\eqref{eq: kinetic Langevin with quadratic potential} (see also~\eqref{eq: kinetic Langevin with quadratic potential (as matrix)} below).

\medskip
\begin{definition}
A coupling $\mu$ of the joint processes $(Z_t,\tilde Z_t)_{t\geq 0}$ is called \textit{Markovian} if $(Z_t,\tilde Z_t)_{t\geq0}$ is a Markov process under $\mu$ (with respect to the filtration $(\F_t)_{t\geq0}$ generated by $(Z_t,\tilde{Z}_t)_{t \geq 0}$). A coupling $\mu$ satisfies the \emph{now-equals-forever} property if for every $0\leq s\leq t$ we have that $\mu(\{Z_s=\tilde Z_s\}\cap\{Z_t\neq\tilde Z_t\})=0$.
\end{definition}

\begin{remark}
The \emph{now-equals-forever} property seems to have been introduced in~\cite{Rosenthal:1997}, couplings with this property are also sometimes called \emph{sticky}. Somewhat confusingly, the term \emph{faithful} is also sometimes used for the \emph{now-is-forever} property; this concept was also introduced in~\cite{Rosenthal:1997} but with a distinctly different meaning.
\end{remark}

\begin{theorem}
\label{thm: Markovian couplings have the wrong asymptotic behavior}
Assume that $\gamma^2>4\alpha\geq 0$ and assume $  \lambda_{-} \Delta x - \Delta v =0$ and $\Delta x\neq 0$. 
Then there exists a constant $C>0$ such that for all $t>0$ one has
\begin{equation}\label{eq:tv_dist_Gauss}
    \dTV{\law(Z_t),\law(\tilde Z_t)} \leq C e^{\lambda_{-} t} |\Delta z|.
\end{equation}
On the other hand, for all choices of $\Delta x,\Delta v\in\R^{d}$ and for every Markovian coupling $\mu$ of $(Z_t,\tilde{Z}_t)_{t\geq 0}$ there exists constants $t_\mu,c_{\mu}>0$ such that for all $t\geq t_\mu$ one has
\begin{equation}\label{eq:lb_couplingprob}
    \mu(Z_t\neq \tilde{Z}_t) \geq c_{\mu} \min\big(t^{-\nicefrac{1}{2}}, e^{\lambda_{+} t}\big).
\end{equation}
If moreover $\mu$ satisfies the now-equals-forever property, then \eqref{eq:lb_couplingprob} holds true for all $t>0$.
\end{theorem}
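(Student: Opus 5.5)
Both halves reduce to the linear Gaussian structure of \eqref{eq: kinetic Langevin with quadratic potential}. In matrix form $dZ_t = MZ_t\,dt + \sqrt{2\gamma}\,\Pi\, dW_t$ with $M=\begin{pmatrix}0&\id\\-\alpha\id&-\gamma\id\end{pmatrix}$ and $\Pi$ the velocity embedding, we have $Z_t\sim\calN(e^{tM}z,\Sigma_t)$. Therefore \eqref{eq: Gaussian TV distance} gives
\[
\dTV{\law(Z_t),\law(\tilde Z_t)}=2\Phi\bigl(\tfrac12|\Sigma_t^{-1/2}e^{tM}\Delta z|\bigr)-1\le(2\pi)^{-1/2}|\Sigma_t^{-1/2}e^{tM}\Delta z|.
\]
Since $\gamma^2>4\alpha$, $M$ is diagonalizable (per coordinate) with real eigenvalues $\lambda_\pm<0$. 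The eigenvector for $\lambda$ is $(1,\lambda)$. So the assumption $\Delta v=\lambda_-\Delta x$ says $\Delta z$ is a $\lambda_-$-eigenvector, and $e^{tM}\Delta z=e^{\lambda_- t}\Delta z$. For $t\ge1$, $\Sigma_t\succeq\Sigma_1\succ0$ because $\Sigma_t$ is increasing in $t$ and positive definite by hypoellipticity (Kalman rank). This gives \eqref{eq:tv_dist_Gauss} for $t\ge1$.

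For $t\in(0,1)$ the TV distance is at most $1$, while $e^{\lambda_-t}|\Delta z|\ge e^{\lambda_-}|\Delta z|$ is bounded below. Enlarging $C$ (which may depend on $|\Delta z|$, harmless since $\Delta z$ is fixed) covers this range.

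For the lower bound \eqref{eq:lb_couplingprob} I would follow \cite{BanerjeeKendall:2016}. Under a Markovian coupling, write $\tilde W$ for the noise of $\tilde Z$. The difference $D_t=\tilde Z_t-Z_t$ satisfies $dD_t=MD_t\,dt+\sqrt{2\gamma}\,\Pi\,d(\tilde W_t-W_t)$. Take a left eigenvector $\ell_+$ of $M$ for $\lambda_+$, concretely $\ell_+\cdot(x,v)=-\lambda_- x+v$. Then
\[
Y_t:=e^{-\lambda_+ t}\,\ell_+\cdot D_t
\]
is a local martingale, since $\ell_+M=\lambda_+\ell_+$ kills the drift. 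Its quadratic variation grows at rate at most $8\gamma e^{-2\lambda_+ t}$, and $Y_t$ vanishes on $\{Z_t=\tilde Z_t\}$.

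If $\ell_+\cdot\Delta z\ne0$, I would use Lindvall-type optional stopping with a bounded stopping time. Let $\tau$ be the coupling time. Since $Y$ is a local martingale with $Y_0\neq0$ and $Y_\tau=0$, splitting on $\{\tau\le t\}$ gives
\[
|Y_0|\le \E|Y_{t\wedge\tau}|\le \mu(\tau>t)^{1/2}\,\E[Y_{t\wedge\tau}^2]^{1/2}.
\]
The second factor is at most $(Y_0^2+Ce^{-2\lambda_+t})^{1/2}$ by the quadratic variation bound. Hence $\mu(\tau>t)\ge c\,e^{2\lambda_+t}$, which is at least $c\min(t^{-1/2},e^{\lambda_+t})$.

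If $\ell_+\cdot\Delta z=0$, then $\Delta z$ lies in the $\lambda_-$-eigenspace. I would use the other left eigenvector, $\ell_-\cdot(x,v)=-\lambda_+x+v$, and the martingale $e^{-\lambda_- t}\ell_-\cdot D_t$. Because $\lambda_-<\lambda_+$, this choice is inferior on its own. The genuine difficulty, as in Banerjee--Kendall, is that a Markovian coupling must keep $D$ on the slow manifold to couple fast. To stay there, the noise difference must be correlated in a way that makes the $\ell_+$ component diffuse, by Itô applied to both components at once.

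In that case I plan to use the Markov property to restart at a time $s$ where $\ell_+\cdot D_s\neq0$ with positive probability and $D_s\neq0$. Applying the first-case bound from time $s$ then yields $e^{\lambda_+ t}$ decay, with a constant $c_\mu$ depending on the coupling; this is why $t_\mu,c_\mu$ depend on $\mu$. The alternative is that $\ell_+\cdot D_t\equiv0$ on $\{\tau>t\}$. Then the noise difference is forced to be synchronous in a direction that leaves $D_t=e^{(t-s)M}D_s$ deterministic until coupling. This can only happen with the coupling driven purely by a one-dimensional reflection-type mechanism along $\ell_+$-invariant directions, and the corresponding hitting problem for a Brownian-type integral gives a tail $\gtrsim t^{-1/2}$, the source of the first term.

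The main obstacle is making this dichotomy rigorous. I would follow Banerjee--Kendall's treatment of the Kolmogorov diffusion. The idea is to show that staying on the slow manifold forces $d\langle \ell_+\cdot D\rangle\equiv0$, so the noises are synchronous until coupling and coupling is impossible, unless a square-root-time mechanism as for a one-dimensional Brownian motion applies.

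Finally, under the now-equals-forever property, $\{Z_t\ne\tilde Z_t\}=\{\tau>t\}$, so the stopping argument holds from time $0$ with no restart. Hence $t_\mu$ can be taken arbitrarily small and the bound holds for all $t>0$.
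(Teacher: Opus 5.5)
Your upper bound matches the paper's argument. Your separate treatment of $t\in(0,1)$, with $C$ depending on $|\Delta z|$, is correct and in fact needed, since monotonicity of $\Sigma_t$ controls $\|\Sigma_t^{-1/2}\|$ only away from $t=0$.

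The lower bound, however, fails at the optional-stopping step. That step is a second-moment argument, and it loses a square. From $|Y_0|\le\mu(\tau>t)^{1/2}\bigl(\E|Y_{t\wedge\tau}|^2\bigr)^{1/2}$ and $\E|Y_{t\wedge\tau}|^2\le|Y_0|^2+8\gamma d\int_0^te^{-2\lambda_+s}\,ds$ you only get $\mu(\tau>t)\gtrsim e^{2\lambda_+t}$ when $\alpha>0$. Since $\lambda_+<0$, this is $o(e^{\lambda_+t})$, so it does \emph{not} dominate $c\min(t^{-1/2},e^{\lambda_+t})$. When $\alpha=0$ you have $\lambda_+=0$, the integral equals $t$ rather than a constant, and the same argument yields only $t^{-1}$ instead of $t^{-1/2}$.

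The missing idea is to apply the coupling inequality to the projection itself instead of using a moment bound. Under each marginal, $\ell_+\cdot Z_t=-\lambda_-Q_t$, where $Q_t=X_t-\lambda_-^{-1}V_t$ is an autonomous OU process, $dQ_t=\lambda_+Q_t\,dt-\sqrt{2\gamma}\,\lambda_-^{-1}\,dW_t$ (a scaled Brownian motion if $\alpha=0$). Hence $\mu(Z_t\neq\tilde Z_t)\ge\mu(Q_t\neq\tilde Q_t)\ge\dTV{\law(Q_t),\law(\tilde Q_t)}$. The explicit Gaussian TV is \emph{linear} in the normalized mean gap: of order $|\Delta q|e^{\lambda_+t}$ for $\alpha>0$ and $|\Delta q|t^{-1/2}$ for $\alpha=0$. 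This is where both terms of the minimum come from, and for $\Delta q\neq0$ it needs neither stopping times nor Markovianity.

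Your eigenspace case is also misdiagnosed. The martingale part of the $\R^d$-valued process $\ell_+\cdot D$ is the \emph{entire} noise difference $\sqrt{2\gamma}(\tilde W-W)$. Suppose $\mu(\ell_+\cdot D_t=0)=1$ for all $t$, so that $\ell_+\cdot D\equiv0$ a.s.\ by path continuity. Then the noises are fully synchronous, $D_t=e^{\lambda_-t}\Delta z\neq0$ for all $t$, and the processes never meet. No reflection mechanism or hitting problem remains, and the $t^{-1/2}$ term does not come from this case.

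Otherwise $\mu(\Delta Q_{t_0}\neq0)>0$ for some $t_0$, and Markovianity is used only to restart at $t_0$. Conditionally on $\F_{t_0}$, $(Q,\tilde Q)$ couples two OU processes started from $Q_{t_0},\tilde Q_{t_0}$. Truncate $|\Delta Q_{t_0}|$ so that $2\Phi(x)-1\ge cx$ on $[0,1]$ applies; the TV bound then gives $c_\mu e^{\lambda_+t}$ for $t\ge t_\mu$. So the dichotomy you flagged as the main obstacle is elementary, and the step you treated as routine is the one that fails.

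Finally, in the eigenspace case $Y_0=0$, so the now-equals-forever property cannot remove the restart. What it gives is that $t\mapsto\mu(Z_t\neq\tilde Z_t)$ is nonincreasing. That extends the bound down to small $t$ after shrinking $c_\mu$.
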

\medskip 
Note that if $\lambda_{-}\Delta x - \Delta v=0$, then $\Delta z = (\Delta x, \Delta v)$ is in the eigenspace of the drift operator corresponding to eigenvalue $\lambda_{-}$, i.e.\, $\Delta z$ lies in the direction where the drift of $Z$ has the steepest decline. This results in the decline in TV distance given by~\eqref{eq:tv_dist_Gauss}. However, as one sees in the proofs below, a Markovian coupling cannot result in a meet-up if it stays in this eigenspace, which results in the suboptimal probability of not meeting~\eqref{eq:lb_couplingprob}. Note moreover that $\lambda_{+}=0$ if and only if $\alpha=0$; this is the situation that the probability of not meeting is bounded from below by $c_{\mu} t^{-\nicefrac{1}{2}}$.\par 
It is typically easier to construct couplings for discrete-time processes than for continuous-time processes. While discrete-time couplings indeed provide some more `wiggle room', the following theorem demonstrates that as the step size tends to $0$, the meeting probability of discrete Markovian couplings exhibits the same issues as in the continuous time setting.

For $h>0$ and $k\in \N$ we set $Z^h_k=Z_{hk}$ and $\tilde{Z}^h_k =\tilde{Z}_{hk}$; the Markov processes $(Z^h_k)_{k\in\N}$ and $(\tilde{Z}^h_k)_{k\in \N}$ represent discrete approximations of $(Z_t)_{t\geq 0}$ and $(\tilde{Z}_t)_{t \geq 0}$ for which one samples exactly from the distribution of the increments.

\medskip
\begin{definition}\label{def: discrete Markovian coupling}
    A coupling $\mu_h$ of the joint chain $(Z^h_k,\tilde Z^h_k)_{k\in\N}$ is called \textit{Markovian} if $(Z^h_k,\tilde Z^h_k)_{k\in\N}$ is a Markov chain under $\mu_h$ (with respect  to the filtration $(\F_k)_{k\in\N}$ generated by $(Z^h_k,\tilde Z^h_k)_{k\in\N}$). A coupling $\mu_h$ satisfies the \emph{now-equals-forever} property if for every $0\leq k\leq \ell$ we have that $\mu_h(\{Z_k=\tilde Z_k\}\cap\{Z_\ell\neq\tilde Z_\ell\})=0$.
\end{definition}

\medskip
\begin{theorem}
\label{thm: discrete Markovian couplings have the wrong asymptotic behavior}
Assume that $\gamma^2>4\alpha\geq 0$ and assume $  \lambda_{-} \Delta x - \Delta v =0$ and $\Delta x \neq 0$. 
Then there exists a constant $C>0$ such that for all $h>0$, $k\in \N$ one has
\begin{equation}\label{eq:tv_dist_Gauss_disc}
    \dTV{\law(Z_k^h),\law(\tilde Z_k^h)} \leq C e^{\lambda_{-} hk} |\Delta z|.
\end{equation}
On the other hand, for all choices of $\Delta x,\Delta v\in\R^{d}$, every $h>0$ and every Markovian coupling $\mu_h$ of $(Z_k^h,\tilde{Z}_k^h)_{k\in \N}$ there exist constants $k_{\mu_h}\in\N$, $c_{\mu_h} >0$ and $c>0$ (with $c$ independent of $\mu_h$) such that for all $k\geq k_{\mu_h}$ one has
\begin{equation}\label{eq:lb_couplingprob_disc}
    \mu_h(Z_k^h\neq \tilde{Z}_k^h) \geq c\min\big( c_{\mu_h}(hk+1)^{-\nicefrac{1}{2}}, c_{\mu_h} e^{\lambda_{+} hk},  h^{-1} e^{\lambda_{-} hk} \big).
\end{equation}
If moreover $\mu_h$ satisfies the now-equals-forever property, then \eqref{eq:lb_couplingprob_disc} holds true for all $k\in\N$. 
Notably, if $\lambda_{-} \Delta x - \Delta v =0$ and $\Delta x \neq 0$, then one has for every $h>0$ and every Markovian coupling $\mu_h$ of $(Z_k^h,\tilde{Z}_k^h)_{k\in \N}$ that
\begin{equation}\label{eq:lb_couplingprob_disc_limsup}
    \liminf_{k\rightarrow \infty} 
    \frac{\mu_h(Z_k^h\neq \tilde{Z}_k^h)}{\dTV{\law(Z_k^h),\law(\tilde Z_k^h)}}
    \geq \frac{c}{C h},
\end{equation}
where $c,C>0$ are constants independent of $h$ and $\mu_h$.
\end{theorem}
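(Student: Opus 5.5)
The plan is to treat the two halves separately. The upper bound~\eqref{eq:tv_dist_Gauss_disc} is an explicit Gaussian computation. The lower bound~\eqref{eq:lb_couplingprob_disc} is a martingale argument in the spirit of~\cite{BanerjeeKendall:2016}, applied to the eigen-coordinates of the difference process. The limit statement~\eqref{eq:lb_couplingprob_disc_limsup} is then a comparison of the two.

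\emph{Upper bound.} Write~\eqref{eq: kinetic Langevin with quadratic potential} as $dZ_t=MZ_t\,dt+\Sigma\,dW_t$, so that $Z^h_k\sim\calN(e^{Mhk}z,\Sigma_{hk})$ with $\Sigma_t=\int_0^te^{Ms}\Sigma\Sigma^Te^{M^Ts}\,ds$.

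1. When $\lambda_-\Delta x-\Delta v=0$, the vector $\Delta z$ is an eigenvector of $M$ for $\lambda_-$. Hence the mean difference is exactly $e^{\lambda_-hk}\Delta z$.
2. By~\eqref{eq: Gaussian TV distance} and $2\Phi(r/2)-1\le r/\sqrt{2\pi}$, we get $\dTV{\cdot,\cdot}\le(2\pi)^{-1/2}\|\Sigma_{hk}^{-1/2}\|\,e^{\lambda_-hk}|\Delta z|$.
3. For $hk\ge1$, the matrix $\Sigma_{hk}$ is increasing in $hk$ and dominates $\Sigma_1>0$ (hypoellipticity), so $\|\Sigma_{hk}^{-1/2}\|\le\|\Sigma_1^{-1/2}\|$.
4. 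For $hk<1$, use the trivial bound $\dTV{\cdot,\cdot}\le1\le e^{-\lambda_-}e^{\lambda_-hk}$ and absorb $|\Delta z|^{-1}$ into $C$, since $\Delta z$ is fixed.

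\emph{Lower bound.} The main structural step is the observation that a Markovian coupling preserves the one-step conditional means.

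1. Under a Markovian coupling $\mu_h$, conditionally on $\F_k$, the pair $(Z^h_{k+1},\tilde Z^h_{k+1})$ is a coupling of $\calN(e^{Mh}Z^h_k,\Sigma_h)$ and $\calN(e^{Mh}\tilde Z^h_k,\Sigma_h)$.
2. Hence the difference $D_k=\tilde Z^h_k-Z^h_k$ satisfies $D_{k+1}=e^{Mh}D_k+\varepsilon_{k+1}$, where $\E[\varepsilon_{k+1}\mid\F_k]=0$ and $\E[|\ell\cdot\varepsilon_{k+1}|^2\mid\F_k]\le4\,\ell^T\Sigma_h\ell$ for every vector $\ell$.
3. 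Let $\ell_\pm$ be left eigenvectors of $M$ (for $d>1$, take them coordinatewise). Then $M^\pm_k:=e^{-\lambda_\pm hk}\,\ell_\pm\cdot D_k$ are martingales, and on $\{D_k=0\}$ both vanish.
4. Let $\tau$ be the first meeting time. Optional stopping and Cauchy--Schwarz give
$|M^\pm_0|=|\E[M^\pm_{k\wedge\tau}]|=|\E[M^\pm_k\ind_{\{\tau>k\}}]|\le(\E|M^\pm_{k\wedge\tau}|^2)^{1/2}\,\mu_h(\tau>k)^{1/2}$.
5. Bound the second moment by summing the conditional increment variances $4e^{-2\lambda_\pm h(j+1)}\ell_\pm^T\Sigma_h\ell_\pm$.
   - In the $\ell_+$-direction with $\alpha=0$ (so $\lambda_+=0$), this sum is $O(hk+1)$, which yields the $(hk+1)^{-1/2}$ term.
   - For $\lambda_+<0$, the same computation yields the $e^{\lambda_+hk}$ term. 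If Cauchy--Schwarz only gives $e^{2\lambda_+hk}$, I would replace it by an $L^1$/overshoot argument as in~\cite{BanerjeeKendall:2016}, using the Gaussian conditional tails of $\varepsilon$.
   - The $\ell_-$-direction yields the $h^{-1}e^{\lambda_-hk}$ term. There $\ell_-^T\Sigma_h\ell_-$ is of order $h$, and the resulting constant does not depend on $\mu_h$.

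\emph{Choice of starting time.} The initial values $M^\pm_0$ may vanish; in particular $M^+_0=0$ in the eigenvector case. This is why the constants $c_{\mu_h}$ and $k_{\mu_h}$ appear.

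1. If the coupling never makes $M^+$ nonzero before meeting, then the $\ell_+$-constraint is vacuous and only the $\ell_-$ bound is needed.
2. Otherwise, pick $k_{\mu_h}$ with $\mu_h(M^+_{k_{\mu_h}}\neq0,\ \tau>k_{\mu_h})>0$, and restart the argument from time $k_{\mu_h}$ using the Markov property.
3. Under now-equals-forever, $\{\tau>k\}=\{D_k\neq0\}$. The restart can then be avoided by monotonicity in $k$, so~\eqref{eq:lb_couplingprob_disc} holds for all $k\in\N$.

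\emph{Limit statement.} For~\eqref{eq:lb_couplingprob_disc_limsup}, note that $(hk+1)^{-1/2}$ and $e^{\lambda_+hk}$ decay strictly slower than $e^{\lambda_-hk}$, because $\lambda_+>\lambda_-$. For large $k$ the minimum in~\eqref{eq:lb_couplingprob_disc} is therefore $c\,h^{-1}e^{\lambda_-hk}$. Dividing by the upper bound $Ce^{\lambda_-hk}|\Delta z|$ gives the claim (with $|\Delta z|$ absorbed into $C$).

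\emph{Main obstacle.} The hard step is obtaining the sharp $h^{-1}e^{\lambda_-hk}$ term with a constant independent of $\mu_h$. A naive second-moment bound on $M^-$ loses the factor $h^{-1}$. I would first try to make the $\ell_-$ increment variance, of order $h$ per step, enter linearly rather than through a square root. If that fails, I would exploit the fact that meeting in a single step requires $D_{\tau-1}$ to lie within the degenerate $O(h^{3/2})$ position-noise scale. Recovering $h^{-1}$ exactly may need a finer hitting estimate.
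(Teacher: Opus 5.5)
Your upper bound is fine; your treatment of $hk<1$ is slightly more careful than the paper's. Your case split on whether $M^+\propto\Delta Q$ ever becomes nonzero also matches the paper's dichotomy.

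\textbf{The gap: Cauchy--Schwarz squares every rate.} Optional stopping plus Cauchy--Schwarz gives $\mu_h(\tau>k)\ge |M_0|^2/\E|M_{k\wedge\tau}|^2$, which is the \emph{square} of each rate you want, not only in the $\lambda_+<0$ branch you flagged. For $\alpha=0$ in the $\ell_+$ direction the per-step variance is $O(h)$, so you get $(hk+1)^{-1}$, not the $(hk+1)^{-1/2}$ you assert. For $\lambda_+<0$ you get $e^{2\lambda_+hk}$. In the $\ell_-$ direction, with your bound $4\ell_-^T\Sigma_h\ell_-=O(h)$, the sum is $4(e^{2\gamma hk}-1)$. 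So $\mu_h(\tau>k)\gtrsim e^{2\lambda_-hk}$ with no $h$-dependence at all: you lose the exponential rate, not just the factor $h^{-1}$. These losses are fatal. With $e^{2\lambda_-hk}$ in place of $h^{-1}e^{\lambda_-hk}$, the ratio in \eqref{eq:lb_couplingprob_disc_limsup} tends to $0$. For $\alpha\in(2\gamma^2/9,\gamma^2/4)$ one has $2\lambda_+<\lambda_-$, so even the branch where $\Delta Q$ becomes nonzero would no longer show suboptimality. Your $L^1$/overshoot fallback is not carried out. No sharpening of the $\ell_-$ martingale that uses the unconstrained variance $\ell_-^T\Sigma_h\ell_-$ can produce a factor $h^{-1}$.

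\textbf{The two missing ideas.} First, replace martingales by TV lower bounds for \emph{autonomous marginal sub-processes}; these are linear in the separation. Let $k_0$ be the first time with $\mu_h(Q^h_{k_0}\neq\tilde Q^h_{k_0})>0$. Since $\{Z^h_k=\tilde Z^h_k\}\subseteq\{Q^h_k=\tilde Q^h_k\}$, and conditionally on $\F_{k_0}$ the pair $(Q^h_{k_0+j},\tilde Q^h_{k_0+j})_j$ couples two Gaussian random walks (OU chains if $\alpha>0$), you get $\mu_h(Q^h_k\neq\tilde Q^h_k\mid\F_{k_0})\ge 2\Phi\bigl(|\Delta Q^h_{k_0}|/(2\sqrt{2\gamma^{-1}h(k-k_0)})\bigr)-1$. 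After restricting to $|\Delta Q^h_{k_0}|\le 2\sqrt{2\gamma^{-1}h(k-k_0)}$ this is linear and yields $c_{\mu_h}(hk+1)^{-1/2}$.

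Second, in the branch where $\Delta Q^h_k=0$ a.s.\ for all $k$, you must use the constraint. It slaves the velocity-noise difference to the position-noise difference, so $\Delta X^h_{k+1}=e^{-\gamma h}\Delta X^h_k+\begin{pmatrix}\id_d & 0\end{pmatrix}B_h\Delta\xi_{k+1}$. Thus $\Delta X^h_k=\tilde Y^h_k-Y^h_k$ for two AR(1) chains, each driven only by its own position noise of per-step variance $\sigma^{xx}_h=O(h^3)$. Hence $\mu_h(Z^h_k\neq\tilde Z^h_k)\ge\dTV{\law(Y^h_k),\law(\tilde Y^h_k)}$. The accumulated variance $\sigma^{xx}_h(1-e^{-2\gamma hk})/(1-e^{-2\gamma h})\le Ch^2$ is exactly what produces $c\min(1,h^{-1}e^{-\gamma hk})$ with $c$ independent of $\mu_h$. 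In your language: under $M^+\equiv0$ the $\ell_-$-increments have variance $O(h^3)$, not $O(h)$. Even then Cauchy--Schwarz would give only $\min(1,h^{-2}e^{-2\gamma hk})$, so the TV argument is still needed.
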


\begin{figure}[t]
\centering
\begin{tikzpicture}
\tikzset{
    declare function={
        asT(\x)     = 1/(1 + 0.3275911*\x);
        myerf(\x)   = 1 - ( asT(\x)*( 0.254829592
                          + asT(\x)*(-0.284496736
                          + asT(\x)*( 1.421413741
                          + asT(\x)*(-1.453152027
                          + asT(\x)*  1.061405429 )))) ) * exp(-(\x)^2);
        sxx(\T)     = 2*\T - 4*(1-exp(-\T)) + (1-exp(-2*\T));
        sxv(\T)     = (1-exp(-\T))^2;
        svv(\T)     = 1-exp(-2*\T);
        Bsq(\T)     = (svv(\T) + 2*sxv(\T) + sxx(\T))
                      / (sxx(\T)*svv(\T) - sxv(\T)^2);
        TV(\T)         = myerf( exp(-\T)*sqrt(Bsq(\T)) / (2*sqrt(2)) );
        oneshot(\T,\h) = myerf( sqrt(Bsq(\h))
                               * sqrt((1-exp(-2*\h))/(exp(2*\T)-1))
                               / (2*sqrt(2)) );
        markov(\T,\h)  = min(1, exp(-\T)/\h);
    },
}
\begin{groupplot}[
    group style={
        group size=3 by 1,
        horizontal sep=6pt,
        ylabels at=edge left,
        yticklabels at=edge left,
    },
    width=0.36\linewidth, height=5.4cm,
    xmin=0, xmax=8,
    ymode=log, ymin=1e-3, ymax=1.5,
    xlabel={$T = hk$},
    ylabel={meeting probability},
    label style={font=\small},
    tick label style={font=\footnotesize},
    title style={font=\small},
    grid=both, grid style={gray!20},
    every axis plot/.append style={very thick, samples=140, smooth},
]

\nextgroupplot[title={$h = 0.5$}]
\addplot[blue!70!black,             domain=0.5:8] {TV(x)};
\addplot[red!70!black, dashed,      domain=0.5:8] {oneshot(x, 0.5)};
\addplot[gray, dotted, line width=1.2pt, domain=0.5:8] {markov(x, 0.5)};

\nextgroupplot[title={$h = 0.1$}]
\addplot[blue!70!black,             domain=0.1:8] {TV(x)};
\addplot[red!70!black, dashed,      domain=0.1:8] {oneshot(x, 0.1)};
\addplot[gray, dotted, line width=1.2pt, domain=0.1:8] {markov(x, 0.1)};

\nextgroupplot[
    title={$h = 0.02$},
    legend to name=fig:sharpness:legend,
    legend style={
        legend columns=-1,
        column sep=1em,
        font=\footnotesize,
        draw=none, fill=none,
    },
]
\addplot[blue!70!black,             domain=0.02:8] {TV(x)};
\addlegendentry{TV distance}
\addplot[red!70!black, dashed,      domain=0.02:8] {oneshot(x, 0.02)};
\addlegendentry{iterated one-shot}
\addplot[gray, dotted, line width=1.2pt, domain=0.02:8] {markov(x, 0.02)};
\addlegendentry{Markovian lower bound}

\end{groupplot}

\node[anchor=north, yshift=-2pt]
  at (group c2r1.below south) {\ref{fig:sharpness:legend}};
\end{tikzpicture}
\caption{Sharpness of
Theorem~\ref{thm: discrete Markovian couplings have the wrong asymptotic behavior}.
Meeting probability vs.\ total time $T=hk$ on log scale, for the
exact discretization of the free kinetic Langevin equation
($\alpha=0$, $\gamma=1$)with $\Delta z=(\Delta x,-\gamma\Delta x)$ in the $\lambda_-$
eigenspace; the plotted curves correspond to $|\Delta x|=1$ (the
proportionality constants in the lower bound are suppressed). \emph{Solid:} TV distance
$\dTV{\pi_k^h(\delta_z),\pi_k^h(\delta_{\tilde z})}$, evaluated via
the closed-form Gaussian expression~\eqref{eq: Gaussian TV distance}.
The non-Markovian coupling of
Section~\ref{sec: non-Markovian coupling} reproduces this curve exactly
(Remark~\ref{rem: coalescence gives optimal TV for linear case}).
\emph{Dashed:} meeting probability of the iterated one-shot coupling,
evaluated via the closed-form expression of
Theorem~\ref{thm: iterated one-shot coupling probability}
applied to Example~\ref{example: one shot matching lower bound}.
\emph{Dotted:} the Markovian lower bound
$h^{-1}e^{\lambda_{-}hk}$ from~\eqref{eq:lb_couplingprob_disc}
(constant suppressed). As $h\downarrow 0$, the dashed and dotted
curves stay close to $1$ over a longer interval before decaying,
illustrating the $1/h$ degradation
of~\eqref{eq:lb_couplingprob_disc_limsup}.}
\label{fig:sharpness-impossibility}
\end{figure}

\medskip 
Unfortunately, we are not able to control how the constant $c_{\mu_h}$ in~\eqref{eq:lb_couplingprob_disc} depends on $\mu_h$. Nevertheless, the lower bound~\eqref{eq:lb_couplingprob_disc}
illustrates that while asymptotically optimal meeting probabilities can
be achieved for a discrete Markovian coupling when fixing a step size
$h$ and letting $hk\rightarrow\infty$, one cannot obtain optimal
meeting probabilities for a fixed time $hk$ when letting $h\downarrow 0$;
see Figure~\ref{fig:sharpness-impossibility}. \par 
The upper bound~\eqref{eq:tv_dist_Gauss} is established in Section~\ref{subsec: ub tvdist Gauss} below (note that $\eqref{eq:tv_dist_Gauss}$ implies \eqref{eq:tv_dist_Gauss_disc}). The lower bounds~\eqref{eq:lb_couplingprob} and~\eqref{eq:lb_couplingprob_disc} are established in Sections~\ref{subsec: Markovian suboptimality continuous quadratic} and~\ref{subsec: Markovian suboptimality discrete free} below. A key ingredient for both proofs relies on the analysis of a process that is zero if and only if $Z$ is in the eigenspace of $\lambda_{-}$. This process, which we denote by $(Q_t)_{t\geq 0}$, is introduced and analyzed in Section~\ref{subsec:Qprocess}. We also note that the proof of the lower bounds for the case $\alpha=0$ deviates slightly from the proof for the case $\alpha>0$; we provide a proof for~\eqref{eq:lb_couplingprob} in the case $\alpha>0$ and a proof for~\eqref{eq:lb_couplingprob_disc} in the case $\alpha = 0$. The remaining proofs are entirely analogous and are left to the reader.

\medskip
\begin{remark}
    The assumption that we are in the overdamped regime, i.e. that $\gamma^2>4\alpha$, is crucial in Theorems~\ref{thm: Markovian couplings have the wrong asymptotic behavior} and~\ref{thm: discrete Markovian couplings have the wrong asymptotic behavior}. Indeed, without this assumption, the rate of the exponential contraction of the means can be the same in all directions. This would prevent us from finding a subspace of initial values for which the TV distance between solutions contracts at a larger rate than elsewhere, which forms the key distinction between the TV distance and the probability of not meeting of Markovian couplings.
\end{remark}

\subsection{Proof of upper bound\texorpdfstring{~\eqref{eq:tv_dist_Gauss}}{ }}\label{subsec: ub tvdist Gauss}

The linear SDE \eqref{eq: kinetic Langevin with quadratic potential} can be represented in terms of $(Z_t)_{t\geq0}=(X_t,V_t)_{t\geq0}$ as
\begin{equation}
\label{eq: kinetic Langevin with quadratic potential (as matrix)}
    dZ_t=AZ_t\,dt+B\,d\bar W_t,
\end{equation}
where $(\bar W_t)_{t\geq0}$ is now a $2d$-dimensional Brownian motion and $A,B\in\R^{2d\times 2d}$ are given by
\begin{equation*}
    A=\begin{pmatrix}  0& \id_d\\ -\alpha\id_d & -\gamma\id_d \end{pmatrix},\qquad B=\begin{pmatrix}  0& 0\\ 0 & \sqrt{2\gamma}\id_d \end{pmatrix}
\end{equation*}
The solution to this SDE at time $t>0$ is given by
\begin{equation*}
    Z_t=e^{tA}Z_0+\int_0^te^{(t-s)A}Bd \bar W_s.
\end{equation*}
In the case of a deterministic initial value $Z_0=z=(x,v)\in\R^{2d}$ this means that the solution is distributed as $Z_t\sim\calN(e^{tA}z,\Sigma_t)$, where the covariance $\Sigma_t\in\R^{2d\times 2d}$ is given by
\begin{equation*}
\Sigma_t=\int_0^t e^{sA}BB^Te^{sA^T}ds.
\end{equation*}
By \eqref{eq: Gaussian TV distance}, the TV distance of two solutions $(Z_t)_{t\geq0}$, $(\tilde Z_t)_{t\geq0}$ of \eqref{eq: kinetic Langevin with quadratic potential} with initial values $z,\tilde z\in\R^{2d}$ at time $t>0$ is therefore
\begin{equation}
\label{eq: TV distance kinetic Langevin with Gaussian potential}
    \dTV{\law(Z_t),\law(\tilde Z_t)}=2\Phi\br{\frac{|\Sigma_t^{-1/2}e^{tA}\Delta z|}{2}}-1,
\end{equation}
where $\Delta z=\tilde z-z$. 

Note that the eigenvalues of the matrix $A$ are given by~\eqref{eq:ev_of_A}. If $\lambda_{-}\Delta x - \Delta v = 0$, then $\Delta z$ is in the eigenspace of $\lambda_{-}$ so that $e^{tA}\Delta z = e^{\lambda_{-} t}\Delta z$. As $2\Phi(x) -1 \leq \frac{\sqrt{2}}{\sqrt{\pi}} x$ for all $x\geq 0$, all that remains to establish~\eqref{eq:tv_dist_Gauss} is to prove that the operator norm of $\Sigma_t^{-1/2}$ appearing in \eqref{eq: TV distance kinetic Langevin with Gaussian potential} can be bounded uniformly in time. For this, we employ the following lemma.
\medskip
\begin{lemma}
\label{lemma: Sigma_t operator norm uniform bound}
    Let $(M_t)_{t>0}$ be sequence of $d$-dimensional positive definite matrices such that $M_t\succeq M_s$ for all $t\geq s$. Then for all $t\geq s$ we have
    \begin{equation*}
        \big\|M_t^{-1/2}\big\|\leq\big\|M_s^{-1/2}\big\|.
    \end{equation*}
\end{lemma}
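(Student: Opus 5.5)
The plan is to express the operator norm of the inverse square root through the smallest eigenvalue, and then use the variational (Rayleigh quotient) characterization of that eigenvalue to carry the Loewner ordering over to it. For a positive definite symmetric matrix $M$, the spectral theorem gives $\|M^{-1/2}\| = \lambda_{\min}(M)^{-1/2}$. The eigenvalues of $M^{-1/2}$ are $\lambda^{-1/2}$ for $\lambda$ ranging over the (positive) eigenvalues of $M$, and the operator norm of a symmetric positive definite matrix is its largest eigenvalue. The claim therefore reduces to showing $\lambda_{\min}(M_t)\geq \lambda_{\min}(M_s)$ whenever $t\geq s$.

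For this step I would use the Rayleigh quotient formula
\[
\lambda_{\min}(M)=\min_{|u|=1} u^T M u .
\]
The hypothesis $M_t\succeq M_s$ means exactly that $u^T M_t u\geq u^T M_s u$ for every $u\in\R^d$. Taking the minimum over unit vectors on both sides gives $\lambda_{\min}(M_t)\geq\lambda_{\min}(M_s)>0$. Since $x\mapsto x^{-1/2}$ is decreasing on $(0,\infty)$, it follows that $\|M_t^{-1/2}\| = \lambda_{\min}(M_t)^{-1/2}\leq \lambda_{\min}(M_s)^{-1/2}=\|M_s^{-1/2}\|$.

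There is no genuine obstacle here; the only point to state explicitly is that the matrices are symmetric, which is implicit in "positive definite" as used for the covariances $\Sigma_t$. Symmetry is what justifies both the spectral identity for $\|M^{-1/2}\|$ and the Rayleigh characterization. An alternative route would be the operator monotonicity of $M\mapsto M^{-1}$ together with $\|M^{-1/2}\|^2=\|M^{-1}\|$, but the eigenvalue argument above is more elementary, so I would use it.

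For context on how the lemma is applied: $\Sigma_t=\int_0^t e^{rA}BB^Te^{rA^T}\,dr$ has a positive semidefinite integrand, so $\Sigma_t\succeq\Sigma_s$ for $t\geq s$. Hence the lemma bounds $\|\Sigma_t^{-1/2}\|$ by $\|\Sigma_{1}^{-1/2}\|$ for all $t\geq 1$, which gives the uniform-in-time control needed in Section~\ref{subsec: ub tvdist Gauss}.
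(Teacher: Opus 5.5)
Your proposal is correct and follows the paper's proof essentially step for step: the paper also writes $\|M_t^{-1/2}\|^2=\lambda_{\max}(M_t^{-1})=\lambda_{\min}(M_t)^{-1}$ and then compares $z^TM_tz\geq z^TM_sz$ over unit vectors to obtain $\lambda_{\min}(M_t)\geq\lambda_{\min}(M_s)$. Your remark that symmetry is needed for both the spectral identity and the Rayleigh characterization matches the paper, which likewise takes ``positive definite'' to include symmetry.
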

\begin{proof}
    The matrix $M_t$ is positive definite for any $t>0$ and is therefore symmetric and invertible. Hence,
    \begin{equation*}
        \big\|M_t^{-1/2}\big\|^2=\lambda_{\max}(M_t^{-1})=\br{\lambda_{\min}(M_t)}^{-1},
    \end{equation*}
    where $\lambda_{\max}(M_t^{-1}),\lambda_{\min}(M_t)>0$ denote the largest and smallest eigenvalue of $M_t^{-1}$ and $M_t$ respectively. Suppose that $t\geq s$, then for all $z\in\R^{d}$ we have 
    \begin{equation*}
        z^TM_tz=z^TM_{s}z+z^T(M_t-M_{s})z\geq z^TM_{s}z.
    \end{equation*}
    Taking the minimum over $z\in\R^{d}$ with $|z|=1$ on both sides and using that $M_t$ and $M_{s}$ are symmetric we obtain
    \begin{equation*}
        \lambda_{\min}(M_t)=\min_{|z|=1}z^TM_tz\geq\min_{|z|=1}z^TM_{s}z=\lambda_{\min}(M_{s}).
    \end{equation*}
    This means that for all $t\geq s$ we have
    \begin{equation*}
        \big\|M_t^{-1/2}\big\|=\br{\lambda_{\min}(M_t)}^{-1/2}\leq\br{\lambda_{\min}(M_{s})}^{-1/2}=\big\|M_s^{-1/2}\big\|,
    \end{equation*}
    which completes the proof.
\end{proof}
The sequence of matrices $(\Sigma_t)_{t>0}$ satisfies the assumptions of the lemma above, as for ${0<s<t}$ we have that $\Sigma_t-\Sigma_s=\int_s^t e^{rA}BB^Te^{rA^T}dr$, which is positive definite. In view of the discussion preceding Lemma~\ref{lemma: Sigma_t operator norm uniform bound}, this completes the proof of the upper bound~\eqref{eq:tv_dist_Gauss}.

\subsection{The process \texorpdfstring{$(Q_t)_{t\geq 0}$}{Q}}\label{subsec:Qprocess}

For a solution $(Z_t)_{t\geq0}=(X_t,V_t)_{t\geq0}$ to \eqref{eq: kinetic Langevin with quadratic potential}, let 
\begin{equation*}
    Q_t=X_t-\lambda_-^{-1}V_t.
\end{equation*}
This choice is exactly such that $Q_t=0$ if and only if $Z_t$ is contained in the eigenspace of the drift matrix $A$ corresponding to $\lambda_-$ (see~\eqref{eq:ev_of_A} and \eqref{eq: kinetic Langevin with quadratic potential (as matrix)}).

We can rewrite the SDE \eqref{eq: kinetic Langevin with quadratic potential} in terms of $(X_t,Q_t)_{t\geq0}$ by noting that
\begin{align*}
    dQ_t&=dX_t-\lambda_-^{-1}\,dV_t\\
    &=\alpha\lambda_-^{-1}X_t\,dt+(1+\gamma\lambda_-^{-1})V_t\,dt-\sqrt{2\gamma}\lambda_-^{-1}\,dW_t\\
    &=(\alpha\lambda_-^{-1}+\gamma+\lambda_-)X_t\,dt-(\gamma+\lambda_-)Q_t\,dt-\sqrt{2\gamma}\lambda_-^{-1}\,dW_t
\end{align*}
Quick calculations show that $\lambda_-(\alpha\lambda_-^{-1}+\gamma+\lambda_-)=\alpha+\gamma\lambda_-+\lambda_-^2=0$ and $-(\gamma+\lambda_-)=\lambda_+$, so that \eqref{eq: kinetic Langevin with quadratic potential} is equivalent to
\begin{subequations}
\label{eq:dX_dQ_group} 
\begin{align}
    dX_t &= \lambda_-X_t\,dt - \lambda_-Q_t\,dt, \label{eq:dX_SDE} \\
    dQ_t &= \lambda_+Q_t\,dt - \frac{\sqrt{2\gamma}}{\lambda_{-}}\,dW_t. \label{eq:dQ_SDE}
\end{align}
\end{subequations}
We see that on its own, $(Q_t)_{t\geq0}$ is an Ornstein--Uhlenbeck process, with exact solution 
\begin{equation}\label{eq: Q_asOU}
    Q_t=e^{\lambda_+ t}q-\frac{\sqrt{2\gamma}}{\lambda_{-}}\int_0^te^{\lambda_+(t-s)}\,dW_s,
\end{equation}
where $q=x-\lambda_-^{-1}v$ is its initial value. In particular, if $(Q_t)_{t\geq0}$, $(\tilde Q_t)_{t\geq0}$ satisfy~\eqref{eq: Q_asOU} (in the distributional sense) with initial values $q,\tilde q\in \R^d$, then clearly $q=\tilde q$ implies that $\law(Q_t)=\law(\tilde{Q}_t)$ for all $t\geq 0$. Moreover, $Q_t$ is normally distributed with mean $e^{\lambda_+t}q$ and covariance that for $\alpha>0$ equals
\begin{equation*}
    \Cov(Q_t)=\frac{2\gamma}{\lambda_{-}^2}\br{\int_0^te^{2\lambda_+s}\,ds}\id_d=-\frac{\gamma}{\lambda_+\lambda_{-}^2}(1-e^{2\lambda_+t})\id_d=-\frac{\gamma}{\alpha\lambda_{-}}(1-e^{2\lambda_+t})\id_d,
\end{equation*}
where we use that $\lambda_+\lambda_-=\alpha$, while for $\alpha=0$ we likewise have covariance ${\Cov(Q_t)=2\gamma^{-1}t}$. Thus, by \eqref{eq: Gaussian TV distance} the processes $(Q_t)_{t\geq0}$, $(\tilde Q_t)_{t\geq0}$ satisfying~\eqref{eq: Q_asOU} with initial values $q,\tilde{q}\in \R^d$ have a TV distance that for $\alpha>0$ is given by
\begin{equation}
    \label{eq: TV distance Q_t}
    \dTV{\law(Q_t),\law(\tilde Q_t)}=2\Phi\br{\frac{\kappa e^{\lambda_+ t}|\Delta q|}{2\sqrt{1-e^{2\lambda_+t}}}}-1=2\Phi\br{\frac{\kappa|\Delta q|}{2\sqrt{e^{-2\lambda_+t}-1}}}-1,
\end{equation}
where $\kappa=\sqrt{-\alpha\lambda_-/\gamma}$ and $\Delta q=\tilde q-q$, and for $\alpha=0$ is given by
\begin{equation}
    \label{eq: TV distance Q_t (alpha=0)}
    \dTV{\law(Q_t),\law(\tilde Q_t)}=2\Phi\br{\frac{|\Delta q|}{2\sqrt{2\gamma^{-1}t}}}-1.
\end{equation}

\subsection{Proof of the \texorpdfstring{lower bound \eqref{eq:lb_couplingprob} for $\alpha>0$}{first lower bound}}
\label{subsec: Markovian suboptimality continuous quadratic}

The following lemma demonstrates that the probability of not meeting $\mu(Z_t\neq \tilde Z_t)$ of a Markovian coupling eventually contracts at best exponentially in $t$ with rate $|\lambda_+|$. Note that the lower bound~\eqref{eq:lb_couplingprob} in Theorem~\ref{thm: Markovian couplings have the wrong asymptotic behavior} is a direct consequence of this lemma, the proof of which is an adaptation of the proof of Lemma 1 in \cite{BanerjeeKendall:2016} to our setting.
\medskip
\begin{lemma}
\label{lemma: Markovian coupling asymptotic behavior}
Assume that $\gamma^2>4\alpha>0$ and let $\mu$ be a Markovian coupling of $(Z_t,\tilde Z_t)_{t\geq 0}$ of two solutions to \eqref{eq: kinetic Langevin with quadratic potential} with initial values such that $\Delta z\neq0$. Then there exist constants $t_\mu\geq0$, $c_\mu>0$ such that for all $t\geq t_{\mu}$ we have 
\begin{equation}\label{eq: coupling bound}
    \mu(Z_t\neq\tilde Z_t)\geq c_\mu e^{\lambda_+t}.
\end{equation}
Furthermore, if $\mu$ satisfies the now-equals forever property, then we can pick $t_\mu=0$. 
\end{lemma}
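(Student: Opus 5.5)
The plan is to adapt the argument of \cite[Lemma~1]{BanerjeeKendall:2016}, using the process $(Q_t)_{t\geq0}$ of Section~\ref{subsec:Qprocess} as the \enquote{witness coordinate}. Write $\Delta Q_s=\tilde Q_s-Q_s$ and $\Delta X_s=\tilde X_s-X_s$.

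\emph{Step 1 (conditional lower bound).} Fix $0\le s\le t$. Since $\mu$ is Markovian, conditionally on $\F_s$ the shifted pair $(Z_{s+r},\tilde Z_{s+r})_{r\ge0}$ is a coupling of two solutions of~\eqref{eq: kinetic Langevin with quadratic potential} started from $Z_s$ and $\tilde Z_s$. On $\{Z_t=\tilde Z_t\}$ we also have $Q_t=\tilde Q_t$, because $Q$ is a deterministic linear function of $Z$. The coupling characterization~\eqref{eq: TV coupling characterization}, together with~\eqref{eq: TV distance Q_t}, then gives
\begin{equation*}
\mu(Z_t\neq\tilde Z_t\mid\F_s)\;\ge\;2\Phi\br{\frac{\kappa|\Delta Q_s|}{2\sqrt{e^{-2\lambda_+(t-s)}-1}}}-1\;\ge\;c\min\bigl(1,|\Delta Q_s|\,e^{\lambda_+(t-s)}\bigr).
\end{equation*}
Here $c>0$ depends only on $\kappa$. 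The second inequality uses $2\Phi(a)-1\ge c'\min(1,a)$ and $e^{-2\lambda_+ r}-1\le e^{-2\lambda_+ r}$, where $\lambda_+<0$ because $\alpha>0$.

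\emph{Step 2 (a time where the $Q$-difference is nonzero).} I split into two cases.

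Case (a): $\mu(\Delta Q_s\neq0)=0$ for every rational $s$. By path continuity, $\Delta Q\equiv0$ almost surely. Then~\eqref{eq:dX_SDE} shows that $\Delta X$ solves $d\Delta X_t=\lambda_-\Delta X_t\,dt$. Hence $\Delta X_t=e^{\lambda_- t}\Delta x$. In this case $\Delta x\neq0$, since $\Delta q=0$ and $\Delta z\ne0$. So $\mu(Z_t\ne\tilde Z_t)=1$ for all $t$, and~\eqref{eq: coupling bound} holds trivially with $t_\mu=0$.

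Case (b): otherwise, there exist $s_0\ge0$ and $\delta>0$ with $p:=\mu(|\Delta Q_{s_0}|\ge\delta)>0$. Taking expectations in Step~1 with $s=s_0$ gives, for all $t\ge t_\mu:=s_0$,
\begin{equation*}
\mu(Z_t\neq\tilde Z_t)\ge c\,p\min\bigl(1,\delta e^{\lambda_+(t-s_0)}\bigr)\ge c_\mu e^{\lambda_+t},
\end{equation*}
with $c_\mu:=c\,p\min(1,\delta e^{-\lambda_+ s_0})$. This uses $e^{\lambda_+t}\le1$.

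\emph{Step 3 (now-equals-forever).} I extend the bound to $t\in(0,s_0)$. Under the now-equals-forever property, $\{Z_{s_0}\ne\tilde Z_{s_0}\}\subseteq\{Z_t\ne\tilde Z_t\}$ up to null sets for $t\le s_0$. Therefore $\mu(Z_t\neq\tilde Z_t)\ge\mu(Z_{s_0}\ne\tilde Z_{s_0})\ge c_\mu e^{\lambda_+s_0}\ge c_\mu e^{\lambda_+ s_0}e^{\lambda_+ t}$. Shrinking $c_\mu$ by the factor $e^{\lambda_+s_0}$ then gives $t_\mu=0$.

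The main obstacle I expect is making Step~1 rigorous. It requires that the regular conditional law of the future under a Markovian coupling is indeed a coupling of the two marginal laws started from $(Z_s,\tilde Z_s)$. Equivalently, the marginals $Z$ and $\tilde Z$ must each remain Markov for the joint filtration $(\F_t)$, so that their conditional laws given $\F_s$ depend only on $Z_s$ and $\tilde Z_s$ respectively. I would justify this from the definition of a Markovian coupling together with the fact that each marginal is the law of the SDE solution, as is done implicitly in \cite{BanerjeeKendall:2016}. If needed, I would state it as a standing interpretation of \enquote{Markovian coupling}, namely that it is a Markovian coupling in the sense of co-adapted transition kernels.
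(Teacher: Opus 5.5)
Your proposal is correct and follows essentially the paper's route: the witness process $Q$, the case split on whether $\Delta Q$ ever becomes nonzero with positive probability (a synchronous $Q$ forces $\Delta X_t=e^{\lambda_- t}\Delta x\neq 0$), conditioning on $\F_{s_0}$ to compare with the Ornstein--Uhlenbeck formula~\eqref{eq: TV distance Q_t}, and the now-equals-forever extension; your use of the event $\{|\Delta Q_{s_0}|\ge\delta\}$ together with $2\Phi(a)-1\ge c'\min(1,a)$ is only a slightly tidier replacement for the paper's truncated-first-moment and monotone-convergence step, and it gives $t_\mu=s_0$ directly. The point you flag in Step~1 is used in exactly the same way, without further comment, in the paper's proof, so reading ``Markovian coupling'' as co-adapted (each marginal keeps its own transition law given $\F_s$) is what the paper's argument needs as well.
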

\begin{proof}
Let $(Q_t)_{t\geq 0}$ and $(\tilde{Q}_t)_{t\geq 0}$ satisfy $Q_t = X_t - \lambda_{-}^{-1} V_t$ and $\tilde{Q}_t = \tilde{X}_t - \lambda_{-}^{-1} \tilde{V}_t$, $t\geq 0$.  
Suppose first that $\mu(Q_t=\tilde Q_t)=1$ for all $t\geq0$. Fubini's theorem implies that
\begin{equation}
    \mu(Q_t=\tilde Q_t \textup{ for almost every } t\geq0)=1.
\end{equation}
Since the paths of $Q$ and $\tilde Q$ are continuous this implies $Q$ and $\tilde Q$ are synchronously coupled $\mu$-almost surely. In particular, by \eqref{eq:dX_dQ_group} we have that 
\begin{equation*}
    \mu(\tilde X_t-X_t=e^{\lambda_-t}\Delta x \textup{ for all } t>0)=1,
\end{equation*}
showing that $\mu(Z_t\neq\tilde Z_t)=1$ for all $t>0$.

Now assume that there exists some $t_0\geq0$ such that $\mu(Q_{t_0}\neq\tilde Q_{t_0})>0$. Since the coupling $\mu$ is Markovian, the shifted process $(Q_{t_0+s},\tilde Q_{t_0+s})_{s\geq 0}$ conditioned on $\F_{t_0}$ constitutes a coupling between two OU processes that have different initial values with positive probability. 
Since $Z_t=\tilde Z_t$ implies that $Q_t=\tilde Q_t$, we have for every $t>t_0$ that
\begin{equation}
\label{eq: coupling probability simple bound (Markovian proof)}
    \mu(Z_t\neq\tilde Z_t)\geq \mu(Q_t\neq\tilde Q_t)=\E_\mu\sqbr{\E^{\F_{t_0}}_\mu\sqbr{\ind_{\{Q_t\neq\tilde Q_t\}}}}.
\end{equation}
The TV distance of the two OU processes $Q$ and $\tilde{Q}$ is given by \eqref{eq: TV distance Q_t}; for $t>t_0$ we have
\begin{equation}
\label{eq: dTV Q_t0 (Markovian proof)}
    \E^{\F_{t_0}}_\mu\sqbr{\ind_{\{Q_t\neq\tilde Q_t\}}}\geq2\Phi\br{\frac{\kappa|\tilde Q_{t_0}-Q_{t_0}|}{2\sqrt{e^{-2\lambda_+(t-t_0)}-1}}}-1.
\end{equation}
If $y\in\R^d$ satisfies $|y|\leq \tfrac{2}{\kappa}\sqrt{e^{-2\lambda_+(t-t_0)}-1}$, then we can bound
\begin{align}
\label{eq: Q_t TV lower bound (Markovian proof)}
    2\Phi\br{\frac{\kappa|y|}{2\sqrt{e^{-2\lambda_+(t-t_0)}-1}}}-1&=\P\br{\l|\calN(0,1)\r|\leq\frac{\kappa|y|}{2\sqrt{e^{-2\lambda_+(t-t_0)}-1}}}\nonumber\\
    &\geq\frac{\kappa}{\sqrt{2\pi e}}\frac{|y|}{\sqrt{e^{-2\lambda_+(t-t_0)}-1}},
\end{align}
using the fact that the standard normal density function can be bounded as $\varphi(z)\geq 1/\sqrt{2\pi e}$ for $z\in[-1,1]$.

Since $\lambda_+<0$, we have that $\E_\mu\sqbr{\ind_{\l\{|\tilde Q_{t_0}-Q_{t_0}|\leq \tfrac{2}{\kappa}\sqrt{e^{-2\lambda_+(t-t_0)}-1}\r\}}|\tilde Q_{t_0}-Q_{t_0}|}$ monotonically increases to $\E_\mu\sqbr{|\tilde Q_{t_0}-Q_{t_0}|}$ as $t\to\infty$, and thus there exists a $t_\mu>t_0$ such that for all $t\geq t_\mu$ we have that
\begin{equation}
\label{eq: Q_t difference restricted lower bound (Markovian proof)}
    \E_\mu\sqbr{\ind_{\l\{|\tilde Q_{t_0}-Q_{t_0}|\leq \tfrac{2}{\kappa}\sqrt{e^{-2\lambda_+(t-t_0)}-1}\r\}}|\tilde Q_{t_0}-Q_{t_0}|}\geq \frac{1}{2}\E_\mu\sqbr{|\tilde Q_{t_0}-Q_{t_0}|}=\frac{\sqrt{2\pi e}}{\kappa}c_\mu,
\end{equation}
where $c_\mu\coloneqq \tfrac{\kappa}{2\sqrt{2\pi e}}\E_\mu\sqbr{|\tilde Q_{t_0}-Q_{t_0}|}$. Note that $c_\mu>0$ by definition of $t_0$. 

Combining the above observations we see that the probability of not meeting at time $t\geq t_\mu$ can be bounded from below as follows
\begin{align*}
    \mu(Z_t\neq\tilde Z_t)&\stackrel{\eqref{eq: coupling probability simple bound (Markovian proof)}}{\geq}\E_\mu\l[\E_\mu^{\F_{t_0}}[\ind_{\{Q_t\neq\tilde Q_t\}}]\r]\\
    &\stackrel{\eqref{eq: dTV Q_t0 (Markovian proof)}}{\geq}\E_\mu\l[2\Phi\br{\frac{\kappa|\tilde Q_{t_0}-Q_{t_0}|}{2\sqrt{e^{-2\lambda_+(t-t_0)}-1}}}-1\r]\\
    &\geq\E_\mu\l[\ind_{\l\{|\tilde Q_{t_0}-Q_{t_0}|\leq \tfrac{2}{\kappa}\sqrt{e^{-2\lambda_+(t-t_0)}-1}\r\}}\l\{2\Phi\br{\frac{\kappa|\tilde Q_{t_0}-Q_{t_0}|}{2\sqrt{e^{-2\lambda_+(t-t_0)}-1}}}-1\r\}\r]\\
    &\stackrel{\eqref{eq: Q_t TV lower bound (Markovian proof)}}{\geq} \E_\mu\sqbr{\ind_{\l\{|\tilde Q_{t_0}-Q_{t_0}|\leq \tfrac{2}{\kappa}\sqrt{e^{-2\lambda_+(t-t_0)}-1)}\r\}}\frac{\kappa}{\sqrt{2\pi e}}\frac{|\tilde Q_{t_0}-Q_{t_0}|}{\sqrt{e^{-2\lambda_+(t-t_0)}-1}}}\\
    &\stackrel{\eqref{eq: Q_t difference restricted lower bound (Markovian proof)}}{\geq}\frac{c_\mu}{\sqrt{e^{-2\lambda_+(t-t_0)}-1}}\geq c_\mu e^{\lambda_+(t-t_0)}\geq c_\mu e^{\lambda_+t}.
\end{align*}
This completes the proof of \eqref{eq: coupling bound}. If in addition $\mu$ satisfies the now-equals forever property, then we have for all $0\leq t<t_\mu$ that $\mu(Z_t\neq\tilde Z_t)\geq\mu(Z_{t_\mu}\neq \tilde Z_{t_\mu})$. The bound \eqref{eq: coupling bound} can therefore be extended to all $t\geq0$ by appropriately scaling $c_\mu$.
\end{proof}

\subsection{Proof of the \texorpdfstring{lower bound \eqref{eq:lb_couplingprob_disc} for $\alpha=0$}{second lower bound}}\label{subsec: Markovian suboptimality discrete free}

Note that when $\alpha=0$, then the equation~\eqref{eq: kinetic Langevin with quadratic potential} admits an exact solution $(Z_t)_{t\geq0}=(X_t,V_t)_{t\geq0}$ for any initial value $z=(x,v)\in\R^{2d}$,
which is given by
\begin{equation*}
    X_t=x+\gamma^{-1}(1-e^{-\gamma t})v+\sqrt{2\gamma^{-1}}\int_0^t(1-e^{-\gamma(t-s)})\,dW_s,
\end{equation*}
\begin{equation*}
    V_t=e^{-\gamma t}v+\sqrt{2\gamma}\int_0^te^{-\gamma(t-s)}\,dW_s.
\end{equation*}
In particular, at time $t>0$ the solution $(Z_t)_{t\geq 0}$ to the potential free kinetic Langevin equation with initial value $z\in\R^{2d}$ is distributed as $Z_t\sim\mathcal{N}(e^{tA}z,\Sigma_t)$, where $e^{tA},\Sigma_t\in\R^{2d\times 2d}$ are given by
\begin{align}
\label{eq: free kinetic Langevin mean and covariance matrices}
\begin{split}
    e^{tA}&=
    \begin{pmatrix}
    \id_d & \gamma^{-1}\br{1-e^{-\gamma t}}\id_d\\
    0 & e^{-\gamma t}\id_d
    \end{pmatrix},\\
    \Sigma_t&=
    \begin{pmatrix}
    \br{2\gamma^{-1}t-4\gamma^{-2}\br{1-e^{-\gamma t}}+\gamma^{-2}(1-e^{-2\gamma t})}\id_d & \gamma^{-1}\br{1-e^{-\gamma t}}^2\id_d\\
    \gamma^{-1}\br{1-e^{-\gamma t}}^2\id_d & \br{1-e^{-2\gamma t}}\id_d
    \end{pmatrix}.
\end{split}
\end{align}
Moreover, in this case the eigenvalues of the drift matrix $A$ in~\eqref{eq:ev_of_A} are simply given by
\begin{equation*}
\lambda_{-} = -\gamma,\qquad \lambda_{+}=0.
\end{equation*}

Recall that given processes $(Z_t)_{t\geq 0}=(X_t,V_t)_{t\geq 0}$ and $(\tilde{Z}_t)_{t\geq 0}=(\tilde X_t,\tilde V_t)_{t\geq 0}$ we define $Q_t = X_t + \gamma^{-1}V_t$ and $\tilde{Q}_t = \tilde{X}_t + \gamma^{-1} \tilde V_t$, $t\geq 0$. Recall that $Q_t=0$ whenever $Z_t$ is in the eigenspace of the drift matrix $A$ corresponding to eigenvalue $\lambda_{-}=-\gamma$. We also define $Q^h_k = Q_{hk}$ and $\tilde{Q}^h_k = \tilde{Q}_{hk}$, $h>0$ and $k\in \N$. \par 
The main difference compared to the argument providing a lower bound in the continuous time setting is that the assumption that $\mu_h(\cap_{k=0}^\infty\{Q^h_k=\tilde Q^h_k\})=1$ and $\Delta x\neq 0$ does \emph{not} guarantee that $\mu_h(\cap_{k=0}^{\infty} \{ Z^h_k\neq \tilde Z^h_k\})=1$. This is inherent to the discrete set-up, where we can not control the behavior of the processes on the time intervals $(hk,h(k+1))$, on which the positions could be driven together. The value of the step size, however, crucially limits the meeting probability, due to the fact that the variance of the positions over these time intervals scales as $h^3$ for small $h$. This is demonstrated in Lemma~\ref{lem: hitting prob when in eigenspace on grid} below. First, however, we use arguments analogous to the proof of Lemma~\ref{lemma: Markovian coupling asymptotic behavior} to deal the situation that $\mu_h(\cap_{k=0}^\infty\{Q^h_k=\tilde Q^h_k\})<1$. The Lemmas~\ref{lemma: discrete Markovian coupling asymptotic behavior nonzero Q} and~\ref{lem: hitting prob when in eigenspace on grid} together provide the lower bound~\eqref{eq:lb_couplingprob_disc} when $\alpha = 0$.
\medskip
\begin{lemma}
\label{lemma: discrete Markovian coupling asymptotic behavior nonzero Q}
Assume that $\gamma^2> \alpha =0$, let $h>0$, and let $\mu_h$ be a Markovian coupling of $(Z_k^{h},\tilde{Z}_k^h)_{k\in \N} = (Z_{hk},\tilde Z_{hk})_{k\in \N}$ where $(Z_t,\tilde Z_t)_{t\geq 0}$ are two solutions to \eqref{eq: kinetic Langevin with quadratic potential} with initial values such that $\Delta z\neq 0$. Suppose that there is a $k\in\N$ such that $\mu_h(Q^h_{k}\neq \tilde Q^h_{k})>0$. Then there exist constants $k_{\mu_h}\in\N$, $c_{\mu_h}>0$ such that for all $k\geq k_{\mu_h}$ we have 
\begin{equation}
\label{eq: coupling bound discrete}
    \mu_{h}(Z^h_k\neq \tilde Z^h_k)\geq \frac{c_{\mu_{h}}}{\sqrt{hk+1}}.
\end{equation}
Furthermore, if in addition $\mu$ satisfies the now-equals forever property, then we can pick $k_{\mu_h}=0$.
\end{lemma}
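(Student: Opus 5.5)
The plan is to transplant the proof of Lemma~\ref{lemma: Markovian coupling asymptotic behavior} to the grid, using the $\alpha=0$ total variation formula \eqref{eq: TV distance Q_t (alpha=0)} in place of \eqref{eq: TV distance Q_t}.

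\emph{Setting up the Markov step.} Fix $k_0\in\N$ with $\mu_h(Q^h_{k_0}\neq\tilde Q^h_{k_0})>0$. Since $Z^h_k=\tilde Z^h_k$ forces $Q^h_k=\tilde Q^h_k$, we have
\[
\mu_h(Z^h_k\neq\tilde Z^h_k)\ge\mu_h(Q^h_k\neq\tilde Q^h_k).
\]
For $\alpha=0$, \eqref{eq:dQ_SDE} reads $dQ_t=\gamma^{-1}\sqrt{2\gamma}\,dW_t$, so $Q$ is a scaled Brownian motion. Its skeleton $(Q^h_k)_k$ is therefore a Gaussian random walk that is autonomous (a function of $Z^h$ alone), with $Q^h_k-Q^h_{k_0}\sim\calN(0,2\gamma^{-1}h(k-k_0)\id_d)$ conditionally on $\F_{k_0}$. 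The Markov property of $\mu_h$ gives that, conditionally on $\F_{k_0}$, $(Z^h_{k_0+j},\tilde Z^h_{k_0+j})_{j\ge0}$ is a coupling of two chains started at $Z^h_{k_0},\tilde Z^h_{k_0}$. Hence $(Q^h_k,\tilde Q^h_k)$ is a coupling of the corresponding laws, and the coupling characterization \eqref{eq: TV coupling characterization} together with \eqref{eq: TV distance Q_t (alpha=0)} yields, for $k>k_0$,
\[
\E^{\F_{k_0}}_{\mu_h}\bigl[\ind_{\{Q^h_k\neq\tilde Q^h_k\}}\bigr]\ge 2\Phi\br{\frac{|\tilde Q^h_{k_0}-Q^h_{k_0}|}{2\sqrt{2\gamma^{-1}h(k-k_0)}}}-1 .
\]

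\emph{Linearizing and integrating.} On the event $\{|\tilde Q^h_{k_0}-Q^h_{k_0}|\le 2\sqrt{2\gamma^{-1}h(k-k_0)}\}$, the density bound $\varphi\ge1/\sqrt{2\pi e}$ on $[-1,1]$ bounds the right-hand side below by
\[
\frac{|\tilde Q^h_{k_0}-Q^h_{k_0}|}{\sqrt{2\pi e}\,\sqrt{2\gamma^{-1}h(k-k_0)}} .
\]
By monotone convergence, $\E_{\mu_h}\bigl[\ind_{\{|\Delta Q_{k_0}|\le 2\sqrt{2\gamma^{-1}h(k-k_0)}\}}|\Delta Q_{k_0}|\bigr]$, with $\Delta Q_{k_0}:=\tilde Q^h_{k_0}-Q^h_{k_0}$, increases to $\E_{\mu_h}|\Delta Q_{k_0}|$, which is positive. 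One subtlety is that this expectation might be infinite. To avoid that, I would work with $\E[\min(|\Delta Q_{k_0}|,1)]>0$; in fact no integrability is needed, since $\mu_h(0<|\Delta Q_{k_0}|\le M)>0$ for some $M$. So there is $k_{\mu_h}>k_0$ such that, for $k\ge k_{\mu_h}$, the truncated expectation exceeds half its limit. This gives
\[
\mu_h(Z^h_k\neq\tilde Z^h_k)\ge \frac{c'}{\sqrt{h(k-k_0)}}\ge\frac{c'}{\sqrt{h\max(k,1)}}\ge \frac{c_{\mu_h}}{\sqrt{hk+1}},
\]
after adjusting the constant. The last comparison holds because $h(k-k_0)\le hk\le hk+1$.

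\emph{Now-equals-forever case.} If the coupling is sticky, then $k\mapsto\mu_h(Z^h_k\neq\tilde Z^h_k)$ is nonincreasing. For $k<k_{\mu_h}$ the left side is therefore at least its value at $k_{\mu_h}$, which is at least $c_{\mu_h}(hk_{\mu_h}+1)^{-1/2}\ge c_{\mu_h}(hk_{\mu_h}+1)^{-1/2}(hk+1)^{-1/2}$. Shrinking the constant extends the bound to all $k\in\N$.

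The only delicate point is the conditioning step: we must check that the Markov property with respect to the filtration generated by the pair lets us apply \eqref{eq: TV distance Q_t (alpha=0)} conditionally. This requires that the regular conditional law of the future of each marginal given $\F_{k_0}$ be the chain law started at $Z^h_{k_0}$ (respectively $\tilde Z^h_{k_0}$). I would take this as part of the definition of a Markovian coupling, exactly as the continuous-time proof does.
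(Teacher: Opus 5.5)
Your proposal is correct and follows the paper's proof essentially step for step. The steps are: the reduction $\mu_h(Z^h_k\neq\tilde Z^h_k)\ge\mu_h(Q^h_k\neq\tilde Q^h_k)$; conditioning on $\F_{k_0}$ via the Markov property together with the $\alpha=0$ Gaussian TV formula; linearizing $2\Phi-1$ on the truncated event; monotone convergence to choose $k_{\mu_h}$; and, under now-equals-forever, monotonicity in $k$ to take $k_{\mu_h}=0$.

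Your integrability worry is unnecessary. $Q^h_{k_0}$ and $\tilde Q^h_{k_0}$ have fixed Gaussian marginals, so $\E_{\mu_h}|\tilde Q^h_{k_0}-Q^h_{k_0}|<\infty$, and the paper simply takes half of this expectation. Your truncation workaround is nonetheless valid.
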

\begin{proof}
Let $k_0=\min\{k\in\N:\mu_h(Q^h_k\neq\tilde Q^h_k)>0\}$. Since the coupling $\mu_h$ is Markovian, the shifted chain $(Q^h_{k_0+k},\tilde Q^h_{k_0+k})_{k\in\N}$ conditioned on $\F_{k_0}$ constitutes a coupling between two processes following the (discretized) dynamics of~\eqref{eq:dQ_SDE} and having different initial values with positive probability under $\mu_h$. Since $Z^h_k=\tilde Z^h_k$ implies that $Q^h_k=\tilde Q^h_k$, we have for every $k>k_0$ that
\begin{equation}
\label{eq: coupling probability simple bound (discrete Markovian proof)}
    \mu_h(Z^h_k\neq\tilde Z^h_k)\geq\mu_h(Q^h_k\neq \tilde Q^h_k)=\E_{\mu_h}\sqbr{\E^{\F_{k_0}}_{\mu_h}\sqbr{\ind_{\{Q^h_k\neq\tilde Q^h_k\}}}}.
\end{equation}
The TV distance of the two (discretized) processes $Q^h_k = Q_{hk}$ and $\tilde Q^h_k = \tilde{Q}_{hk}$ is given by \eqref{eq: TV distance Q_t (alpha=0)}; for $k>k_0$ we have
\begin{equation}
\label{eq: dTV Q^h_k0 (discrete Markovian proof)}
    \E^{\F_{k_0}}_{\mu_h}\sqbr{\ind_{\{Q^h_k\neq\tilde Q^h_k\}}}\geq2\Phi\br{\frac{|\tilde Q^h_{k_0}-Q^h_{k_0}|}{2\sqrt{2\gamma^{-1}h(k-k_0)}}}-1.
\end{equation}
If $y\in\R^d$ satisfies $|y|\leq 2\sqrt{2\gamma^{-1}h(k-k_0)}$, then we can bound
\begin{equation}
    \label{eq: Q^h_n TV lower bound (discrete Markovian proof)}
    2\Phi\br{\frac{|y|}{2\sqrt{2\gamma^{-1}h(k-k_0)}}}-1\geq \frac{|y|}{2 \sqrt{\pi e\gamma^{-1}h(k-k_0)}}.
\end{equation}
Note that 
\begin{equation*} 
\E_{\mu_h}\sqbr{\ind_{\l\{|\tilde Q^h_{k_0}-Q^h_{k_0}|\leq 2\sqrt{2\gamma^{-1}h(k-k_0)}\r\}}|\tilde Q^h_{k_0}-Q^h_{k_0}|} \nearrow \E_{\mu_h}\sqbr{|\tilde Q^h_{k_0}-Q^h_{k_0}|} \quad \textnormal{as } k\to\infty.
\end{equation*}
Thus, there exists a $k_{\mu_h}>k_0$ such that for all $k\geq k_{\mu_h}$ we have that
\begin{equation}
\label{eq: Q^h_n difference restricted lower bound (discrete Markovian proof)}
    \E_{\mu_h}\sqbr{\ind_{\l\{|\tilde Q^h_{k_0}-Q^h_{k_0}|\leq 2\sqrt{2\gamma^{-1}h(k-k_0)}\r\}}|\tilde Q^h_{k_0}-Q^h_{k_0}|}\geq \frac{1}{2}\E_{\mu_h}\sqbr{|\tilde Q^h_{k_0}-Q^h_{k_0}|}=2\sqrt{\pi e \gamma^{-1}}c_{\mu_h},
\end{equation}
where $c_{\mu_h}\coloneqq \E_{\mu_h}\sqbr{|\tilde Q^h_{k_0}-Q^h_{k_0}|}/\br{4\sqrt{\pi e \gamma^{-1}}}$. Note that $c_{\mu_h}>0$ by definition of $k_0$.

Combining the above observations we see that for $k\geq k_{\mu_{h}}$ we have the lower bound
\begin{align*}
    \mu_h(Z^h_k\neq\tilde Z^h_k)&\stackrel{\eqref{eq: coupling probability simple bound (discrete Markovian proof)}}{\geq}\E_{\mu_h}\l[\E_{\mu_h}^{\F_{k_0}}[\ind_{\{Q^h_k\neq\tilde Q^h_k\}}]\r]\\
    &\stackrel{\eqref{eq: dTV Q^h_k0 (discrete Markovian proof)}}{\geq}\E_{\mu_h}\l[2\Phi\br{\frac{|\tilde Q^h_{k_0}-Q^h_{k_0}|}{2\sqrt{2\gamma^{-1}h(k-k_0)}}}-1\r]\\
    &\geq\E_{\mu_h}\l[\ind_{\l\{|\tilde Q^h_{k_0}-Q^h_{k_0}|\leq 2\sqrt{2\gamma^{-1}h(k-k_0)}\r\}}\l\{2\Phi\br{\frac{|\tilde Q^h_{k_0}-Q^h_{k_0}|}{2\sqrt{2\gamma^{-1}h(k-k_0)}}}-1\r\}\r]\\
    &\stackrel{\eqref{eq: Q^h_n TV lower bound (discrete Markovian proof)}}{\geq} \E_{\mu_h}\sqbr{\ind_{\l\{|\tilde Q^h_{k_0}-Q^h_{k_0}|\leq 2\sqrt{2\gamma^{-1}h(k-k_0)}\r\}}\frac{|\tilde Q^h_{k_0}-Q^h_{k_0}|}{2\sqrt{\pi e\gamma^{-1}h(k-k_0)}}}\\
    &\stackrel{\eqref{eq: Q^h_n difference restricted lower bound (discrete Markovian proof)}}{\geq}\frac{c_{\mu_h}}{\sqrt{h(k-k_0)}}\geq\frac{c_{\mu_h}}{\sqrt{hk+1}}.
\end{align*}
This completes the proof of \eqref{eq: coupling bound discrete}. If in addition $\mu_h$ satisfies the now-equals forever property, then we have for all $0\leq k<k_{\mu_h}$ that $\mu(Z^h_k\neq\tilde Z^h_k)\geq\mu(Z_{k_{\mu_h}}\neq \tilde Z_{k_{\mu_h}})$. The bound \eqref{eq: coupling bound discrete} can therefore be extended to all $k\in\N$ by appropriately scaling $c_{\mu_h}$.
\end{proof}

\medskip
\begin{lemma}\label{lem: hitting prob when in eigenspace on grid}
Assume that $\gamma^2> \alpha =0$, let $h>0$, and let $\mu_h$ be a Markovian coupling of $(Z_k^{h},\tilde{Z}_k^h)_{k\in \N} = (Z_{hk},\tilde Z_{hk})_{k\in \N}$ where $(Z_t,\tilde Z_t)_{t\geq 0}$ are two solutions to \eqref{eq: kinetic Langevin with quadratic potential} with initial values such that $\Delta z\neq 0$. Suppose that $\mu_h(\cap_{k=0}^\infty\{Q^h_{k}=\tilde Q^h_k\})=1$. Then there exists a $c>0$ (independent of $h$ and $\mu_h$) such that for all $k\in \N$ one has 
    \begin{equation*}
        \mu_h(Z_k^h\neq \tilde Z_k^h)\geq c \min(1, h^{-1} e^{-\gamma hk} ).
    \end{equation*}
\end{lemma}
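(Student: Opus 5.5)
The plan is to reduce everything to the scalar position difference $D_k:=\tilde X^h_k-X^h_k$. Since $Q^h_k=\tilde Q^h_k$ for all $k$ almost surely, we have $Z^h_k=\tilde Z^h_k$ if and only if $D_k=0$. From the exact solution with $\alpha=0$, i.e.\ \eqref{eq:dX_dQ_group} with $\lambda_-=-\gamma$, $\lambda_+=0$, one step reads
\begin{equation*}
X^h_{k+1}=e^{-\gamma h}X^h_k+(1-e^{-\gamma h})Q^h_k+R_k,\qquad R_k:=\gamma\int_{hk}^{h(k+1)}e^{-\gamma(h(k+1)-s)}(Q_s-Q_{hk})\,ds,
\end{equation*}
and similarly for the tilde process. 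The middle term cancels in the difference because $Q^h_k=\tilde Q^h_k$, so
\begin{equation*}
D_{k+1}=e^{-\gamma h}D_k+(\tilde R_k-R_k).
\end{equation*}

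The first step is to use the Markov property of $\mu_h$, conditioning on $\F_k$ and on the common increment $Q^h_{k+1}-Q^h_k=\tilde Q^h_{k+1}-\tilde Q^h_k$. Under this conditioning, $R_k$ and $\tilde R_k$ are each Gaussian, with the same conditional mean (an affine function of the shared Brownian-bridge endpoint) and the same conditional variance $\sigma_h^2$. A direct Brownian-bridge computation should give $\sigma_h^2\asymp h^3$ for $\gamma h\le 1$ and $\sigma_h^2\asymp 1$ for $\gamma h$ large. Two consequences follow.
\begin{enumerate}
\item $M_k:=e^{\gamma hk}D_k$ is a martingale. Hence $E_{\mu_h}|D_k|\ge e^{-\gamma hk}|\Delta x|$.
\item Meeting at step $k+1$ requires $\tilde R_k-R_k=-e^{-\gamma h}D_k$. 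By the Gaussian TV formula \eqref{eq: Gaussian TV distance} applied to the two conditional laws, this gives
\begin{equation*}
\mu_h(D_{k+1}\neq0\mid\F_k)\ge 2\Phi\bigl(e^{-\gamma h}|D_k|/(2\sigma_h)\bigr)-1\ge c\min\bigl(1,|D_k|/\sigma_h\bigr).
\end{equation*}
This is the discrete analogue of the bound used in Lemma~\ref{lemma: discrete Markovian coupling asymptotic behavior nonzero Q}, and it requires $e^{-\gamma h}$ to be bounded below. Large $h$ is easy anyway, since then $h^{-1}e^{-\gamma hk}$ is tiny compared with the trivial bound.
\end{enumerate}

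The second step is to combine these as
\begin{equation*}
\mu_h(D_{k+1}\ne0)\ge c\,E\min\bigl(1,|D_k|/\sigma_h\bigr),
\end{equation*}
and then to bound $E\min(1,|D_k|/\sigma_h)$ from below using the $L^1$ information $E|D_k|\ge e^{-\gamma hk}|\Delta x|$. If $|D_k|\le\sigma_h$ held, this would give $e^{-\gamma hk}|\Delta x|h^{-3/2}$, which is even better than $h^{-1}e^{-\gamma hk}$. The slack of $h^{-1/2}$ is what I intend to spend to control large values of $|D_k|$. I will split on $\{|D_k|\le \sigma_h\}$ and control $E[|D_k|;|D_k|>\sigma_h]$ via a second-moment bound. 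The martingale structure gives
\begin{equation*}
E D_k^2\le e^{-2\gamma hk}|\Delta x|^2+4\sigma_h^2/(1-e^{-2\gamma h})\lesssim e^{-2\gamma hk}|\Delta x|^2+h^2.
\end{equation*}
The case $\mu_h(D_k\neq0)$ already of order one, or $h^{-1}e^{-\gamma hk}\ge1$, gives the $\min(1,\cdot)$ branch. For $k=0$ we have $D_0=\Delta x\neq0$ whenever $\Delta x\neq 0$; if $\Delta x=0$ but $\Delta v\neq0$, then $\Delta q=0$ forces $\Delta z=0$, so this case does not arise.

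The main obstacle is precisely this last step: a naive Cauchy--Schwarz combination of the $L^1$ lower bound with the $L^2$ upper bound yields only $\min(1,h^{-2}e^{-2\gamma hk})$, which is squared in $e^{-\gamma hk}$. To obtain the linear rate I would instead iterate the one-step bound along the martingale $M_k$. Each step's conditional non-meeting probability is at least proportional to $|D_k|/\sigma_h$, so each unit of $L^1$ mass of $D$ that is destroyed by meeting costs probability of order $\sigma_h^{-1}$ per step. Meanwhile the martingale conserves $E M_k$, so only mass of size about $e^{-\gamma hk}|\Delta x|$ needs to survive. I would try to formalize this via the optional stopping theorem at the first meeting time, bounding $E|M_k|1_{\{D_k\ne0\}}$ by $\sqrt{h}\,e^{\gamma hk}$ times a constant through a maximal inequality. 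This step is where I expect the real difficulty to lie.
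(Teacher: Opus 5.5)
There is a genuine gap, and it is where you suspect. Your reduction to $D_k=\tilde X^h_k-X^h_k$ and the recursion $D_{k+1}=e^{-\gamma h}D_k+(\tilde R_k-R_k)$ are correct. What is never carried out is the step that turns the one-step bound $\mu_h(D_{k+1}\neq 0\mid\F_k)\ge c\min(1,|D_k|/\sigma_h)$ into a bound linear in $e^{-\gamma hk}$. The optional-stopping and maximal-inequality route is only a sketch.

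Moment information on $D_k$ cannot close this gap. Take a variable equal to $0$ with probability $1-p$ and to $e^{-\gamma hk}\Delta x/p$ with probability $p$. It has the correct mean, and its second moment is $\lesssim h^2$ once $p\gtrsim e^{-2\gamma hk}|\Delta x|^2h^{-2}$. For such $D_k$, both $\mu_h(D_k\neq0)$ and $E\min(1,|D_k|/\sigma_h)$ equal $p$, which is exactly your squared rate. Feeding Gaussian tails at scale $h$ into your truncation argument still leaves a logarithmic loss.

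Separately, your conditioning step and the martingale property of $M_k$ tacitly assume co-adaptedness. That means that, given the joint past $\F_k$, each marginal moves according to its own transition kernel. This does not follow from the pair being Markov in its own filtration, which is the paper's definition.

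The missing idea is to unroll the recursion you already have and apply the coupling inequality to functionals of each marginal chain separately. This is the whole of the paper's proof. Write $D_n=\tilde Y_n-Y_n$, where $Y_n:=e^{-\gamma hn}x+\sum_{j=0}^{n-1}e^{-\gamma h(n-1-j)}R_j$ and $\tilde Y_n$ is defined likewise. The quantity $R_j=X^h_{j+1}-e^{-\gamma h}X^h_j-(1-e^{-\gamma h})Q^h_j$ is just the position component of the Gaussian increment of the $Z$-chain. So under any coupling the $R_j$ are i.i.d.\ $\calN(0,\sigma^{xx}_h\id_d)$. Hence
\[
Y_n\sim\calN(e^{-\gamma hn}x,\,s_n^2\id_d),\qquad \tilde Y_n\sim\calN(e^{-\gamma hn}\tilde x,\,s_n^2\id_d),
\]
where $s_n^2=\sigma^{xx}_h(1-e^{-2\gamma hn})/(1-e^{-2\gamma h})\le Ch^2$. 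Since $(Y_n,\tilde Y_n)$ couples these two laws,
\[
\mu_h(Z^h_n\neq\tilde Z^h_n)\ge\mu_h(Y_n\neq\tilde Y_n)\ge 2\Phi\left(\frac{e^{-\gamma hn}|\Delta x|}{2s_n}\right)-1\ge c\min\left(1,h^{-1}e^{-\gamma hn}\right).
\]
This argument needs no conditioning, no martingale and no Markov property, and it holds uniformly in $h>0$. It keeps the exact Gaussian law of the noise accumulated in $D_n$. Your step-by-step approach throws that law away and is left with moment information, which is insufficient.
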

\begin{proof}
Note that by construction (see~\eqref{eq: kinetic Langevin with quadratic potential (as matrix)} and~\eqref{eq: free kinetic Langevin mean and covariance matrices}) we have that $(Z_k^h)_{k\in \N}=(X_k^h,V_k^h)_{k\in \N} = (X_{hk},V_{hk})_{k\in \N} $ satisfies
\begin{align}
X^h_{k+1} &=X^h_k+\gamma^{-1}(1-e^{-\gamma h}) V^h_k+\begin{pmatrix}\id_d & 0\end{pmatrix}B_h
    \xi_{k+1},\\
V^h_{k+1} &=e^{-\gamma h} V^h_k+\begin{pmatrix} 0 & \id_d \end{pmatrix}B_h
    \xi_{k+1}
\end{align}
for all $k\in \N$, where $(\xi_k)_{k\in \N_{>0}}$ is a sequence of of i.i.d.\ $\mathcal{N}(0,\id_{2d})$-distributed random variables and ${B_h\in \R^{2d\times 2d}}$ is such that $B_h B_h^T = \Sigma_h$ with $\Sigma_h$ given by~\eqref{eq: free kinetic Langevin mean and covariance matrices}. Analogous relations hold for $(\tilde Z_k^h)_{k\in \N}=(\tilde X_k^h,\tilde V_k^h)_{k\in \N}$; these involve a sequence of i.i.d.\ $\mathcal{N}(0,\id_{2d})$-distributed random variables $(\tilde \xi_k)_{k\in \N_{>0}}$. Setting $\Delta \xi_k = \tilde \xi_k - \xi_k$, $k\in \N_{>0}$, we obtain the following relation for $\Delta X^h_{k+1}=X^h_{k+1}-\tilde X^h_{k+1}$, where we use that $\mu_h( \Delta Q_k^h = 0 )=1$:
\begin{align*}
    \Delta X^h_{k+1}&=\Delta X^h_k+\gamma^{-1}(1-e^{-\gamma h})\Delta V^h_k+\begin{pmatrix}\id_d & 0\end{pmatrix}B_h
    \Delta \xi_{k+1}\\
    &=e^{-\gamma h}\Delta X^h_k+(1-e^{-\gamma h}) \Delta Q^h_k+\begin{pmatrix}\id_d & 0\end{pmatrix}B_h
    \Delta \xi_{k+1}\\
    &=e^{-\gamma h}\Delta X^h_k+\begin{pmatrix}\id_d & 0\end{pmatrix}B_h\Delta \xi_{k+1},
\end{align*}
for all $k\in\N$ $\mu_h$-almost surely.\par 
Consider the Markov chains $(Y^h_k)_{k\in\N}$, $(\tilde Y^h_k)_{k\in\N}$ given recursively by $Y^h_0=x$, $\tilde Y^h_0=\tilde x$ and 
\begin{equation*}
    Y^h_{k+1}=e^{-\gamma h}Y^h_k+\begin{pmatrix}\id_d & 0 \end{pmatrix}B_h\xi_{k+1},\qquad\tilde Y^h_{k+1}=e^{-\gamma h}\tilde Y^h_k+\begin{pmatrix}\id_d & 0 \end{pmatrix}B_h\tilde \xi_{k+1}. 
\end{equation*}
Note that $\mu_h(\cap_{k=0}^\infty\{\Delta X^h_k=\Delta Y^h_k\})=1$. In addition, the two chains are distributed as $Y^h_n\sim\calN \br{e^{-\gamma hn}x,\Sigma_{h,n}^{xx}}$ and $\tilde Y^h_n\sim\calN \br{e^{-\gamma hn}\tilde x,\Sigma_{h,n}^{xx}}$, with covariance
\begin{equation*}
    \Sigma_{h,n}^{xx}=\sum_{k=0}^{n-1} e^{-2\gamma hk}\begin{pmatrix}\id_d & 0 \end{pmatrix}
    B_hB_h^T\begin{pmatrix}\id_d \\ 0 \end{pmatrix}=\frac{1-e^{-2\gamma hn}}{1-e^{-2\gamma h}}\sigma_{h}^{xx}\id_d,
\end{equation*}
where $B_hB_h^T=\Sigma_h$ is given by \eqref{eq: free kinetic Langevin mean and covariance matrices} and
\begin{equation*}
    \sigma_{h}^{xx}=2\gamma^{-1}h-4\gamma^{-2}\br{1-e^{-\gamma h}}+\gamma^{-2}(1-e^{-2\gamma h}).
\end{equation*}
There exists a constant $C>0$ (independent of $h$ and $\gamma$) such that\footnote{The function $f\colon\R\to\R$ given by $f(x)=2x-4(1-e^{-x})+1-e^{-2x}$ has derivative $f^\prime(x)=2(1-e^{-x})^2$, so that $f^\prime(x)\leq2\min(1,x^2)$ for all 
$x\geq0$. Consequently, $f(x)\leq2\min(x,\frac{1}{3} x^3)$ for all $x\geq0$, and therefore $\sigma^{xx}_h=\gamma^{-2}f(\gamma h)\leq 2\gamma^{-2}\min(\gamma h,\frac{1}{3}(\gamma h)^3)$.} $\sigma_h^{xx}\leq C\min(h,h^3)$. Additionally, there exists a constant $c>0$ such that $1-e^{-2\gamma h}\geq c\min(1,h)$. In conclusion, there exists a constant (possibly different) $C>0$ such that
\begin{equation*}
    \frac{\sigma^{xx}_h}{1-e^{-2\gamma h}}\leq C\min(h,h^2)\leq Ch^2.
\end{equation*}
Putting the above observations together we see for all $n\in\N$ that (after passing to a different $C>0$ if necessary) one has
\begin{align*}
    \mu_h(Z^h_n\neq \tilde Z^h_n)& \geq\mu_h(X^h_n\neq \tilde X^h_n)=\mu_h(Y^h_n\neq \tilde Y^h_n)\geq\dTV{\law(Y^h_n),\law(\tilde Y^h_n)}\\
    & =2\Phi\br{\frac{e^{-\gamma hn}|(\Sigma_{h,n}^{xx})^{-1/2}\Delta x|}{2}}-1=2\Phi\br{\frac{e^{-\gamma hn}|\Delta x|}{2\sqrt{\sigma^{xx}_h}\sqrt{\frac{1-e^{-2\gamma h n}}{1-e^{-2\gamma h}}}}}-1\\
    & \geq 2\Phi\br{\frac{e^{-\gamma hn}|\Delta x|}{Ch}}-1.
\end{align*}
In view of the fact that $2\Phi(x) -1 \geq \frac{\sqrt{2}}{\sqrt{\pi e}} \min(1,x)$, the proof is now completed. 
\end{proof}

\medskip
\begin{remark}
    The Markovian couplings in this section are only allowed to depend on the step size $h$. Theorem~\ref{thm: discrete Markovian couplings have the wrong asymptotic behavior} therefore does not rule out the existence of a set of Markovian couplings $(\mu_{h,n})_{h>0,n\in\N}$ depending on both the step size $h$ and the terminal time $T=hn$ that both reproduce the right asymptotic behavior in $T$ and is stable under $h\downarrow 0$.
\end{remark}\medskip

\section{Quantifying the lack of optimality for iterated one-shot couplings}
\label{sec: iterated one-shot coupling}

The iterated one-shot coupling is a coupling for Markov chains; it is defined such that the probability of two chains meeting at time $n\in \N$ is maximized given the state at time $n-1$. In particular, this is Markovian coupling satisfying the now-equals-forever property in the sense of Definition~\ref{def: discrete Markovian coupling}, and it is the natural discretization of the well-known reflection coupling for continuous diffusion equations. The coupling can also be thought of as a greedy algorithm for maximizing the meeting probability iteratively. It has been used effectively in e.g.~\cite{DurmusMoulines:2019} to bound the TV distance of Euler discretizations of the overdamped Langevin equation, as well as in other places, such as \cite{JacobEtAl:2020}.

While~\cite{DurmusMoulines:2019} demonstrates that the iterated one-shot coupling provides asymptotically optimal meeting probabilities for the \emph{overdamped} Langevin equation, Theorem~\ref{thm: discrete Markovian couplings have the wrong asymptotic behavior} above implies that this cannot be the case for the \emph{kinetic} Langevin equation. Indeed, Theorem~\ref{thm: iterated one-shot coupling probability} below provides an explicit (exact) expression for the meeting probability under the iterated one-shot coupling for linear Markov chains with Gaussian increments, i.e.\, for the type of Markov chain arising when discretizing a linear Langevin equation. This explicit expression for the meeting probability reveals that at least one of terms obtained as a lower bound in Theorem~\ref{thm: discrete Markovian couplings have the wrong asymptotic behavior} is sharp, see Example~\ref{example: one shot matching lower bound} below. \cref{thm: iterated one-shot coupling probability} and the subsequent remarks are also of independent interest: they provide guidelines on when the iterated one-shot coupling may provide asymptotically optimal bounds for the TV distance for more general diffusion equations. Both the notation and most of the arguments in this section are inspired by results in Section 6 of \cite{DurmusMoulines:2019}.

\subsection{Iterated one-shot coupling for linear Markov chains}

As mentioned above, Theorem~\ref{thm: iterated one-shot coupling probability} below concerns establishing an expression for the meeting probability for an iterated one-shot coupling for Markov chains with Gaussian increments. More specifically, the covariance of the increment is assumed to be independent of the state (though it may be time-dependent). The goal of this section is to provide the construction of an iterated one-shot coupling in this setting. In order to do so, one needs a maximal (i.e.\ optimal) coupling for Gaussian distributions on $\R^d$ with the same non-singular covariance matrix $\Sigma$ but with (possibly) different means $\mu$ and $\tilde\mu$. 
Multiple such maximal couplings exist. In this section we use the \emph{reflection coupling}, which we outline below; see e.g.\ \cite{Bou-RabeeEberle:2021,BubleyEtAl:1998,DurmusMoulines:2019} for a more extensive treatment.

\subsubsection{Reflection coupling for Gaussians with shared covariance}\label{sssec: reflection coupling}

Let $\mu,\tilde{\mu}\in \R^d$ and let $\Sigma \in \R^{d\times d}$. As announced above, we provide the construction of a maximal coupling for $\mathcal{N}(\mu,\Sigma^{\nicefrac{1}{2}})$ and $\mathcal{N}(\tilde{\mu},\Sigma^{\nicefrac{1}{2}})$ known as the reflection coupling.

Let $B\in\R^{d\times d}$ by a matrix such that $BB^T=\Sigma$ and define
\begin{align*}
    E=B^{-1}(\tilde\mu-\mu), \qquad e=
    \begin{cases}
        \frac{E}{|E|} &\textup{if }  E\neq0,\\
        0 &\textup{if }  E=0.
    \end{cases}
\end{align*}
and
\begin{equation*}
    p(z)=\frac{\varphi(E-z)}{\varphi(z)}=\exp\br{-|E|^2/2+\langle E,z\rangle}, \qquad z\in\R^d,
\end{equation*}
where $\varphi\colon\R\to\R$ is the density function of a one-dimensional standard Gaussian. Let $\xi\sim\mathcal{N}(0,\id_d)$ and $U\sim U(0,1)$ be independent from each other and set $X=\mu+B\xi$. Furthermore, consider the random variable
\begin{equation}
\label{eq: one-shot coupling}
    \tilde\xi=\ind_{\{U\leq p(\xi)\}}(\xi-E)+\ind_{\{U>p(\xi)\}}(\id_d-2ee^T)\xi
\end{equation}
and set $\tilde X=\tilde\mu+B\tilde\xi$. Note that $(\id_d-2ee^T)$ describes a reflection in the linear subspace orthogonal to $E=B^{-1}(\tilde\mu-\mu)$, so that $X=\tilde X$ on $\{U\leq p(\xi)\}$ and $B^{-1}(\tilde X-\tilde\mu)$ is the reflection of $B^{-1}(X-\mu)$ in this linear subspace on $\{U>p(\xi)\}$.

The random variable $\tilde\xi$ is distributed as $\mathcal{N}(0,\id_d)$ \cite[Section 2.3.2]{Bou-RabeeEtAl:2020}. Consequently, the random variable $(X,\tilde X)$ has marginals $\mathcal{N}(\mu,\Sigma)$ and $\mathcal{N}(\tilde\mu,\Sigma)$ and is therefore a realization of a coupling between these distributions. As mentioned, this coupling is optimal in the sense that 
\begin{equation}
\label{eq: one-shot TV distance}
    \P(X\neq \tilde X)=\dTV{\mathcal{N}(\mu,\Sigma),\mathcal{N}(\tilde\mu,\Sigma)}=2\Phi\br{\frac{\l|B^{-1}(\tilde\mu-\mu)\r|}{2}}-1,
\end{equation}
where $\Phi\colon\R\to\R$ is the cumulative distribution function of the one-dimensional standard Gaussian distribution, see \cite[Section 2.3.2]{Bou-RabeeEtAl:2020} (and see Equations~\eqref{eq: TV coupling characterization} and~\eqref{eq: Gaussian TV distance}). We note that if $\mu=\tilde\mu$, then $E=0$, $p\equiv1$, and thus $\tilde\xi=\xi$ and $\tilde X=X$.

\subsubsection{An iterated one-shot coupling}\label{sssec: iterated one-shot}
We now construct an iterated one-shot coupling for discrete time Markov chains with Gaussian increments based on the reflection coupling introduced above. First we introduce the Markov chains: for $k\in\N_{>0}$ let $B_k\in\R^{d\times d}$ be a non-singular matrix and let $h_k\colon\R^d\to\R^d$ some function that models the drift at step $k$. Consider the two Markov chains $(Z_k)_{k\in\N}$, $(\tilde Z_k)_{k\in\N}$ with initial values $Z_0=z$, $\tilde Z_0=\tilde z$ and, for $k\in\N$,
\begin{equation}
\label{eq: autoregressive Markov chains}
    Z_{k+1}=h_{k+1}(Z_k)+B_{k+1}\xi_{k+1}, \quad \tilde Z_{k+1}=h_{k+1}(\tilde Z_k)+B_{k+1}\tilde\xi_{k+1},
\end{equation}
where $(\xi_k)_{k\in \N_{>0}}$ and $(\tilde{\xi}_k)_{k\in \N_{>0}}$ are two sequences of i.i.d.\ $\mathcal{N}(0,\id_d)$-distributed random variables on a probability space $(\Omega,\mathcal{F},\P)$.\par 
The iterated one-shot coupling for $(Z_k,\tilde{Z}_k)_{k\in \N}$ is constructed by repeated application of the reflection coupling introduced above. More specifically, we assume the i.i.d.\ sequence of $\mathcal{N}(0,\id_d)$-distributed random variables $(\xi_k)_{k\in \N_{>0}}$ to be given, and construct the i.i.d.\ sequence of $\mathcal{N}(0,\id_d)$-distributed random variables $(\tilde{\xi}_k)_{k\in \N_{>0}}$ from $(\xi_k)_{k\in \N_{>0}}$. To this end, we let $(U_k)_{k\in \N_{>0}}$ be an i.i.d.\ $U(0,1)$-distributed sequence of random variables on $(\Omega,\mathcal{F},\P)$ that is independent of $(\xi_k)_{k\in \N_{>0}}$.
Setting $Z_0=z_0$ and $\tilde{Z}_0=\tilde{z}_0$, we inductively define, for $k\in\N_{>0}$,
\begin{align*}
    E_k=B_k^{-1}\br{h_k(\tilde Z_{k-1})-h_k(Z_{k-1})}, \qquad e_k=
    \begin{cases}
        \frac{E_k}{|E_k|} &\textup{if }  E_k\neq0,\\
        0 &\textup{if }  E_k=0,
    \end{cases}
\end{align*}
and 
\begin{equation}
\label{eq: multi-shot acceptance probability}
    p_k(z)=\frac{\varphi(E_k-z)}{\varphi(z)}=\exp\br{-|E_k|^2/2+\langle E_k,z\rangle}, \qquad z\in\R^d.
\end{equation}
In addition, we let $\tilde\xi_k$ be the random variable given by
\begin{equation}
\label{eq: multi-shot coupling}
    \tilde\xi_{k}=\ind_{\{U_k\leq p_k(\xi_k)\}}(\xi_{k}-E_k)+\ind_{\{U_k>p_k(\xi_k)\}}(\id_d-2e_ke_k^T)\xi_{k}.
\end{equation}
Finally, $Z_{k}$ and $\tilde{Z}_k$ are defined by~\eqref{eq: autoregressive Markov chains}. The resulting sequence $(Z_k,\tilde{Z}_k)_{k\in \N}$ is a Markov chain with respect to the filtration $(\mathcal{F}_k)_{k\in \N}$ generated by $(Z_k,\tilde{Z}_k)_{k\in \N}$; it is known as the iterated one-shot coupling for~\eqref{eq: autoregressive Markov chains} (under reflection coupling). \par \medskip 
Note that once the chains meet under the iterated one-shot coupling, they will stick together: on the set $Z_k=\tilde{Z}_k$, one has $E_j=0$ and $\xi_j = \tilde{\xi}_j$ for all $j\geq k$. Therefor, the iterated one-shot coupling satisfies the now-equals forever property. Moreover, 
\begin{equation}
\label{eq: conditioned one-step rejection rate}
    \P(Z_k\neq \tilde Z_k)=\E\l[\E^{\F_{k-1}}\l[\ind_{\{Z_k\neq\tilde Z_k\}}\r]\r]=\E\l[2\Phi\br{\frac{|E_k|}{2}}-1\r],
\end{equation}
by combination of the fact that the chain is Markovian and \eqref{eq: one-shot TV distance}.

\subsection{The meeting probability for the iterated one-shot coupling}
Theorem~\ref{thm: iterated one-shot coupling probability} below provides an exact expression for the meeting probability of the one-shot coupling introduced above in the case that the Markov chains are linear. More specifically, we assume that the functions $h_k$ in \eqref{eq: autoregressive Markov chains} are given by $h_k(z)=A_kz$ for some non-singular $A_k\in\R^{d\times d}$. The dynamics of the Markov chains $(Z_k)_{k\in \N}$ and $(\tilde{Z}_k)_{k\in \N}$ are thus given by
\begin{equation}
\label{eq: linear Markov chain}
    Z_{k+1}=A_{k+1}Z_k+B_{k+1}\xi_{k+1}, \quad \tilde Z_{k+1}=A_{k+1}\tilde Z_k+B_{k+1}\tilde\xi_{k+1}.
\end{equation}
\medskip
\begin{theorem}
\label{thm: iterated one-shot coupling probability}
    Let the Markov chain $(Z_k,\tilde Z_k)_{k\in \N}$ be given by the iterated one-shot coupling introduced in Section~\ref{sssec: iterated one-shot} applied to the Markov chains \eqref{eq: linear Markov chain}. The probability that the two chains have not met after $n$ steps is given by
    \begin{equation}
    \label{eq: multi-shot exact bound general}
        \P(Z_n\neq \tilde Z_n)=2\Phi\br{\frac{1}{2\Theta_{n}^{1/2}}}-1,
    \end{equation}
    where 
    \begin{equation*}
        \Theta_{n}=\sum_{k=1}^n\frac{1}{\l|B_{k}^{-1}\Pi_k\Delta z\r|^{2}},
    \end{equation*}
    and $\Pi_k\coloneqq A_kA_{k-1}\dots A_1$.
\end{theorem}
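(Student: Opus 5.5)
The plan is to reduce the $d$-dimensional coupled chain to a one-dimensional absorbed process. I would then identify that process with the reflection coupling of two Brownian motions, observed at the deterministic times $\Theta_1,\Theta_2,\ldots$.

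\emph{Step 1: the difference stays on a fixed ray.} Set $\Delta_k:=\tilde Z_k-Z_k$, so that $\Delta_0=\Delta z$. I claim that $\Delta_k=R_k\,\Pi_k\Delta z$ for a real-valued random variable $R_k$ with $R_0=1$, and I would prove this by induction on $k$. Suppose $\Delta_{k-1}=R_{k-1}\Pi_{k-1}\Delta z$. Then $E_k=B_k^{-1}A_k\Delta_{k-1}=R_{k-1}B_k^{-1}\Pi_k\Delta z$, hence $|E_k|=|R_{k-1}|\,a_k$ with $a_k:=|B_k^{-1}\Pi_k\Delta z|$. On the acceptance event in~\eqref{eq: multi-shot coupling} we have $\Delta_k=0$, so $R_k=0$. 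On the rejection event,
\begin{equation*}
  \Delta_k=A_k\Delta_{k-1}-2B_ke_ke_k^T\xi_k=B_ke_k\bigl(|E_k|-2\eta_k\bigr),\qquad \eta_k:=e_k^T\xi_k .
\end{equation*}
Since $B_ke_k=\operatorname{sign}(R_{k-1})\Pi_k\Delta z/a_k$, this gives $R_k=R_{k-1}-2\operatorname{sign}(R_{k-1})\eta_k/a_k$. Conditionally on $\F_{k-1}$, the variable $\eta_k$ is a standard one-dimensional Gaussian. Moreover, the acceptance probability $p_k(\xi_k)=\exp(-|E_k|^2/2+|E_k|\eta_k)$ depends on $\xi_k$ only through $\eta_k$. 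Consequently $(R_k)_{k\in\N}$ is a Markov chain in its own filtration. Each of its steps is exactly the one-dimensional reflection (one-shot) coupling of $\calN(0,a_k^{-2})$ and $\calN(R_{k-1},a_k^{-2})$, and the chain is absorbed at $0$. We also have $\{Z_n\neq\tilde Z_n\}=\{R_n\neq0\}$, because $\Pi_n\Delta z\neq0$: indeed the $A_k$ are non-singular, and we may assume $\Delta z\neq0$.

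\emph{Step 2: embedding into Brownian reflection coupling.} Let $\beta$ be a one-dimensional Brownian motion and set $D_t:=1-2\beta_t$, stopped at $\tau:=\inf\{t:D_t=0\}$. This is the difference process of the reflection coupling of two Brownian motions started at $0$ and $1$. I would show that $(R_k)_{k\le n}$ has the same law as $(D_{\Theta_k\wedge\tau})_{k\le n}$, where $\Theta_k=\sum_{j\le k}a_j^{-2}$. By the strong Markov property of $D$, it suffices to compare a single step. That is, I need to show that for $r\neq0$ and $\sigma^2=a^{-2}$, the law of $D_{\sigma^2\wedge\tau}$ under $D_0=r$ coincides with the law of one step of the one-shot chain from $r$. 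The atom at $0$ has mass $1-\dTV{\calN(0,\sigma^2),\calN(r,\sigma^2)}$ in both cases, by~\eqref{eq: one-shot TV distance} for the chain and by the reflection principle for $D$. For the absolutely continuous part, the reflection principle gives the killed density of $D_{\sigma^2}$ as $\phi_{r,4\sigma^2}(x)-\phi_{-r,4\sigma^2}(x)$ for $x$ on the side of $r$, where $\phi_{m,s}$ denotes the $\calN(m,s)$ density. For the chain, a direct computation of the law of $r-2\operatorname{sign}(r)\eta/a$ restricted to $\{U>p\}$ should give the same expression. This is the Gaussian calculation behind~\eqref{eq: rejection density}, and it should reduce to the identity $\varphi(z)-\varphi(z-E)=(1-p(z))\varphi(z)$.

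\emph{Step 3: conclusion and main obstacle.} Combining Steps 1 and 2,
\begin{equation*}
  \P(Z_n\neq\tilde Z_n)=\P(\tau>\Theta_n)=\P\Bigl(\max_{s\le\Theta_n}2\beta_s<1\Bigr)=2\Phi\Bigl(\frac{1}{2\Theta_n^{1/2}}\Bigr)-1,
\end{equation*}
by the reflection principle, which is~\eqref{eq: multi-shot exact bound general}.

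I expect the main obstacle to be Step 2, specifically verifying that the killed sub-density of one rejection step equals the reflection-principle expression. One must also take care that the sign bookkeeping makes $R_k$ never cross $0$ without being absorbed. The chain moves away from $0$ on the correct side only with the right probability, and this is encoded in the density identity rather than in a pathwise statement.

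If the embedding argument turns out to be awkward, I would instead prove the identity $f_n(r)=\E[f_{n-1}(R_1)\mid R_0=r]$ by induction on $n$, with $f_m(r)=2\Phi\bigl(|r|/(2\Theta^{(m)})^{1/2}\bigr)-1$ and suitably shifted $\Theta^{(m)}$. This amounts to the same Gaussian convolution identity, namely the Chapman--Kolmogorov equation for Brownian motion killed at $0$.
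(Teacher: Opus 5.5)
Your proposal is correct, but its main line differs from the paper's.

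\textbf{The paper's route.} It proves by backward induction that $\P(Z_n\neq\tilde Z_n)=\E\bigl[2\Phi\bigl(|E_k|/(2\Xi_{k,n}^{1/2})\bigr)-1\bigr]$ for $k=n,\dots,1$. It uses two preparatory lemmas: non-meeting at step $k$ holds iff $E_k\neq0$ and rejection occurs, and $e_k$ is the deterministic unit vector along $B_k^{-1}\Pi_k\Delta z$. Each inductive step then applies the integral identity of \cite[Lemma 20]{DurmusMoulines:2019}. That is essentially your fallback.

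\textbf{Your route.} Your Step 1 condenses both lemmas into the statement $\Delta_k=R_k\Pi_k\Delta z$, with $(R_k)$ a one-dimensional absorbed Markov chain. Your Step 2 replaces the integral identity, which is just Chapman--Kolmogorov for Brownian motion killed at $0$ tested against $2\Phi(\cdot)-1$, by a pointwise kernel comparison and a single use of the reflection principle at the end.

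\textbf{The kernel check is easier than you expect.} For $r>0$, rejection requires $p(\eta)<1$, i.e.\ $\eta<ra/2$, i.e.\ $r-2\eta/a>0$. So the sign of $R_k$ is constant pathwise; this is the paper's no-crossing inequality, not merely a distributional fact. The substitution $\eta=(r-x)a/2$ maps the rejection sub-density $(\varphi(\eta)-\varphi(\eta-ra))\ind_{\{\eta<ra/2\}}$ to $\tfrac a2\bigl[\varphi\bigl(\tfrac{(x-r)a}{2}\bigr)-\varphi\bigl(\tfrac{(x+r)a}{2}\bigr)\bigr]\ind_{\{x>0\}}=\phi_{r,4a^{-2}}(x)-\phi_{-r,4a^{-2}}(x)$. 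Also, only the simple Markov property of the stopped process at the deterministic times $\Theta_k$ is needed, not the strong one.

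\textbf{What each route buys.} Yours explains the formula: for linear chains, the iterated one-shot coupling is exactly the reflection coupling of two one-dimensional Brownian motions at unit distance, read off on the clock $\Theta_1<\Theta_2<\cdots$. It also gives the joint law of $(R_k)_{k\le n}$, hence of the coupling time, at no extra cost. The paper's route is more robust. Its induction survives replacing the exact relation $|E_{k+1}|=\bigl||E_k|-2\langle e_k,\xi_k\rangle\bigr|\,|B_{k+1}^{-1}\Pi_{k+1}\Delta z|/|B_k^{-1}\Pi_k\Delta z|$ by a Lipschitz upper bound, because $r\mapsto 2\Phi(r/(2s))-1$ is increasing. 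So, as in \cite[Theorem 19]{DurmusMoulines:2019}, it degrades to an upper bound for nonlinear drifts with isotropic noise. There the difference no longer stays on a deterministic ray, and your embedding is unavailable.
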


This theorem is a variation on Theorem 19 in \cite{DurmusMoulines:2019}, which provides an upper bound (instead of an equality) for the probability in~\eqref{eq: multi-shot exact bound general} in a setting equivalent to Markov chains of the form \eqref{eq: autoregressive Markov chains} under a Lipschitz assumption on the drift functions $h_k$. In particular, the proof of Theorem~\ref{thm: iterated one-shot coupling probability}, which is presented in Section~\ref{sec: proof of iterated one-shot coupling probability} below, follows the lines of the the proof of~\cite[Theorem 19]{DurmusMoulines:2019}. However, extra work is needed to obtain an equality instead of an upper bound, this is mainly contained in the preparatory lemmas presented in the following section (Lemmas \ref{lemma: rejection equivalences} and \ref{lemma: e_k expression} below). We stress that linearity of the drift is crucial for obtaining an equality instead of an inequality.  

\medskip
\begin{remark}
    Since both $Z_n$ and $\tilde Z_n$ given by \eqref{eq: linear Markov chain} are Gaussian random variables with the same covariance matrix, we can calculate their TV distance directly using \eqref{eq: one-shot TV distance}. \cref{thm: iterated one-shot coupling probability} should therefore not be interpreted as a result that helps to bound the TV distance. Rather, it serves to establish sharp lower bounds for the probability of not meeting, and it helps to determine whether the iterated one-shot coupling can produce useful TV distance bounds when linear Markov chains like \eqref{eq: linear Markov chain} are perturbed with some nonlinear term.
\end{remark}

\subsubsection{Some preparatory lemmas}
The following two lemmas provide some insight in the iterated one-shot coupling defined in Section~\ref{sssec: iterated one-shot} applied to the Markov chains~\eqref{eq: linear Markov chain}. For the sake of brevity, we use the notation $\Delta Z_k\coloneqq\tilde Z_k-Z_k$ and $\Delta z\coloneqq\tilde z-z$. Since $\P(Z_n\neq \tilde Z_n)=0$ if $\Delta z=0$, we assume without loss of generality that $\Delta z\neq 0$. 

The assumption that each $A_k$ is non-singular ensures that whether the two chains have met in the $k^{\text{th}}$ step is determined solely by the combination of whether they have met before and whether $U_k\leq p_k(\xi_k)$, as shown in the following lemma.

\medskip
\begin{lemma}
\label{lemma: rejection equivalences}
     Let $(Z_k,\tilde Z_k)_{k\in \N}$ be given by the iterated one-shot coupling defined in Section~\ref{sssec: iterated one-shot} applied to the Markov chains~\eqref{eq: linear Markov chain}. Then for all $k\in \N$ we have
     \begin{align*}
         \Delta Z_k\neq0&\iff E_k\neq0 \textup{ and }U_k>p_k(\xi_k),\\
         E_k\neq0&\iff\Delta Z_{k-1}\neq0.
     \end{align*}
\end{lemma}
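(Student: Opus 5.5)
The plan is to prove the second equivalence first, since it involves only the deterministic relation between $E_k$ and $\Delta Z_{k-1}$, and then to deduce the first from it. Throughout, "for all $k$" should be read as $k\in\N_{>0}$, since $E_k$ and $U_k$ are only defined for $k\geq 1$. Because $h_k(z)=A_kz$, the definition of $E_k$ gives
\[
E_k=B_k^{-1}A_k\Delta Z_{k-1}.
\]
Both $B_k$ and $A_k$ are non-singular, so $B_k^{-1}A_k$ is injective. Hence $E_k=0$ exactly when $\Delta Z_{k-1}=0$, which is the second equivalence. It holds pointwise on $\Omega$ and needs no probabilistic input.

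For the first equivalence, I would write the one-step difference using \eqref{eq: linear Markov chain}:
\[
\Delta Z_k = A_k\Delta Z_{k-1}+B_k(\tilde\xi_k-\xi_k)=B_k\bigl(E_k+\tilde\xi_k-\xi_k\bigr).
\]
Since $B_k$ is non-singular, $\Delta Z_k=0$ if and only if $\tilde\xi_k=\xi_k-E_k$. I would then split into cases according to \eqref{eq: multi-shot coupling}.

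\emph{Case $E_k=0$.} Here $e_k=0$ and $p_k\equiv 1$, so $U_k\le p_k(\xi_k)$ always holds and $\tilde\xi_k=\xi_k$. Therefore $\Delta Z_k=0$, and the right-hand side of the first equivalence is false as well.

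\emph{Case $E_k\neq0$ and $U_k\le p_k(\xi_k)$.} Then $\tilde\xi_k=\xi_k-E_k$, so $\Delta Z_k=0$, consistent with the right-hand side being false.

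\emph{Case $E_k\neq0$ and $U_k>p_k(\xi_k)$.} Then $\tilde\xi_k=(\id_d-2e_ke_k^T)\xi_k$, and $\Delta Z_k=0$ would require
\[
\xi_k-2e_k\langle e_k,\xi_k\rangle=\xi_k-E_k,
\]
that is, $\langle e_k,\xi_k\rangle=|E_k|/2$. At such a point,
\[
p_k(\xi_k)=\exp\bigl(-|E_k|^2/2+|E_k|\langle e_k,\xi_k\rangle\bigr)=1,
\]
so $U_k>p_k(\xi_k)$ would force $U_k>1$, which is impossible. Hence $\Delta Z_k\neq0$.

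I expect the only delicate point to be this last case, namely ruling out that reflection accidentally produces coalescence. The computation above shows it happens only when $p_k(\xi_k)=1$, and that is incompatible with $U_k>p_k(\xi_k)$ because $U_k\in(0,1)$. If $U_k$ is allowed to take the value $1$, the statement should be read almost surely, using that $\{U_k=1\}$ is a null set. Combining the three cases gives the first equivalence.
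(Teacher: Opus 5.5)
Your proof is correct and follows the paper's argument. The second equivalence comes from injectivity of $B_k^{-1}A_k$, and the first from the fact that on the reflection branch $\Delta Z_k=B_k\bigl(|E_k|-2\langle e_k,\xi_k\rangle\bigr)e_k$, which vanishes only when $p_k(\xi_k)=1$; that is incompatible with $U_k>p_k(\xi_k)$ because $U_k\le 1$. The paper states this last step as the strict inequality $|E_k|-2\langle e_k,\xi_k\rangle>0$, which it reuses in the next lemma. Your almost-sure caveat about $U_k=1$ is unnecessary, since $U_k>1$ is impossible for $U_k\in[0,1]$ as well.
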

\begin{proof}
    By definition of the iterated one-shot coupling, either $E_k=0$ or $U_k\leq p_k(\xi_k)$ implies that $\Delta Z_k=0$, so that $\Delta Z_k\neq0$ implies both $E_k\neq0$ and $U_k>p_k(\xi_k)$. On the other hand, if both $E_k\neq0$ and $U_k>p_k(\xi_k)$ we have by \eqref{eq: multi-shot coupling} that
    \begin{equation*}
        |B_k^{-1}\Delta Z_k|=\l|B_{k}^{-1}\br{A_k\Delta Z_{k-1}+B_k(\tilde\xi_k-\xi_k)}\r|=\l|E_k-2\langle e_k,\xi_k\rangle e_k\r|=\l||E_k|-2\langle e_k,\xi_k\rangle\r|.
    \end{equation*}
    Furthermore, note that $U_k>p_k(\xi_k)$ ensures that $p_k(\xi_k)<1$, which by definition of $p_k$ implies that 
    \begin{equation}
    \label{eq: no crossing of reflection plane}
        |E_{k}|-2\langle e_{k},\xi_{k}\rangle>0.
    \end{equation}
    Combination of these two observations shows that if both $E_k\neq0$ and $U_k>p_k(\xi_k)$ we also have that $\Delta Z_k\neq 0$, proving the first equivalence. The second equivalence follows immediately from the fact that $E_k=B_{k}^{-1}A_k\Delta Z_{k-1}$ and that $A_k$ and $B_k$ are non-singular. 
\end{proof}
If the two chains have not met before the $k^{\text{th}}$ step, the vector $e_k$ determines the linear subspace in which $\xi_k$ is reflected to obtain $\tilde\xi_k$ in the case that $U_k>p_k(\xi_k)$. The following lemma provides a deterministic expression for $e_k$.

\medskip
\begin{lemma}
\label{lemma: e_k expression}
     Let $(Z_k,\tilde Z_k)_{k\in \N}$ be given by the iterated one-shot coupling defined in Section~\ref{sssec: iterated one-shot} applied to the Markov chains~\eqref{eq: linear Markov chain}. For $k\in \N$ consider the event $\{Z_{k-1}\neq\tilde Z_{k-1}\}$, i.e.\ the event that the chains have not met before the $k^{\text{th}}$ step. On this event we have that
     \begin{equation*}
         e_{k}=\frac{B_{k}^{-1}
         \Pi_{k}\Delta z}{\l|B_{k}^{-1}\Pi_{k}\Delta z\r|},
     \end{equation*}
    where $\Pi_{k}\coloneqq A_{k}A_{k-1}\dots A_1$.
\end{lemma}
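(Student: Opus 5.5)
The plan is to show by induction on $k$ that, on the event $\{\Delta Z_{k-1}\neq 0\}$, the difference $\Delta Z_{k-1}$ is a \emph{positive scalar multiple} of $\Pi_{k-1}\Delta z$. The claim then follows at once: $E_k=B_k^{-1}A_k\Delta Z_{k-1}$, so $E_k$ is a positive multiple of $B_k^{-1}A_k\Pi_{k-1}\Delta z=B_k^{-1}\Pi_k\Delta z$. Normalising gives the stated formula for $e_k$. The vector $B_k^{-1}\Pi_k\Delta z$ is nonzero because $\Delta z\neq 0$ and all $A_j,B_k$ are non-singular.

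Concretely, I would prove the following statement for $k\in\N$. On $\{\Delta Z_k\neq 0\}$ there is a random scalar $s_k>0$ with $\Delta Z_k=s_k\Pi_k\Delta z$, where $\Pi_0=\id_d$.

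\emph{Base case.} At $k=0$ this holds trivially with $s_0=1$.

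\emph{Inductive step.} Suppose $\Delta Z_k\neq 0$. By Lemma~\ref{lemma: rejection equivalences}, this means $E_k\neq 0$, so $\Delta Z_{k-1}\neq 0$, and $U_k>p_k(\xi_k)$. By the induction hypothesis $\Delta Z_{k-1}=s_{k-1}\Pi_{k-1}\Delta z$, so $E_k=s_{k-1}B_k^{-1}\Pi_k\Delta z$ with $s_{k-1}>0$. On the reflection branch, \eqref{eq: multi-shot coupling} gives $\tilde\xi_k-\xi_k=-2\langle e_k,\xi_k\rangle e_k$. Therefore
\begin{equation*}
B_k^{-1}\Delta Z_k=E_k-2\langle e_k,\xi_k\rangle e_k=\bigl(|E_k|-2\langle e_k,\xi_k\rangle\bigr)e_k ,
\end{equation*}
exactly as in the proof of Lemma~\ref{lemma: rejection equivalences}. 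The key point is the sign of the coefficient, and this is supplied by \eqref{eq: no crossing of reflection plane}: on $\{U_k>p_k(\xi_k)\}$ we have $|E_k|-2\langle e_k,\xi_k\rangle>0$. Hence $\Delta Z_k$ is a positive multiple of $B_ke_k$, which in turn is a positive multiple of $\Pi_k\Delta z$. This closes the induction, and the statement for $e_k$ follows by applying it at $k-1$.

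The only delicate step is this sign control. A reflection could a priori flip the difference to $-\Pi_k\Delta z$. That would not affect the subspace of reflection, but it would affect $e_k$ as a vector, so the positivity \eqref{eq: no crossing of reflection plane} is genuinely needed. It comes from $p_k(\xi_k)<1$, which is equivalent to $\langle E_k,\xi_k\rangle<|E_k|^2/2$. Everything else is bookkeeping with the non-singularity of $A_k$ and $B_k$.
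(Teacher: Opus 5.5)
Your proposal is correct and follows essentially the paper's argument: an induction driven by the reflection identity $B_k^{-1}\Delta Z_k=(|E_k|-2\langle e_k,\xi_k\rangle)e_k$, with the sign supplied by \eqref{eq: no crossing of reflection plane}. The only difference is cosmetic. You carry the induction on $\Delta Z_k$ being a positive multiple of $\Pi_k\Delta z$ rather than on $e_k$ itself, which also gives a cleaner base case at $k=0$.
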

\begin{proof}
    We will prove this by induction. The statement for $k=0$ follows directly from the fact that $E_1=B_1^{-1}A_1\Delta z$. So let $k\geq 1$ and assume the statement is true for $k-1$. On the event $\{Z_{k}\neq \tilde Z_k\}$ we have that $\tilde \xi_{k}=\xi_{k}-2\langle e_{k},\xi_{k}\rangle e_{k}$, so that
    \begin{align*}
        \Delta Z_{k}&=A_k\Delta Z_{k-1}-2\langle e_{k},\xi_{k}\rangle B_{k}e_{k}= B_kE_k-2\langle e_{k},\xi_{k}\rangle B_ke_{k}\\
        &=\br{|E_k|-2\langle e_{k},\xi_{k}\rangle}B_ke_{k}.
    \end{align*}
    This means that on $\{Z_k\neq\tilde Z_k\}$ we have
    \begin{equation}
    \label{eq: E_k expression}
        E_{k+1}=B_{k+1}^{-1}A_{k+1}\Delta Z_{k}=\br{|E_k|-2\langle e_{k},\xi_{k}\rangle}B_{k+1}^{-1}A_{k+1}B_ke_{k}.
    \end{equation}
    On $\{Z_{k}\neq \tilde Z_k\}$ we also have $|E_{k}|-2\langle e_{k},\xi_{k}\rangle>0$, as stated in \eqref{eq: no crossing of reflection plane}. Combined with the induction hypothesis, which can be applied since $Z_{k}\neq \tilde Z_k$ implies that $Z_{k-1}\neq \tilde Z_{k-1}$, we see that $E_{k+1}$ is a positive multiple of $B_{k+1}^{-1}A_{k+1}\Pi_k\Delta z=B_{k+1}^{-1}\Pi_{k+1}\Delta z$ on $\{Z_{k}\neq \tilde Z_k\}$. The statement now follows from the fact that $e_{k+1}=E_{k+1}/|E_{k+1}|$ is normalized.
\end{proof}

We will also need the following standard integral; see \cite[Lemma 20]{DurmusMoulines:2019} for proof.
\medskip
\begin{lemma}
\label{lemma: standard integral Durmus}
For all $a>0$ and $t\geq0$ we have
    \begin{equation*}
        \int_{-\infty}^\infty\varphi(y)\l\{1-\left(1\wedge\frac{\varphi(t-y)}{\varphi(y)}\right)\r\}\l\{2\Phi\br{\frac{|t-2y|}{2a}}-1\r\}\,dy=2\Phi\br{\frac{t}{2\br{1+a^2}^{1/2}}}-1,
    \end{equation*}
    where $\varphi$ denotes the density of the $1$-dimensional standard Gaussian $\mathcal{N}(0,1)$.
\end{lemma}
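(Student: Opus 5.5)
The plan is to prove the identity by direct computation, rewriting the left-hand side as a difference of probabilities for a pair of independent Gaussians and then evaluating it with a linear change of variables. No earlier result of the paper is needed.

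\emph{Step 1: restrict the integral.} Since $\varphi(t-y)/\varphi(y)=\exp(-t^2/2+ty)$, the factor $1-\bigl(1\wedge\frac{\varphi(t-y)}{\varphi(y)}\bigr)$ is nonzero exactly when $ty<t^2/2$. For $t>0$ this means $y<t/2$; for $t=0$ the factor vanishes identically and both sides are $0$. So assume $t>0$. On $\{y<t/2\}$ the weight $\varphi(y)\{1-\ldots\}$ equals $\varphi(y)-\varphi(t-y)$, and $|t-2y|=t-2y$.

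\emph{Step 2: change variables.} Substituting $u=t/2-y$ turns the left-hand side into
\begin{equation*}
\int_0^\infty\bigl[\varphi(t/2-u)-\varphi(t/2+u)\bigr]\bigl\{2\Phi(u/a)-1\bigr\}\,du .
\end{equation*}
Write $2\Phi(u/a)-1=\P(|W|<u/a)=\P(a|W|<u)$ with $W\sim\calN(0,1)$. Let $U\sim\calN(t/2,1)$ be independent of $W$. The density of $U$ is $\varphi(u-t/2)=\varphi(t/2-u)$, and the density of $-U$ is $\varphi(t/2+u)$. Hence the integral equals
\begin{equation*}
\P\bigl(U>a|W|\bigr)-\P\bigl(-U>a|W|\bigr).
\end{equation*}

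\emph{Step 3: evaluate.} Set $A=\{U-aW>0\}$ and $B=\{U+aW>0\}$. Then $\{U>a|W|\}=A\cap B$ and $\{-U>a|W|\}=A^c\cap B^c$, up to null sets. The elementary identity $\P(A\cap B)-\P(A^c\cap B^c)=\P(A)+\P(B)-1$ then applies. Both $U-aW$ and $U+aW$ are distributed as $\calN\bigl(t/2,1+a^2\bigr)$, so
\begin{equation*}
\P(A)=\P(B)=\Phi\bigl(t/(2(1+a^2)^{1/2})\bigr),
\end{equation*}
which gives the claimed value $2\Phi\bigl(t/(2(1+a^2)^{1/2})\bigr)-1$.

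The only delicate step is Step 2: getting the sign and region right (the rejection weight lives on $y<t/2$, not $y>t/2$) and recognising the weight $\varphi(t/2-u)-\varphi(t/2+u)$ as a difference of the densities of $U$ and $-U$. Once that is in place, the symmetric-event trick in Step 3 finishes the proof with no further integration.
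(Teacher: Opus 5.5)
Your proof is correct and complete. The restriction to $\{y<t/2\}$, the substitution $u=t/2-y$, the identification of the two pieces as $\P(U>a|W|)$ and $\P(-U>a|W|)$, and the identity $\P(A\cap B)-\P(A^c\cap B^c)=\P(A)+\P(B)-1$ all check out. The degenerate case $t=0$ is handled as well.

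The paper gives no argument for this lemma and simply defers to \cite[Lemma~20]{DurmusMoulines:2019}. Your write-up therefore supplies a self-contained derivation where the paper relies on the citation.

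For comparison, the cancellation you use in Step 3 can also be seen analytically. Set $g(y):=2\Phi\bigl((t-2y)/(2a)\bigr)-1$, which agrees with the second bracket on $\{y<t/2\}$ and satisfies $g(t-y)=-g(y)$. The substitution $y\mapsto t-y$ then turns $-\int_{-\infty}^{t/2}\varphi(t-y)g(y)\,dy$ into $\int_{t/2}^{\infty}\varphi(y)g(y)\,dy$. The left-hand side thus collapses to $\int_{\mathbb{R}}\varphi(y)g(y)\,dy=2\P(2Y+2aW\le t)-1$, with $Y,W$ independent standard normals, which gives the same value.

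That version is a line shorter. Yours has the merit of exhibiting the two $\calN(t/2,1+a^2)$ variables $U\pm aW$ explicitly, which makes the origin of the factor $(1+a^2)^{1/2}$ transparent.
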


\subsubsection{Proof of Theorem~\ref{thm: iterated one-shot coupling probability}}\label{sec: proof of iterated one-shot coupling probability}
    Fix $n\in \N$. We will show by backward induction that 
    \begin{equation}
    \label{eq: induction hypthesis}
        \P(Z_n\neq \tilde Z_n)=\E\l[2\Phi\br{\frac{|E_k|}{2\Xi_{k,n}^{1/2}}}-1\r],
    \end{equation}
    for all $k\in\{1,\dots,n\}$, where
    \begin{equation*}
        \Xi_{k,n}=\sum_{j=k}^n\frac{|B_{k}^{-1}\Pi_k\Delta z|^2}{|B_j^{-1}\Pi_j\Delta z|^2}.
    \end{equation*}
    The proof is then completed by considering the case $k=1$ and noting that $\Theta_{n}=\Xi_{1,n}/|E_1|^2$.

    First, the base case $k=n$ is covered by \eqref{eq: conditioned one-step rejection rate}. So let $k\in\{1,\dots,n-1\}$ and suppose that \eqref{eq: induction hypthesis} is true for $k+1$. Since $\Delta Z_k=0$ implies that $E_{k+1}=0$, which in turn implies that $2\Phi(|E_{k+1}|/2)-1=0$, we can write
    \begin{equation}
    \label{eq: induction hypothesis multi-meeting proof}
        \P(Z_n\neq\tilde Z_n)=\E\l[2\Phi\br{\frac{|E_{k+1}|}{2\Xi_{k+1,n}^{1/2}}}-1\r]=\E\l[\ind_{\{Z_{k}\neq\tilde Z_{k}\}}\l\{2\Phi\br{\frac{|E_{k+1}|}{2\Xi_{{k+1},n}^{1/2}}}-1\r\}\r].
    \end{equation}
    Thanks to the expression for $E_{k+1}$ on $\{Z_k\neq\tilde Z_k\}$ given by \eqref{eq: E_k expression} and the results of \cref{lemma: rejection equivalences} and \cref{lemma: e_k expression}, we have
    \begin{align*}
        \ind_{\{Z_{k}\neq\tilde Z_{k}\}}|E_{k+1}|&\stackrel{\phantom{xx}\eqref{eq: E_k expression}\phantom{xx}}{=}\ind_{\{Z_{k}\neq\tilde Z_{k}\}}\l||E_{k}|-2\langle e_{k},\xi_{k}\rangle\r|\l|B_{k+1}^{-1}A_{k+1}B_{k}e_{k}\r|\\
        &\stackrel{\text{Lemma } \ref{lemma: rejection equivalences}}{=}\ind_{\{Z_{k-1}\neq\tilde Z_{k-1}\}}\ind_{\{U_k>p_k(\xi_k)\}}\l||E_{k}|-2\langle e_{k},\xi_{k}\rangle\r|\l|B_{k+1}^{-1}A_{k+1}B_{k}e_{k}\r|\\
        &\stackrel{\text{Lemma } \ref{lemma: e_k expression}}{=}\ind_{\{Z_{k-1}\neq\tilde Z_{k-1}\}}\ind_{\{U_{k}>p_k(\xi_k)\}}\l||E_{k}|-2\langle e_{k},\xi_{k}\rangle\r|\frac{\l|B_{k+1}^{-1}\Pi_{k+1}\Delta z\r|}{\l|B_{k}^{-1}
         \Pi_{k}\Delta z\r|}.
    \end{align*}
    The term inside of the expectation on the right hand side of \eqref{eq: induction hypothesis multi-meeting proof} therefore has, conditioned on $\F_{k-1}$, the expectation
    \begin{multline*}
        \E^{\F_{k-1}}\l[\ind_{\{Z_{k}\neq\tilde Z_{k}\}}\l\{2\Phi\br{\frac{|E_{k+1}|}{2\Xi_{k+1,n}^{1/2}}}-1\r\}\r]\\
        =\ind_{\{Z_{k-1}\neq\tilde Z_{k-1}\}}\E^{\F_{k-1}}\l[\ind_{\{U_{k}>p_k(\xi_k)\}}\l\{2\Phi\br{\frac{\l||E_{k}|-2\langle e_k,\xi_k\rangle\r|}{2\frac{\l|B_k^{-1}\Pi_{k}\Delta z\r|}{\l|B_{k+1}^{-1}\Pi_{k+1}\Delta z\r|}\Xi_{k+1,n}^{1/2}}}-1\r\}\r].
    \end{multline*}
    The uniformly distributed random variable $U_k$ is independent of $\F_{k-1}$ and of $\xi_k$. Consequently, the expected value of $\ind_{\{U_{k}>p_k(\xi_k)\}}$ conditioned on both $\F_{k-1}$ and $\xi_k$ is equal to $1-(1\wedge p_k(\xi_k))=1-(1\wedge\exp\br{-|E_k|^2/2+|E_k|\langle e_k,\xi_k\rangle})$ and hence we have by the tower property that
    \begin{multline*}
        \E^{\F_{k-1}}\l[\ind_{\{Z_{k}\neq\tilde Z_{k}\}}\l\{2\Phi\br{\frac{|E_{k+1}|}{2\Xi_{k+1,n}^{1/2}}}-1\r\}\r]\\   
        =\ind_{\{Z_{k-1}\neq\tilde Z_{k-1}\}}\E^{\F_{k-1}}\l[\br{1-1\wedge \exp\br{-\frac{|E_k|^2}{2}+|E_k|\langle e_k,\xi_k\rangle}}\l\{2\Phi\br{\frac{\l||E_{k}|-2\langle e_k,\xi_k\rangle\r|}{2\frac{\l|B_k^{-1}\Pi_{k}\Delta z\r|}{\l|B_{k+1}^{-1}\Pi_{k+1}\Delta z\r|}\Xi_{k+1,n}^{1/2}}}-1\r\}\r],
    \end{multline*}
    Furthermore, if $e_k\neq0$, the random variable $\langle e_{k},\xi_{k}\rangle$ is a projection of $\xi_{k}\sim\mathcal{N}(0,\id_d)$ onto the $1$-dimensional linear subspace spanned by $e_k$. This means it has the conditional distribution $\law\br{\langle e_{k},\xi_{k}\rangle|\F_{k-1}}=\ind_{\{e_k\neq0\}}\mathcal{N}(0,1)+\ind_{\{e_k=0\}}\delta_0$, where the event $\{e_k\neq0\}$ is equivalent to $\{Z_{k-1}\neq\tilde Z_{k-1}\}$ by \cref{lemma: rejection equivalences}. We can therefore apply the integral of \cref{lemma: standard integral Durmus} with $a=\br{\l|B_k^{-1}\Pi_{k}\Delta z\r|/\l|B_{k+1}^{-1}\Pi_{k+1}\Delta z\r|}\Xi_{k+1,n}^{1/2}$ and $t=|E_k|$ to obtain 
    \begin{equation*}
        \E^{\F_{k-1}}\l[\ind_{\{Z_{k}\neq\tilde Z_{k}\}}\l\{2\Phi\br{\frac{|E_{k+1}|}{2\Xi_{k+1,n}^{1/2}}}-1\r\}\r]\\
        =\ind_{\{Z_{k-1}\neq\tilde Z_{k-1}\}}\l\{2\Phi\br{\frac{|E_{k}|}{2\Xi_{k,n}^{1/2}}}-1\r\},
    \end{equation*}
    where we use that
    \begin{equation*}
        1+\frac{\l|B_k^{-1}\Pi_{k}\Delta z\r|^2}{\l|B_{k+1}^{-1}\Pi_{k+1}\Delta z\r|^2}\Xi_{k+1,n}=\Xi_{k,n}
    \end{equation*}
    by the definition of $\Xi_{k,n}$. We thus obtain, from \eqref{eq: induction hypothesis multi-meeting proof} and the calculations above, that
    \begin{equation*}
        \P(Z_{n}\neq \tilde Z_{n})=\E\sqbr{\ind_{\{Z_{k-1}\neq\tilde Z_{k-1}\}}\l\{2\Phi\br{\frac{|E_k|}{2\Xi_{k,n}^{1/2}}}-1\r\}}.
    \end{equation*}
    Since $Z_{k-1}=\tilde Z_{k-1}$ implies that $2\Phi(|E_k|/(2\Xi_{k,n}^{1/2}))-1=0$, we can drop the indicator function and obtain
    \begin{equation*}
        \P(Z_{n}\neq \tilde Z_{n})=\E\sqbr{2\Phi\br{\frac{|E_k|}{2\Xi_{k,n}^{1/2}}}-1},
    \end{equation*}
    concluding the proof of \cref{thm: iterated one-shot coupling probability}.
    
\subsection{Examples}
We will now use \cref{thm: iterated one-shot coupling probability} to assess the performance of the iterated one-shot coupling in several settings. First, we will give a sufficient condition for when the coupling is optimal in the sense that \eqref{eq: multi-shot exact bound general} is equal to $d_\textup{TV}(\law(Z_n),\law(\tilde Z_n))$ and give an example where this condition is met. In addition, we will discuss the homogeneous setting with a contracting drift matrix and isotropic noise, where the iterated one-shot reproduces the TV distance up to a constant factor. Afterwards, we will turn to the exact discretization of the kinetic Langevin equation, which does not satisfy the aforementioned sufficient condition, and show that the coupling performance deteriorates when the step size $h$ decreases to $0$, in line with the third term on the right-hand side of~\eqref{eq:lb_couplingprob_disc_limsup}.

\medskip
\begin{example} 
\label{example: optimality condition of the multi-shot coupling}
   Consider the homogeneous case of \eqref{eq: linear Markov chain}, where the Markov chains are given by $Z_0=z$, $\tilde Z_0=\tilde z$ and
    \begin{equation}
    \label{eq: homogenous linear Markov chain}
        Z_{k+1}=AZ_k+B\xi_{k+1}, \quad \tilde Z_{k+1}=A\tilde Z_k+B\tilde\xi_{k+1},
    \end{equation}
    for some non-singular matrices $A,B\in\R^{d\times d}$. Then the iterated one-shot coupling introduced in Section~\ref{sssec: iterated one-shot} applied to \eqref{eq: homogenous linear Markov chain} is optimal, i.e.\, $\P(Z_n\neq \tilde Z_n)=d_\textup{TV}(\law(Z_n),\law(\tilde Z_n))$ if 
    \begin{equation*}
        ABB^T A^T=a^2BB^T
    \end{equation*}
    for some nonzero constant $a\in\R$. Indeed, in this case, the TV distance between $\law(Z_n)$ and $\law(\tilde{Z}_n)$ is given by
    \begin{equation}
        \label{eq: coupling probability optimal multi-shot setting}
        \dTV{\law(Z_n),\law(\tilde Z_n)}=2\Phi\br{\frac{|B^{-1}\Delta z|}{2\br{\sum_{k=1}^na^{-2k}}^{1/2}}}-1=\P(Z_n\neq \tilde Z_n),
    \end{equation}
    see equations~\eqref{eq: multi-shot optimal TV} and~\eqref{eq: interated one shot coupling prob spec} in \cref{appendix: sufficient condition optimality iterated one-shot coupling} (the first identity is based on the explicit expression for the TV distance between Gaussians, see \eqref{eq: Gaussian TV distance}; the second identity is based on the explicit expression for the meeting probability provided by Theorem~\ref{thm: iterated one-shot coupling probability}).
\end{example}

\medskip
\begin{remark}
    The assumption that $ABB^TA^T=a^2BB^T$ is equivalent to stating that $\frac{1}{a}B^{-1}AB$ is orthogonal, and is for example satisfied in the case where $\frac{1}{a}A$ is an orthogonal matrix that commutes with $BB^T$. 
\end{remark}

\medskip
\begin{remark}
\label{remark: optimality of the iterated one-shot coupling for inhomogeneous chains}
    The result of \cref{example: optimality condition of the multi-shot coupling} can be generalized to the iterated one-shot coupling introduced in Section~\ref{sssec: iterated one-shot} applied to the inhomogeneous linear Markov chains \eqref{eq: linear Markov chain} under the assumption that for each $k\in\{1,\dots,n-1\}$ there exists a nonzero constant $a_k\in\R$ such that
    \begin{equation*}
        A_{k+1}B_kB_k^TA_{k+1}^T=a_k^2B_{k+1}B_{k+1}^T,
    \end{equation*}
    which is equivalent to stating that $\frac{1}{a_k}B_{k+1}^{-1}A_{k+1}B_k$ is orthogonal.
\end{remark}

\medskip
\begin{example}
\label{example: inhomogeneous scaling chains}
    Consider the case where the Markov chains are of the form \eqref{eq: linear Markov chain} with $A_k=\bar\omega_k\id_d$ and $B_k=\sigma_k\id_d$ for nonzero constants $\bar\omega_k,\sigma_k\in\R$. In this case, we find that $\Theta_n=|\Delta z|^{-2}\sum_{k=1}^n\br{\sigma_k^2/\prod_{j=1}^k\bar\omega_j^2}$, so that in light of \cref{remark: optimality of the iterated one-shot coupling for inhomogeneous chains} we have
\begin{equation}
     \dTV{\law(Z_n),\law(\tilde Z_n)}=2\Phi\br{\frac{|\Delta z|}{2\Xi_{n}^{1/2}}}-1,
\end{equation}
where $\Xi_n=\sum_{k=1}^n\br{\sigma_k^2/\prod_{j=1}^k\bar\omega_j^2}$. This shows that the upper bound on the TV distance between two functional autoregressive chains of the form $Z_{k+1}=h_{k+1}(Z_k)+\sigma_{k+1}\xi_{k+1}$ provided by the iterated one-shot coupling as stated in Theorem 19 of \cite{DurmusMoulines:2019} is attained if all $h_k$ are of the form $h_{k}(x)=\bar\omega_k x$. 
\end{example}

One specific instance of the above example, where the upper bound on the TV distance derived in \cite{DurmusMoulines:2019} is actually an equality, is given by the setting of independent OU-processes with a common mean reversion rate:
\medskip
\begin{example}
    Let $(X_t)_{t\geq0}$ consist of $d$ independent OU processes with common mean reversion rate $\gamma>0$, i.e.\ it is the solution to the $d$-dimensional stochastic differential equation
    \begin{equation*}
        dX_t=-\gamma X_t\,dt+\sqrt{2\gamma}\,dW_t,
    \end{equation*}
    where $(W_t)_{t\geq 0}$ is a $d$-dimensional standard Brownian motion. The solution of this SDE for some initial value $X_0=x\in\R^d$ is given by
    \begin{equation}
    \label{eq: OU example exact solution}
        X_t=e^{-\gamma t}x+\sqrt{2\gamma}\int_0^te^{-\gamma(t-s)}\,dW_s.
    \end{equation}    
    Consider for some fixed step size $h>0$ the Markov chains $(X^h_k)_{k\in\N}$, $(\tilde X^h_k)_{k\in\N}$ defined inductively by $X^h_0=x$, $\tilde X^h_0=\tilde x$ and 
    \begin{equation*}
        X^h_{k+1}=e^{-\gamma h}X^h_k+(1-e^{-2\gamma h})^{1/2}\xi_{k+1}, \quad \tilde X^h_{k+1}=e^{-\gamma h}\tilde X^h_k+(1-e^{-2\gamma h})^{1/2}\tilde \xi_{k+1},
    \end{equation*}
    where $(\xi_k)_{k\in \N_{>0}}$ and $(\tilde{\xi}_k)_{k\in \N_{>0}}$ are two sequences of i.i.d.\ $\mathcal{N}(0,\id_d)$-distributed random variables. The distribution of each of these Markov chains is the same as that of the exact solution \eqref{eq: OU example exact solution} (with initial value $x$ and $\tilde x$ respectively) at times $t_k=hk$. This setting is an instance of \eqref{eq: homogenous linear Markov chain} with $A=e^{-\gamma h}\id_d$ and $B=(1-e^{-2\gamma h})^{1/2}\id_d$. In particular, we have that $ABB^T A^T=a^2BB^T$ with $a=e^{-\gamma h}$. \cref{example: optimality condition of the multi-shot coupling} thus guarantees that the iterated one-shot coupling is optimal, and if we apply it to $(X_k^h,\tilde X^h_k)_{k\in\N}$ we have for any $n\in\N$ that
    \begin{equation*}
        \dTV{\law(X^h_n),\law(\tilde X^h_n)}=\P(X^h_n\neq \tilde X_n^h)=2\Phi\br{\frac{(1-e^{-2\gamma h})^{-1/2}|\Delta x|}{2\br{\sum_{k=1}^ne^{2\gamma hk}}^{1/2}}}-1
    \end{equation*}
    by \eqref{eq: coupling probability optimal multi-shot setting}. The geometric series in this expression can be written as
    \begin{equation*}
        \sum_{k=1}^ne^{2\gamma hk}=\frac{e^{2\gamma h(n+1)}-e^{2\gamma h}}{e^{2\gamma h}-1}=\frac{e^{2\gamma hn}-1}{1-e^{-2\gamma h}}=\frac{e^{2\gamma T}-1}{1-e^{-2\gamma h}},
    \end{equation*}
    where $T=hn$. The TV distance is thus given by
    \begin{equation*}
        \dTV{\law(X^h_n),\law(\tilde X^h_n)}=\P(X^h_n\neq \tilde X_n^h)=2\Phi\br{\frac{|\Delta x|}{2\br{e^{2\gamma T}-1}^{1/2}}}-1.
    \end{equation*}
    As argued in \cref{example: inhomogeneous scaling chains}, this expression equals the upper bound given in Theorem 19 of \cite{DurmusMoulines:2019}.
\end{example}

The iterated one-shot coupling also performs well in more general homogeneous settings with a contracting drift matrix and isotropic noise, as the next example shows.

\medskip
\begin{example}
    Consider homogeneous linear Markov chains of the form \eqref{eq: homogenous linear Markov chain} with covariance matrix $BB^T=\sigma^2\id_d$ and a contracting drift matrix $A$ satisfying $\|A\|\leq\alpha<1$. The covariance after $n$ steps is $\Sigma_n=\sigma^2\sum_{k=1}^nA^k(A^T)^k$. In particular,
    \begin{equation*}
        \|\Sigma_n\|\leq\sigma^2\sum_{k=1}^n\|A\|^{2k}\leq\sigma^2\sum_{k=1}^n\alpha^{2k}=\sigma^2\frac{1-\alpha^{2n}}{\alpha^{-2}-1},
    \end{equation*}
    and since $\Sigma_n$ is symmetric, this shows that
    \begin{equation*}
        |\Sigma_n^{-1/2}A^n\Delta z|\geq\|\Sigma_n\|^{-1/2}|A^n\Delta z|\geq\frac{\sqrt{\alpha^{-2}-1}}{\sigma\sqrt{(1-\alpha^{2n})}}|A^n\Delta z|.
    \end{equation*}
    Let $(Z_k,\tilde Z_k)_{k\in \N}$ be given by the iterated one-shot coupling defined in Section~\ref{sssec: iterated one-shot} applied to the current setting. The factor $\Theta_n$ can be lower bounded as
    \begin{equation*}
        \Theta_n=\sigma^2\sum_{k=1}^n\frac{1}{|A^k\Delta z|^2}\geq\frac{\sigma^2}{|A^n\Delta z|^2},
    \end{equation*}
    so that
    \begin{equation*}
        \frac{1}{\Theta_n^{1/2}}\leq\frac{|A^n\Delta z|}{\sigma}\leq\frac{\sqrt{1-\alpha^{2n}}}{\sqrt{\alpha^{-2}-1}}|\Sigma_n^{-1/2}A^n\Delta z|.
    \end{equation*}
    Hence, \cref{thm: iterated one-shot coupling probability} shows that 
    \begin{equation*}
        \P(Z_n\neq\tilde Z_n)\leq2\Phi\br{\frac{\sqrt{1-\alpha^{2n}}|\Sigma_n^{-1/2}A^n\Delta z|}{2\sqrt{\alpha^{-2}-1}}}-1.
    \end{equation*} 
    If we compare this to the TV distance after $n$ steps,
    \begin{equation*}
        \dTV{\law(Z_n),\law(\tilde Z_n)}=2\Phi\br{\frac{|\Sigma_n^{-1/2}A^n\Delta z|}{2}}-1,
    \end{equation*}
    which follows from \eqref{eq: Gaussian TV distance} (see also \eqref{eq: Gaussian TV distance spec}), we see that for small values of $|\Sigma_n^{-1/2}A^n\Delta z|$, the probability of not meeting under the iterated one-shot coupling differs from the actual TV distance up to a factor of at most $1/\sqrt{\alpha^{-2}-1}$.   
\end{example}

The next example shows that the iterated one-shot coupling applied to the exact discretization of the potential free kinetic Langevin equation gives a probability of not meeting that deteriorates as the terminal time $T\coloneqq hn$ is fixed while decreasing the step size $h$. This is the same behavior as the third term in \eqref{eq:lb_couplingprob_disc_limsup} of \cref{thm: discrete Markovian couplings have the wrong asymptotic behavior}, showing that this part of the bound is sharp.

\medskip
\begin{example}\label{example: one shot matching lower bound}
    Consider the exact discretization of the potential free kinetic Langevin equation for $h>0$ and $k\in \N$ given by $Z^h_k=Z_{hk}$ and $\tilde{Z}^h_k =\tilde{Z}_{hk}$, where $(Z_t)_{t\geq0}$ and $(\tilde Z_t)_{t\geq0}$ denote the solutions to the linear system \eqref{eq: kinetic Langevin with quadratic potential} with $\alpha = 0$ and with initial values $z$ and $\tilde z$ respectively. This means that we have two homogeneous linear chains satisfying the recursive relation 
\begin{equation}
\label{eq: linear Markov chain 2}
    Z^h_{k+1}=A_{k+1}Z^h_k+B_{k+1}\xi_{k+1}, \quad \tilde Z^h_{k+1}=A_{k+1}\tilde Z^h_k+B_{k+1}\tilde\xi_{k+1},
\end{equation}
where $(\xi_k)_{k\in \N_{>0}}$ and $(\tilde{\xi}_k)_{k\in \N_{>0}}$ are two sequences of i.i.d.\ $\mathcal{N}(0,\id_{2d})$-distributed random variables and where $A_k=e^{hA}$ and $B_k=B_h$ for each $k\in\{1,\dots,n\}$, with $B_h\in\R^{2d\times 2d}$ being such that $B_hB_h^T=\Sigma_h$ and $e^{hA}$ and $\Sigma_h$ given by \eqref{eq: free kinetic Langevin mean and covariance matrices}. \cref{thm: iterated one-shot coupling probability} shows that if $(Z^h_k,\tilde Z^h_k)_{k\in\N}$ is given by the iterated one-shot coupling defined in Section~\ref{sssec: iterated one-shot}, then
    \begin{equation}
    \label{eq: multi-shot exact bound}
        \P(Z_n^h\neq \tilde Z_n^h)=2\Phi\br{\frac{1}{2\Theta_{n,h}^{1/2}}}-1,
    \end{equation}
    where 
    \begin{equation*}
        \Theta_{n,h}=\sum_{k=1}^n\frac{1}{\l|B_h^{-1}A_h^k\Delta z\r|^{2}}.
    \end{equation*}
    Suppose $\Delta z=(\Delta x,\Delta v)$ is such that $\gamma \Delta x+\Delta v=0$, so that $\Delta z$ is contained in the eigenspace of $A$ corresponding to the eigenvalue $-\gamma$. Then $e^{hA}\Delta z=e^{-\gamma h}\Delta z$, so that
    \begin{equation*}
        \Theta_{n,h}=\l|B_h^{-1}\Delta z\r|^{-2}\sum_{k=1}^ne^{2\gamma hk}=\l|B_h^{-1}\Delta z\r|^{-2}\frac{e^{2\gamma hn}-1}{1-e^{-2\gamma h}}.
    \end{equation*}   
    In order evaluate $|B_h^{-1}\Delta z|^{2}=\Delta z^T \Sigma_h^{-1}\Delta z$, let us write
    \begin{equation*}
        \Sigma_h=\begin{pmatrix}
            \sigma^{xx}_h\id_d & \sigma^{xv}_h\id_d \\  \sigma^{xv}_h\id_d & \sigma^{vv}_h\id_d
        \end{pmatrix}.
    \end{equation*}
    Recalling that $\Delta v=-\gamma \Delta x$ and using that $\sigma^{xx}_h,\sigma^{xv}_h>0$ we find that 
    \begin{equation*}
        \l|B_h^{-1}\Delta z\r|^{2}=\frac{(\sigma^{vv}_h+2\gamma\sigma^{xv}_h+\gamma^2\sigma^{xx}_h)}{\sigma^{xx}_h\sigma^{vv}_h-(\sigma^{xv}_h)^2}|\Delta x|^2\geq\frac{\sigma^{vv}_h}{\sigma^{xx}_h\sigma^{vv}_h}|\Delta x|^2=\frac{1}{\sigma^{xx}_h}|\Delta x|^2.       
    \end{equation*}
    We therefore have that
    \begin{equation*}
        \P(Z^h_n\neq \tilde Z^h_n)=2\Phi\br{\frac{1}{2\Theta_{n,h}^{1/2}}}-1\geq2\Phi\br{\frac{|\Delta x|}{2\sqrt{\sigma^{xx}_h}\sqrt{\frac{e^{2\gamma hn}-1}{1-e^{-2\gamma h}}}}}-1.
    \end{equation*}
    The expression on the right hand side is the same as that obtained in the final stage of the proof of \cref{lem: hitting prob when in eigenspace on grid}, where a lower bound is derived. With this lower bound we can conclude that there exists a constant $c>0$ (depending on $\gamma$ but not on $h$) such that
    \begin{equation*}
        \P(Z^h_n\neq \tilde Z^h_n)\geq c\min(1,h^{-1}e^{-\gamma hn}|\Delta z|).
    \end{equation*}
    This shows us that the probability of not meeting under the iterated one shot coupling behaves as the third term in \eqref{eq:lb_couplingprob_disc_limsup}. This result is no surprise, as \cref{lemma: e_k expression} shows that if $\Delta z$ is in the eigenspace of $A$ corresponding to the eigenvalue $-\gamma$, then so is $B_he_{k+1}$ for all $k\in\N$. In particular, $\Delta Z^h_k$ is contained in this eigenspace too for all $k\in\N$, and we thus have that $\Delta Q^h_k=0$ for all $k\in\N$, so that the iterated one-shot coupling satisfies the conditions of \cref{lem: hitting prob when in eigenspace on grid}. 
    
    We also observe that for the iterated one-shot coupling, the probability $\P(Z^h_n\neq \tilde Z^h_n)$ deteriorates for small values of $h$ regardless of the choice of initial values. Indeed, if we instead assume that $\Delta v=0$, so that $\Delta z$ is contained in the eigenspace of $A$ corresponding to the eigenvalue $0$, we find that
    \begin{equation*}
        \Theta_{n,h}=\sum_{k=1}^n\l|B_h^{-1}e^{hkA}\Delta z\r|^{-2}=|B_h^{-1}\Delta z|^{-2}=n\sigma^{xx}_h|\Delta z|^{-2},
    \end{equation*}
    so that a similar argument as before shows that there exists a constant $c>0$ such that
    \begin{equation*}
        \P(Z^h_n\neq \tilde Z^h_n)=2\Phi\br{\frac{1}{2\Theta_{n,h}^{1/2}}}-1\geq c \min(1,h^{-1}(hn)^{-1/2}|\Delta z|).
    \end{equation*}
    \end{example}

\newpage
\appendix
\section{Auxiliary inequalities}
\label{appendix: exponential bounds}

In order to analyze the performance of the coalescence map based on the optimized trajectory as discussed in \cref{subsec: optimized trajectory} we will need a few auxiliary bounds on terms involving exponential functions.

\medskip
\begin{lemma}
\label{lemma: auxiliary exponential bounds}
    For $x\geq0$ we have
    \begin{equation}
        \label{eq: exp bound lemma 1}
        \frac{x}{1+x}\leq 1-e^{-x} \leq\frac{2x}{2+x},
    \end{equation}
    \begin{equation}
        \label{eq: exp bound lemma 2}
        \frac{6-2x}{6+4x+x^2}\leq e^{-x}\leq\frac{6}{6+x^3},
    \end{equation}
    \begin{equation}
        \label{eq: exp bound lemma 3}
        0\leq\frac{1-e^{-2x}}{2x}-e^{-x}\leq\frac{2}{3}x^2.
    \end{equation}
    In addition, for $a\geq b\geq0$ we have
    \begin{equation}
        \label{eq: exp bound lemma 4}
        2ax(1+e^{-x})-4b(1-e^{-x})\geq \frac{2ax^3}{6+4x+x^2},
    \end{equation}
    \begin{equation}
        \label{eq: exp bound lemma 5}
        2ax-4b(1-e^{-x})+b(1-e^{-2x})\leq 2(a-b)x+\frac{4}{3}bx^3.
    \end{equation}
    \begin{equation}
        \label{eq: exp bound lemma 6}
        2ax-2b(1-e^{-x})-b(1-e^{-x})^2\geq\frac{24ax^3+13ax^4+2ax^5}{(6+4x+x^2)^2}.
    \end{equation}
\end{lemma}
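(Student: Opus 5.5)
The six inequalities in Lemma~\ref{lemma: auxiliary exponential bounds} are elementary one-variable statements. I would prove each by the same device: move everything to one side, check that the function vanishes at $x=0$ (to the appropriate order), and show that its first (or higher) derivative has a definite sign on $[0,\infty)$. Where rational bounds for $e^{-x}$ appear, I would multiply through by the positive denominator and compare against $e^{x}$ or $e^{-x}$, whose Taylor expansions have terms of fixed sign.

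For \eqref{eq: exp bound lemma 1}, the lower bound is equivalent to $e^{x}\ge 1+x$. The upper bound is equivalent to $g(x)=(2+x)e^{-x}-2+x\ge 0$. We have $g(0)=0$ and $g'(x)=1-(1+x)e^{-x}\ge 0$, again by $e^x\ge1+x$.

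For \eqref{eq: exp bound lemma 2}, the upper bound reads $e^{x}\ge 1+x^3/6$, which follows from the Taylor series. For the lower bound, assume $x<3$ (otherwise the left side is $\le 0$). It then becomes $e^{x}(6-2x)\le 6+4x+x^2$. Setting $k(x)=6+4x+x^2-(6-2x)e^x$, we get $k(0)=0$ and $k'(x)=4+2x-(4-2x)e^x$, with $k'(0)=0$ and $k''(x)=2+(2x-2)e^x$. Then $k''(0)=0$ and $k'''(x)=2xe^x\ge0$, so $k\ge0$ by triple integration.

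For \eqref{eq: exp bound lemma 3}, write $\frac{1-e^{-2x}}{2x}=e^{-x}\frac{\sinh x}{x}$. The quantity in question is then $e^{-x}(\sinh x/x-1)\ge0$. For the upper bound, it suffices to show $e^{-x}(\sinh x-x)\le \tfrac23 x^3$. I would handle this by differentiation, or by using $\sinh x-x\le \tfrac{x^3}{6}\cosh x$ together with $e^{-x}\cosh x\le 1$, which even gives the constant $1/6$.

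The remaining three inequalities depend linearly on $(a,b)$ with $a\ge b\ge0$. For \eqref{eq: exp bound lemma 4} and \eqref{eq: exp bound lemma 6}, the coefficient of $b$ on the left is $\le0$ and the right side has no $b$. So it suffices to take $b=a$, after which we divide by $a$.

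\eqref{eq: exp bound lemma 4} then becomes $2x(1+e^{-x})-4(1-e^{-x})\ge 2x^3/(6+4x+x^2)$. I would prove this by inserting the lower bound on $e^{-x}$ from \eqref{eq: exp bound lemma 2}. The left side is increasing in $e^{-x}$, since its coefficient is $2x+4>0$. Substituting gives exactly the rational expression, because $2x(1+r)-4(1-r)$ with $r=\frac{6-2x}{6+4x+x^2}$ simplifies to $\frac{2x^3}{6+4x+x^2}$ after clearing denominators; I would verify this algebra.

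\eqref{eq: exp bound lemma 6} is treated the same way. With $b=a$, the left side is $2x-3+4e^{-x}-e^{-2x}$. As a function of $r=e^{-x}\in(0,1]$, namely $4r-r^2$, it is increasing. The same rational lower bound for $r$ therefore yields a rational expression, which I expect to match $\frac{24x^3+13x^4+2x^5}{(6+4x+x^2)^2}$ after expansion. When $6-2x<0$, I would use $r\ge0$ directly and check that the right side is dominated.

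For \eqref{eq: exp bound lemma 5}, the $b$-coefficient of $\text{LHS}-\text{RHS}$ is $-2x-4(1-e^{-x})+(1-e^{-2x})+2x-\tfrac43x^3$. Since the terms in $a$ cancel, it suffices to show $f(x)=2x-4(1-e^{-x})+(1-e^{-2x})\le \tfrac43x^3$. This follows from the footnote computation already in the paper: $f'(x)=2(1-e^{-x})^2\le 2x^2$, hence $f(x)\le\tfrac23x^3$.

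The main obstacle is the algebraic verification in \eqref{eq: exp bound lemma 6}: both confirming that the substitution produces precisely the stated rational function and handling the regime $x>3$, where the rational lower bound on $e^{-x}$ is negative.
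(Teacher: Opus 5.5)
Your proof is correct. It follows the paper's overall strategy: one-variable calculus and truncated Taylor bounds, plus substituting the rational lower bound $e^{-x}\ge r_0:=\frac{6-2x}{6+4x+x^2}$ from \eqref{eq: exp bound lemma 2} into \eqref{eq: exp bound lemma 4} and \eqref{eq: exp bound lemma 6}. The differences are local.

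\begin{itemize}
\item In \eqref{eq: exp bound lemma 4} and \eqref{eq: exp bound lemma 6} the paper keeps general $(a,b)$ and discards the nonnegative terms carrying $a-b$ at the end. You first reduce to $b=a$ by monotonicity in $b$. With $b=a$ those terms vanish, so both routes give the same rational function.
\item For the lower half of \eqref{eq: exp bound lemma 2} the paper differentiates once and uses $e^{x}\ge 1+x+\tfrac12x^2$; your three derivatives work equally well.
\item For the upper halves of \eqref{eq: exp bound lemma 3} and \eqref{eq: exp bound lemma 5} the paper inserts $1-x+\tfrac12x^2-\tfrac16x^3\le e^{-x}\le 1-x+\tfrac12x^2$ directly. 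Your $\sinh$ comparison and the identity $f'(x)=2(1-e^{-x})^2$ are equally elementary and even give the sharper constants $\tfrac16x^2$ and $\tfrac23x^3$.
\end{itemize}

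The obstacle you flagged does not arise. Write $D=6+4x+x^2$. In \eqref{eq: exp bound lemma 4} one has $(2x-4)D+(2x+4)(6-2x)=2x^3$. In \eqref{eq: exp bound lemma 6} one has $(2x-3)D^2+4(6-2x)D-(6-2x)^2=2x^5+13x^4+24x^3$. So both substitutions reproduce the stated right-hand sides exactly.

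No case split at $x=3$ is needed, because $r\mapsto 4r-r^2$ is increasing on all of $(-\infty,2)$ and $r_0\le e^{-x}\le 1$. The paper avoids the sign issue differently: it bounds $1-e^{-x}\le\frac{6x+x^2}{D}$, where both sides are nonnegative, before squaring. Your fallback for $x\ge 3$ also works, since $(2x-3)D^2-(2x^5+13x^4+24x^3)=4(2x+3)(x-3)(x+3)$, with equality exactly at $x=3$.

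One slip in \eqref{eq: exp bound lemma 5}: the $b$-coefficient of LHS$-$RHS is $2x-4(1-e^{-x})+(1-e^{-2x})-\tfrac43x^3$, without the extra $-2x$ you wrote. Your reduction to $f(x)\le\tfrac43x^3$ is nevertheless the right one.
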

\begin{proof}
    Rewriting $1+x\leq e^x$ readily gives the first inequality of \eqref{eq: exp bound lemma 1}. The second inequality of \eqref{eq: exp bound lemma 1} is equivalent to
    \begin{equation*}
        f(x)\coloneqq\br{1+\frac{x}{2}}e^{-x}+\frac{x}{2}-1\geq0.
    \end{equation*}
    Since $f(0)=0$ and
    \begin{equation*}
        f^\prime(x)=-\frac{1}{2}\br{1+x}e^{-x}+\frac{1}{2}\geq 0,
    \end{equation*}
    where the final inequality uses that $1+x\leq e^{x}$, we see that indeed $f(x)\geq0$ for all $x\geq 0$. The first inequality of \eqref{eq: exp bound lemma 2} is equivalent to
    \begin{equation*}
        \frac{(6+4x+x^2)e^{-x}-6+2x}{6+4x+x^2}\geq0, 
    \end{equation*}
    which is true if the numerator is nonnegative. We thus let $g(x)=(6+4x+x^2)e^{-x}-6+2x$ and note that $g(0)=0$ and
    \begin{equation*}
        g^\prime(x)=-(2+2x+x^2)e^{-x}+2=2\br{1-\br{1+x+\frac{1}{2}x^2}e^{-x}}\geq0,
    \end{equation*}
    which is nonnegative due to the fact that $1+x+\frac{1}{2}x^2\leq e^{x}$. This shows that $g(x)\geq0$ for all $x\geq0$, proving the first inequality of \eqref{eq: exp bound lemma 2}. The second inequality follows immediately from $e^x\geq 1+x+\frac{1}{2}x^2+\frac{1}{6}x^3\geq1+\frac{1}{6}x^3$ for $x\geq0$. The first inequality of \eqref{eq: exp bound lemma 3} follows from
    \begin{equation*}
        \frac{1-e^{-2x}}{2x}-e^{-x}=e^{-x}\br{\frac{e^{x}-e^{-x}}{2x}-1}\geq 0
    \end{equation*}
    using that $e^x-e^{-x}\geq 2x$ for $x\geq 0$. For the second inequality, we use the inequalities $e^{-x}\geq1-x+\frac{1}{2}x^2-\frac{1}{6}x^3$ and $e^{-x}\geq 1-x$, to obtain
    \begin{equation*}
        \frac{1-e^{-2x}}{2x}-e^{-x}\leq\frac{2x-2x^2+\frac{4}{3}x^3}{2x}-1+x=\frac{2}{3}x^2.
    \end{equation*}
    We see that \eqref{eq: exp bound lemma 4} is a consequence of the first inequality of \eqref{eq: exp bound lemma 2}, as
    \begin{align*}
        2ax(1+e^{-x})-4b(1-e^{-x})&=2ax-4b+(2ax+4b)e^{-x}\\&
        \geq2ax-4b+\frac{(2ax+4b)(6-2x)}{6+4x+x^2}\\
        &=\frac{24(a-b)x+4(a-b)x^2+2ax^3}{6+4x+x^2}\\
        &\geq\frac{2ax^3}{6+4x+x^2}.
    \end{align*}
    Moving on to \eqref{eq: exp bound lemma 5}, by the inequalities
    \begin{equation*}
        1-x+\frac{1}{2}x^2-\frac{1}{6}x^3\leq e^{-x}\leq1-x+\frac{1}{2}x^2,
    \end{equation*}
    we have $1-e^{-x}\geq x-\frac{1}{2}x^2$ and $1-e^{-2x}\leq 2x-2x^2+\frac{4}{3}x^3$. Applying these bounds and collecting terms of the same order gives
    \begin{equation*}
        2ax-4b(1-e^{-x})+b(1-e^{-2x})\leq 2(a-b)x+\frac{4}{3}bx^3,
    \end{equation*}
    showing \eqref{eq: exp bound lemma 5}. Finally, for \eqref{eq: exp bound lemma 6} we have by the first inequality of \eqref{eq: exp bound lemma 2} that
    \begin{equation*}
        1-e^{-x}\leq1-\frac{6-2x}{6+4x+x^2}\leq\frac{6x+x^2}{6+4x+x^2}.
    \end{equation*}
    Applying this twice we get
    \begin{align*}
        2ax-2b(1-e^{-x})-b(1-e^{-x})^2&\geq 2ax-2b\frac{6x+x^2}{6+4x+x^2}-b\frac{(6x+x^2)^2}{(6+4x+x^2)^2}\\
        &=\frac{12(a-b)x+(8a-2b)x^2+2ax^3}{6+4x+x^2}-\frac{36bx^2+12bx^3+bx^4}{(6+4x+x^2)^2}\\
        &\geq \frac{6ax^2+2ax^3}{6+4x+x^2}-\frac{36bx^2+12bx^3+bx^4}{(6+4x+x^2)^2}\\
        &\geq\frac{36(a-b)x^2+(36a-12b)x^3+(14a-b)x^4+2ax^5}{(6+4x+x^2)^2}\\
        &\geq\frac{24ax^3+13ax^4+2ax^5}{(6+4x+x^2)^2},
    \end{align*}
    showing \eqref{eq: exp bound lemma 6}. 
\end{proof}

\section{The optimized trajectory of the OBABO scheme}
In this section we derive an explicit expression of the position part of the optimized trajectory for the OBABO scheme as introduced in \cref{subsec: optimized trajectory}. This forms a crucial ingredient for the proof of \cref{lemma: potential part bound}. 

Throughout this section we fix $h>0$ and $n\in\N$. To improve readability, we will suppress the explicit notation of dependence on $h$ unless needed. We will also use the notation $\eta=e^{-\gamma h}$. 

\subsection{The OBABO mean and the covariance matrices}
\label{appendix: optimized trajectory calculation}

Recall that the matrices $A=A_h$, $L=L_h$ that govern the potential free part of the OBABO scheme are given by \eqref{eq: OBABO as linear Markov chain}. Denoting $\eta=e^{-\gamma h}$, these matrices are
\begin{equation*}
    A=\begin{pmatrix} \id_d & h\eta^{1/2}\id_d \\ 0 & \eta\id_d  \end{pmatrix}, \qquad  L=(1-\eta)^{1/2}\begin{pmatrix} h\id_d & 0 \\ \eta^{1/2}\id_d & \id_d  \end{pmatrix}.
\end{equation*}
We will now calculate the matrices $A^n$ and $\Sigma_n=\sum_{k=0}^{n-1}A^kLL^T(A^T)^k$ that govern respectively the mean and the covariance of the OBABO chain after $n$ steps if $\nabla U\equiv 0$. First, using the upper triangular structure of $A$, we have
\begin{equation}\
\label{eq: OBABO n step mean}
    A^n=\begin{pmatrix} \id_d & b_n\id_d \\ 0 & c_n\id_d  \end{pmatrix},
\end{equation}
where
\begin{equation*}
    b_n=h\eta^{1/2}\sum_{k=0}^{n-1}\eta^k=\frac{h\eta^{1/2}}{1-\eta}(1-\eta^n),\qquad c_n=\eta^n.
\end{equation*}
Consequently,
\begin{equation*}
    A^kLL^T(A^T)^k=(1-\eta)\begin{pmatrix} (h^2+2h\eta^{1/2}b_k+(1+\eta)b_k^2)\id_d & (h\eta^{1/2}c_k+(1+\eta)b_kc_k)\id_d \\ (h\eta^{1/2}c_k+(1+\eta)b_kc_k)\id_d & (1+\eta)c_k^2\id_d  \end{pmatrix},
\end{equation*}
so that the $n$ step covariance matrix is
\begin{equation}
\label{eq: OBABO n step covariance}
    \Sigma_{n}=\sum_{k=0}^{n-1}A^kLL^T(A^T)^k=\begin{pmatrix} \sigma^{xx}_{n}\id_d & \sigma^{xv}_{n}\id_d \\ \sigma^{xv}_{n}\id_d & \sigma^{vv}_{n}\id_d  \end{pmatrix},
\end{equation}
with
\begin{equation*}
    \begin{aligned}
        \sigma^{xx}_n&=(1-\eta)\sum_{k=0}^{n-1}(h^2+2h\eta^{1/2}b_k+(1+\eta)b_k^2),\\
        \sigma^{xv}_n&=(1-\eta)\sum_{k=0}^{n-1}(h\eta^{1/2}c_k+(1+\eta)b_kc_k),\\
        \sigma^{vv}_n&=(1-\eta)(1+\eta)\sum_{k=0}^{n-1}c_k^2.
    \end{aligned}
\end{equation*}
The various summations in these expressions are given by
\begin{align*}
    &\sum_{k=0}^{n-1}b_k=\frac{h\eta^{1/2}}{1-\eta}\sum_{k=0}^{n-1}(1-\eta^k)=\frac{h\eta^{1/2}}{1-\eta}\br{n-\frac{1-\eta^n}{1-\eta}},\\
    &\sum_{k=0}^{n-1}b_k^2=\frac{h^2\eta}{(1-\eta)^2}\sum_{k=0}^{n-1}(1-2\eta^k+\eta^{2k})=\frac{h^2\eta}{(1-\eta)^2}\br{n-2\frac{1-\eta^n}{1-\eta}+\frac{1-\eta^{2n}}{1-\eta^2}},\\
    &\sum_{k=0}^{n-1}c_k=\sum_{k=0}^{n-1}\eta^k=\frac{1-\eta^n}{1-\eta}\\
    &\sum_{k=0}^{n-1}b_kc_k=\frac{h\eta^{1/2}}{1-\eta}\sum_{k=0}^{n-1}(\eta^k-\eta^{2k})=\frac{h\eta^{1/2}}{1-\eta}\br{\frac{1-\eta^n}{1-\eta}-\frac{1-\eta^{2n}}{1-\eta^2}},\\
    &\sum_{k=0}^{n-1}c_k^2=\sum_{k=0}^{n-1}\eta^{2k}=\frac{1-\eta^{2n}}{1-\eta^2}.
\end{align*}
In particular, using the fact that $(1+\eta)(1-\eta)=(1-\eta^2)$ throughout, we have
\begin{align*}
    &\begin{aligned}
        \sigma^{xx}_{n}=&\frac{h^2}{1-\eta}\sqbr{\br{(1-\eta)^2+2(1-\eta)\eta+(1+\eta)\eta}n-2\eta\br{1+\frac{1+\eta}{1-\eta}}(1-\eta^n)+\frac{\eta(1+\eta)}{1-\eta^2}(1-\eta^{2n})}\\
        =&\frac{h^2}{1-\eta}\sqbr{\br{1+\eta}n-\frac{4\eta}{1-\eta}(1-\eta^n)+\frac{\eta}{1-\eta}(1-\eta^{2n})}\\
        =&\frac{h^2}{(1-\eta)^2}\sqbr{(1-\eta^2)n-4\eta(1-\eta^n)+\eta(1-\eta^{2n})},
    \end{aligned}\\
    &\begin{aligned}
        \sigma^{xv}_{n}&=h\eta^{1/2}\br{1+\frac{1+\eta}{1-\eta}}(1-\eta^n)-\frac{h\eta^{1/2}(1+\eta)}{1-\eta^2}(1-\eta^{2n})=\frac{h\eta^{1/2}}{1-\eta}\br{2(1-\eta^n)-(1-\eta^{2n})}\\
        &=\frac{h\eta^{1/2}}{1-\eta}(1-\eta^n)^2,
    \end{aligned}\\
    &\sigma^{vv}_{n}=\frac{(1-\eta)(1+\eta)}{1-\eta^2}(1-\eta^{2n})=1-\eta^{2n}.
\end{align*}

\subsection{The explicit optimized trajectory}\label{app: explicit optimized trajectory}
The trajectory $(y_0,\dots,y_n)$ defining the coalescence map $\Psi^n_{z,\tilde z}$ via \eqref{eq: coalescence map recursive relation} as proposed in \eqref{eq: y_k choices optimized trajectory} is given inductively by $y_0=\Delta z$ and
\begin{equation*}
    y_{k+1}=Ay_k-LE_{k+1},
\end{equation*}
where $E_k=L^T(A^T)^{n-k}\Sigma_{n}^{-1}A^n\Delta z$. By induction, it follows that
\begin{equation*}
    y_k=A^k\Delta z-\sum_{j=1}^kA^{k-j}LE_j=A^k\Delta z-\sum_{j=1}^kA^{k-j}LL^T(A^T)^{n-j}\Sigma_{n}^{-1}A^n\Delta z.
\end{equation*}
This can be rewritten into
\begin{align*}
    y_k&=A^k\Delta z-\underbrace{\br{{\sum_{j=1}^kA^{k-j}LL^T(A^T)^{k-j}}}}_{=\Sigma_{k}}(A^T)^{n-k}\Sigma_{n}^{-1}A^n\Delta z\\
    &=A^{k-n}\br{\Sigma_{n}-A^{n-k}\Sigma_{k}(A^T)^{n-k}}\Sigma_{n}^{-1}A^n\Delta z.
\end{align*}
The matrix inside the brackets reduces to
\begin{align*}
    \Sigma_{n}-A^{n-k}\Sigma_{k}(A^T)^{n-k}&=\sum_{j=1}^nA^{n-j}LL^T(A^T)^{n-j}-\sum_{j=1}^kA^{n-j}LL^T(A^T)^{n-j}\\
    &=\sum_{j=k+1}^nA^{n-j}LL^T(A^T)^{n-j}=\sum_{j=1}^{n-k}A^{n-k-j}LL^T(A^T)^{n-k-j}=\Sigma_{n-k}.
\end{align*}
In conclusion, the trajectory at step $n-k$ is
\begin{equation*}
    y_{n-k}=A^{-k}\Sigma_{k}\Sigma_{n}^{-1}A^n\Delta z.
\end{equation*}
We can evaluate this using the expressions \eqref{eq: OBABO n step mean} and \eqref{eq: OBABO n step covariance}. First, the inverses appearing in this expression are given by
\begin{equation*}
    A^{-k}=\begin{pmatrix} \id_d & -\frac{b_k}{c_k}\id_d \\ 0 & \frac{1}{c_k}\id_d \end{pmatrix},
\end{equation*}
and
\begin{equation*}
    \Sigma_{n}^{-1}=\frac{1}{\sigma^{xx}_{n}\sigma^{vv}_{n}-(\sigma^{xv}_{n})^2}\begin{pmatrix} \sigma^{vv}_{n}\id_d & -\sigma^{xv}_{n}\id_d \\ -\sigma^{xv}_{n}\id_d & \sigma^{xx}_{n}\id_d  \end{pmatrix}.
\end{equation*}
The part of $y_{n-k}$ that depends on $k$ is 
\begin{equation*}
    A^{-k}\Sigma_k=\begin{pmatrix}\br{\sigma^{xx}_k-\frac{b_k}{c_k}\sigma^{xv}_k}\id_d & \br{\sigma^{xv}_k-\frac{b_k}{c_k}\sigma^{vv}_k}\id_d \\ \frac{1}{c_k}\sigma^{xv}_k\id_d & \frac{1}{c_k}\sigma^{vv}_k\id_d \end{pmatrix},
\end{equation*}
where 
\begin{align*}
    \sigma^{xx}_{k}-\frac{b_k}{c_k}\sigma^{xv}_{k}&=\frac{h^2}{(1-\eta)^2}\sqbr{(1-\eta^2)k-4\eta(1-\eta^k)+\eta(1-\eta^{2k})-\eta\frac{(1-\eta^k)^3}{\eta^k}}\\
    &=\frac{h^2}{(1-\eta)^2}\sqbr{(1-\eta^2)k-\eta\br{3-4\eta^k+\eta^{2k}+\frac{1}{\eta^k}-3+3\eta^k-\eta^{2k}}}\\
    &=\frac{h^2}{(1-\eta)^2}\sqbr{(1-\eta^2)k-\frac{\eta}{\eta^k}\br{1-\eta^{2k}}},
\end{align*}
and
\begin{align*}
    \sigma^{xv}_{k}-\frac{b_k}{c_k}\sigma^{vv}_{k}=\frac{h\eta^{1/2}}{1-\eta}\sqbr{(1-\eta^k)^2-\frac{1}{\eta^{k}}(1-\eta^k)(1-\eta^{2k})}=-\frac{h\eta^{1/2}}{1-\eta}\frac{1}{\eta^k}(1-\eta^{k})^2.
\end{align*}
On the other hand,
\begin{align*}
    \Sigma_n^{-1}A_n\Delta z&=\frac{1}{\sigma^{xx}_{n}\sigma^{vv}_{n}-(\sigma^{xv}_{n})^2}\begin{pmatrix} \sigma^{vv}_{n}\id_d & -\sigma^{xv}_{n}\id_d \\ -\sigma^{xv}_{n}\id_d & \sigma^{xx}_{n}\id_d  \end{pmatrix}\begin{pmatrix}\Delta x+b_n\Delta v \\ c_n\Delta v \end{pmatrix}\\
    &=\frac{1}{\sigma^{xx}_{n}\sigma^{vv}_{n}-(\sigma^{xv}_{n})^2}\begin{pmatrix}\sigma^{vv}_n\br{\Delta x+b_n\Delta v}-\sigma^{xv}_nc_n\Delta v\\ -\sigma^{xv}_n\br{\Delta x+b_n\Delta v}+\sigma^{xx}_nc_n\Delta v \end{pmatrix},
\end{align*}

The position part of the trajectory at the $(n-k)th$ step is therefore given by
\begin{equation*}
    \begin{aligned}
        u_{n-k}=&\begin{pmatrix} \id_d & 0\end{pmatrix}y_{n-k}=\begin{pmatrix} \id_d & 0\end{pmatrix}A^{-k}\Sigma_{h,k}\Sigma_{h,n}^{-1}A^n\Delta z\\
        =&\frac{1}{\sigma^{xx}_{n}\sigma^{vv}_{n}-(\sigma^{xv}_{n})^2}\sqbr{\br{\sigma^{xx}_{k}-\frac{b_k}{c_k}\sigma^{xv}_{k}}\sigma^{vv}_{n}-\br{\sigma^{xv}_{k}-\frac{b_k}{c_k}\sigma^{vv}_{k}}\sigma^{xv}_{n}}(\Delta x+b_n\Delta v)\\
        &+\frac{1}{\sigma^{xx}_{n}\sigma^{vv}_{n}-(\sigma^{xv}_{n})^2}\sqbr{\br{\sigma^{xv}_{k}-\frac{b_k}{c_k}\sigma^{vv}_{k}}\sigma^{xx}_{n}-\br{\sigma^{xx}_{k}-\frac{b_k}{c_k}\sigma^{xv}_{k}}\sigma^{xv}_{n}}c_n\Delta v.
    \end{aligned}
\end{equation*}
The terms in the squared brackets are
\begin{equation}
\label{eq: alpha_k expression}
    \begin{aligned}
        \bigg(\sigma^{xx}_{k}&-\frac{b_k}{c_k}\sigma^{xv}_{k}\bigg)\sigma^{vv}_{n}-\br{\sigma^{xv}_{k}-\frac{b_k}{c_k}\sigma^{vv}_{k}}\sigma^{xv}_{n}\\
        &=\frac{h^2}{(1-\eta)^2}\sqbr{\br{(1-\eta^2)k-\frac{\eta}{\eta^k}\br{1-\eta^{2k}}}(1-\eta^{2n})+\frac{\eta}{\eta^k}(1-\eta^{k})^2(1-\eta^n)^2}\\
        &=\frac{h^2}{(1-\eta)^2}\sqbr{(1-\eta^2)(1-\eta^{2n})k-\frac{\eta}{\eta^k}\br{(1-\eta^{2k})(1-\eta^{2n})-(1-\eta^k)^2(1-\eta^n)^2}}\\
        &=\frac{h^2}{(1-\eta)^2}\sqbr{(1-\eta^2)(1-\eta^{2n})k-2\eta(1+\eta^{n-k})(1-\eta^k)(1-\eta^n)},
    \end{aligned}
\end{equation}
and 
\begin{align*}
    \bigg(\sigma^{xv}_{k}&-\frac{b_k}{c_k}\sigma^{vv}_{k}\bigg)\sigma^{xx}_{n}-\br{\sigma^{xx}_{k}-\frac{b_k}{c_k}\sigma^{xv}_{k}}\sigma^{xv}_{n}\\
    &\begin{aligned}
        =-\frac{h^3\eta^{3/2}}{(1-\eta)^3}\bigg[&\frac{1}{\eta^k}(1-\eta^{k})^2\br{(1-\eta^2)n-4\eta(1-\eta^n)+\eta(1-\eta^{2n})}\\
        &+\br{(1-\eta^2)k-\frac{\eta}{\eta^k}\br{1-\eta^{2k}}}(1-\eta^n)^2\bigg]\\
        =-\frac{h^3\eta^{3/2}}{(1-\eta)^3}\bigg[&(1-\eta^2)\br{\frac{1}{\eta^k}(1-\eta^k)^2n+(1-\eta^n)^2k}\\
        &-\frac{\eta}{\eta^k}(1-\eta^k)(1-\eta^n)\br{4(1-\eta^k)-(1-\eta^k)(1+\eta^{n})+(1+\eta^k)(1-\eta^n)}\bigg]\\
        =-\frac{h^3\eta^{3/2}}{(1-\eta)^3}\bigg[&(1-\eta^2)\br{\frac{1}{\eta^k}(1-\eta^k)^2n+(1-\eta^n)^2k}-2\frac{\eta}{\eta^k}(1-\eta^k)(1-\eta^n)(2-\eta^k-\eta^{n})\bigg].\\
    \end{aligned}
\end{align*}
As a shorthand, we introduce
\begin{equation}
\label{eq: alpha_k definition}
    \alpha_k\coloneqq (1-\eta^2)(1-\eta^{2n})k-2\eta(1+\eta^{n-k})(1-\eta^k)(1-\eta^n),
\end{equation}
\begin{equation}
\label{eq: beta_k definition}
    \beta_k\coloneqq(1-\eta^2)\br{\frac{1}{\eta^k}(1-\eta^k)^2n+(1-\eta^n)^2k}-2\frac{\eta}{\eta^k}(1-\eta^k)(1-\eta^n)(2-\eta^k-\eta^{n}).
\end{equation}
The relation \eqref{eq: alpha_k expression} reduces for $k=n$ to
\begin{equation*}
    \sigma^{xx}_{n}\sigma^{vv}_{n}-(\sigma^{xv}_{n})^2=\frac{h^2}{(1-\eta)^2}\alpha_n.
\end{equation*}
We therefore find the final expression
\begin{equation}\label{eq: expression for u_k}
    \begin{aligned}  
    u_{n-k}&=\frac{1}{\alpha_n}\br{\alpha_k\sqbr{\Delta x+\frac{h\eta^{1/2}}{1-\eta}(1-\eta^n)\Delta v}-\beta_k\frac{h\eta^{1/2}}{1-\eta}\eta^n\Delta v}\\
    &=\frac{\alpha_k}{\alpha_n}\Delta x-\frac{h\eta^{1/2}}{1-\eta}\frac{\eta^n\beta_k-(1-\eta^n)\alpha_k}{\alpha_n}\Delta v.
    \end{aligned}
\end{equation}

With this explicit expression, we can derive an uniform bound on $|u_{n-k}|$ as in \cref{lemma: potential part bound} by using the following result.
\medskip
\begin{lemma}
\label{lemma: alpha_k and beta_k are increasing}
    Both $(\alpha_k)_{0\leq k\leq n}$ and $(\beta_k)_{0\leq k\leq n}$ defined by \eqref{eq: alpha_k definition} and \eqref{eq: beta_k definition} are increasing.
\end{lemma}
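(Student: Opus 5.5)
We need to show $\alpha_{k+1}\ge\alpha_k$ and $\beta_{k+1}\ge\beta_k$ for $0\le k<n$, with $\eta=e^{-\gamma h}\in(0,1)$ and $n$ fixed. There are two routes. The first is structural: by \eqref{eq: alpha_k expression}, $\frac{h^2}{(1-\eta)^2}\alpha_k$ is the $(1,1)$-entry of the $2\times2$ reduction of $A^{-k}\Sigma_k\,\mathrm{adj}(\Sigma_n)$, and similarly for $\beta_k$. Monotonicity would then follow from the increment identity
\[
A^{-(k+1)}\Sigma_{k+1}-A^{-k}\Sigma_k=A^{-(k+1)}LL^T(A^T)^k,
\]
which comes from $\Sigma_{k+1}=A\Sigma_kA^T+LL^T$ with the $A^T$ factor absorbed on the right. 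One would still have to check a sign for each entry after multiplying by $\mathrm{adj}(\Sigma_n)$, which is not obviously positive. For that reason I will not rely on this route and will instead go with the direct computation.

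\emph{Direct route for $\alpha_k$.} Treat $k$ as a real variable $s\in[0,n]$ and set
\[
f(s)=(1-\eta^2)(1-\eta^{2n})s-2\eta(1-\eta^n)\bigl(1-\eta^s+\eta^{n-s}-\eta^n\bigr),
\]
where we have expanded $(1+\eta^{n-s})(1-\eta^s)=1-\eta^s+\eta^{n-s}-\eta^n$. Showing $f$ is nondecreasing on $[0,n]$ suffices. With $c=-\log\eta=\gamma h>0$, we get $f'(s)=(1-\eta^2)(1-\eta^{2n})-2\eta c(1-\eta^n)(\eta^s+\eta^{n-s})$. Since $\eta^s+\eta^{n-s}$ is convex and symmetric about $n/2$, it is maximal at the endpoints, where it equals $1+\eta^n$. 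Hence
\[
f'(s)\ge (1-\eta^{2n})\bigl[(1-\eta^2)-2c\eta\bigr].
\]
Writing $(1-\eta^2)-2c\eta=\eta(e^{c}-e^{-c}-2c)=2\eta(\sinh c-c)\ge0$ closes the argument for $\alpha_k$. This is essentially the first inequality in \eqref{eq: exp bound lemma 3}.

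\emph{Direct route for $\beta_k$.} First simplify $\beta_k$ by expanding $\eta^{-k}(1-\eta^k)^2=\eta^{-k}-2+\eta^k$ and $\eta^{-k}(1-\eta^k)(2-\eta^k-\eta^n)$ into terms of the form $a\eta^{-k}+b+c'\eta^k$. This gives
\[
\beta(s)=p\,s+q\,\eta^{-s}+r\,\eta^{s}+\text{const},\qquad p=(1-\eta^2)(1-\eta^n)^2,
\]
where $q=(1-\eta^2)n-2\eta(1-\eta^n)(2-\eta^n)$ and $r=(1-\eta^2)n-2\eta(1-\eta^n)$. Both $q$ and $r$ are nonnegative: $q$ is a multiple of $\sigma^{xx}_n$-type quantities and is essentially $\alpha_n$-like. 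I would verify $q\ge0$ using $(1-\eta^2)n\ge 2c\eta n\ge 2\eta(1-\eta^{n})(1+\eta^n)\cdot\tfrac{cn}{1-\eta^n}$, or more simply by noting $q\ge \alpha_n/(1+\eta^n)$ after algebra. Then
\[
\beta'(s)=p+c\,(q\,\eta^{-s}-r\,\eta^{s}).
\]
This function is convex in $s$, so its minimum need not be at an endpoint. Since $q\eta^{-s}\ge q$ and $r\eta^{s}\le r$, it suffices to show $p+c(q-r)\ge0$. Now $q-r=-2\eta(1-\eta^n)^2$, so $p+c(q-r)=(1-\eta^n)^2\bigl[(1-\eta^2)-2c\eta\bigr]\ge0$ by the same $\sinh c\ge c$ inequality. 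Hence $\beta$ is nondecreasing, and $\beta_0=0$ is immediate.

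The main obstacle is the bookkeeping in rewriting $\beta_k$ into the $p s+q\eta^{-s}+r\eta^s$ form. In particular, the bound $\beta'(s)\ge p+c(q-r)$ uses $q\ge0$ (so that $q\eta^{-s}\ge q$), and I would double-check the sign of $q$ carefully. If $q\ge0$ fails for small $n$, the fallback is to bound $\beta'(s)$ at its critical point instead. I expect $q\ge0$ to follow from $(1-\eta^2)n\ge 2c\eta n$ combined with $1-\eta^n\le cn$ and $2-\eta^n\le 2$, since $2c\eta n\ge 2\eta(1-\eta^n)\cdot2$ needs only $cn\ge2(1-\eta^n)$. That inequality can fail for small $cn$, so this is exactly the step that needs care, possibly via the sharper bounds in \cref{lemma: auxiliary exponential bounds}.
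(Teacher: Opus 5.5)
Your argument for $(\alpha_k)$ is correct and complete. It replaces the paper's exact factorization of $\alpha_{k+1}-\alpha_k$ by a derivative bound. The decomposition $\beta(s)=ps+q\eta^{-s}+r\eta^{s}+\mathrm{const}$, the identity $q-r=-2\eta(1-\eta^n)^2$, and the bound $\beta'(0)=p+c(q-r)\ge 0$ are also correct.

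The gap is the claim $q\ge 0$. It is false, and it fails precisely when $\gamma hn$ is small, which is the regime that matters most:
\begin{itemize}
\item For $n=1$ one computes $q=(1-\eta)^2(1-2\eta)$, which is negative whenever $\gamma h<\log 2$.
\item For $\gamma h=0.01$ and $n=10$ one gets $q\approx-8.3\cdot 10^{-3}$.
\item In general, since $c\le x:=cn$, one has $q=-\eta x^2+O(x^3)$ as $x\to 0$.
\end{itemize}
In particular $q\ge\alpha_n/(1+\eta^n)$ cannot hold, because $\alpha_n>0$. Without $q\ge 0$ the step $q\eta^{-s}\ge q$ is unavailable, so you have not established that $\beta$ is monotone.

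Your suggested fallback, bounding $\beta'$ at its critical point, also points the wrong way. When $q<0\le r$ one has $\beta'''(s)=c^3(q\eta^{-s}-r\eta^{s})<0$. So $\beta'$ is concave, and an interior critical point is a maximum, not a minimum.

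That concavity is exactly what closes the gap. First, $r\ge 0$, since $1-\eta^2\ge 2c\eta$ and $1-\eta^n\le cn$. If $q\ge0$, your argument applies as written. If $q<0$, then $\beta'$ is concave on $[0,n]$, so it suffices to check both endpoints. You already have $\beta'(0)\ge 0$. A direct computation gives
\[
\beta'(n)=\eta^{-n}(1-\eta^n)\Bigl\{(1-\eta^2)(1-\eta^n)\eta^n+c(1-\eta^2)n(1+\eta^n)-2c\eta(1-\eta^n)(2+\eta^n)\Bigr\}.
\]
Using $1-\eta^2\ge 2c\eta$ in the first two terms, the brace is at least $2c\eta\bigl[cn(1+\eta^n)-2(1-\eta^n)\bigr]$. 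This is nonnegative because $\tanh(y)\le y$ with $y=cn/2$.

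With this endpoint check added, your continuous approach gives a noticeably shorter proof for $\beta$ than the paper's. The paper instead bounds the discrete increment $\beta_{k+1}-\beta_k$ through the rational exponential inequalities of \cref{lemma: auxiliary exponential bounds}.
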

\begin{proof}
     We first show that $(\alpha_k)_{0\leq k\leq n}$ is increasing. For $k\in\{0,\dots n-1\}$ we have 
    \begin{equation}
    \label{eq: a_k difference}
        \alpha_{k+1}-\alpha_k=(1-\eta)(1-\eta^n)\sqbr{(1+\eta)(1+\eta^{n})-2\eta(\eta^k+\eta^{n-k-1})},
    \end{equation}
    where we use that
    \begin{equation*}
        (1+\eta^{n-k-1})(1-\eta^{k+1})-(1+\eta^{n-k})(1-\eta^k)=(1-\eta)(\eta^{k}+\eta^{n-k-1}).
    \end{equation*}
    The term in the square brackets in \eqref{eq: a_k difference} can be expanded in rearranged as    
    \begin{align*}
        (1+\eta)(1+\eta^{n})&-2\eta(\eta^k+\eta^{n-k-1})=1+\eta+\eta^n+\eta^{n+1}-2\eta^{k+1}-2\eta^{n-k}\\
        &=(1-\eta^{k+1})+\eta(1-\eta^k)-\eta^{n-k}(1-\eta^k)-\eta^{n-k}(1-\eta^{k+1})\\
        &=(1-\eta^{k+1})(1-\eta^{n-k})+\eta(1-\eta^k)(1-\eta^{n-k-1})\geq0,
    \end{align*}
    since $\eta\in(0,1)$, showing that $(\alpha_k)_{0\leq k\leq n}$ is indeed increasing. For $(\beta_k)_{0\leq k\leq n}$, we again fix some $k\in\{0,\dots n-1\}$ and rewrite
    \begin{align*}
        \beta_k&=(1-\eta^2)\br{\frac{1}{\eta^k}(1-\eta^k)^2n+(1-\eta^n)^2k}-2\frac{\eta}{\eta^k}(1-\eta^k)^2(1-\eta^n)-2\frac{\eta}{\eta^k}(1-\eta^k)(1-\eta^n)^2\\
        &=\frac{1}{\eta^{k}}(1-\eta^k)^2\br{(1-\eta^2)n-2\eta(1-\eta^n)}+\br{(1-\eta^2)k-2\frac{\eta}{\eta^k}(1-\eta^k)}(1-\eta^n)^2,
    \end{align*}
    and note that
    \begin{equation*}
        \frac{1}{\eta^{k+1}}(1-\eta^{k+1})^2-\frac{1}{\eta^{k}}(1-\eta^{k})^2=\frac{1}{\eta^{k+1}}(1-\eta)(1-\eta^{2k+1}),
    \end{equation*}
    \begin{equation*}
        \frac{1}{\eta^{k+1}} (1-\eta^{k+1})-\frac{1}{\eta^{k}}(1-\eta^{k})=\frac{1}{\eta^{k+1}}(1-\eta).
    \end{equation*}
    We therefore have that
    \begin{equation*}
        \beta_{k+1}-\beta_k=\frac{1}{\eta^{k+1}}(1-\eta)\sqbr{(1-\eta^{2k+1})\br{(1-\eta^2)n-2\eta(1-\eta^n)}+\br{(1+\eta)\eta^{k+1}-2\eta}(1-\eta^n)}.
    \end{equation*}
    This expression can be rearranged using the fact that
    \begin{equation*}
        (1+\eta)\eta^{k+1}-2\eta=-\eta(2-\eta^k-\eta^{k+1})=-\eta\br{(1-\eta^k)(1-\eta^{k+1})+(1-\eta^{2k+1})},
    \end{equation*}
    to obtain
    \begin{equation}
    \label{eq: beta_k difference final equation}   
        \begin{aligned}
            \beta_{k+1}-\beta_k=\frac{1}{\eta^{k+1}}(1-\eta)\bigg[&(1-\eta^{2k+1})\br{(1-\eta^2)n-2\eta(1-\eta^n)-\eta(1-\eta^n)^2}\\
            &-\eta(1-\eta^k)(1-\eta^{k+1})(1-\eta^n)^2\bigg].
        \end{aligned}
    \end{equation}
    In order to bound this result from below, we use several of the inequalities of \cref{lemma: auxiliary exponential bounds}. First, recall that $\eta=e^{-\gamma h}$, so that $\frac{1-\eta^2}{2\gamma h}\geq\eta$ by \eqref{eq: exp bound lemma 3}, and so we can use \eqref{eq: exp bound lemma 6} with $a=\frac{1-\eta^2}{2\gamma h}$, $b=\eta$ and $x=\gamma hn$ to find
    \begin{equation}
        (1-\eta^2)n-2\eta(1-\eta^n)-\eta(1-\eta^n)^2\geq\br{\frac{1-\eta^2}{2\gamma h}}\frac{24(\gamma hn)^3+13(\gamma hn)^4+2(\gamma hn)^5}{\br{6+4\gamma hn+(\gamma hn)^2}^2}\geq0.
    \end{equation}
    In addition, 
    \begin{equation*}
        1-\eta^{2k+1}\geq1-\eta^{2k+1}=(1-\eta^k)(1+\eta^k),
    \end{equation*}
    and, by rewriting the first inequality of \eqref{eq: exp bound lemma 2},
    \begin{equation*}
        (1-\eta^n)^2\leq\frac{(6\gamma hn+(\gamma hn)^2)^2}{\br{6+4\gamma hn+(\gamma hn)^2}^2}=\frac{36(\gamma hn)^2+12(\gamma hn)^3+(\gamma hn)^4}{\br{6+4\gamma hn+(\gamma hn)^2}^2}.
    \end{equation*}
    Combining these observations, we see from \eqref{eq: beta_k difference final equation} that $\beta_{k+1}-\beta_k\geq0$ if
    \begin{equation*}
        \br{\frac{1-\eta^2}{2\gamma h}}(1+\eta^k)\frac{24(\gamma hn)^3+13(\gamma hn)^4+2(\gamma hn)^5}{\br{6+4\gamma hn+(\gamma hn)^2}^2}-\eta(1-\eta^{k+1})\frac{36(\gamma hn)^2+12(\gamma hn)^3+(\gamma hn)^4}{\br{6+4\gamma hn+(\gamma hn)^2}^2}\geq0.
    \end{equation*}
    In particular, since $\frac{1-\eta^2}{2\gamma h}\geq\eta$, and
    \begin{equation*}
        24(\gamma hn)^3+13(\gamma hn)^4+2(\gamma hn)^5\geq \frac{2}{3}\gamma hn\br{36(\gamma hn)^2+12(\gamma hn)^3+(\gamma hn)^4},
    \end{equation*}
    it suffices to show that
    \begin{equation*}
        \frac{2}{3}(1+\eta^k)\gamma hn\geq1-\eta^{k+1}.
    \end{equation*}
    In fact, we will show the stronger inequality
    \begin{equation*}
        \frac{1}{2}(1+\eta^k)\geq\frac{1-\eta^{k+1}}{\gamma h(k+1)},
    \end{equation*}
    which suffices since $n\geq k+1$. Using the first inequality of \eqref{eq: exp bound lemma 2} twice get both
    \begin{equation*}
        \frac{1}{2}(1+\eta^k)\geq\frac{6+\gamma hk}{6+4\gamma hk+(\gamma hk)^2},
    \end{equation*}
    and
    \begin{equation*}
        \frac{1-\eta^{k+1}}{\gamma h(k+1)}\leq\frac{6+\gamma h(k+1)}{6+4\gamma h(k+1)+(\gamma h(k+1))^2}.
    \end{equation*}
    By combining these two fractions we get
    \begin{equation*}
        \frac{1}{2}(1+\eta^k)-\frac{1-\eta^{k+1}}{\gamma h(k+1)}\geq\frac{18\gamma h+6(\gamma h)^2(2k+1)+(\gamma h)^3k(k+1)}{(6+4\gamma hk+(\gamma hk)^2)(6+4\gamma h(k+1)+(\gamma h(k+1))^2)}\geq0,
    \end{equation*}
    showing the claimed result. We conclude that $(\beta_k)_{0\leq k\leq n}$ is indeed increasing.
\end{proof}

\section{Proof of Lemma~\ref{lemma: potential free part bound}}
\label{appendix: potential free bound}
We begin by recalling that~\eqref{eq: identity for E_k^2} establishes the following identity:
\begin{equation*}
    \sum_{k=1}^n|E_k|^2=\l|\Sigma_{h,n}^{-1/2}A_h^n\Delta z\r|^2.
\end{equation*}
We will show that the right-hand side can be bounded appropriately in terms of the terminal time $hn$. For the sake of notational brevity, in what follows we only acknowledge the dependence on $h$ explicitly in our notation when needed, and use the notation $\eta=e^{-\gamma h}$ throughout.
\begin{proposition}
\label{prop: potential free operator norm bound}
For $h>0$, $n\in\N$, let $A_n$ and $\Sigma_{n}$ given by \eqref{eq: OBABO n step mean} and \eqref{eq: OBABO n step covariance}. Then we have the upper bound
    \begin{equation*}
        \l\|\Sigma_{n}^{-1/2}A^n\r\|\leq\br{\frac{44}{\gamma (hn)^3}+\frac{264+44\gamma^2}{\gamma hn}}^{1/2}.
    \end{equation*}
\end{proposition}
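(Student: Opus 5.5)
The plan is to reduce the operator-norm bound to a scalar $2\times 2$ computation and then estimate that computation with the exponential inequalities of Lemma~\ref{lemma: auxiliary exponential bounds}. Since every block of $A^n$ and $\Sigma_n$ in \eqref{eq: OBABO n step mean}--\eqref{eq: OBABO n step covariance} is a scalar multiple of $\id_d$, it suffices to treat $d=1$. There
\begin{equation*}
\|\Sigma_n^{-1/2}A^n\|^2=\lambda_{\max}\bigl((A^n)^T\Sigma_n^{-1}A^n\bigr)\le \tr\bigl(\Sigma_n^{-1}A^n(A^n)^T\bigr).
\end{equation*}
Using the explicit inverse of $\Sigma_n$ from Appendix~\ref{app: explicit optimized trajectory} and $A^n(A^n)^T=\begin{pmatrix}1+b_n^2 & b_nc_n\\ b_nc_n & c_n^2\end{pmatrix}$, this trace equals
\begin{equation*}
\frac{\sigma^{vv}_n(1+b_n^2)-2\sigma^{xv}_n b_nc_n+\sigma^{xx}_n c_n^2}{\sigma^{xx}_n\sigma^{vv}_n-(\sigma^{xv}_n)^2}.
\end{equation*}
The denominator is $\frac{h^2}{(1-\eta)^2}\alpha_n$ by \eqref{eq: alpha_k expression}. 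Since $\sigma^{xv}_n,b_n,c_n\ge 0$, the cross term can be dropped, which leaves an upper bound on a numerator of two nonnegative terms divided by $\alpha_n$.

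Next I bound the denominator from below. Evaluating \eqref{eq: alpha_k definition} at $k=n$ gives
\begin{equation*}
\alpha_n=(1-\eta^n)\bigl[(1-\eta^2)(1+\eta^n)n-4\eta(1-\eta^n)\bigr].
\end{equation*}
I would apply \eqref{eq: exp bound lemma 4} with $x=\gamma hn$, $a=\frac{1-\eta^2}{2\gamma h}$ and $b=\eta$; the requirement $a\ge b$ is exactly \eqref{eq: exp bound lemma 3}. This yields
\begin{equation*}
\alpha_n\ge (1-\eta^n)\,\frac{1-\eta^2}{\gamma h}\,\frac{(\gamma hn)^3}{6+4\gamma hn+(\gamma hn)^2},
\end{equation*}
and the prefactor $1-\eta^n$ is then bounded below using \eqref{eq: exp bound lemma 1}.

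For the numerator I would estimate each surviving term separately:
\begin{itemize}
\item $\sigma^{vv}_n=1-\eta^{2n}\le 2(1-\eta^n)$;
\item $b_n=\frac{h\eta^{1/2}}{1-\eta}(1-\eta^n)\le \gamma^{-1}(1-\eta^n)$, by \eqref{eq: exp bound lemma 3};
\item $\sigma^{xx}_n c_n^2$ is controlled through \eqref{eq: exp bound lemma 5} with the same choice of $a,b$, which gives $\sigma^{xx}_n\le \frac{h^2}{(1-\eta)^2}\bigl[2(a-b)\gamma hn+\tfrac43\eta(\gamma hn)^3\bigr]\cdot\gamma h$-type expressions, together with $c_n^2=\eta^{2n}\le \bigl(\tfrac{6}{6+(\gamma hn)^3}\bigr)^2$ from \eqref{eq: exp bound lemma 2}.
\end{itemize}
The $h$-dependent prefactors are the delicate point. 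The ratios $\frac{(1-\eta)^2}{h^2}$ coming from the determinant must cancel against $\frac{1-\eta^2}{\gamma h}$ and against the $\frac{h^2}{(1-\eta)^2}$ inside $\sigma^{xx}_n$. After that cancellation, everything should be expressible as a rational function of $x=\gamma hn$, multiplied by $\gamma$ and by the two bounded quantities $\frac{(1-\eta)^2}{\gamma h(1-\eta^2)}\le 1$ and $\frac{\gamma h}{1-\eta}\cdot\frac{1-\eta}{1+\eta}$.

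The final step, which I expect to be the main obstacle, is to show that the resulting rational function of $x$ is at most $\frac{44\gamma^2}{x^3}+\frac{264+44\gamma^2}{x}$, times the appropriate power of $\gamma$. The prefactors must be matched so that no stray $h$-dependence survives for large $\gamma h$. I would first try to split into the regimes $x\le 1$ and $x\ge 1$. For small $x$, the denominator behaves like $x^3$ and the numerator like a constant; the $x^{-3}$ term comes from $\sigma^{vv}_n$ and the $1+b_n^2$ factor, with $b_n^2\lesssim (hn)^2$ producing the $\gamma^{-1}(hn)^{-1}$ contribution. For large $x$, the denominator behaves like $x$ and the numerator stays bounded, producing the $(hn)^{-1}$ terms. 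I would then verify the constants $44$ and $264$ by crude polynomial comparison, since the quadratic $6+4x+x^2$ is at most $11\max(1,x^2)$, which is where a factor like $44=4\cdot 11$ should come from.
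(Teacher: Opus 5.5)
Your route is the paper's: the trace bound, dropping the cross term $-2\sigma^{xv}_nb_nc_n$, the lower bound on $D:=\sigma^{xx}_n\sigma^{vv}_n-(\sigma^{xv}_n)^2$ via \eqref{eq: exp bound lemma 4} (with $a=\frac{1-\eta^2}{2\gamma h}$, $b=\eta$, $x=\gamma hn$) plus \eqref{eq: exp bound lemma 1}, and term-by-term numerator estimates. Your handling of $\sigma^{vv}_n$ and $b_n^2\sigma^{vv}_n$ works. With $\sigma^{vv}_n\le 2(1-\eta^n)$ the factor $1-\eta^n$ cancels, and $\frac{h^2}{(1-\eta)^2}\cdot 2a=\frac{h(1+\eta)}{\gamma(1-\eta)}\ge\gamma^{-2}$. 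This gives $\sigma^{vv}_n/D\le 2\gamma^2(6+4x+x^2)/x^3$ and $b_n^2\sigma^{vv}_n/D\le 12/x$, both comfortably within the target.

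The gap is the term $c_n^2\sigma^{xx}_n$. This is exactly the ``stray $h$-dependence'' you flag but leave unresolved, and your sketch for it does not close.

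\textbf{First issue.} \eqref{eq: exp bound lemma 5} leaves the term $2(a-b)\gamma hn$. You must control it with the \emph{second} inequality of \eqref{eq: exp bound lemma 3}, $a-b\le\frac23(\gamma h)^2\le\frac23(\gamma hn)^2$, which gives $\sigma^{xx}_n\le\frac83\frac{h^2}{(1-\eta)^2}(\gamma hn)^3$. A cruder bound such as $a-b\le a$ overestimates $\sigma^{xx}_n$ by roughly $2hn/\gamma$ for small $\gamma hn$. It then yields a contribution of order $\gamma^{-3}(hn)^{-3}$, which is not dominated by $44\gamma^{-1}(hn)^{-3}$ when $\gamma$ is small. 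So your claim that the $x^{-3}$ term comes only from $\sigma^{vv}_n$ depends on this step.

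\textbf{Second issue.} After the factors $\frac{h^2}{(1-\eta)^2}$ cancel, a factor $\frac{1}{2a}=\frac{\gamma h}{1-\eta^2}$ survives. It is \emph{not} bounded: it grows linearly in $\gamma h$. So your assertion that only bounded prefactors remain is false. Your estimate $c_n^2\le\bigl(6/(6+x^3)\bigr)^2$ discards exactly what is needed to absorb it.

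\textbf{How the paper closes it.} It uses $a\ge\eta$ to bound $D$ below by $\frac{1}{11}\frac{h^2\eta}{(1-\eta)^2}\frac{x^4}{1+x^3}$. It then writes $c_n^2=\eta^n e^{-\gamma hn}\le\frac{6\eta}{1+x^3}$, using $\eta^n\le\eta$ since $n\ge 1$. The two factors of $\eta$ cancel, and $c_n^2\sigma^{xx}_n/D\le 176/(\gamma hn)$.

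Alternatively, you could use $\frac{1}{2a}\le\frac12(1+2\gamma h)\le\frac12(1+2x)$ (again because $n\ge 1$) and let the decay of $c_n^2$ in $x$ absorb this growth. Some such argument has to be supplied before the constants can be checked.
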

\begin{proof}
The norm of $\Sigma_{n}^{-1/2}A^n$ is given by its largest singular value:
    \begin{equation}
    \l\|\Sigma_{n}^{-1/2}A^n\r\|=\sqrt{\lambda_\textup{max}\br{(A^n)^T\Sigma_{n}^{-1}A^n}}.
\end{equation}
The matrices $A^n$ and $\Sigma_{n}$ are, up to the same reordering of the basis vectors, block matrices consisting of $d$ identical $2\times 2$ blocks of the form
\begin{equation*}
    \begin{pmatrix}
        1 & b_n\\ 0 & c_n
    \end{pmatrix}, \qquad \begin{pmatrix}
        \sigma_n^{xx} & \sigma_n^{xv} \\ \sigma_n^{xv} & \sigma_n^{vv}
    \end{pmatrix},
\end{equation*}
respectively. In particular, the eigenvalues of $(A^n)^T\Sigma_{n}^{-1}A^n$ are the same as the eigenvalues obtained from these blocks, so that
\begin{align}
\label{eq: norm upper bound via eigenvalue and trace}
    \lambda_\textup{max}\br{(A^n)^T\Sigma_{n}^{-1}A^n}&=\lambda_\textup{max}\br{\frac{1}{\sigma^{xx}_n\sigma^{vv}_n-(\sigma^{xv}_n)^2}\begin{pmatrix}
        \sigma^{vv}_n & b_n\sigma^{vv}_n-c_n\sigma^{xv}_n\\ b_n\sigma^{vv}_n-c_n\sigma^{xv}_n & b_n^2\sigma^{vv}_n-2b_nc_n\sigma^{xv}_n+c_n^2\sigma^{xx}_n
    \end{pmatrix}}\nonumber\\
    &\leq \frac{(1+b_n^2)\sigma^{vv}_n-2b_nc_n\sigma^{xv}_n+c_n^2\sigma^{xx}_n}{\alpha_n}\leq\frac{(1+b_n^2)\sigma^{vv}_n+c_n^2\sigma^{xx}_n}{\alpha_n},
\end{align}
using that the trace of a matrix gives an upper bound on its eigenvalues and the fact that $b_n,c_n,\sigma^{xv}_n\geq0$, and where $\alpha_n$ is given by \eqref{eq: alpha_k definition}. What remains is bounding the terms on the right hand side of this inequality. First, we rewrite the denominator as
\begin{align}
\label{eq: covariance matrix determinant}
    \alpha_n&=\frac{h^2}{(1-\eta)^2}\sqbr{(1-\eta^2)(1-\eta^{2n})n-4\eta(1-\eta^n)^2}\nonumber\\
    &=\frac{h^2}{(1-\eta)^2}(1-\eta^n)\sqbr{(1-\eta^2)(1+\eta^n)n-4\eta(1-\eta^n)}\nonumber\\
    &=\frac{h^2}{(1-\eta)^2}(1-\eta^n)\sqbr{2\frac{(1-\eta^2)}{2\gamma h}(1+\eta^n)\gamma hn-4\eta(1-\eta^n)}
\end{align}

Combining the first inequality of \eqref{eq: exp bound lemma 1} and \eqref{eq: exp bound lemma 4} from \cref{lemma: auxiliary exponential bounds} shows that for all $a\geq b\geq 0$ and $x\geq0$ we have 
\begin{equation*}
    (1-e^{-x})\sqbr{2ax(1+e^{-x})-4b(1-e^{-x})}\geq \frac{24(a-b)x^2+4(a-b)x^3+2ax^4}{(1+x)(6+4x+x^2)}.
\end{equation*}
where 
\begin{equation*}
    (1+x)(6+4x+x^2)=6+10x+5x^2+x^3\leq22(1+x^3).
\end{equation*}
The first inequality of \eqref{eq: exp bound lemma 3} shows that $\frac{1-\eta^2}{2\gamma h}\geq\eta$, so that we can apply the above inequalities to \eqref{eq: covariance matrix determinant} by setting $a=\frac{1-\eta^2}{2\gamma h}$, $b=\eta$ and $x=\gamma hn$ and recalling that $\eta^n=e^{-\gamma hn}$. We therefore obtain
\begin{equation*}
    \alpha_n\geq\frac{1}{11}\frac{h^2}{(1-\eta)^2}\frac{1-\eta^2}{2\gamma h}\frac{(\gamma hn)^4}{1+(\gamma hn)^3}.
\end{equation*}
This bound can then be extended in two ways. First, since $\frac{1-\eta^2}{2\gamma h}\geq\eta$, we have
\begin{equation}
\label{eq: alpha_n lower bound 1}
    \alpha_n\geq\frac{1}{11}\frac{h^2\eta}{(1-\eta)^2}\frac{(\gamma hn)^4}{1+(\gamma hn)^3}.
\end{equation}
Secondly, we have
\begin{equation*}
    \frac{h^2}{(1-\eta)^2}\frac{1-\eta^2}{2\gamma h}=\frac{h(1+\eta)}{2\gamma(1-\eta)}\geq\frac{h}{2\gamma(1-\eta)}\geq\frac{1}{2\gamma^2},
\end{equation*}
where the final inequality uses that $1-\eta=1-e^{-\gamma h}\leq\gamma h$. Hence,
\begin{equation}
\label{eq: alpha_n lower bound 2}
    \alpha_n\geq\frac{1}{22\gamma^2}\frac{(\gamma hn)^4}{1+(\gamma hn)^3}.
\end{equation}
For the next term, we use the second inequality of \eqref{eq: exp bound lemma 1} to see that
\begin{equation*}
    b_n^2=\frac{h^2\eta}{(1-\eta)^2}(1-\eta^n)^2\leq\frac{h^2\eta}{(1-\eta)^2}\frac{4(\gamma hn)^2}{(2+\gamma hn)^2},
\end{equation*}
while by the same inequality of \eqref{eq: exp bound lemma 1} we have
\begin{equation*}
    \sigma^{vv}_n=1-\eta^{2n}\leq\frac{2\gamma hn}{1+\gamma hn}.
\end{equation*}
Combining these, together with the fact that $(2+\gamma hn)^2(1+\gamma hn)\geq 1+(\gamma hn)^3$, results in
\begin{equation*}
    \frac{b_n^2\sigma^{vv}_n}{\alpha _n}\leq\frac{88}{\gamma hn},
\end{equation*}
where we use the lower bound \eqref{eq: alpha_n lower bound 1}. In addition, using the lower bound \eqref{eq: alpha_n lower bound 2}, we see
\begin{equation*}
    \frac{\sigma^{vv}_n}{\alpha_n}\leq\frac{44\gamma^2}{(\gamma hn)^3}\frac{1+(\gamma hn)^3}{1+\gamma hn}\leq\frac{44\gamma^2}{(\gamma hn)^3}(1+(\gamma hn)^2)=\frac{44}{\gamma (hn)^3}+\frac{44\gamma}{hn}.
\end{equation*}
For the final term, we have by the second inequality of \eqref{eq: exp bound lemma 2} that
\begin{equation*}
    c_n^2=\eta^{2n}=\eta^ne^{-\gamma h n}\leq \frac{6\eta^n}{6+(\gamma hn)^3}\leq\frac{6\eta}{1+(\gamma hn)^3}.
\end{equation*}
Also, inequality \eqref{eq: exp bound lemma 5} with $a=\frac{1-\eta^2}{2\gamma h}$, $b=\eta\leq1$ and $x=\gamma hn$, where $a-b\leq\frac{2}{3}(\gamma h)^2\leq\frac{2}{3}(\gamma hn)^2$ by the second inequality of \eqref{eq: exp bound lemma 3}, gives
\begin{equation*}
    \sigma^{xx}_n=\frac{h^2}{(1-\eta)^2}\sqbr{(1-\eta^2)n-4\eta(1-\eta^n)+\eta(1-\eta^{2n})}\leq\frac{8}{3}\frac{h^2}{(1-\eta)^2}(\gamma hn)^3.
\end{equation*}
Together, using \eqref{eq: alpha_n lower bound 1}, we see that
\begin{equation*}
    \frac{c_n^2\sigma^{xx}_n}{\alpha_n}\leq \frac{176}{\gamma hn}.
\end{equation*}
In conclusion, when we put the various bounds above into \eqref{eq: norm upper bound via eigenvalue and trace}, we obtain
\begin{equation*}
    \l\|\Sigma_{n}^{-1/2}A^n\r\|^2\leq \frac{44}{\gamma (hn)^3}+\frac{264+44\gamma^2}{\gamma hn},
\end{equation*}
completing the proof
\end{proof}

\section{Sufficient condition for optimality of the iterated one-shot coupling}
\label{appendix: sufficient condition optimality iterated one-shot coupling}

This section contains proofs for the identities in equation~\eqref{eq: coupling probability optimal multi-shot setting} of \cref{example: optimality condition of the multi-shot coupling}. Indeed, let $A,B\in\R^{d\times d}$ be non-singular matrices and consider the homogeneous linear Markov chains given by $Z_0=z$, $\tilde Z_0=\tilde z$ and
\begin{equation*}
    Z_{k+1}=AZ_k+B\xi_{k+1}, \quad \tilde Z_{k+1}=A\tilde Z_k+B\tilde\xi_{k+1},
\end{equation*}
where $(\xi_k)_{k\in \N_{>0}}$ and $(\tilde{\xi}_k)_{k\in \N_{>0}}$ are two sequences of i.i.d.\ $\mathcal{N}(0,\id_d)$-distributed random variables. By an induction argument it follows that
\begin{equation*}
    Z_n=A^nz+\sum_{k=1}^n A^{n-k}B\xi_k.
\end{equation*}
Consequently, $Z_n\sim\mathcal{N}(A^n z,\Sigma_n)$, where 
\begin{equation*}
    \Sigma_n=\sum_{k=0}^{n-1} A^{k}BB^T(A^{T})^{k}.
\end{equation*}
The TV distance between the two chains after $n$ steps thus follows from \eqref{eq: Gaussian TV distance} and is given by
\begin{equation}\label{eq: Gaussian TV distance spec}
    \dTV{\law(Z_n),\law(\tilde Z_n)}=2\Phi\br{\frac{|\Sigma_n^{-1/2}A^n\Delta z|}{2}}-1.
\end{equation}
Let us now assume that $ABB^TA^T=a^2BB^T$ for some nonzero $a\in\R$. Under this assumption, the covariance of the chain at each step reduces to a positive multiple of $BB^T$:
\begin{equation*}
    \Sigma_n=\sum_{k=0}^{n-1} A^{k}BB^T (A^T)^{k}=\br{\sum_{k=0}^{n-1} a^{2k}}BB^T.
\end{equation*}
Moreover, by the non-singularity of $A$, the assumption that $ABB^TA^T=a^2BB^T$ is equivalent to
\begin{equation}
\label{eq: multi-shot converse optimality assumption}
    A^T(BB^T)^{-1}A=a^2(BB^T)^{-1}.
\end{equation}
Applying both these observations, we deduce that
\begin{align*}
    (A^T)^n\Sigma_n^{-1}A^n&=\br{\sum_{k=0}^{n-1}a^{2k}}^{-1}(A^T)^n(BB^T)^{-1} A^n=\br{\sum_{k=0}^{n-1}a^{2k}}^{-1}a^{2n}(BB^T)^{-1}\\
    &=\br{\sum_{k=1}^{n}a^{-2k}}^{-1}(BB^T)^{-1}.
\end{align*}
The term $|\Sigma_n^{-1/2}A^n\Delta z|$, as it appears in~\eqref{eq: Gaussian TV distance spec}, can therefore be simplified to
\begin{equation*}
    |\Sigma_n^{-1/2}A^n\Delta z|^2=\br{\sum_{k=1}^na^{-2k}}^{-1}\Delta z^T(BB^T)^{-1}\Delta z=\br{\sum_{k=1}^na^{-2k}}^{-1}|B^{-1}\Delta z|^2,
\end{equation*}
so that the TV distance is given by
\begin{equation}
\label{eq: multi-shot optimal TV}
    \dTV{\law(Z_n),\law(\tilde Z_n)}=2\Phi\br{\frac{|B^{-1}\Delta z|}{2\br{\sum_{k=1}^na^{-2k}}^{1/2}}}-1.
\end{equation}
On the other hand, the probability that the two chains have met under the iterated one-shot coupling is, due to \cref{thm: iterated one-shot coupling probability}, given by
\begin{equation*}
    \P(Z_n\neq \tilde Z_n)=2\Phi\br{\frac{1}{2\Theta_{n}^{1/2}}}-1,
\end{equation*}
where 
\begin{equation*}
    \Theta_{n}=\sum_{k=1}^n\frac{1}{\l|B^{-1}A^k\Delta z\r|^{2}}.
\end{equation*}
Under \eqref{eq: multi-shot converse optimality assumption} this value can be simplified to
\begin{equation*}
    \Theta_{n}=\sum_{k=1}^n\frac{1}{\l|B^{-1}A^k\Delta z\r|^{2}}=\frac{1}{\Delta z^T(BB^T)^{-1}\Delta z}\sum_{k=1}^na^{-2k}=\frac{1}{|B^{-1}\Delta z|^2}\sum_{k=1}^na^{-2k}.
\end{equation*}
This shows that
\begin{equation}\label{eq: interated one shot coupling prob spec}
    \P(Z_n\neq \tilde Z_n)=2\Phi\br{\frac{|B^{-1}\Delta z|}{2\br{\sum_{k=1}^na^{-2k}}^{1/2}}}-1,
\end{equation}
which is equal to $\dTV{\law(Z_n),\law(\tilde Z_n)}$ given in \eqref{eq: multi-shot optimal TV}. 
\newpage

\printbibliography[heading=bibintoc,title={References}]

\end{document}